\documentclass[a4paper,11pt,twoside]{article}
\usepackage[
    top=2.75cm, 
    bottom=2.5cm, 
    inner=2.5cm,
    outer=2.5cm
]{geometry}

\usepackage{times}
\usepackage[british]{babel}

\title{\LARGE \textsc{Ricci-flat string algebroids} }
\author{
    Agnaldo A. da Silva Jr.\footnote{\texttt{agnaldosilvajr@ime.unicamp.br}},
\quad 
     Henrique N. Sá Earp\footnote{\texttt{henrique.saearp@ime.unicamp.br}}
\qandq
    Bernardo S. Vieira\footnote{\texttt{bernardo.vieira@ime.unicamp.br}}
\\[0.5em]
     Universidade Estadual de Campinas (UNICAMP)
}
\date{}

\usepackage[utf8]{inputenc}

\usepackage{amsmath,amssymb,amsfonts,amsthm,mathtools}
\usepackage{mathrsfs}
\usepackage{stmaryrd}
\usepackage{bbm}
\usepackage{xfrac}
\usepackage{colonequals}
\usepackage{cancel}
\usepackage{slashed}
\usepackage{accents}

\usepackage{graphicx}
\usepackage{tikz}
\usepackage{tikz-cd}
\usepackage{xcolor}
\usepackage{rotating}

\usepackage{abstract}
\usepackage{caption}
\usepackage{titlesec}
\usepackage{multicol}
\usepackage[shortlabels]{enumitem}
\usepackage{framed}

\usepackage{anyfontsize}
\usepackage{textgreek}
\usepackage{soul}
\usepackage{blindtext}

\usepackage{listings}
\usepackage{sidenotes}
\usepackage[size=\tiny]{todonotes}
\usepackage{verbatim}

\usepackage{import}
\usepackage{xifthen}
\usepackage{xstring}
\usepackage{pdfpages}
\usepackage{transparent}
\usepackage{calc}

\usepackage[pageanchor, backref=page, colorlinks=true, linkcolor = blue]{hyperref}
\hypersetup{colorlinks=true, citecolor=blue, urlcolor = red}
\renewcommand*{\backref}[1]{}
\renewcommand*{\backrefalt}[4]
{%$\Rsh$
    \ifcase #1%
        \or        (p.~#2)% cite of one page
        \else      (pp.~#2)% cite of several pages 
    \fi
}

\setuptodonotes{
  color=white,          % fundo do box
  bordercolor=red!60!black, % borda do box
  linecolor=blue!60!black,   % linha até a margem
  size=\small                % tamanho da fonte nas notas
}

\makeatletter
\@addtoreset{paragraph}{subsection}
\makeatother

\renewcommand{\theparagraph}{\S\thesubsection.\arabic{paragraph}}

\titleformat{\paragraph}[runin]
    {\normalfont\normalsize\bfseries}
    {\theparagraph}
    {0.5em}
    {}
    [:]
\usepackage{fancyhdr}
\newcommand{\shortauthors}{Agnaldo A. da Silva Jr., Henrique N. Sá Earp, Bernardo S. Vieira}
\newcommand{\shorttitulo}{Ricci-flat string algebroids}
\begin{document}
\emergencystretch 3em 

\numberwithin{equation}{section}

%comands
%% comandos de simplificação de texto matematico
\newcommand{\bb}{\mathbb}
\newcommand{\mcal}{\mathcal}
\newcommand{\vet}{\mathbf}
\newcommand{\rmm}{\mathrm}
\newcommand{\rmend}{\rmm{End}}
\newcommand{\rmhom}{\rmm{Hom}}
\newcommand{\Lg}{\mathfrak{g}}
\newcommand{\Lh}{\mathfrak{h}}
\newcommand{\brackss}[2]{\langle #1,#2\rangle}
\newcommand{\fsp}{\fs\fp }
\newcommand{\im}{\rmm{im}\,}
\newcommand{\bracks}[1]{\left\langle #1 \right\rangle}
\newcommand{\pp}[1]{\left( {#1} \right)}
\newcommand{\chaves}[1]{\left\{#1\right\}}
\newcommand{\colchetes}[1]{\left[#1\right]}
\newcommand{\largemath}[1]{\textrm{\Large\(#1\)}}
\newcommand{\smallmath}[1]{\textrm{\footnotesize\(#1\)}}
\newcommand{\hint}[1]{\\ \textit{{\footnotesize Dica: #1 }}}
\newcommand{\ppp}[1]{( #1 )}
\newcommand{\Fr}{\rmm{Fr}}
\newcommand{\al}{\alph*)}
\newcommand{\noit}{\textnormal{\roman*)}}
\newcommand{\llangle}{\langle\!\langle}
\newcommand{\rrangle}{\rangle\!\rangle}
\newcommand{\proj}{\rmm{proj}}
%% comandos de atalho de texto
\newcommand{\id}{\rmm{Id}}
\newcommand{\rank}{\rmm{rank}}
\newcommand{\actionL}{\circlearrowright}
\newcommand{\actionR}{\circlearrowleft}
\newcommand{\iprod}{\mathbin{\lrcorner}}
\newcommand{\adjointbundle}{\rmm{ad}}
\newcommand{\adjointbundleP}{\rmm{ad}P}
\newcommand{\rmmtr}{\rmm{tr}}
\newcommand{\ricci}{\rmm{Ric}}
\newcommand{\vol}{\rmm{vol}}
\newcommand{\dv}{\rmm{div}}
\newcommand{\ida}{\((\Rightarrow)\)\;}
\newcommand{\volta}{\((\Leftarrow)\)\;}
\newcommand{\isomorphic}{\cong}
\renewcommand{\r}{\rmm}
\newcommand{\SO}{\mathrm{SO}}
\newcommand{\SU}{\mathrm{SU}}
\newcommand{\Sp}{\mathrm{Sp}}
\renewcommand{\sp}{\Sp}
\newcommand{\spksp}{\sp(k)\sp(1)}
\newcommand{\ad}{\adjointbundle}
\newcommand{\rG}{\mathrm{G}}
\newcommand{\Cl}{\rmm{Cl}}
\newcommand{\cl}{\Cl}
\newcommand{\la}{\langle}
\newcommand{\ra}{\rangle}
\newcommand{\GRic}{\rmm{GRic}}
\newcommand{\Ric}{\rmm{Ric}}
\newcommand{\Spin}{\rmm{Spin}}
\newcommand{\Gl}{\rmm{GL}}
\newcommand{\framebundle}{\rmm{Fr}}
\newcommand{\spin}{\Spin}
\newcommand{\GL}{\Gl}
\newcommand{\rg}{\r G}
\newcommand{\embeds}{\hookrightarrow}
\newcommand{\U}{\r U}
\newcommand{\simast}{\overset{*}{\sim}}
\newcommand{\leftbrack}{[\![}
\newcommand{\rightbrack}{]\!]}
\newcommand{\transitive}{TM\oplus \adjointbundleP\oplus T^*M}
\renewcommand{\star}{*}
\newcommand{\lambad}{\lambda}
\newcommand{\Lambad}{\Lambda}

%%comandos qed
\newcommand{\qedbarra}{\hfill{/}}
\newcommand{\qedduasbarra}{\hfill{//}}
\newcommand{\qedpreto}{\hfill{\blacksquare}}
\newcommand{\qedstar}{\hfill{\star}}
\newcommand{\qedball}{\hfill{$\bigcirc$}}
\newcommand{\qedtriangle}{\hfill{$\triangle$}}

%< LETTERS >====================================================================

% ROMAN
%\newcommand{\ra}{{\rm a}}
\newcommand{\rb}{{\rm b}}
\newcommand{\rc}{{\rm c}}
\newcommand{\rd}{{\rm d}}
\newcommand{\re}{{\rm e}}
\newcommand{\rf}{{\rm f}}
\newcommand{\rh}{{\rm h}}
\newcommand{\ri}{{\rm i}}
\newcommand{\rj}{{\rm j}}
\newcommand{\rmk}{{\rm k}}
\newcommand{\rl}{{\rm l}}
\newcommand{\rn}{{\rm n}}
\newcommand{\ro}{{\rm o}}
\newcommand{\rp}{{\rm p}}
\newcommand{\rmq}{{\rm q}}
\newcommand{\rr}{{\rm r}}
\newcommand{\rs}{{\rm s}}
\newcommand{\rt}{{\rm t}}
\newcommand{\ru}{{\rm u}}
\newcommand{\rv}{{\rm v}}
\newcommand{\rw}{{\rm w}}
\newcommand{\rx}{{\rm x}}
\newcommand{\ry}{{\rm y}}
\newcommand{\rz}{{\rm z}}
\newcommand{\rA}{{\rm A}}
\newcommand{\rB}{{\rm B}}
\newcommand{\rC}{{\rm C}}
\newcommand{\rD}{{\rm D}}
\newcommand{\rE}{{\rm E}}
\newcommand{\rF}{{\rm F}}
\newcommand{\rH}{{\rm H}}
\newcommand{\rI}{{\rm I}}
\newcommand{\rJ}{{\rm J}}
\newcommand{\rK}{{\rm K}}
\newcommand{\rL}{{\rm L}}
\newcommand{\rM}{{\rm M}}
\newcommand{\rN}{{\rm N}}
\newcommand{\rO}{{\rm O}}
\newcommand{\rP}{{\rm P}}
\newcommand{\rQ}{{\rm Q}}
\newcommand{\rR}{{\rm R}}
\newcommand{\rS}{{\rm S}}
\newcommand{\rT}{{\rm T}}
\newcommand{\rU}{{\rm U}}
\newcommand{\rV}{{\rm V}}
\newcommand{\rW}{{\rm W}}
\newcommand{\rX}{{\rm X}}
\newcommand{\rY}{{\rm Y}}
\newcommand{\rZ}{{\rm Z}}

% SANS SERIF
\newcommand{\sfa}{{\sf a}}
\newcommand{\sfb}{{\sf b}}
\newcommand{\sfc}{{\sf c}}
\newcommand{\sfd}{{\sf d}}
\newcommand{\sfe}{{\sf e}}
\newcommand{\sff}{{\sf f}}
\newcommand{\sfg}{{\sf g}}
\newcommand{\sfh}{{\sf h}}
\newcommand{\sfi}{{\sf i}}
\newcommand{\sfj}{{\sf j}}
\newcommand{\sfk}{{\sf k}}
\newcommand{\sfl}{{\sf l}}
\newcommand{\sfm}{{\sf m}}
\newcommand{\sfn}{{\sf n}}
\newcommand{\sfo}{{\sf o}}
\newcommand{\sfp}{{\sf p}}
\newcommand{\sfq}{{\sf q}}
\newcommand{\sfr}{{\sf r}}
\newcommand{\sfs}{{\sf s}}
\newcommand{\sft}{{\sf t}}
\newcommand{\sfu}{{\sf u}}
\newcommand{\sfv}{{\sf v}}
\newcommand{\sfw}{{\sf w}}
\newcommand{\sfx}{{\sf x}}
\newcommand{\sfy}{{\sf y}}
\newcommand{\sfz}{{\sf z}}
\newcommand{\sfA}{{\sf A}}
\newcommand{\sfB}{{\sf B}}
\newcommand{\sfC}{{\sf C}}
\newcommand{\sfD}{{\sf D}}
\newcommand{\sfE}{{\sf E}}
\newcommand{\sfF}{{\sf F}}
\newcommand{\sfG}{{\sf G}}
\newcommand{\sfH}{{\sf H}}
\newcommand{\sfI}{{\sf I}}
\newcommand{\sfJ}{{\sf J}}
\newcommand{\sfK}{{\sf K}}
\newcommand{\sfL}{{\sf L}}
\newcommand{\sfM}{{\sf M}}
\newcommand{\sfN}{{\sf N}}
\newcommand{\sfO}{{\sf O}}
\newcommand{\sfP}{{\sf P}}
\newcommand{\sfQ}{{\sf Q}}
\newcommand{\sfR}{{\sf R}}
\newcommand{\sfS}{{\sf S}}
\newcommand{\sfT}{{\sf T}}
\newcommand{\sfU}{{\sf U}}
\newcommand{\sfV}{{\sf V}}
\newcommand{\sfW}{{\sf W}}
\newcommand{\sfX}{{\sf X}}
\newcommand{\sfY}{{\sf Y}}
\newcommand{\sfZ}{{\sf Z}}

% UNDERLINED
\newcommand{\ua}{{\underline a}}
\newcommand{\ub}{{\underline b}}
\newcommand{\uc}{{\underline c}}
\newcommand{\ud}{{\underline d}}
\newcommand{\ue}{{\underline e}}
\newcommand{\uf}{{\underline f}}
\newcommand{\ug}{{\underline g}}
\newcommand{\uh}{{\underline h}}
\newcommand{\ui}{{\underline i}}
\newcommand{\uj}{{\underline j}}
\newcommand{\uk}{{\underline k}}
\newcommand{\um}{{\underline m}}
\newcommand{\un}{{\underline n}}
\newcommand{\uo}{{\underline o}}
\newcommand{\up}{{\underline p}}
\newcommand{\uq}{{\underline q}}
\newcommand{\ur}{{\underline r}}
\newcommand{\us}{{\underline s}}
\newcommand{\ut}{{\underline t}}
\newcommand{\uu}{{\underline u}}
\newcommand{\uv}{{\underline v}}
\newcommand{\uw}{{\underline w}}
\newcommand{\ux}{{\underline x}}
\newcommand{\uy}{{\underline y}}
\newcommand{\uz}{{\underline z}}
\newcommand{\uA}{{\underline A}}
\newcommand{\uB}{{\underline B}}
\newcommand{\uC}{{\underline C}}
\newcommand{\uD}{{\underline D}}
\newcommand{\uE}{{\underline E}}
\newcommand{\uF}{{\underline F}}
\newcommand{\uG}{{\underline G}}
\newcommand{\uH}{{\underline H}}
\newcommand{\uI}{{\underline I}}
\newcommand{\uJ}{{\underline J}}
\newcommand{\uK}{{\underline K}}
\newcommand{\uL}{{\underline L}}
\newcommand{\uM}{{\underline M}}
\newcommand{\uN}{{\underline N}}
\newcommand{\uO}{{\underline O}}
\newcommand{\uP}{{\underline P}}
\newcommand{\uQ}{{\underline Q}}
\newcommand{\uR}{{\underline R}}
\newcommand{\uS}{{\underline S}}
\newcommand{\uT}{{\underline T}}
\newcommand{\uU}{{\underline U}}
\newcommand{\uV}{{\underline V}}
\newcommand{\uW}{{\underline W}}
\newcommand{\uX}{{\underline X}}
\newcommand{\uY}{{\underline Y}}
\newcommand{\uZ}{{\underline Z}}

% BOLD
\newcommand{\ba}{{\bf a}}
\newcommand{\bc}{{\bf c}}
\newcommand{\bd}{{\bf d}}
\newcommand{\be}{{\bf e}}
\newcommand{\bff}{{\bf f}}
\newcommand{\bg}{{\bf g}}
\newcommand{\bh}{{\bf h}}
\newcommand{\bi}{{\bf i}}
\newcommand{\bj}{{\bf j}}
\newcommand{\bk}{{\bf k}}
\newcommand{\bl}{{\bf l}}
\newcommand{\bm}{{\bf m}}
\newcommand{\bn}{{\bf n}}
\newcommand{\bo}{{\bf o}}
\newcommand{\bp}{{\bf p}}
\newcommand{\bq}{{\bf q}}
\newcommand{\br}{{\bf r}}
\newcommand{\bs}{{\bf s}}
\newcommand{\bt}{{\bf t}}
\newcommand{\bu}{{\bf u}}
\newcommand{\bv}{{\bf v}}
\newcommand{\bw}{{\bf w}}
\newcommand{\bx}{{\bf x}}
\newcommand{\by}{{\bf y}}
\newcommand{\bz}{{\bf z}}
\newcommand{\bA}{{\bf A}}
\newcommand{\bB}{{\bf B}}
\newcommand{\bC}{{\bf C}}
\newcommand{\bD}{{\bf D}}
\newcommand{\bE}{{\bf E}}
\newcommand{\bF}{{\bf F}}
\newcommand{\bG}{{\bf G}}
\newcommand{\bH}{{\bf H}}
\newcommand{\bI}{{\bf I}}
\newcommand{\bJ}{{\bf J}}
\newcommand{\bK}{{\bf K}}
\newcommand{\bL}{{\bf L}}
\newcommand{\bM}{{\bf M}}
\newcommand{\bN}{{\bf N}}
\newcommand{\bO}{{\bf O}}
\newcommand{\bP}{{\bf P}}
\newcommand{\bQ}{{\bf Q}}
\newcommand{\bR}{{\bf R}}
\newcommand{\bS}{{\bf S}}
\newcommand{\bT}{{\bf T}}
\newcommand{\bU}{{\bf U}}
\newcommand{\bV}{{\bf V}}
\newcommand{\bW}{{\bf W}}
\newcommand{\bX}{{\bf X}}
\newcommand{\bY}{{\bf Y}}
\newcommand{\bZ}{{\bf Z}}

% CALLIGRAPHIC
\newcommand{\cA}{\mathcal{A}}
\newcommand{\cB}{\mathcal{B}}
\newcommand{\cC}{\mathcal{C}}
\newcommand{\cD}{\mathcal{D}}
\newcommand{\cE}{\mathcal{E}}
\newcommand{\cF}{\mathcal{F}}
\newcommand{\cG}{\mathcal{G}}
\newcommand{\cH}{\mathcal{H}}
\newcommand{\cI}{\mathcal{I}}
\newcommand{\cJ}{\mathcal{J}}
\newcommand{\cK}{\mathcal{K}}
\newcommand{\cL}{\mathcal{L}}
\newcommand{\cM}{\mathcal{M}}
\newcommand{\cN}{\mathcal{N}}
\newcommand{\cO}{\mathcal{O}}
\newcommand{\cP}{\mathcal{P}}
\newcommand{\cQ}{\mathcal{Q}}
\newcommand{\cR}{\mathcal{R}}
\newcommand{\cS}{\mathcal{S}}
\newcommand{\cT}{\mathcal{T}}
\newcommand{\cU}{\mathcal{U}}
\newcommand{\cV}{\mathcal{V}}
\newcommand{\cW}{\mathcal{W}}
\newcommand{\cX}{\mathcal{X}}
\newcommand{\cY}{\mathcal{Y}}
\newcommand{\cZ}{\mathcal{Z}}

% SCRIPT
\newcommand{\sA}{\mathscr{A}}
\newcommand{\sB}{\mathscr{B}}
\newcommand{\sC}{\mathscr{C}}
\newcommand{\sD}{\mathscr{D}}
\newcommand{\sE}{\mathscr{E}}
\newcommand{\sF}{\mathscr{F}}
\newcommand{\sG}{\mathscr{G}}
\newcommand{\sH}{\mathscr{H}}
\newcommand{\sI}{\mathscr{I}}
\newcommand{\sJ}{\mathscr{J}}
\newcommand{\sK}{\mathscr{K}}
\newcommand{\sL}{\mathscr{L}}
\newcommand{\sM}{\mathscr{M}}
\newcommand{\sN}{\mathscr{N}}
\newcommand{\sO}{\mathscr{O}}
\newcommand{\sP}{\mathscr{P}}
\newcommand{\sQ}{\mathscr{Q}}
\newcommand{\sR}{\mathscr{R}}
\newcommand{\sS}{\mathscr{S}}
\newcommand{\sT}{\mathscr{T}}
\newcommand{\sU}{\mathscr{U}}
\newcommand{\sV}{\mathscr{V}}
\newcommand{\sW}{\mathscr{W}}
\newcommand{\sX}{\mathscr{X}}
\newcommand{\sY}{\mathscr{Y}}
\newcommand{\sZ}{\mathscr{Z}}

% FRAKTUR
\newcommand{\fa}{{\mathfrak a}}
\newcommand{\fb}{{\mathfrak b}}
\newcommand{\fc}{{\mathfrak c}}
\newcommand{\fd}{{\mathfrak d}}
\newcommand{\fe}{{\mathfrak e}}
\newcommand{\ff}{{\mathfrak f}}
\newcommand{\fg}{{\mathfrak g}}
\newcommand{\frg}{{\mathfrak g}}
\newcommand{\fh}{{\mathfrak h}}
\newcommand{\fri}{{\mathfrak i}}
\newcommand{\fj}{{\mathfrak j}}
\newcommand{\fk}{{\mathfrak k}}
\newcommand{\fl}{{\mathfrak l}}
\newcommand{\fm}{{\mathfrak m}}
\newcommand{\fn}{{\mathfrak n}}
\newcommand{\fo}{{\mathfrak o}}
\newcommand{\fp}{{\mathfrak p}}
\newcommand{\fq}{{\mathfrak q}}
\newcommand{\fr}{{\mathfrak r}}
\newcommand{\fs}{{\mathfrak s}}
\newcommand{\ft}{{\mathfrak t}}
\newcommand{\fu}{{\mathfrak u}}
\newcommand{\fv}{{\mathfrak v}}
\newcommand{\fw}{{\mathfrak w}}
\newcommand{\fx}{{\mathfrak x}}
\newcommand{\fy}{{\mathfrak y}}
\newcommand{\fz}{{\mathfrak z}}
\newcommand{\fA}{{\mathfrak A}}
\newcommand{\fB}{{\mathfrak B}}
\newcommand{\fC}{{\mathfrak C}}
\newcommand{\fD}{{\mathfrak D}}
\newcommand{\fE}{{\mathfrak E}}
\newcommand{\fF}{{\mathfrak F}}
\newcommand{\fG}{{\mathfrak G}}
\newcommand{\fH}{{\mathfrak H}}
\newcommand{\fI}{{\mathfrak I}}
\newcommand{\rII}{{\rm II}}
\newcommand{\rIII}{{\rm III}}
\newcommand{\rIV}{{\rm IV}}
\newcommand{\fJ}{{\mathfrak J}}
\newcommand{\fK}{{\mathfrak K}}
\newcommand{\fL}{{\mathfrak L}}
\newcommand{\fM}{{\mathfrak M}}
\newcommand{\fN}{{\mathfrak N}}
\newcommand{\fO}{{\mathfrak O}}
\newcommand{\fP}{{\mathfrak P}}
\newcommand{\fQ}{{\mathfrak Q}}
\newcommand{\fR}{{\mathfrak R}}
\newcommand{\fS}{{\mathfrak S}}
\newcommand{\fT}{{\mathfrak T}}
\newcommand{\fU}{{\mathfrak U}}
\newcommand{\fV}{{\mathfrak V}}
\newcommand{\fW}{{\mathfrak W}}
\newcommand{\fX}{{\mathfrak X}}
\newcommand{\fY}{{\mathfrak Y}}
\newcommand{\fZ}{{\mathfrak Z}}

%< NUMBERS >====================================================================
\newcommand{\K}{\mathbb{K}}
\newcommand{\N}{\mathbb{N}}
\newcommand{\Z}{\mathbb{Z}}
\newcommand{\Q}{\mathbb{Q}}
\newcommand{\R}{\mathbb{R}}
\newcommand{\C}{\mathbb{C}}
\renewcommand{\H}{\mathbb{H}}
\newcommand{\Oc}{\mathbb{O}}

%< Slashed >=============================
\newcommand{\corte}{\slashed}
\newcommand{\Sc}{\corte{S}}
\newcommand{\Scc}{\Sc_{\C}}
\newcommand{\Scp}{\Sc^{+}}
\newcommand{\Scn}{\Sc^{-}}
\newcommand{\Dc}{\corte{D}}
\newcommand{\Dca}{\Dc_{A}}

%< MISC >=======================================================================
\newcommand{\mfrak}{\mathfrak}

\newcommand{\incl}{\hookrightarrow}
\newcommand{\lto}{\longrightarrow}

\newcommand{\homot}{\simeq}
\newcommand{\homott}{\sim}
\newcommand{\homol}{\sim}
\newcommand{\defeq}{\coloneqq}
\newcommand{\iso}{\cong}
\newcommand{\tensor}{\otimes}
\newcommand{\hodge}{{*}}

\newcommand{\chapeu}{\widehat}
\newcommand{\fecho}{\overline}
\newcommand{\til}{\Tilde}
\newcommand{\wtil}{\widetilde}
\newcommand{\interior}{\mathring}
\newcommand{\barra}[1]{\underline #1}

\newcommand{\menos}{\setminus}
\newcommand{\cont}{\subset}
\newcommand{\bordo}{\partial}

\newcommand{\Ad}{\operatorname{Ad}}

\newcommand{\Span}{\mathrm{span}}
\newcommand{\spanR}{\Span_{\R}}
\newcommand{\spanC}{\Span_{\C}}
\newcommand{\spanK}{\Span_{\K}}
\newcommand{\spanQ}{\Span_{\Q}}
\newcommand{\supp}{\mathop{\mathrm{supp}}}
\newcommand{\codim}{\mathop{\mathrm{codim}}}
\newcommand{\coker}{\mathop{\mathrm{coker}}}
\newcommand{\Ker}{\mathop{\mathrm{Ker}}}
\newcommand{\Ima}{\mathop{\mathrm{Im}}}
\newcommand{\End}{\mbox{End}}
\newcommand{\Hom}{\mbox{Hom}}
\newcommand{\Aut}{\mathrm{Aut}}
\newcommand{\Fred}{\mathrm{Fred}}
\newcommand{\ind}{\mathop{\mathrm{index}}}
\newcommand{\Lie}{\mathrm{Lie}}

\newcommand{\norm[1]}{||#1||}
\newcommand{\pts}[1]{\left( #1 \right)} % parêntesis com \left \right
\newcommand{\Mod}[1]{\ (\mathrm{mod}\ #1)}
\newcommand{\floor}[1]{\left\lfloor #1 \right\rfloor}
\newcommand{\pdi}[1]{\left\langle #1 \right\rangle} %produto interno sem duas entradas
\newcommand{\bracl}[1]{ [\![ #1 ]\!]}
\newcommand{\prodint}[2]{\left\langle #1,#2\right\rangle}
\newcommand{\Reprodint}[2]{\operatorname{Re}\left\langle #1,#2\right\rangle}

\newcommand{\nablac}{\nabla^{*}\nabla}
\newcommand{\grad}{\mbox{grad}}
\newcommand{\divv}{\mbox{div}}
\newcommand{\del}{\partial}
\newcommand{\dLie}{\mathcal{L}}

\newcommand{\vphi}{\varphi}
\newcommand{\vepsilon}{\varepsilon}
\newcommand{\vtheta}{\vartheta}

\newcommand{\PP}{\mathbb{P}}
\newcommand{\Ccl}{\mathbb{C}\ell}
\newcommand{\Ricc}{\mbox{Ricc}}
\newcommand{\ric}{\mbox{ric}}
\newcommand{\tr}{\text{tr}}
\newcommand{\Ss}{\mathbb{S}}
\newcommand{\Rr}{\mathfrak{R}}
\newcommand{\Scal}{\mathrm{Scal}}

\newcommand{\stab}{\rmm{Stab}}
\newcommand{\Stab}{\stab}
\newcommand{\G}{\mcal{G}}
\newcommand{\M}{\mcal{M}}
\newcommand{\Mirr}{\M_{irr}}
\newcommand{\B}{\mcal{B}}
\newcommand{\Birr}{\mcal{B}_{irr}}
\newcommand{\SW}{\textit{\textbf{SW}}}
\newcommand{\Cirr}{\mcal{C}_{irr}}
\newcommand{\Zirr}{\mcal{Z}_{irr}}
\newcommand{\rfrak}{\mfrak{r}}
\newcommand{\rfrakk}{\mfrak{r}^{*}}
\newcommand{\PSW}{\mcal{P}\SW }
\newcommand{\perturb}{W^{3,2}(i\Lambda^{2}_{+}T^{*}M)}
\newcommand{\SWM}{\textit{\textbf{sw}}_{M}}
\newcommand{\starseven}{\overset{\scriptscriptstyle 7}{*}}
\newcommand{\stareight}{\overset{\scriptscriptstyle 8}{*}}

\newcommand{\qeq}{\quad\text{e}\quad}
\newcommand{\qandq}{\quad\text{and}\quad}
\newcommand{\qwithq}{\quad\text{with}\quad}
\newcommand{\qforq}{\quad\text{for}\quad}
\newcommand{\qwhereq}{\quad\text{where}\quad}

\newcommand{\Hol}{\mathrm{Hol}}

%< LIE ALGEBRAS AND LIE GROUPS >================================================
\newcommand{\su}{\mathfrak{su}}
\renewcommand{\so}{\mathfrak{so}}
\newcommand{\spinc}{\mathfrak{spin}^{c}}
\newcommand{\gl}{\mathfrak{gl}}
\newcommand{\fsl}{\mathfrak{sl}}

\newcommand{\Spinc}{\mathrm{Spin}^{c}}
\newcommand{\pin}{\mathrm{Pin}}
%\newcommand{\SU}{\mathrm{SU}}
%\newcommand{\rG}{\mathrm{G}}
%\newcommand{\Sp}{\mathrm{Sp}}
%\newcommand{\SL}{\mathrm{SL}}
%\newcommand{\U}{{\mathrm U}}

% Derivatives =========================================================================================
\newcommand{\delum}{\dfrac{\partial}{\partial x^1}}
\newcommand{\deldois}{\dfrac{\partial}{\partial x^2}}
\newcommand{\delj}{\dfrac{\partial}{\partial x^j}}
\newcommand{\deli}{\dfrac{\partial}{\partial x^i}}
\newcommand{\delk}{\dfrac{\partial}{\partial x^k}}
\newcommand{\deln}{\dfrac{\partial}{\partial x^n}}
\newcommand{\delyum}{\dfrac{\partial}{\partial y^1}}
\newcommand{\delydois}{\dfrac{\partial}{\partial y^2}}
\newcommand{\delyj}{\dfrac{\partial}{\partial y^j}}
\newcommand{\delyi}{\dfrac{\partial}{\partial y^i}}
\newcommand{\delyk}{\dfrac{\partial}{\partial y^k}}
\newcommand{\delyn}{\dfrac{\partial}{\partial y^n}}
\renewcommand{\indent}{\hspace{0.3cm}}
\renewcommand{\cos}{\operatorname{cos}}
\renewcommand{\sin}{\operatorname{sen}}
\newcommand{\ddt}[1]{\left.\frac{d}{dt}\left(#1\right)\right\rvert_{t=0}}
\newcommand{\dddt}[1]{\left.\frac{d^2}{dt^2}\left(#1\right)\right\rvert_{t=0}}
\newcommand{\ddto}[1]{\left.\frac{d}{dt}\left(#1\right)\right\rvert_{t=s}}
\newcommand{\pd}[3]{\left.\frac{\partial #1}{\partial #2}\right\rvert_{#3}}
\newcommand{\pdd}[2]{\frac{\partial #1}{\partial #2}}
\newcommand{\ddts}[1]{\left.\frac{d}{dt}#1\right\rvert_{t=0}}

%todos
\newcommand{\todoinagn}[1]{\todo[inline]{\color{blue}AGN: #1}}
\newcommand{\todoinber}[1]{\todo[inline]{\color{red} BER: #1}}
\newcommand{\todoinhqs}[1]{\todo[inline]{\color{green}HQS: #1}}
\newcommand{\todoagn}[1]{\todo{\color{blue} AGN: #1}}
\newcommand{\todober}[1]{\todo{\color{red} BER: #1}}
\newcommand{\todohqs}[1]{\todo{\color{green} HQS: #1}}

    %%%%%%ENUMERAÇÕES

    \newtheorem{theorem}{Theorem}[section]
    \newtheorem{theoremA}{Theorem}
    \renewcommand{\thetheoremA}{\Alph{theoremA}} 
    \newtheorem*{theorem*}{Theorem}
    \newtheorem*{proposition*}{Proposition}
    \newtheorem{lemma}[theorem]{Lemma}
    \newtheorem{proposition}[theorem]{Proposition}
    \newtheorem{corollary}[theorem]{Corollary}
	
    \theoremstyle{definition} %%sem italico
    
    \newtheorem{definition}[theorem]{Definition}

    \newtheorem{example2}[theorem]{Example}
        \newenvironment{example}{\begin{example2}}{\qedtriangle \end{example2}}
        
    \theoremstyle{remark}
    
    \newtheorem{remark2}[theorem]{Remark}
        \newenvironment{remark}{\begin{remark2}}
    {$\hfill\bigcirc $ \end{remark2}\smallskip}

\maketitle
\begin{abstract}
We investigate geometric structures defined by 4-forms, encompassing important classes of structures with torsion, including almost Hermitian, $\mathrm{G}_2$, $\mathrm{Spin}(7)$, almost quaternion-Hermitian, and almost hyper-Hermitian structures. Within this unified framework, the underlying 4-form naturally determines geometric data that define a string algebroid over the manifold. In particular, it gives rise to a canonical notion of instanton, thereby linking these geometries to gauge-theoretic structures in generalized geometry and heterotic string theory. We further prove that these data induce generalized Ricci-flat metrics. Finally, we examine the main examples in the literature and construct several explicit ones.

\medskip
\noindent\textbf{Keywords.} generalized geometry, gauge theory, geometric structures.
\par
\medskip
\noindent\textbf{MSC 2020.}
53D18 (primary); 53C10, 53C07, 53C29 (secondary).
\end{abstract}

\tableofcontents{}

\newpage

\phantomsection
\addcontentsline{toc}{section}{Introduction}
\section*{Introduction}

The development of gauge theory beyond four dimensions was strongly motivated by theoretical physics \cite{Strominger1986,Hull1986}. A recurring feature is the study of instantons on $n$-manifolds endowed with a distinguished $(n-4)$-form \cite{Donaldson1998,Tian2000} reflecting the symmetries of a $G$-structure \cite{Carrion1998}, or equivalently with its Hodge-dual 4-form. Heterotic supergravity provides a source of gauge-theoretic equations \cite{Garcia-Fernandez2014,delaOssa2014} governing compactifications of the form $\mathbb R^{1,9-n}\times M^n$, where $\mathbb R^{1,9-n}$ is a Lorentzian spacetime and $M^n$ is a compact Riemannian spin manifold encoding the extra dimensions of a supersymmetric vacuum \cite{Gauntlett2004}. In many cases, $M^n$ carries a special structure associated with $\SU(m)$, $\Sp(k)$, $\rG_2$, or $\Spin(7)$ \cite{Clarke2022}.

For $\SU(3)$-structures, the relevant equations form the \emph{Hull--Strominger system}, a non-Kähler extension of Calabi--Yau geometry whose mathematical study was motivated in part by Yau's interpretation of it as a generalisation of the Calabi problem \cite{Fu2008,LiYau2006}. Its seven-dimensional analogue, the \emph{heterotic $\rG_2$-system}, goes back to Friedrich and Ivanov \cite{Friedrich2003b,Friedrich2003} and was subsequently developed in \cite{Fernandez2011,delaOssa2018a,Clarke2022,Lotay2023,deLazari2025}. Studies of heterotic compactifications and their infinitesimal moduli spaces by de la Ossa et al. \cite{delaOssa2014,delaOssa2016,delaOssa2018a,delaOssa2018} inspired the notions of coupled $\SU(m)$- and $\rG_2$-instantons introduced in \cite{Garcia-Fernandez2025} and \cite{daSilva2024a}, respectively. The latter work develops a general theory of coupled $G$-instantons using generalized geometry \cite{Gualtieri2003}, encompassing both systems and relating their Killing spinor equations to generalized Ricci flatness on \emph{string algebroids}. This raises the question of how these results extend to other dimensions and to $\Sp(k)$- and $\Spin(7)$-structures.

Many of the findings in \cite{daSilva2024a} concerning $\rG_2$-geometry depend only on the role played by the coassociative 4-form. This motivates the study of $G$-structures carrying a distinguished invariant 4-form, which we call \emph{4-form geometries}. We show that, in this setting, the distinguished 4-form canonically determines a flux 3-form and a Lee form, which are central for expressing the generalized Ricci tensor on string algebroids, while the $G$-structure determines the instanton condition. Our aim is to identify the conditions under which these data yield generalized Ricci-flat metrics on string algebroids.

Our main result states that, under a suitable algebraic condition relating the 4-form to the Lie algebra $\fg\subset\Lambda^2T^*M$, the gravitino equation for the tensors defining the structure implies generalized Ricci flatness on string algebroids. Under the same algebraic hypothesis, when the coupled instanton equations hold, we also express the generalized Ricci tensor as a contraction of the flux with the gravitino defect. For the natural 4-forms considered here, the algebraic hypothesis holds for $\SU(m)$, $\rG_2$, $\Spin(7)$, and $\Sp(k)$, but not for $\U(m)$ or $\Sp(k)\Sp(1)$, a distinction that parallels the classical Berger picture. The framework recovers known generalized Ricci-flatness results for $\SU(m)$-structures and coupled $\SU(m)$-instantons on Hermitian manifolds \cite{Garcia-Fernandez2019,Garcia-Fernandez2025,garciafernandez2024pluriclosedflowhullstrominger}, as well as the $\rG_2$ result of \cite{daSilva2024a}. These results thus follow from a common algebraic mechanism that also applies to $\Spin(7)$- and $\Sp(k)$-geometries.

\paragraph*{4-form geometries}
A \emph{4-form geometry} is a triple $(M,G,\psi)$ consisting of a connected, oriented Riemannian manifold $(M^n,g)$ endowed with a $G$-structure and a non-zero 4-form $\psi\in\Omega^4(M)$ pointwise modelled on a fixed form $\psi_0\in\Lambda^4(\mathbb R^n)^*$, such that
\[
G\subset \stab_{\SO(n)}(\psi_0).
\]
See Definition~\ref{def:4:form:geometry}. The form $\psi$ induces self-adjoint, $G$-equivariant operators obtained by contracting a pair of indices with $\psi$:
\[
\bH_\psi^q:\Omega^q(M)\longrightarrow\Omega^q(M),
\qquad
\bH_\psi^q(\gamma)\defeq\gamma\iprod^2\psi;
\]
the operator $\bH_\psi^3$ then determines a unique form \(
H_\psi\in({\ker\bH_\psi^3})^\perp\subset \Omega^3(M)
\), called the \emph{flux 3-form}, satisfying
\[
\proj^\perp(d^*\psi)=\bH_\psi^3(H_\psi),
\]
where $\proj^\perp$ denotes the orthogonal projection onto $({\ker\bH_\psi^3})^\perp$; see Definition~\ref{def:flux:3:form}. Its contraction with $\psi$ defines the \emph{Lee form}, see Definition~\ref{def:lee:form},
\[
\zeta_\psi\defeq H_\psi\iprod\psi.
\]
The first result identifies $H_\psi$ directly from the irreducible components of $d^*\psi$, and therefore from the intrinsic torsion of the underlying $G$-structure. 

\begin{theoremA}[Theorem~\ref{theorem:flux:3:form:expression:texto}]
\label{theorem:flux:3:form:expression}
Let $(M,G,\psi)$ be a 4-form geometry and consider the decomposition
\[
    \Omega^3(M)=\ker\vet H_\psi^3\oplus \bigoplus_dV_d,
    \]
where $V_d$ is the sum of all irreducible $G$-subbundles of rank $d$. Let
\(
\bh_d\defeq\bH_\psi^3|_{V_d}:V_d\to V_d
\)
and define the $d$-torsion forms by $\tau^d\defeq\proj_d(d^*\psi)$. Then
\[
H_\psi=\sum_d\bh_d^{-1}\pts{\tau^d}.
\]
In particular, suppose that
\(
({\ker\bH_\psi^3})^\perp
=\Omega^3_1\oplus\cdots\oplus\Omega^3_m
\)
with $\Omega^3_j$ pairwise non-isomorphic irreducible $G$-modules. Then
$\bH_\psi^3|_{\Omega^3_j}=a_j\cdot \id_{\Omega^3_j}$ for non-zero constants $a_j$, and
\[
H_\psi=\sum_{j=1}^m\frac1{a_j}\tau^j,
\qquad
\tau^j:=\proj_{\Omega^3_j}(d^*\psi).
\]
\end{theoremA}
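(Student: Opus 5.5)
The argument is representation-theoretic: $\bH_\psi^3$ is self-adjoint and $G$-equivariant, so Schur's lemma controls it on each isotypic block. The first step is to record that $\bH_\psi^3$ is a pointwise linear map $\Lambda^3(\R^n)^*\to\Lambda^3(\R^n)^*$ built from contraction with $\psi_0$ and the metric. Since $G\subset\stab_{\SO(n)}(\psi_0)$, this map commutes with the $G$-action. It is also symmetric for the induced inner product. Indeed, $\langle \gamma\iprod^2\psi,\beta\rangle$ is the full contraction $\gamma_{abi}\beta_{abj}\psi_{ij\ldots}$ up to index placement, and this is symmetric in $(\gamma,\beta)$ because $\psi$ is skew in the relevant pair of slots. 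Everything is built from $G$-invariant tensors, so the whole argument is pointwise on $\Lambda^3(\R^n)^*$ and then passes to bundles through the $G$-structure.

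The second step is to set up the decomposition. Consider the orthogonal $G$-invariant splitting $\Lambda^3=\ker\bH\oplus(\ker\bH)^\perp$. Self-adjointness gives $(\ker\bH)^\perp=\operatorname{im}\bH$, and $\bH$ restricts to an isomorphism of this complement. Decompose $(\ker\bH)^\perp$ into irreducibles and group them by rank into $V_d$. A $G$-equivariant map sends an irreducible to an isomorphic irreducible, in particular one of the same rank. Hence $\bH$ preserves each $V_d$, and $\bh_d:V_d\to V_d$ is invertible, being the restriction of an isomorphism that preserves the splitting. The third step is to check that the $V_d$ are mutually orthogonal. Orthogonal projection onto an invariant subspace is equivariant, and an equivariant map between non-isomorphic irreducibles vanishes. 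So projecting $V_d$ onto the sum of pieces of rank $d'\neq d$ gives zero, and $\proj_d$ is the orthogonal projection.

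The fourth step applies this to the defining equation. From $\bH(H_\psi)=\proj^\perp(d^*\psi)=\sum_d\tau^d$, apply $\proj_d$ and use that $\bH$ commutes with $\proj_d$, since it preserves every $V_{d'}$. This gives $\bh_d(\proj_d H_\psi)=\tau^d$. Because $H_\psi\in(\ker\bH)^\perp=\bigoplus V_d$, summing yields $H_\psi=\sum_d\bh_d^{-1}(\tau^d)$. The same computation shows existence and uniqueness of $H_\psi$.

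For the special case, suppose $(\ker\bH)^\perp=\bigoplus_j\Omega^3_j$ with the summands pairwise non-isomorphic. Then each $\Omega^3_j$ is an isotypic component and is preserved by $\bH$. Schur's lemma over $\R$ says $\bH|_{\Omega^3_j}$ lies in a division algebra $\R$, $\C$ or $\H$. Self-adjointness forces a real scalar $a_j$: a symmetric element of $\C$ or $\H$ acting orthogonally is real, since the imaginary units act skew-adjointly. The scalar $a_j$ is nonzero because $\Omega^3_j\not\subset\ker\bH$. The formula $H_\psi=\sum_j a_j^{-1}\tau^j$ then follows exactly as in the fourth step.

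The main subtlety, and the step I expect to be the obstacle, is the grouping by rank. Non-isomorphic irreducibles of the same rank can be mixed by $\bH$, so $\bh_d$ need not be scalar. The general formula only needs $V_d$ to be $\bH$-invariant, which holds, and one must resist claiming more. The real-Schur point with self-adjointness also needs care.
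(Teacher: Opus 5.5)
Your proof is correct and follows the same route as the paper. $G$-equivariance makes each $V_d$ invariant under $\bH_\psi$, so $\bH_\psi$ commutes with $\proj_d$, and one inverts $\bh_d$ blockwise in $\bH_\psi(H_\psi)=\proj^\perp(d^*\psi)$; you also supply two points the paper leaves implicit, namely the mutual orthogonality of the $V_d$ and the Schur/self-adjointness argument for the real scalars $a_j$.

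One correction to your closing remark: an equivariant map cannot mix non-isomorphic irreducibles. So $\bh_d$ fails to be scalar only because distinct summands of $V_d$ may carry different eigenvalues, or because isomorphic copies are mixed, as when $\bH_{\Psi_+}$ interchanges $\Omega^3_8$ and $\Omega^3_{8'}$ for $\SU(4)$-structures.
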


Another relevant ingredient is the notion of a $G$-instanton. The $G$-structure determines an associated Lie algebra subbundle $\fg\subset\Lambda^2T^*M$, whose fibres are isomorphic to $\Lie(G)$. A connection $\vtheta$ on a principal $K$-bundle $P\to M$ is called a \emph{$G$-instanton} (Definition~\ref{def:instanton}) if
\[
F_\vtheta\in\Gamma(\fg\tensor\adjointbundleP)
\subset \Omega^2(M,\adjointbundleP).
\]

\paragraph*{String algebroids and Ricci flatness}
Given a 4-form geometry $(M,G,\psi)$, let $P\to M$ be a principal $K$-bundle with a connection $\theta\in\Omega^1(P,\fk)$, and such that $\fk=\Lie(K)$ is endowed with a non-degenerate, symmetric, bi-invariant pairing $\bracks{\cdot,\cdot}_\fk$. If the flux 3-form $H_\psi\in\Omega^3(M)$ and the connection $\theta$ satisfy the heterotic Bianchi identity
\begin{equation*}
\label{eq: hBi4}
\tag{HBI4}
 dH_\psi=\bracks{F_\theta\wedge F_\theta}_\fk,
\end{equation*}
then these data determine a string algebroid as in Example~\ref{example:string:alegbroid}, with underlying vector bundle
\[
E=TM\oplus\adjointbundleP\oplus T^*M.
\]

\medskip
Under the condition $\vet H_\psi^2|_\fg=b\cdot\id_\fg$ for some $b\neq 0$, the metric $g$ and the Lee form $\zeta_\psi$ induce, respectively, a generalized metric $V_+^{G,\psi}$ and a divergence operator $\dv^{G,\psi}$; see \eqref{eq:generalized:metric:transitive} and \eqref{eq:divergence:4:form:geometry}. These two objects determine the generalized Ricci tensors
\[
    \GRic^\pm
    \pp{V_+^{G,\psi},\dv^{G,\psi}}
    \in \Gamma\pp{\pp{V^{G,\psi}_\mp}^*\otimes \pp{V^{G,\psi}_\pm}^*},
\]
as presented in \ref{parag:ricci:tensor:string:algebroids}. Throughout this article, \emph{generalized Ricci flatness} means
\(
\GRic^+=0.
\)
When the 1-form determining the divergence is closed, this is equivalent to the vanishing of both generalized Ricci tensors; cf. Remark~\ref{remark:Gric+=Gric-}.

\bigskip 
To state our main generalized Ricci-flatness result, we impose two compatibility conditions on the induced string algebroid. First, the flux operator must act on the associated Lie algebra subbundle $\fg\subset\Lambda^2T^*M$ by a non-zero scalar $b$:
\[
\bH_\psi^2|_\fg=b\cdot \id_\fg,
\qquad b\neq0.
\]
Second, the tensors defining the $G$-structure must satisfy the \emph{gravitino equation}; see Definition~\ref{def:gravitino:equation}. Writing $\nabla^+$ for the metric connection with skew-symmetric torsion $H_\psi$, this equation for a defining tensor $\xi$ takes the form
\begin{equation*}
D^+_-\xi=0
\quad\Longleftrightarrow\quad
\nabla^+\xi=0
\qandq
F_\theta\diamond\xi=0;
\end{equation*}
see \eqref{par:gravitino:ricci:flatness}. Thus, when the tensors defining the $G$-structure satisfy the gravitino equation, $\nabla^+$ preserves the $G$-structure and $\theta$ is a $G$-instanton. One of our main results concerning generalized Ricci flatness is as follows.

\begin{theoremA}[Theorem~\ref{theorem:ricci:flatness:simple:group:texto}]
\label{theorem:ricci:flatness:simple:group}
Let $E:(P,K;\theta,\pdi{\cdot,\cdot}_\fk)\to(M,G,\psi)$ be the string algebroid induced by a 4-form geometry for which
\[
\bH_\psi^2|_\fg=b\cdot \id_\fg,
\qquad b\neq0.
\]
Suppose that $G$ is the common stabiliser of $\psi$ and a multi-tensor $\xi=(\xi_1,\ldots,\xi_m)$, and that both $\psi$ and $\xi$ satisfy the gravitino equation:
\[
G=\stab_{\SO(n)}(\psi,\xi)
\qandq
D^+_-\psi=0=D^+_-\xi.
\]
Then the induced pair \eqref{eq:generalized:induced:pair} is generalized Ricci-flat:
\[
\GRic^+\pp{V_+^{G,\psi},\dv^{G,\psi}}=0.
\]
\end{theoremA}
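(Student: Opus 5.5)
We start from the explicit formula for the generalized Ricci tensor of a transitive string algebroid $E=TM\oplus\adjointbundleP\oplus T^*M$ with generalized metric induced by $g$ (and no gauge deformation), and divergence given by a 1-form. In the Garcia-Fernandez framework recalled in \ref{parag:ricci:tensor:string:algebroids}, $\GRic^+$ splits into two components. The first is the $T^*M\otimes T^*M$ part,
\[
\Ric^{+}-\tfrac14\ldots,\qquad\text{more precisely}\qquad \Ric^{\nabla^+}(X,Y)+\nabla^+_X\zeta(Y)\text{-type term}-\bracks{\iota_XF_\theta,\iota_YF_\theta}_\fk\text{-type correction},
\]
where $\Ric^{\nabla^+}=\Ric_g-\tfrac14H^2-\tfrac12 d^*H$ and the divergence 1-form enters through $\nabla^+\zeta_\psi$. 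The second is the mixed $\adjointbundleP\otimes T^*M$ part, which is essentially $d_\theta^*F_\theta-\iota_{\zeta}F_\theta+\tfrac12 H\lrcorner F_\theta$, that is, a twisted Yang--Mills operator. The plan is to show that each component vanishes under the stated hypotheses. Here $H=H_\psi$, and $\zeta$ is the divergence 1-form fixed in \eqref{eq:divergence:4:form:geometry}, which is a multiple of $\zeta_\psi$ normalised by $b$.

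\textbf{Step 1: gauge component.} The gravitino equation $F_\theta\diamond\xi=0$ together with $F_\theta\diamond\psi=0$ means that $F_\theta$ is annihilated, as an element of $\Lambda^2\otimes\adjointbundleP$, by the infinitesimal action on all defining tensors. Since $G=\stab(\psi,\xi)$, this gives $F_\theta\in\fg\otimes\adjointbundleP$. By $\bH^2_\psi|_\fg=b\,\id$ we get $F_\theta=\tfrac1b F_\theta\iprod^2\psi$, which one may write as $\ast(F_\theta\wedge\ast\psi)$ up to a sign. Applying $d_\theta^*$ gives
\[
b\,d_\theta^*F_\theta=\pm\ast\big(d_\theta F_\theta\wedge\ast\psi + F_\theta\wedge d\ast\psi\big).
\]
The Bianchi identity $d_\theta F_\theta=0$ kills the first term. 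The second term involves $d^*\psi$, and only its projection $\proj^\perp(d^*\psi)=\bH^3_\psi(H_\psi)$ survives the contraction with $F_\theta\in\fg$; this uses that $\ker\bH^3_\psi$ pairs trivially with $\fg$, a representation-theoretic point to be checked. Rewriting via the definition of $H_\psi$ (Theorem~\ref{theorem:flux:3:form:expression}) should produce exactly $\iota_{\zeta}F_\theta-\tfrac12H\lrcorner F_\theta$, giving vanishing of the mixed part.

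\textbf{Step 2: metric component.} Since $\nabla^+$ preserves $\psi$ and $\xi$, its holonomy lies in $G$, so its curvature $R^+(X,Y)\in\fg$ in the endomorphism slot. Using the pair-symmetry identity for torsion connections, valid up to $dH$ terms, we transfer the result to the other slot: $R^+$ lies in $\fg$ in the form slot modulo a correction by $dH=\bracks{F_\theta\wedge F_\theta}_\fk$, from \eqref{eq: hBi4}. Contracting with $\psi$ and using $\bH^2_\psi|_\fg=b\,\id$ expresses $\Ric^{\nabla^+}$ as $\tfrac1b$ times a trace of $R^+\iprod\psi$. The first Bianchi identity with torsion, together with $\nabla^+\psi=0$, rewrites this as $-\nabla^+\zeta_\psi$-type terms plus $\tfrac1b\,(dH\iprod\psi)$-terms. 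Substituting $dH=\bracks{F_\theta\wedge F_\theta}$ and using $F_\theta\in\fg$ again, so that $F\iprod^2\psi=bF$, turns the $dH$ contribution into exactly $\bracks{\iota_XF_\theta,\iota_YF_\theta}_\fk$. This cancels the gauge correction and yields $\GRic^+=0$.

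\textbf{Main obstacle.} I expect Step 2 to be the hardest: tracking the torsion Bianchi identity $\mathfrak{S}R^+=dH-\sigma_H+\nabla^+H$-type terms through the contraction with $\psi$. Constants must match the normalisation of $\zeta$ in the divergence. The skew part of $\Ric^+$ must combine with $d^*H$ and $d\zeta$ correctly. I would first verify the scalar identities on the $\rG_2$ case, where the result of \cite{daSilva2024a} fixes the constants, and then abstract the computation using only $\nabla^+\psi=0$ and $\bH^2_\psi|_\fg=b\,\id$.
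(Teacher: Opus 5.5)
Your architecture is the paper's. You reduce, via Theorem~\ref{theorem:explicit:formula:generalized:ricci:curvature}, to \eqref{eq:YM} and \eqref{eq:RB} with $\tilde\zeta_\psi=-\frac1b\zeta_\psi$. You handle the gauge part with $F_\theta\in\fg\otimes\adjointbundleP$ and $d^\theta F_\theta=0$, and the metric part by contracting $R^+$ with $\psi$, applying the torsion Bianchi identity and then \eqref{eq: hBi4}. Your Step~1 is slightly leaner than Proposition~\ref{prop:yang:mills}: since $F_\theta\iprod\psi-bF_\theta\equiv0$, you can apply $d^{\theta*}$ directly and skip the $\fA_b(\nabla^{\theta,+}F_\theta)$ bookkeeping.

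However, two of your justifications fail as stated. In Step~1, the claim that $\ker\bH^3_\psi$ pairs trivially with $\fg$ is false in general. For $(M^4,\SU(2),\vol_M)$ one has $\bH^3_\psi=0$ and $b=-1$, yet $(e^{12}-e^{34})\iprod e^{123}=e^3\neq0$. What you need instead is that $d^*\psi$ has no kernel component at all. Since $\nabla^+\psi=0$ and $\nabla^+$ has skew torsion $H_\psi$, one gets $d^*\psi=\bH_\psi(H_\psi)$ exactly (Remark~\ref{remark:parallelism:d:psi:has:no:kernel:intersection}). After that, the step you left as ``should produce exactly'' is a genuine identity you must prove:
\[
F_\theta\iprod(H_\psi\iprod^2\psi)=(F_\theta\iprod\psi)\iprod H_\psi-H_\psi\iprod(F_\theta\diamond\psi)+\zeta_\psi\iprod F_\theta .
\]
Here $F_\theta\diamond\psi=0$ is used in addition to $F_\theta\iprod\psi=bF_\theta$. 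Together they give $d^{\theta*}F_\theta-F_\theta\iprod H_\psi-\frac1b\zeta_\psi\iprod F_\theta=0$.

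In Step~2, the pair-symmetry transfer is unnecessary and incorrect. The clean identity $R^+(X,Y,Z,W)-R^-(Z,W,X,Y)=\frac12dH_\psi(X,Y,Z,W)$ compares $R^+$ with the curvature of the Hull connection, not with itself. The failure of pair symmetry of $R^+$ alone contains $\nabla^+H_\psi$-terms. So ``$R^+$ is $\fg$-valued in the form slot modulo $dH$'' does not hold; drop it.

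Work with the endomorphism slot instead. In $\Ric^+_{ij}=R^+_{i\mu\mu j}$ the pair $(\mu j)$ is that slot, which is $\fg$-valued by the holonomy principle. Hence $b\,\Ric^+_{ij}=\frac12R^+_{i\mu ab}\psi_{ab\mu j}$ exactly. Antisymmetrising over $(\mu,a,b)$ and applying the Bianchi identity with the first index fixed gives
\[
b\,\Ric^+_{ij}=-\tfrac1{12}(dH_\psi)_{ab\mu i}\psi_{ab\mu j}+\tfrac16(\nabla^+_iH_\psi)_{ab\mu}\psi_{ab\mu j}.
\]
By \eqref{eq: hBi4} and $F_\theta\in\fg$, the first term is $-b\,(F_\theta\circ F_\theta)_{ij}$. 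Since $\nabla^+\psi=0$, the second is $(\nabla^+\zeta_\psi)_{ij}$. This is \eqref{eq:RB} with $\tilde\zeta_\psi=-\frac1b\zeta_\psi$, i.e.\ exactly Proposition~\ref{prop:ricci:bismut} with $\fA_b(\fR^+)=0=\fA_b(\bracks{F_\theta\otimes F_\theta}_\fk)$.
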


In the $\rG_2$ case, this recovers the generalized Ricci flatness theorem proved in \cite[Theorem~4.9]{daSilva2024a}; the point here is that the argument is a consequence of the 4-form formalism and therefore applies uniformly to other geometric structures.

\paragraph*{Coupled instantons}

Let $N(G)$ denote the normaliser of $G$ in $\SO(n)$. The $G$-instanton condition extends to 4-form geometries $(M,N(G),\psi)$; see Remark~\ref{remark:instantons:normaliser}. This allows us, for instance, to consider $\SU(m)$-instantons on manifolds with a $\U(m)$-structure. In this broader setting, a \emph{coupled $G$-instanton} is a string algebroid on which $D^-_+$ is a $G$-instanton; see \ref{par:coupled:instantons}. If a tensor $\xi$ satisfies the gravitino equation, then $F_{D^-_+}\diamond\xi=0$. Consequently, if $G$ is the common stabiliser of a collection of tensors, each satisfying the gravitino equation, then the coupled $G$-instanton equation holds. However, the coupled $G$-instanton condition alone does not guarantee generalized Ricci flatness. The following theorem clarifies this relationship by expressing the generalized Ricci tensor for coupled $G$-instantons as a contraction of the flux 3-form with the gravitino defect.

\begin{theoremA}[Theorem~\ref{theorem:ricci:flatness:coupled:equations:texto}]
\label{theorem:ricci:flatness:coupled:equations}
Let $E:(P,K;\theta,\pdi{\cdot,\cdot}_\fk)\to(M,N(G),\psi)$ be the string algebroid induced by a 4-form geometry for which
\[
\bH_\psi^2|_{\fg}=b\cdot \id_{\fg},
\qquad b\neq0.
\]
If the coupled $G$-instanton equation holds, then
\[
\GRic^+\pp{V_+^{G,\psi},\dv^{G,\psi}}
=-\frac1bH_\psi\iprod D^+_-\psi.
\]
\end{theoremA}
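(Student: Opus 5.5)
We begin from the general formula for the generalized Ricci tensor of a transitive string algebroid with generalized metric $V_+^{G,\psi}$ and divergence $\dv^{G,\psi}$, as recalled in \ref{parag:ricci:tensor:string:algebroids}. Identifying $V_+\cong TM$ and $V_-\cong TM\oplus\adjointbundleP$, we write $\GRic^+$ as two blocks. The first is the $T^*M\otimes T^*M$ block
\[
\Ric^+ -\tfrac12 d^*H_\psi\text{-type terms}+\nabla^+\zeta_\psi\text{-type terms}
-\text{(the }\pdi{F_\theta\iprod F_\theta}_\fk\text{ term)}.
\]
The second is the mixed $T^*M\otimes\adjointbundleP$ block, which is essentially $d_\theta^*F_\theta+F_\theta\iprod(H_\psi-\zeta_\psi)$, suitably normalised. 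The plan is to show that, under the coupled instanton equation and $\bH^2_\psi|_\fg=b\,\id_\fg$, each block equals the corresponding component of $-\frac1b H_\psi\iprod D^+_-\psi$.

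The key observation is that $D^+_-\psi$ has two components. The first is $\nabla^+\psi$, coming from the $TM$-part. The second is $F_\theta\diamond\psi$, coming from the $\adjointbundleP$-part, where $\diamond$ is the infinitesimal action of $\Lambda^2$ on forms. Since the coupled $G$-instanton condition says that the curvature of $D^-_+$ lies in $\fg\otimes(\cdots)$, we first reinterpret that curvature. Its $TM\otimes TM$ part is the curvature $R^-$ of $\nabla^-$, up to $F_\theta\otimes F_\theta$ corrections. Its mixed part is $F_\theta$ itself. So the condition gives $F_\theta\in\fg\otimes\adjointbundleP$, and $R^-(X,Y)$ minus the correction lies in $\fg$ in the first slot.

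Next we trace both blocks against $\psi$ via $\bH^2_\psi$. For any $\alpha\in\fg$, $\alpha=\frac1b\,\alpha\iprod^2\psi$. Apply this to $F_\theta$: the term $d_\theta^*F_\theta$ becomes $\frac1b d_\theta^*(F_\theta\iprod^2\psi)$. Expanding the derivative, we get $\frac1b(d_\theta F_\theta)\iprod\psi$-type terms, which vanish by the Bianchi identity and $d\psi$ relations, plus $\frac1b F_\theta\iprod\nabla\psi$. We then rewrite $\nabla$ as $\nabla^+$ plus torsion $H_\psi$. The torsion contributions recombine, using the defining identity $\proj^\perp d^*\psi=\bH^3_\psi(H_\psi)$ together with Theorem~\ref{theorem:flux:3:form:expression} and $\zeta_\psi=H_\psi\iprod\psi$, to cancel $F_\theta\iprod(H_\psi-\zeta_\psi)$. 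What remains is the $\nabla^+\psi$ term, which we reorganise as the mixed component of $-\frac1bH_\psi\iprod D^+_-\psi$.

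The $TM\otimes TM$ block is treated in the same way, via the Bismut-type identity relating $\Ric^+$ to $R^-$ by pair symmetry modulo $dH_\psi$. By (HBI4), $dH_\psi=\pdi{F_\theta\wedge F_\theta}_\fk$ absorbs the $F_\theta$ corrections. The Ricci contraction of $R^-$, with its first slot in $\fg$, is rewritten using $b^{-1}\bH^2_\psi$. This turns the trace into a contraction of the second Bianchi identity for $\nabla^+$ with $\psi$, producing $d^*\psi$ and hence $H_\psi$ and $\zeta_\psi$ terms, plus $\nabla^+\psi$ terms. The main obstacle is this step: the careful bookkeeping showing that the $\nabla^+\zeta_\psi$ and $d^*H_\psi$ pieces cancel exactly against those produced by differentiating $\psi$. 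Any leftover must be precisely $-\frac1b H_\psi\iprod\nabla^+\psi$.

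To handle this, we would first verify the identity in the torsion-free case as a normalisation check on $b$. We would then isolate the components of $d^*\psi$ lying in $\ker\bH^3_\psi$, which must be shown to drop out after contraction with $\fg$-valued curvature. This should hold because $\fg\iprod$ annihilates them by $G$-equivariance.
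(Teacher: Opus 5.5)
\textbf{There is a genuine gap.} It lies in how you unpack the coupled instanton condition.

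By Lemma~\ref{lemma:eq:curvature:coupled}, the off-diagonal block of $F_{D^-_+}$ is $\nabla^{\theta,+}F_\theta$, not $F_\theta$. The off-diagonal part of $D^-_+$ is $-i_XF_\theta$, and what appears in the curvature is a covariant derivative of it. So the coupled $G$-instanton equation gives $\nabla^{\theta,+}_XF_\theta\in\fg\otimes\adjointbundleP$ for all $X$, and $\fR^+-\bracks{F_\theta\otimes F_\theta}_\fk\in\fg\otimes\Omega^2$. It does \emph{not} give $F_\theta\in\fg\otimes\adjointbundleP$.

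The statement itself shows this. If $F_\theta$ were $\fg$-valued, then $F_\theta\diamond\psi=0$, and the $\adjointbundleP$-component of the right-hand side, which is built from $H_\psi\iprod(F_\theta\diamond\psi)$, would vanish identically. For the same reason, your final step of ``reorganising'' a leftover $\nabla^+\psi$ term into the mixed component cannot be right. Consequently, substituting $F_\theta=\tfrac1b F_\theta\iprod\psi$ into $d^{\theta*}F_\theta$ and then differentiating is not allowed.

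What works is to apply the eigenvalue relation to the derivative instead, as in Proposition~\ref{prop:yang:mills}:
\begin{itemize}
\item Write $d^{\theta*}F_\theta-F_\theta\iprod H_\psi=-e_\mu\iprod\nabla^{\theta,+}_{e_\mu}F_\theta=-\tfrac1b\,e_\mu\iprod\pp{(\nabla^{\theta,+}_{e_\mu}F_\theta)\iprod\psi}$. The second equality is exact under the coupled equation; in general the error is $\fA_b(\nabla^{\theta,+}F_\theta)$.
\item Use the Leibniz rule $(\nabla^{\theta,+}F_\theta)\iprod\psi=\nabla^{\theta,+}(F_\theta\iprod\psi)-F_\theta\iprod\nabla^+\psi$ together with $d^\theta F_\theta=0$.
\item Finish with the algebraic identity $F_\theta\iprod(H_\psi\iprod^2\psi)-(F_\theta\iprod\psi)\iprod H_\psi=-H_\psi\iprod(F_\theta\diamond\psi)+\zeta_\psi\iprod F_\theta$.
\end{itemize}
In this computation the two $F_\theta\iprod d^*\psi$ terms cancel exactly. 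So you never need $\proj^\perp(d^*\psi)=\bH_\psi(H_\psi)$, nor your unproved claim that $\fg$ annihilates the $\ker\bH^3_\psi$-components of $d^*\psi$.

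For the $TM\otimes TM$ block, trading the Ricci trace for a contraction with $\psi$ via $b^{-1}\bH^2_\psi$ is the right idea. However, the identity to use is the \emph{first} (algebraic) Bianchi identity for a connection with skew torsion, not the second:
\[
\mathfrak S_{X,Y,Z}R^+(V,X,Y,Z)=\tfrac12dH_\psi(V,X,Y,Z)+(\nabla^+_VH_\psi)(X,Y,Z).
\]
The steps, as in Proposition~\ref{prop:ricci:bismut}, are:
\begin{itemize}
\item Write $b\,\Ric^+_{ij}=\tfrac12R^+_{i\mu ab}\psi_{ab\mu j}$, up to an error term $\fA_b(\fR^+)$.
\item Symmetrise cyclically over $(\mu,a,b)$ using the skew-symmetry of $\psi$, and apply the Bianchi identity above.
\item By (HBI4), the $dH_\psi$ part gives $-F_\theta\circ F_\theta-\fA_b(\bracks{F_\theta\otimes F_\theta}_\fk)$.
\item By Leibniz, the torsion part gives $\tfrac1b(\nabla^+H_\psi)\iprod\psi=\tfrac1b\nabla^+\zeta_\psi-\tfrac1bH_\psi\iprod\nabla^+\psi$.
\end{itemize}
No $d^*\psi$ or $d^*H_\psi$ terms enter, so the bookkeeping obstacle you anticipate does not arise. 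What the coupled equation must kill is exactly the combination $\fA_b(\fR^+-\bracks{F_\theta\otimes F_\theta}_\fk)$, not $\fR^+$ and the $F_\theta\otimes F_\theta$ term separately.

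With both error terms gone, Theorem~\ref{theorem:explicit:formula:generalized:ricci:curvature} assembles the two blocks into
\[
-\tfrac1bH_\psi\iprod\pp{\nabla^+\psi-\bracks{F_\theta\diamond\psi,\cdot}_\fk}=-\tfrac1bH_\psi\iprod D^+_-\psi.
\]
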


\paragraph*{Special geometries and Berger's list}

In \S\ref{sec:examples:4-form:geometry}, we apply the general theory to the natural 4-form geometries associated with $\U(m)$, $\SU(m)$, $\rG_2$, $\Spin(7)$, $\Sp(k)\Sp(1)$, and $\Sp(k)$. The flux operator acts by a non-zero scalar on $\fg$ for $\SU(m)$, $\rG_2$, $\Spin(7)$, and $\Sp(k)$, so the eigenvalue hypothesis of Theorem~\ref{theorem:ricci:flatness:simple:group} is satisfied. By contrast, the relevant Lie algebra summands have different eigenvalues for $\U(m)$ and $\Sp(k)\Sp(1)$.
This distinction mirrors the classical Berger picture: among the corresponding irreducible torsion-free structures, the groups $\SU(m)$, $\Sp(k)$, $\rG_2$, and $\Spin(7)$ induce Ricci-flat metrics, whereas $\U(m)$ and $\Sp(k)\Sp(1)$ do not impose Ricci flatness in general \cite{Berger1955,Salamon1989}. Thus, in the presence of skew torsion and generalized geometry, the 4-form formalism exhibits an algebraic division that strikingly parallels the torsion-free Berger picture.

The failure of the eigenvalue hypothesis in Theorem~\ref{theorem:ricci:flatness:simple:group} does not rule out generalized Ricci-flat metrics on $\U(m)$- and $\Sp(k)\Sp(1)$-structures. For example, suitable coupled $\SU(m)$-instantons still produce such metrics on $\U(m)$-structures, and an analogous phenomenon occurs for $\Sp(k)\Sp(1)$-structures; see \ref{parag:string:algebroids:u:m} and the discussion in \S\ref{sec:examples:4-form:geometry}.
The principal applications of Theorem~\ref{theorem:ricci:flatness:simple:group} can be summarised as follows.

\begin{theoremA}
\label{theorem:ricci:flatness:compilation}
Let $E:(P,K;\theta,\pdi{\cdot,\cdot}_\fk)\to(M,G,\psi)$ be a string algebroid induced by one of the 4-form geometries below, and let $\nabla^+$ denote the Bismut connection \eqref{eq:bismut:connection} with skew-torsion $H_\psi$. Then the induced generalized pair is generalized Ricci-flat,
\[
\GRic^+\pp{V_+^{G,\psi},\dv^{G,\psi}}=0,
\]
in each of the following cases:

    \begin{itemize}

    \item       (Corollary~\ref{corollary:GRIC:dimensao:quatro}) 
    $M^4$ is a Calabi--Yau manifold, and $\theta$ is an anti-self-dual instanton connection. The 4-form geometry is $(M^4,G=\SU(2),\psi=\vol_M)$; in particular,
\[
H_{\vol_M}=0\qandq\zeta_{\vol_M}=0.
\]
    
    \item (Proposition~\ref{prop:ricci:flatness:U(m)}) $M^{2m}$, for $m\ge 3$, is endowed with a $G=\SU(m)$-structure $(\omega,\Psi)$ whose Nijenhuis tensor $N_\omega\in \Omega^3(M)$ is totally skew-symmetric and for which $\nabla^+\Psi=0$, while $\theta$ is an $\SU(m)$-instanton. The 4-form geometry is $(M^{2m},\SU(m),\psi=\tfrac12\omega^2)$; in particular,
        \[
        H_\omega=-d^c\omega+N_\omega
        =\frac{1}{m-2}\tau_1\iprod\psi +\frac{1}{3}\tau_3^{3,0}-\tau_3^{2,1} 
        \qandq
        \zeta_\omega=Jd^*\omega 
        =-\frac{m-1}{m-2}\tau_1 ,
        \]  
        where $\tau_1,\tau_3^{3,0},\tau_3^{2,1}$ are the torsion forms defined in \eqref{eq:torsion:forms:u:m}.
        
    \item (Proposition~\ref{prop:ricci:flatness:G2}) 
    $M^7$ is endowed with an integrable $G=\rg_2$-structure $\varphi$, i.e., one for which $\tau_2=0$, and $\theta$ is a $\rg_2$-instanton. The 4-form geometry $(M^{7},\rg_2,\psi=\star\varphi)$ is induced by the coassociative 4-form; in particular,
        \[
        H_\varphi=\frac16\tau_0\vphi-\tau_1\iprod\psi-\tau_3\qandq
        \zeta_\varphi=4\tau_1,
        \]
    where $\tau_j\in \Omega^j$ are the torsion forms defined in \eqref{eq:torsion:forms:G2}. 

    \item (Proposition~\ref{prop:ricci:flatness:Spin7})
    $M^8$ is endowed with a $G=\spin(7)$-structure $\Omega$, and $\theta$ is a $\spin(7)$-instanton. The 4-form geometry $(M^{8},\spin(7),\psi=\Omega)$ is induced by the Cayley 4-form; in particular,
        \[
        H_\Omega =-\frac16\tau_1\iprod\Omega-\tau_3\qandq
        \zeta_\Omega = \frac76\tau_1,
        \]
    where $\tau_j\in \Omega^j$ are torsion forms defined in \eqref{eq:torsion:forms:Spin(7)}. 

    \item (Proposition~\ref{prop:ricci:flatness:sp(k)}) 
    $M^{4k}$, for $k\ge 2$, is endowed with a $G=\sp(k)$-structure $(\omega_I,\omega_J,\omega_K)$ satisfying $\nabla^+\omega_I=\nabla^+\omega_J=\nabla^+\omega_K=0$, and $\theta$ is a $\sp(k)$-instanton. The 4-form geometry $(M^{4k},\sp(k),\psi=\kappa)$ is induced by the Kraines 4-form, where $\kappa=\tfrac16(\omega_I^2+\omega_J^2+\omega_K^2)$; in particular,
        \[
        H_\kappa = \frac{3}{2k-2}\tau_1\iprod\kappa +\frac3{2k+1}\tau_3^{8k}-\tau_3^K+\tau_3^G\qandq 
        \zeta_\kappa =-\frac{2k+1}{2k-2}\tau_1,
        \]
    where $\tau_1\in \Omega^1(M)$, $\tau_3^{8k},\tau_3^K,\tau_3^G\in \Omega^3(M)$ are the torsion forms defined in \eqref{eq:torsion:forms:sp:k:sp:1}.
\end{itemize}
\end{theoremA}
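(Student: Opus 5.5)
The proof reduces Theorem~\ref{theorem:ricci:flatness:compilation} to Theorem~\ref{theorem:ricci:flatness:simple:group}, applied once per case. For each geometry $(M,G,\psi)$ listed, three things must be checked:
\begin{enumerate}[(i)]
\item the eigenvalue condition $\bH_\psi^2|_\fg=b\cdot\id_\fg$ with $b\neq0$;
\item $G$ is the common stabiliser $\stab_{\SO(n)}(\psi,\xi)$ of $\psi$ and a suitable multi-tensor $\xi$;
\item the stated hypotheses imply $D^+_-\psi=0=D^+_-\xi$, that is, $\nabla^+\psi=\nabla^+\xi=0$ and $F_\theta\diamond\psi=F_\theta\diamond\xi=0$.
\end{enumerate}
The explicit formulas for $H_\psi$ and $\zeta_\psi$ follow from Theorem~\ref{theorem:flux:3:form:expression}. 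One decomposes $\Omega^3=\ker\bH^3_\psi\oplus\bigoplus_j\Omega^3_j$ into irreducibles and computes the scalars $a_j$ by testing $\bH^3_\psi$ on one representative element of each summand. Then $\zeta_\psi=H_\psi\iprod\psi$ follows by contracting term by term. Only the $\Lambda^1$-type pieces, such as $\tau_1\iprod\psi$, survive this contraction, by $G$-equivariance.

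For (i), the computation is algebraic and pointwise on the model $\psi_0$. Since $\fg$ is irreducible for $\SU(m)$ ($m\ge3$), $\rG_2$, $\Spin(7)$ and $\Sp(k)$, Schur's lemma shows that $\bH^2_\psi$ acts on it by a scalar. That scalar is found by evaluating on one explicit element of $\fg$, for instance $e^{12}-e^{34}$ in suitable adapted frames. The $\SU(2)$ case with $\psi=\vol_M$ is handled directly: $\bH^2$ acts as $\pm2\hodge$, so it acts on $\Lambda^2_-=\fg$ by a non-zero scalar. For (ii), the choices of $\xi$ are:
\begin{itemize}
\item for $\SU(m)$, take $\xi=(\omega,\Psi)$;
\item for $\rG_2$, note that $\star\varphi$ together with the orientation already determines $\rG_2$, so $\xi$ may be empty, or $\varphi$ itself;
\item for $\Spin(7)$, $\Omega$ alone suffices;
\item for $\Sp(k)$, take $\xi=(\omega_I,\omega_J,\omega_K)$;
\item for $\SU(2)$ on $M^4$, take $\xi=(\omega_I,\omega_J,\omega_K)$ for a Calabi--Yau hyperkähler triple.
\end{itemize}
In each case $\psi$ is algebraically built from $\xi$, so $\nabla^+\xi=0$ implies $\nabla^+\psi=0$, and $F_\theta\in\fg\otimes\adjointbundleP$ kills every $G$-invariant tensor.

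Step (iii) is therefore the substantive part. The instanton condition handles the $F_\theta$ half of the gravitino equation in every case, so what remains is $\nabla^+$-parallelism of the structure for the torsion $H_\psi$.
\begin{itemize}
\item In the $\SU(2)$ case, $d^*\vol_M=0$ gives $H=0$ and $\nabla^+=\nabla^{LC}$, which preserves the Calabi--Yau forms.
\item For $\rG_2$ and $\Spin(7)$, I would invoke the Friedrich--Ivanov/Ivanov theorems. A $\rG_2$-structure with $\tau_2=0$, and every $\Spin(7)$-structure, admits a unique $G$-connection with totally skew torsion. I would then check that this torsion coincides with $H_\psi$ as computed from Theorem~\ref{theorem:flux:3:form:expression}.
\item For $\SU(m)$ and $\Sp(k)$, $\nabla^+\omega=0$ (respectively $\nabla^+\omega_{I,J,K}=0$) is part of the hypothesis, or follows from total skew-symmetry of $N_\omega$ via the Bismut/Gauduchon characterisation $H=-d^c\omega+N_\omega$. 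This again has to be matched with $H_\psi$.
\end{itemize}

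I expect the main obstacle to be this identification of the characteristic torsion with $H_\psi$. The flux $H_\psi$ is defined abstractly through $d^*\psi$, and the $\ker\bH^3_\psi$-component of $d^*\psi$ is discarded. The plan is to use uniqueness of the skew-torsion $G$-connection. If $\nabla^c$ is that connection with torsion $T$, then $\nabla^c\psi=0$ gives $d^*\psi=\pm\bH^3_\psi(T)$ up to a normalising constant. Provided $T\in(\ker\bH^3_\psi)^\perp$, this forces $T=H_\psi$. Verifying this orthogonality, together with the signs and normalisations of the constants $a_j$ in each geometry, is the delicate bookkeeping step.
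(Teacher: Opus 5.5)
Your plan is essentially the paper's argument. Each of the $\SU(m)$, $\rG_2$, $\Spin(7)$ and $\Sp(k)$ bullets is proved by checking your (i)--(iii) and applying Theorem~\ref{theorem:ricci:flatness:simple:group:texto}, with $b=-1$ throughout and $\xi=(\omega,\Psi)$, empty, empty and $(\omega_I,\omega_J,\omega_K)$ respectively.

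The one genuine difference is the four-dimensional bullet, which the paper checks directly rather than through Theorem~\ref{theorem:ricci:flatness:simple:group:texto}. It uses $H_{\vol_M}=0=\zeta_{\vol_M}$ and $\Ric^g=0$. Anti-self-duality gives $d^{A*}F_A=-\hodge d^AF_A=0$. Inserting $dH=0$ into the proof of Proposition~\ref{prop:ricci:bismut} gives $F_A\circ F_A=0$. Your route through the parallel hyperk\"ahler triple is also valid and makes this case a literal instance of the general theorem; the paper's route avoids having to produce $\xi$ at all. Note also that $\bH^2_{\vol_M}\beta=\beta\iprod\vol_M=\hodge\beta$, so $b=-1$, not $\pm2$.

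The identification you single out as the main obstacle is immediate. In every listed case $\bH^3_\psi$ is invertible, with eigenvalues $m-2,3,-1$; $6,3,-1$; $6,-1$; $\tfrac{2k-2}{3},\tfrac{2k+1}{3},1,-1$. Remark~\ref{remark:parallelism:d:psi:has:no:kernel:intersection} gives $d^*\psi=\bH_\psi(T)$ for any $\psi$-parallel metric connection with skew torsion $T$, so invertibility forces $T=H_\psi$ and there is no orthogonality condition to verify.

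One caution for the explicit formulas: compute the scalars $a_j$ and the contraction $H_\psi\iprod\psi$ equivariantly under $\stab_{\SO(n)}(\psi)$ (that is, $\U(m)$, $\Sp(k)\Sp(1)$, $\rG_2$, $\Spin(7)$), as the paper does, rather than under $G$. For $G=\SU(4)$ one has $\leftbrack\Omega^{3,0}\rightbrack\cong\Omega^1$. For $G=\Sp(k)$ the $4k$-dimensional irreducible occurs in both $[EH]$ and $[E\Sigma^3H]$. In these cases $G$-equivariance alone neither makes $\bH_\psi$ scalar on each summand nor kills the non-vector pieces of $H_\psi\iprod\psi$.
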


Besides these applications, in \ref{parag:su(4):structures} we discuss \(\SU(4)\)-structures equipped with an alternative 4-form, instead of the usual choice \(\psi=\frac12\omega^2\), namely the real part of the complex volume form $\Psi_+$.
We also study in \ref{parag:lifting:G2:to:spin7} a lift of a $\rG_2$-structure to a $\Spin(7)$-structure using the language of 4-form geometry, which can be considered a generalisation of \cite[Theorem~5.1]{Ivanov2005}.

\bigskip
The article is organised as follows. In \S~\ref{section:theory:4:forms}, we develop the general theory of 4-form geometries. We introduce the flux and Lee forms, derive the Yang--Mills and Ricci--Bismut identities that enter the generalized Ricci tensor, recall the required background on string algebroids and generalized geometry, and prove Theorems~\ref{theorem:ricci:flatness:simple:group} and \ref{theorem:ricci:flatness:coupled:equations}. In \S~\ref{sec:examples:4-form:geometry}, we apply the theory to $\U(m)$-, $\SU(m)$-, $\rG_2$-, $\Spin(7)$-, $\Sp(k)\Sp(1)$-, and $\Sp(k)$-structures, compute the corresponding flux operators, flux 3-forms, and Lee forms, and discuss explicit examples and product constructions. 

\paragraph*{Notation and conventions}
Unless otherwise stated, all manifolds are assumed to be connected and oriented. We use the following notation throughout:

\begin{itemize}
    \item A 4-form geometry is denoted by $(M,G,\psi)$. The Riemannian metric $g$ and volume form $\vol_M$ determined by the $G$-structure are usually left implicit; cf. Definition~\ref{def:4:form:geometry}.

    \item A principal $K$-bundle $P\to M$, together with a non-degenerate, bi-invariant, symmetric pairing $\pdi{\cdot,\cdot}_\fk$ on $\fk=\Lie(K)$, is denoted by $(P,K;\pdi{\cdot,\cdot}_\fk)$. When a connection $\theta$ is fixed, we write $(P,K;\theta,\pdi{\cdot,\cdot}_\fk)$; cf. Example~\ref{example:transitive:Courant}.

    \item We denote by
    \(
        E:(P,K;\theta,\pdi{\cdot,\cdot}_\fk)\to(M,H)
    \)
    the string algebroid
    \(
        E=TM\oplus\adjointbundle P\oplus T^*M
    \)
    determined by the connection $\theta$ and the 3-form $H$, subject to the heterotic Bianchi identity \eqref{eq: hBi}; cf. Example~\ref{example:transitive:Courant}. A Riemannian metric $g$ on $M$ induces a generalized metric $V_+$, while $\vol_M$ induces the Riemannian divergence $\dv^{V_+}$; cf. \eqref{eq:riemannian:divergence}.

    \item If the string algebroid $E$ is induced by a 4-form geometry, we write
    \(
        E:(P,K;\theta,\pdi{\cdot,\cdot}_\fk)\to(M,G,\psi).
    \)
    In this case, $H=H_\psi$ is the flux 3-form, and the heterotic Bianchi identity becomes \eqref{eq: hBi4}. When the Lie algebra $\fg$ is an eigenspace of the flux operator with non-zero eigenvalue $b$, the 4-form geometry determines the generalized pair $(V_+^{G,\psi},\dv^{G,\psi})$ defined in \eqref{eq:generalized:induced:pair}; cf. Definition~\ref{def:generalized:pair:4:form}.
\end{itemize}

\paragraph*{Acknowledgements}
The authors thank Artur Blois, Udhav Fowdar, Mario Garcia-Fernandez, Lino Grama, Jason Lotay, Andr\'es Moreno, Paul Schwahn, and Jakob Stein, for helpful discussions. We would like to thank Jeffrey Streets, in particular, for suggesting a number of improvements to the original manuscript.

\noindent\textbf{AdSJ} is supported by the S\~ao Paulo Research Foundation (FAPESP) grant 2024/06658-2.

\noindent\textbf{HSE} is supported by FAPESP grants 2020/09838-0, with BI0S--Brazilian Institute of Data Science;
2021/04065-6, with BRIDGES--Brazil--France Interplays in Gauge Theory, Extremal Structures and Stability;
2024/00923-6, with CBG--Brazilian Centre for Geometry; and by the Brazilian National Council for Scientific and Technological Development (CNPq), through the PQ-A Productivity Grant 307145/2025-5.

\noindent\textbf{BV} is supported by FAPESP grant 2024/09514-1.

\paragraph*{Declaration of AI use}
The authors used OpenAI Codex, powered by GPT-5, for English-language editing, \LaTeX{} and bibliography formatting, compilation diagnostics, and drafting the cover letter. It also helped phrase brief explanations based on mathematical reasoning supplied by the authors, but was not used to originate the principal mathematical ideas or results, perform calculations, or construct or verify proofs. The authors reviewed and approved all AI-assisted changes, independently checked the mathematical content, references, and final source, and take full responsibility for the manuscript's accuracy, originality, and integrity.

\newpage
\section{String algebroids over 4-form geometries}
\label{section:theory:4:forms}
\label{sec1:generalized:ricci:4:forms:theory}

We now formulate in detail the theoretical framework for this text. In \S\ref{subsec1:geometry:4:forms}, we introduce the main objects, namely \emph{4-form geometries}. These are geometric structures, viewed as reductions of the oriented orthogonal frame bundle of a Riemannian manifold from $\SO(n)$ to $G$, endowed with a distinguished 4-form $\psi \in \Omega^4(M)$ whose pointwise stabiliser contains $G$. We then present quantities naturally associated with such structures. First, we introduce the \emph{flux 3-form} $H_\psi \in \Omega^3(M)$, which corresponds to the skew-symmetric component of the intrinsic torsion of the $G$-structure and is closely related to compatible connections with totally skew-symmetric torsion, as extensively studied by Friedrich, Ivanov, Agricola, and others \cite{Friedrich2003b,Agricola2004}. We also consider the \emph{Lee form} $\zeta_\psi \in \Omega^1(M)$, which captures the vector component of the intrinsic torsion, as well as the notion of an \emph{instanton} connection, defined by algebraic restrictions on the curvature analogous to those introduced by Reyes-Carrión \cite{Carrion1998}. This notion is also related to the higher-dimensional instantons of Donaldson and Thomas \cite{Donaldson1998,Donaldson2011}, defined in terms of $(n-4)$-forms.
These quantities play a central role in what follows, since they provide the data used in generalized geometry, as discussed in \S\ref{subsec:generalized:geometry}; see \cite{Garcia-Fernandez2014,Garcia-Fernandez2019}.

In \S\ref{subsec:yang:mills+ricci:bismut}, we derive the equations that later appear in the expression for the generalized Ricci tensor associated with 4-form geometries, namely the \emph{Yang--Mills} term, which measures the failure of an instanton to satisfy the Yang--Mills condition, and the \emph{Ricci--Bismut} term, which measures the failure of the underlying metric to be Ricci-flat. These two quantities are naturally encoded by geometric structures with torsion and, in our setting, are expressed in terms of the distinguished 4-form.

Finally, in \S\ref{sec:ricci:curvature:4:forms}, we apply the theory developed in the previous sections to prove that, under suitable assumptions, Courant algebroids associated with 4-form geometries are generalized Ricci-flat. We also discuss a more general case in which the obstruction to generalized Ricci flatness is governed by the coupled instanton equations introduced in \cite{Garcia-Fernandez2017,Garcia-Fernandez2025,daSilva2024a}.

\subsection{Geometric structures admitting 4-forms}
\label{subsec1:geometry:4:forms}

We follow the conventions of \cite{Fadel2024} on geometric structures; see also \cite{Loubeau2023, Dwivedi2024}. A $G$-structure on a manifold $M^n$ is a reduction of the frame bundle
\[
    \GL(n,\R) \ \cdots \ P_{\GL}(M)\to M
\]
to a principal $G$-subbundle $P_G\to M$. 

In many important cases, such a reduction is equivalent to the existence of a finite collection of tensor fields pointwise modelled on a fixed collection of tensors on $\bb R^n$. More precisely, $G$ is the stabiliser of these model tensors under the natural right action of $\GL(n,\bb R)$ defined by
\begin{align*}
    (v,A)\longmapsto A^{-1}v,
    \qforq
    v\in \bb R^n \qandq
    (\alpha,A)\longmapsto A^*\alpha=\alpha\circ A,
    \qforq
    \alpha\in(\bb R^n)^*.
\end{align*}
Let $\{e_i\}\subset \bb R^n$ be the standard basis and let $\{e^i\}\subset (\bb R^n)^*$ be its dual basis. Denote the coordinates of a $(p,q)$-tensor $
    \xi_{\circ}\in \cT^{p,q}(\bb R^n)$ by
    \[
    \xi_{\circ}
    =
    \xi^{i_1\ldots i_p}{}_{j_1\ldots j_q}\ 
    e_{i_1}\otimes\cdots\otimes e_{i_p}
    \otimes
    e^{j_1}\otimes\cdots\otimes e^{j_q},
\]
where $
    \xi^{i_1\ldots i_p}{}_{j_1\ldots j_q}
    \defeq
    \xi_{\circ}
    (e^{i_1},\ldots,e^{i_p},e_{j_1},\ldots,e_{j_q})
    \in \bb R$, 
and the Einstein summation convention is used throughout. The induced right action of $A\in\GL(n,\bb R)$ on $\xi_{\circ}$, corresponding to arbitrary changes of frames, is given by
\begin{align*}
    \xi_{\circ}\cdot A
    \defeq
    \xi^{i_1\ldots i_p}{}_{j_1\ldots j_q}
    A^{-1}e_{i_1}\otimes\cdots\otimes A^{-1}e_{i_p}
    \otimes
    A^*e^{j_1}\otimes\cdots\otimes A^*e^{j_q}.
\end{align*}
We denote the stabiliser of $\xi_{\circ}$ under this right $\GL(n,\R)$-action by
\[
    \Stab(\xi_{\circ})
    \defeq
    \{A\in\GL(n,\bb R):\xi_{\circ}\cdot A=\xi_{\circ}\}.
\]
More generally, if $\xi_{\circ}=\pp{(\xi_{\circ})_1,\ldots,(\xi_{\circ})_k}$ is a finite collection of tensors, then $\GL(n,\R)$ acts componentwise, so that
\[
    \Stab(\xi_{\circ})
    =
    \bigcap_{i=1}^k \Stab\big((\xi_{\circ})_i\big).
\]
For our purposes, we assume throughout that $(M^n,g)$ is an oriented Riemannian manifold; equivalently, $M$ is endowed with an $\SO(n)$-structure. We consider only closed Lie subgroups $G\subseteq \SO(n)$ and, consequently, only the induced $\SO(n)$-action on tensors $\xi_0$ and their $\SO(n)$-stabilisers, which we denote by $\Stab_{\SO(n)}(\xi_0)$. For simplicity, we write $(M,G)$ for a manifold $M$ endowed with a $G$-structure and often refer to this pair simply as a \emph{$G$-structure}.

\bigskip

Throughout the text, we use the diamond operator $\diamond$, which denotes the infinitesimal action induced by the right $\GL(n,\R)$-action on tensors \cite{Fadel2024}. For $\xi\in \Omega^k(M)$, the action of $\fg\fl(n)$ is given by
\[
\pp{\alpha\otimes X}\diamond\xi =
\alpha\wedge i_X\xi
\qforq
\alpha\in\Omega^1(M)
\qandq
X\in\fX(M),
\]
which can be written in coordinates as
\( 
A\diamond \xi = A^j\wedge\xi_j
= (e^j\iprod A) \wedge (e_j\iprod\xi)\) for an orthonormal frame.

\paragraph{4-form geometries} Many special geometric structures on manifolds naturally give rise to associated 4-forms, including almost Hermitian, almost special Hermitian, almost hyper-Hermitian, almost quaternionic Hermitian, almost contact metric, \(\rG_2\), and \(\Spin(7)\) structures. A distinguished 4-form $\psi$ allows us to define a 3-form $H=H_\psi$ and a notion of instanton connection. Together, these provide the ingredients for a string algebroid structure whenever the heterotic Bianchi identity is satisfied.

\begin{definition}[4-form geometry]
\label{def:4:form:geometry}
    Let $(M,g)$ be a connected, oriented Riemannian manifold, and let $G$ be a closed Lie subgroup of $\SO(n)$. A \emph{4-form geometry} is a triple $(M,G,\psi)$ consisting of a $G$-structure on $M$ and a non-zero 4-form $\psi\in \Omega^4(M)$ pointwise modelled on $\psi_0\in\Lambda^4\pts{\R^n}^*$, whose stabiliser satisfies
    \begin{equation*}
        G\subset \mathrm{Stab}_{ \SO(n)}(\psi_0).
    \end{equation*}
\end{definition}

If $G$ is the stabiliser $\stab_{\SO(n)}(\psi)$, or the identity component of $\Stab_{\SO(n)}(\psi)$, then
\begin{equation*}
    \mathfrak g=\rmm{Lie}(G) 
    \iso \Lie(\Stab_{\SO(n)}(\psi))
    \subset \mathfrak{so}(n)\isomorphic \Lambda^2 .
\end{equation*}
Since $\psi$ has, pointwise, the same stabiliser as $\psi_0$, we can use \cite[Lemma 1.7]{Fadel2024} to conclude that
\begin{equation}
\label{eq:ker:diamond:=:lie:algebra}
    \ker ({}\cdot {}\diamond \psi)=
    \Lg.
    \end{equation}
In this case, a 2-form $\beta\in \Omega^2(M)$ belongs to $\Lg$ if and only if $\beta\diamond\psi=0$. In general, when $G\subset \stab_{\SO(n)}(\psi)$, we have $\fg\subset \ker\pts{\cdot\diamond\psi}$; that is, $\beta\in \mathfrak g$ implies $\beta\diamond\psi=0$, but the converse need not hold.

\paragraph{The flux operator and the flux 3-form}
\label{parag:flux:3:form}
On a 4-form geometry $(M, G, \psi)$, the structural form $\psi \in \Omega^4(M)$ induces invariant maps that encode significant geometric information about the manifold. In particular, one such map is the \emph{flux operator} $\bH_\psi$, which is used to define the \emph{flux 3-form} $H_\psi$ associated with the 4-form geometry. The latter plays a fundamental role in the study of string algebroids in \S\ref{sec:ricci:curvature:4:forms}. In this subsection, we introduce these constructions and prove Theorem~\ref{theorem:flux:3:form:expression}, which provides an expression for the flux 3-form.

\begin{definition}[Flux operator]
\label{def:flux:operator}
    Let $(M^n, G, \psi)$ be a 4-form geometry. Then there are $G$-equivariant maps $\vet{H}^q_\psi \colon \Omega^q(M) \to \Omega^q(M)$ defined by
\begin{equation*}
\vet{H}^q_\psi(\xi) = \xi \iprod^2 \psi 
\coloneqq\frac{1}{2}\sum_{i,j=1}^n\xi^{ij}\wedge\psi_{ij}
= \frac{1}{2!(q-2)!2!} \xi^{\mu\nu}{}_{i_1 \cdots i_{q-2}} \psi_{\mu\nu i_{q-1} i_q} e^{i_1 i_2 \cdots i_q},
\end{equation*}
    where $\{e_j\}$ is a local orthonormal frame of $TM$.
    For $q = 3$, we simply write $\vet{H}^3_\psi = \vet{H}_\psi$ and call it the \emph{flux operator}.
\end{definition}

In the next lemma, we collect some properties of the operators $\bH_\psi^q$. 

\begin{lemma}
    The operators $\bH_\psi^q$ have constant rank and are self-adjoint. In particular, $\ker \bH_\psi^q$ and $\Ima\bH_\psi^q$ are vector subbundles of $\Omega^q(M)$, and each $\bH_\psi^q$ is diagonalisable with real eigenvalues. Moreover,
    \begin{equation*}
        \Ima\bH_\psi^q = \pts{\ker\bH_\psi^q}^\perp
        \qandq
        \Omega^q(M)\iso \ker\bH_\psi^q\oplus\pts{\ker\bH_\psi^q}^\perp
        \iso \ker\bH_\psi^q\oplus\Ima\bH_\psi^q .
    \end{equation*}
\end{lemma}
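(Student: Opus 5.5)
The plan is to reduce everything to pointwise linear algebra on the model space $\Lambda^q(\R^n)^*$ and then use the $G$-structure to globalise. Fix a point $x\in M$ and an adapted orthonormal frame, i.e. a local section of $P_G$ in which $\psi_x$ is identified with $\psi_0$. In such a frame the defining formula of Definition~\ref{def:flux:operator} shows that $\bH^q_\psi$ at $x$ is the fixed linear map
\[
\bH^q_0\colon\Lambda^q(\R^n)^*\to\Lambda^q(\R^n)^*,\qquad \xi\mapsto \xi\iprod^2\psi_0 ,
\]
whose matrix entries are universal constants built from the components of $\psi_0$. Hence the rank of $\bH^q_\psi$ at every point equals $\rank\bH^q_0$, which gives constant rank. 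Since $\bH^q_\psi$ is a smooth bundle endomorphism of constant rank, its kernel and image are smooth vector subbundles; this is the standard fact for constant-rank morphisms.

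For self-adjointness, it suffices to check that $\bH_0^q$ is symmetric with respect to the inner product induced by the Euclidean metric on $\Lambda^q$. Writing $\langle\bH^q_0\xi,\eta\rangle$ in components gives, up to the combinatorial normalisation fixed in the definition, the full contraction
\[
\xi^{\mu\nu}{}_{i_1\cdots i_{q-2}}\,\psi_{\mu\nu i_{q-1}i_q}\,\eta^{i_1\cdots i_{q-2}i_{q-1}i_q}.
\]
Pairing indices differently, this is the contraction of $\xi\otimes\eta$ in which two indices of $\xi$ and two of $\eta$ are fed into $\psi_0$, while the remaining $q-2$ indices of each are contracted together. The antisymmetry of $\xi$, $\eta$ and $\psi_0$ then shows that swapping the roles of $\xi$ and $\eta$ only permutes the slots of $\psi_0$ by the block permutation $(\mu\nu)\leftrightarrow(i_{q-1}i_q)$, which is even. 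Hence the expression is symmetric in $\xi,\eta$.

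This index bookkeeping is the only step requiring care. One must make sure that the symmetrisation hidden in the wedge $e^{i_1\cdots i_q}$ does not introduce a sign. I would handle this by expanding $\xi^{ij}\wedge\psi_{ij}$ and pairing with $\eta$ via $\langle\alpha\wedge\beta,\eta\rangle=\langle\beta,\alpha\iprod\eta\rangle$, which yields the manifestly symmetric form $\frac12\sum_{i,j}\langle\psi_{ij},\xi^{ij}\iprod\eta\rangle$ once the normalisations are fixed.

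Once self-adjointness is established, the remaining claims follow from the spectral theorem. At each point $\bH^q_\psi$ is diagonalisable in an orthonormal eigenbasis with real eigenvalues. Moreover, $\Ima\bH = (\ker\bH^*)^\perp = (\ker\bH)^\perp$ fibrewise, so we obtain the orthogonal splitting $\Omega^q=\ker\bH^q_\psi\oplus(\ker\bH^q_\psi)^\perp=\ker\bH^q_\psi\oplus\Ima\bH^q_\psi$ as bundles. $G$-equivariance, which is already asserted in the definition, follows from the invariance of $\psi_0$ under $G$. It is not needed here beyond noting that these subbundles are $G$-invariant.
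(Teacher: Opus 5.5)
Your proposal is correct and follows essentially the paper's route: constant rank from the pointwise model $\psi_0$, self-adjointness from the symmetry in $\xi,\eta$ of the full contraction $\frac{1}{2!(q-2)!2!}\xi_{ijK}\psi_{ijab}\eta_{Kab}$, and then $\Ima\bH^q_\psi=(\ker\bH^q_\psi)^\perp$. The paper gets this last equality from $\Ima\subset(\ker)^\perp$ plus a dimension count, which is the same linear-algebra fact. Your index bookkeeping is more explicit than the paper's, but the expression $\frac12\sum_{i,j}\langle\psi_{ij},\xi^{ij}\iprod\eta\rangle$ is not literally ``manifestly'' symmetric: it still needs the pair-swap symmetry $\psi_{ijab}=\psi_{abij}$ that you had already invoked.
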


\begin{proof}
    The fact that each $\bH_\psi^q$ has constant rank follows from its algebraic definition and the fact that $\psi$ is pointwise modelled on $\psi_0\in\Lambda^4(\R^n)^*$.

    To prove that $\bH_\psi^q$ is self-adjoint, let $\alpha,\beta \in \Omega^q(M)$. Then
\[
\bracks{\vet H^q_\psi \alpha,\beta}
=
\frac{1}{2!(q-2)!2!} \alpha_{ijK} \psi_{ijab} \beta_{Kab}
=
\bracks{\alpha,\vet H^q_\psi\beta}.
\]
Hence, $\Ima\bH^q_\psi\subset \pts{\ker\bH^q_\psi}^\perp$. From this, and since these subbundles have fibres of the same dimension, it follows that $\Ima\bH^q_\psi=(\ker\bH^q_\psi)^\perp$.
\end{proof}

\begin{definition}[Flux 3-form]\label{def:flux:3:form}
On a 4-form geometry $(M, G, \psi)$, the associated \emph{flux 3-form} $H_\psi \in \Omega^3(M)$ is the unique 3-form in $(\ker \vet{H}_\psi)^\perp$ such that
\begin{equation*}
\label{eq:def:flux:3:form}
\proj^\perp(d^* \psi) = \vet{H}_\psi(H_\psi) = H_\psi \iprod^2 \psi, 
\end{equation*}
where $\proj^\perp \colon \Omega^3(M) \to (\ker \vet{H}_\psi)^\perp$ is the orthogonal projection. The \emph{Bismut connection} $\nabla^+$ (respectively, the \emph{Hull connection} $\nabla^-$) is the metric connection on $TM$ with totally skew-symmetric torsion $H_\psi\in \Omega^3(M)$ (respectively, $-H_\psi$): 
\begin{equation*}
    \label{eq:bismut:connection}
    \tag{BisC}
        \nabla^\pm= \nabla^g\pm\frac12g^{-1}H_\psi.
    \end{equation*}
\end{definition}

\begin{remark}[Parallel 4-form] 
\label{remark:parallelism:d:psi:has:no:kernel:intersection}
A particular situation in which  $d^*\psi$ has no component in $\ker \vet H_\psi$ occurs when there exists a metric connection $\nabla = \nabla^g + \frac{1}{2}g^{-1}T$ with totally skew-symmetric torsion $T\in \Omega^3(M)$ satisfying $\nabla\psi=0$. In this case, one can check that $$d^*\psi=T\iprod^2\psi=\vet H_\psi(T).$$ 
Moreover, even when $\nabla^+\psi\neq 0$, in some of our examples we still find that $d^*\psi$ has no component in
$\ker \vet H_\psi$ for purely algebraic reasons stemming from the structure group $G$.
An instance is provided by $\rG_2$-structures, for which the identity
$d^*\psi=H\iprod^2\psi$
continues to hold even when $\nabla^+\psi\neq0$; see \cite[\S 2.1.2]{delaOssa2018a}.
\end{remark}

\bigskip

     From the definition of the flux 3-form $H_\psi$, obtaining an explicit expression for it requires inverting the restriction of the flux operator $\bH_\psi$ to $(\ker\bH_\psi)^\perp$. To this end, we first decompose $(\ker\bH_\psi)^\perp$ into irreducible $G$-submodules. We then group together all irreducible submodules of the same rank $d$, regardless of whether they are isomorphic, into a $G$-submodule $V_d$. Thus,
\[
(\ker\vet H_\psi)^\perp
=
\bigoplus_{d}V_d,
\]
where each $V_d$ is generally reducible. Since $V_d$ contains every irreducible component of rank $d$, including all copies of each isomorphism class, it is invariant under the flux operator. Hence, for each $d$, the restriction
\[
\bh_d\defeq \bH_\psi|_{V_d}:V_d\to V_d
\]
is a well-defined isomorphism. Let $
\proj_d:\Omega^3(M)\to V_d$ denote the orthogonal projection.

\begin{theorem}[Theorem~\ref{theorem:flux:3:form:expression}]
\label{theorem:flux:3:form:expression:texto}
    Let $(M, G,\psi)$ be a 4-form geometry and let $\tau^d \defeq \proj_d(d^*\psi)$, which we call the $d$-\emph{torsion form}. 
    Then the flux 3-form is given by
    \begin{equation*}
        H_\psi = \sum_{d}\bh_d^{-1}\pts{\tau^d} .
    \end{equation*}
    In particular, suppose that no two distinct irreducible submodules are isomorphic in the decomposition of $(\ker\vet H_\psi)^\perp$, so that
    \begin{equation*}
        (\ker\vet H_\psi^3)^\perp = \bigoplus_{j=1}^m\Omega^3_j ,
    \end{equation*}
    with $\Omega^3_j$ pairwise non-isomorphic irreducible $G$-modules. Then $\vet H_\psi^3|_{\Omega^3_j}=a_j\cdot \id_{\Omega^3_j}$ for non-zero constants $a_j\in \bb R$, so that
    \begin{equation*}
        H_\psi = \sum_{j=1}^m \dfrac{1}{a_j}\tau^j, 
        \quad \textrm{where} \;
        \tau^j=\proj_{\Omega^3_j}(d^*\psi) .
    \end{equation*}
\end{theorem}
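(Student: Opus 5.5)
The plan is to combine the defining property of $H_\psi$ with $G$-equivariance and self-adjointness of $\bH_\psi$, using Schur's lemma for the second part.

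\textbf{Step 1: $\bH_\psi$ preserves each $V_d$.} Since $\bH_\psi$ is $G$-equivariant, it maps each irreducible summand $W$ of rank $d$ to a $G$-submodule $\bH_\psi(W)$ that is either zero or isomorphic to $W$, hence of rank $d$. The isotypic component of $W$ in $\Omega^3(M)$ lies in $\ker\bH_\psi\oplus V_d$. Moreover $\bH_\psi(W)\subset\Ima\bH_\psi=(\ker\bH_\psi)^\perp$. Together these give $\bH_\psi(V_d)\subset V_d$.

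\textbf{Step 2: $\bh_d$ is invertible.} The restriction of $\bH_\psi$ to $(\ker\bH_\psi)^\perp$ is injective. Being an endomorphism of a finite-rank bundle of constant rank, it is an isomorphism, and so is each block $\bh_d$.

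\textbf{Step 3: the formula for $H_\psi$.} Decompose $H_\psi=\sum_d H^d$ with $H^d\defeq\proj_d(H_\psi)\in V_d$; this is legitimate because $H_\psi\in(\ker\bH_\psi)^\perp=\bigoplus_d V_d$. By Step 1,
\[
\bH_\psi(H_\psi)=\sum_d\bh_d(H^d),
\]
with the $d$-th term lying in $V_d$. Since the $V_d$ are mutually orthogonal, $\proj_d\circ\proj^\perp=\proj_d$. Projecting the defining identity $\proj^\perp(d^*\psi)=\bH_\psi(H_\psi)$ onto $V_d$ therefore gives $\tau^d=\bh_d(H^d)$. Hence $H^d=\bh_d^{-1}(\tau^d)$, and summing over $d$ yields the claimed expression.

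\textbf{Step 4: the multiplicity-free case.} Suppose the $\Omega^3_j$ are pairwise non-isomorphic. The composition $\proj_{\Omega^3_k}\circ\bH_\psi|_{\Omega^3_j}$ is $G$-equivariant between non-isomorphic irreducibles when $k\neq j$, so it vanishes by Schur's lemma. Thus $\bH_\psi$ preserves each $\Omega^3_j$.

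On a single $\Omega^3_j$, the map $\bH_\psi$ is a real self-adjoint $G$-equivariant endomorphism. Its eigenspaces are $G$-invariant, and it is diagonalisable by the lemma above. Irreducibility then forces a single eigenvalue $a_j$, so $\bH_\psi|_{\Omega^3_j}=a_j\,\id$. This step is where self-adjointness matters: over $\R$, Schur alone only gives a division algebra, which could be $\C$ or $\H$, and self-adjointness is what rules this out.

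Finally, $a_j\neq0$ because $\Omega^3_j\subset(\ker\bH_\psi)^\perp$. Applying Step 3 with $\Omega^3_j$ in place of $V_d$, since the decomposition is now finer but the same argument applies, gives $H_\psi=\sum_j a_j^{-1}\tau^j$.

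\textbf{Main obstacle.} The delicate points are Step 1 and the bundle-level use of these pointwise arguments. For Step 1, the grouping by rank is needed precisely because isomorphic copies may be mixed by $\bH_\psi$. For the passage to bundles, one works fibrewise on the model $\Lambda^3(\R^n)^*$ with $G$ acting. There $\psi_0$ is $G$-invariant, so $\bH_{\psi_0}$ commutes with $G$, and everything transfers to the associated bundles $P_G\times_G(\cdot)$.
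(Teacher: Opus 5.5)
Your proposal is correct and follows essentially the same route as the paper: both use the $\bH_\psi$-invariance of each $V_d$ to project the defining identity $\proj^\perp(d^*\psi)=\bH_\psi(H_\psi)$ onto each $V_d$, using $\proj_d\circ\proj^\perp=\proj_d$, and then invert the blocks $\bh_d$. You also supply two points the paper only asserts, namely the invariance of $V_d$ via isotypic components and the scalar action $a_j\,\id$ in the multiplicity-free case via Schur's lemma together with self-adjointness; these complete the paper's argument rather than give a different one.
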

\begin{proof}
Let $\proj_K:\Omega^3(M)\to\ker\bH_\psi$ be the orthogonal projection onto the kernel.
As each $V_d$ is $\vet H_\psi$-invariant, we have, for $\gamma\in \Omega^3(M)$,
\begin{align*}
    &\bH_\psi(\gamma)
    = \bH_\psi\pts{\proj_K(\gamma) + \sum_{d}\proj_d(\gamma)} 
    = \sum_{d}\bH_\psi\pts{\proj_d(\gamma)} 
    = \sum_{d} \bh_d(\proj_d(\gamma))
    \\
    \Rightarrow \quad
    &\proj_d(\gamma)=\vet h_d^{-1}\pp{\proj_d(\vet H_\psi(\gamma))}\\
    \Rightarrow \quad
    &\proj^\perp(\gamma) =\sum_{d}\vet h_d^{-1}\pp{\proj_d(\vet H_\psi(\gamma))}.
\end{align*}
Taking $\gamma=H_\psi$, using the definition $\proj^\perp(d^*\psi)=\vet H_\psi(H_\psi)$, and noting that $\proj_d\circ\proj^\perp=\proj_d$, we obtain
\begin{align*}
H_\psi&=\proj^\perp(H_\psi)=\sum_{d} \vet h_d^{-1}(\proj_d(\vet H_\psi(H_\psi)))
=\sum_{d} \vet h_d^{-1}(\proj_d(\proj^\perp(d^*\psi)))
\\&=
\sum_{d} \vet h_d^{-1}(\proj_d(d^*\psi)) .\qedhere
\end{align*}
\end{proof}

Although the decomposition of $\Omega^3$ contains no repeated isomorphism classes in most of our examples, such repetitions do occur in some cases, as illustrated by the alternative description of the flux operator for $\SU(4)$-structures in \ref{parag:su(4):structures}.

\begin{remark}[Flux 3-form and intrinsic torsion]
\label{remark:intrinsic:torsion}
If a $G$-structure is defined by a multi-tensor $\xi=(\xi_1,\dots,\xi_k)$, then its intrinsic torsion $\Gamma\in \Omega^1(M, \mathfrak g^\perp)\subset \Omega^1\otimes \Omega^2$ encodes the first-order variation of these tensors through the diamond action $\nabla^g_X\xi=:\Gamma(X)\diamond\xi$; see \cite[\S 1.1]{Fadel2024} and \cite[\S 2]{Friedrich2002}. We define the \emph{minimal} or \emph{characteristic connection}\footnote{The term \emph{characteristic} is typically used for a compatible connection with totally skew-symmetric torsion.} by
\begin{equation*}
\nabla^0_X=\nabla^g_X-\Gamma(X) \diamond{}.
\end{equation*}
This connection is $G$-compatible by construction: $\nabla^0\xi=0$ and, consequently, $\Hol(\nabla^0)\subset G$. When the intrinsic torsion $\Gamma\in \Omega^3(M)$ is totally skew-symmetric, the minimal connection also has totally skew-symmetric torsion $T_0=-2\Gamma$.
    The conditions for the existence of a $G$-compatible metric connection $\nabla$ with totally skew-symmetric torsion were extensively studied in  \cite{Friedrich2002,Friedrich2003b}. 
    
    For a 4-form geometry $(M,G,\psi)$, if the $\stab_{\SO(n)}(\psi)$-intrinsic torsion $\Gamma\in \Omega^1\otimes \mathfrak{stab}(\psi)^\perp$ is totally skew-symmetric, then $-2\Gamma=T_0=H_\psi$. Indeed,
\begin{align*}
    d^*\psi &= -e_j\iprod\nabla^g_{e_j}\psi=-e_j\iprod(\Gamma_j\diamond \psi)
    =-e_j\iprod (\Gamma_{jk}\wedge \psi_k)
    \\&= \cancel{\Gamma_{jjk}}\wedge\psi_k
    +\Gamma_{jk}\wedge \psi_{kj}
    =
    -\Gamma_{jk}\wedge \psi_{jk}
    =-2 (\Gamma\iprod^2\psi)\\
    &=-2\cdot \vet H_\psi^3(\Gamma),
    \end{align*}
    and the conclusion follows from the definition of $H_\psi$ in Definition~\ref{def:flux:3:form}.

Moreover, if the $\Stab_{\SO(n)}(\psi)$-intrinsic torsion is totally skew-symmetric, then there is at least one connection with totally skew-symmetric torsion with respect to which $\psi$ is parallel, namely the minimal connection $\nabla^0$. 
If $\nabla$ is another such connection with torsion $T\in \Omega^3(M)$, then 
    $$
    T=H_\psi+\gamma, \qquad \rmm{for\ some }\ \gamma\in \ker\vet H_\psi^3.
    $$
    In other words, the compatible connections with totally skew-symmetric torsion form an affine space modelled on $\ker\vet H_\psi$.
    In particular, $H_\psi$ is invariant under constant rescaling of $\psi$.
\end{remark}

\paragraph{Lee form and Lee space}
Here we introduce the Lee form, a natural 1-form associated with the 4-form geometry, which will be used to determine a canonical divergence operator in generalized geometry.

\begin{definition}
\label{def:lee:form}
Let $(M, G,\psi)$ be a 4-form geometry. We call $\zeta_\psi:=H_\psi\iprod\psi\in \Omega^1$ its \emph{Lee form}. 
\end{definition}

Consider a 4-form geometry $(M,G,\psi)$. If $\Omega^1$ is an irreducible $G$-module, then $\psi$ is \emph{non-degenerate}:
$$\ker\psi:=\{X\in TM:i_X\psi=0\}=\{0\}\subset TM.
$$
When $\Omega^1$ is not irreducible, we define the \emph{Lee space} by
\begin{equation*}
    \Omega^1_L\defeq \pts{\Omega^1\iprod\psi}\iprod\psi \subset\Omega^1.
\end{equation*}
By invariance of contractions, this space is a $G$-submodule of $\Omega^1$, and we denote the rank of $\Omega^1_L$ by $L\le n$.
In the next lemma, we give another characterisation of the Lee space.

\begin{lemma}
\label{lemma:characterization:Lee:space}
    Let $(M,G,\psi)$ be a 4-form geometry. Then its Lee space is 
    $
    \Omega^1_L =\pts{\ker\psi}^\perp
    $
    and $\zeta_\psi\in \Omega^1_L$.
\end{lemma}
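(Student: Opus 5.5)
The plan is to read the Lee space as the image of a self-adjoint operator on $\Omega^1$, and then to identify both its image and its kernel through the contraction map $\alpha\mapsto\alpha\iprod\psi$. Let $\Phi:\Omega^1\to\Omega^3$ be $\Phi(\alpha)=\alpha\iprod\psi$, i.e.\ $\alpha^\sharp$ inserted into $\psi$. Let $\Phi^*:\Omega^3\to\Omega^1$ be the contraction $\gamma\mapsto\gamma\iprod\psi$, with the normalisation implicit in the paper's $\iprod$.

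\emph{Adjointness.} In an orthonormal frame,
\[
\bracks{\Phi(\alpha),\gamma}=\tfrac1{3!}\,\alpha_i\psi_{ijkl}\gamma_{jkl},
\]
which is, up to a fixed positive constant $c$, equal to $\bracks{\alpha,\gamma\iprod\psi}$. So $\gamma\mapsto\gamma\iprod\psi$ is $c\,\Phi^*$. Then
\[
\Omega^1_L=(\Omega^1\iprod\psi)\iprod\psi=\Ima(\Phi^*\Phi).
\]

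\emph{Kernel.} The operator $\Phi^*\Phi$ is a self-adjoint, positive semi-definite endomorphism of $\Omega^1$, since $\bracks{\Phi^*\Phi\alpha,\alpha}=|\Phi\alpha|^2\ge0$. The same argument as in the lemma on $\bH^q_\psi$ therefore applies, and I would invoke that lemma's reasoning. First, $\ker\Phi^*\Phi=\ker\Phi$, because $\Phi^*\Phi\alpha=0$ gives $|\Phi\alpha|^2=0$. Second, by self-adjointness,
\[
\Ima(\Phi^*\Phi)=(\ker\Phi^*\Phi)^\perp=(\ker\Phi)^\perp.
\]
Constant rank holds since $\psi$ is pointwise modelled on $\psi_0$, so these are genuine subbundles.

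\emph{Conclusion.} By definition $\ker\Phi$ is $\ker\psi=\{X: i_X\psi=0\}$, identified with 1-forms via $g$. This gives $\Omega^1_L=(\ker\psi)^\perp$. For the Lee form, $\zeta_\psi=H_\psi\iprod\psi=c\,\Phi^*(H_\psi)\in\Ima\Phi^*$. Finally,
\[
\Ima\Phi^*=(\ker\Phi)^\perp=\Omega^1_L,
\]
so $\zeta_\psi\in\Omega^1_L$. Note that this last step does not use that $H_\psi$ is itself a contraction; any 3-form contracted with $\psi$ lands in $\Omega^1_L$.

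The only step needing care is the adjointness constant. The contraction conventions ($\iprod$ with factorial normalisations) must produce a \emph{positive} multiple, so that $\Phi^*\Phi$ is semidefinite rather than negative definite. The sign, however, is irrelevant for kernels and images, so even a negative constant would not affect the argument.
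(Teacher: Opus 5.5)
Your argument is correct and is essentially the paper's proof. The paper likewise uses self-adjointness of $A(X)=(X\iprod\psi)\iprod\psi$ to get $\Omega^1_L=\Ima A=(\ker A)^\perp$, the identity $\langle A(X),X\rangle=-|X\iprod\psi|^2$ to identify $\ker A$ with $\ker\psi$, and the adjointness $\langle H_\psi\iprod\psi,X\rangle=-\langle H_\psi,X\iprod\psi\rangle$ to place $\zeta_\psi$ in $(\ker\psi)^\perp$. With the paper's conventions your constant is actually $c=-1$, so the double contraction is negative semidefinite (cf.\ $\alpha=-\tfrac{4}{L}|\psi|^2$ in the appendix); as you anticipated, this sign does not affect kernels or images.
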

\begin{proof}
    Let $A: \Omega^1\to \Omega^1$ be the map defined by $A(X)\defeq (X\iprod\psi)\iprod \psi$. By construction, we have $\Ima A=\Omega^1_L$. Moreover, a direct computation shows that $A$ is self-adjoint. Hence, $\Omega^1_L = \Ima A =(\ker A)^\perp$.
    
We now show that 
$$
\Omega^1_L=\pts{\ker\psi}^\perp. 
$$
If $X\in \Omega^1_L$, then $X=A(Y)$ for some $Y\in \Omega^1$. Therefore, for any $Z\in\ker\psi$,
\begin{equation*}
    \pdi{X,Z}
    =\pdi{A(Y),Z}
    =\pdi{Y, A(Z)}
    =\pdi{Y, (Z\iprod\psi)\iprod\psi}
    = 0.
\end{equation*}
Thus, $X\in\pts{\ker\psi}^\perp$, and consequently $\Omega^1_L\subset (\ker\psi)^\perp$. Conversely, suppose that $(X\iprod\psi)\iprod\psi=0$, i.e., $X\in \ker A$. Then
\[
0
=
\bracks{ (X \iprod \psi ) \iprod \psi , X}
=
-\bracks{X\iprod\psi,X\iprod\psi}
=
-|X\iprod\psi|^2,
\]
and hence $X\iprod\psi=0$. Therefore, $\ker A\subset \ker\psi$, which implies that $(\ker A)^\perp\supset (\ker\psi)^\perp$. Since $A$ is self-adjoint, we have $\Omega^1_L=\Ima A=(\ker A)^\perp\supset (\ker\psi)^\perp$, and equality follows.

Finally, the Lee form always belongs to the Lee space $\Omega^1_L$, because if $X\in \ker \psi$, we have
\[
    \bracks{\zeta_\psi, X}=
X\iprod (H_\psi\iprod\psi)=-H_\psi\iprod(X\iprod\psi)=0
\quad \Rightarrow \quad
\zeta_\psi\in (\ker \psi)^\perp = \Omega_L^1.\qedhere
\]
\end{proof}

\begin{remark}
\label{remark:Lee:form:2:irreps}
    Suppose that $(M^n,G,\psi)$ is such that $\Omega^1(M)$ is irreducible and $\Omega^3(M)$ has exactly two isomorphic irreducible components of dimension $n$ (as occurs with $\SU(4)$-structures, cf. \ref{parag:su(4):structures}), given by 
    $$\Omega^3_n = \Omega^1\iprod\psi \qandq \Omega^3_{n'} = \Omega^1\iprod\hat{\psi},
    $$ 
    where $\hat{\psi}\in\Omega^4(M)$ is another non-zero form whose stabiliser contains $G$, so that we have another 4-form geometry $(M,G,\hat{\psi})$. Since both components are identified with the irreducible $G$-module $\Omega^1(M)$ via contraction, we have a natural isomorphism
\begin{equation*}
    \Phi:\Omega^3_n\to \Omega^3_{n'}
    ,\quad
    X\iprod\psi\mapsto X\iprod\hat{\psi},
\end{equation*}
    and we may write $V_n=\Omega^3_n\oplus\Omega^3_{n'}$ and $\bh_n: V_n\to V_n$, following the notation of Theorem~\ref{theorem:flux:3:form:expression}/\ref{theorem:flux:3:form:expression:texto}. Therefore,
\begin{equation*}
    d^*\hat{\psi} = \tau^1\iprod\psi + \hat{\tau}^1\iprod\hat{\psi} + \eta,
    \qforq
    \eta\in V_n^\perp,
\end{equation*}
and the $n$-dimensional component of the flux 3-form $H_{\hat{\psi}}$ is given by $\bh_n^{-1}\pts{\tau^n}$, where $\bh_n = \bH_{\hat{\psi}}|_{V_n}$ and $\tau^n = \tau^1\iprod\psi + \hat{\tau}^1\iprod\hat{\psi}$. Writing
\begin{equation*}
    \bh_n = \begin{bmatrix}
         a\cdot\id_{\Omega^3_n} & b\cdot\Phi^{-1}\\
         c\cdot\Phi& d\cdot\id_{\Omega^3_{n'}}
     \end{bmatrix}
     \qforq
     a,b,c,d\in\R
     \quad \Longrightarrow\quad
     \bh_n^{-1}  = \frac{1}{ad-bc}\begin{bmatrix}
         d\cdot\id_{\Omega^3_n} & -b\cdot\Phi^{-1}\\
         -c\cdot\Phi& a\cdot\id_{\Omega^3_{n'}}
     \end{bmatrix},
\end{equation*}
we compute
\begin{align*}
    \bh_n^{-1}\pts{\tau^n}
    &=\frac{1}{ad-bc}\pts{
    d\pts{\tau^1\iprod\psi}
    -b\Phi^{-1}\pts{\hat{\tau}^1\iprod\hat{\psi}}
    -c\Phi\pts{\tau^1\iprod\psi}
    +a\pts{\hat{\tau}^1\iprod\hat{\psi}}
    }\\
    &= \frac{1}{ad-bc}\pts{\pts{d\tau^1-b\hat{\tau}^1}\iprod\psi + \pts{-c\tau^1+a\hat{\tau}^1}\iprod\hat{\psi}}.
\end{align*}
Now, since $\Omega^1$ is irreducible and $\hat{\psi}$ is non-zero, we have  
\begin{equation*}
    \Omega^1=\pts{\ker\hat{\psi}}^\perp= \pts{\Omega^1\iprod\hat{\psi}}\iprod\hat{\psi},
\end{equation*}
    where the last equality follows from Lemma~\ref{lemma:characterization:Lee:space}. Therefore, Schur's lemma implies that the double contraction map with $\hat{\psi}$ is a scalar multiple of the identity. Since this map is self-adjoint, the scalar is real:
\[
\pp{X\iprod\hat{\psi}}\iprod\hat{\psi}=\alpha\cdot X, \qquad \forall X\in \Omega^1.
\]
Thus, the Lee form associated with the 4-form geometry $(M,G,\hat{\psi})$ is given by
\begin{align*}
    \zeta_{\hat{\psi}} 
    &= H_{\hat{\psi}}\iprod\hat{\psi}
    = \frac{1}{ad-bc}\pts{\pts{d\tau^1-b\hat{\tau}^1}\iprod\psi}\iprod\hat{\psi} 
    + \frac{1}{ad-bc}\pts{\pts{-c\tau^1+a\hat{\tau}^1}\iprod\hat{\psi}}\iprod\hat{\psi} \\
    &= \frac{1}{ad-bc}\pts{\pts{d\tau^1-b\hat{\tau}^1}\iprod\psi}\iprod\hat{\psi}
    + \frac{\alpha}{ad-bc}\pts{-c\tau^1+a\hat{\tau}^1}.
\end{align*}
    For the choice of forms $\psi$ and $\hat{\psi}$ in \ref{parag:su(4):structures}, we show that the first term above vanishes.
\end{remark}

\paragraph{Instantons}
\label{parag:instantons}
    As discussed in \S\ref{subsec:generalized:geometry}, a string algebroid is determined by a 3-form $H\in\Omega^3(M)$ and a connection $\theta$ on a principal bundle over $M$, subject to the heterotic Bianchi identity \eqref{eq: hBi}. In the setting of 4-form geometries, we have identified a canonical choice of $H$, namely the flux 3-form $H_\psi\in\Omega^3(M)$. Although a 4-form geometry does not canonically determine a connection $\theta$, it imposes natural constraints on its choice. In particular, the instanton condition is one of the main hypotheses in our construction of generalized Ricci-flat metrics on string algebroids over 4-form geometries; see \S\ref{sec:ricci:curvature:4:forms}.
In this paragraph, we present the notion of a $G$-instanton used throughout this work.

\begin{definition}
\label{def:instanton}
    Let $(M,G)$ be a $G$-structure. We say that a connection $\vartheta$ on a principal $K$-bundle $P\to M$ is a $G$-instanton if its curvature 2-form $F_\vartheta\in \Omega^2(P,\fk)^G \isomorphic \Omega^2(M,\adjointbundle{P})$ satisfies
\begin{equation*}
    F_{\vartheta}\in \mathfrak g\otimes \adjointbundleP\subset \Omega^2(M,\adjointbundleP),
\end{equation*}
    where the Lie algebra $\mathfrak g=\rmm{Lie}(G)$ is identified with a subspace of 2-forms under the metric-induced isomorphism $\mathfrak{so}(n)\isomorphic \Omega^2$. In particular, if $(M, G,\psi)$ is a 4-form geometry and $\vtheta$ is a $G$-instanton, then $F_\vtheta\diamond\psi=0$, cf. the discussion below \eqref{eq:ker:diamond:=:lie:algebra}.
\end{definition}

\begin{remark}\label{remark:G-instanton:N(G)-structure}
\label{remark:instantons:normaliser}
    The notion of a $G$-instanton makes sense for a broader class of geometric structures; see \cite{Carrion1998,Madnick2024}. Let $G\subset \SO(n)$ be a closed subgroup, and denote its normaliser in $\SO(n)$ by $N(G)$. Suppose that $M^n$ carries an $N(G)$-structure. Since $G$ is closed, $N(G)$ is a closed Lie subgroup. Moreover, one can see that $\fg\subset \Lie(N(G))\defeq \fn(\fg)$ is invariant under the adjoint action of $N(G)$. Consequently, $\fg$ determines a well-defined subbundle, also denoted by $\fg$, of $\Lambda^2T^*M$. A connection $\vtheta$ on a principal $K$-bundle $P\to M$ is then called a $G$-instanton if its curvature $F_\vtheta\in \Omega^2(M,\ad P)$ satisfies
    \begin{equation*}
        F_\vtheta\in\fg\tensor\ad P\subset \Omega^2(M,\ad P).
    \end{equation*}
    This generalisation is relevant in several contexts \cite{Madnick2024}:
    \begin{itemize}
        \item If $G=\SU(2)=\sp(1)\subset \SO(4)$, then $N(G)=\SO(4)=\sp(1)\sp(1)$. Thus, the notion of an $\SU(2)$-instanton makes sense on any oriented Riemannian 4-manifold.

        \item If $G=\SU(m)\subset \SO(2m)$, with $m\geq 3$, then $N(G)=\U(m)$. Thus, the notion of an $\SU(m)$-instanton makes sense on any almost Hermitian manifold $(M^{2m},\U(m))$.

        \item If $G=\Sp(k)\subset \SO(4k)$, with $k\ge 2$, then $N(G)=\Sp(k)\Sp(1)$. Thus, the notion of an $\Sp(k)$-instanton makes sense on any almost quaternion-Hermitian manifold $(M^{4k},\Sp(k)\Sp(1))$.
    \end{itemize}
    For subgroups that are self-normalising in $\SO(n)$, the notion of $G$-instantons is allowed only for genuine $G$-structures. For example,
    \[
        N(\rG_2)=\rG_2\subset\SO(7)
        \qandq
        N(\Spin(7))=\Spin(7)\subset\SO(8),
    \]
    which will be discussed in \S\ref{sec:examples:4-form:geometry}.
\end{remark}

\begin{example}[Four dimensions]
\label{example:dimensao:quatro}
In the 4-form geometry $(M^4,\SO(4),\psi=\vol_M)$, the volume form is parallel, and hence $d^*\psi=0$ and $H_\psi=0$. The bundle of 2-forms decomposes as
\[
\Lambda^2T^*M=\Lambda^2_+T^*M\oplus\Lambda^2_-T^*M,
\qquad
\beta\in\Lambda^2_\pm T^*M
\iff
\star\beta=\pm\beta.
\]
Under the metric identification $\mathfrak{so}(4)\cong\Lambda^2(\bb R^4)^*$, these summands correspond to the two ideals
\[
\mathfrak{so}(4)=\mathfrak{su}(2)_+\oplus\mathfrak{su}(2)_-.
\]
Consequently, the corresponding self and anti-self dual $\SU(2)$-instanton conditions for a connection $\theta$ are
\[
F_\theta\in\Omega^2_\pm(M,\adjointbundleP)
\iff
\star F_\theta=\pm F_\theta
\iff
F_\theta\iprod\psi=\pm F_\theta.
\]
The minus sign gives the anti-self-duality equation used in Donaldson theory; reversing the orientation exchanges the self-dual and anti-self-dual equations \cite{Donaldson1990}. Our notion of instanton (Definition~\ref{def:instanton}) is motivated by this four-dimensional picture, together with the higher-dimensional framework of \cite{Donaldson1998,Donaldson2011,Tian2000} based on distinguished $(n-4)$-forms and the algebraic perspective developed in \cite{Carrion1998}.

The decomposition of $\mathfrak{so}(4)$ into two copies of $\fs\fu(2)$ is peculiar to four dimensions and gives rise to the two notions of instanton described above. No analogous phenomenon occurs for the higher-dimensional 4-form geometries considered in \S\ref{sec:examples:4-form:geometry}. If the structure group is reduced from $\SO(4)$ to $\SU(2)$ while retaining the 4-form $\psi=\vol_M$, this ambiguity disappears. With respect to the orientation induced by the $\SU(2)$-structure, $\Omega^2_+$ decomposes into three one-dimensional subbundles generated by the Hermitian forms, whereas the associated Lie algebra subbundle is the single component $\fg=\Omega^2_-$. Consequently, Definition~\ref{def:instanton} singles out the anti-self-duality equation as the $\SU(2)$-instanton condition.
\end{example}

\subsection{The Yang--Mills and Ricci--Bismut terms}
\label{subsec:yang:mills+ricci:bismut}

It is well known that the metrics induced by torsion-free
$\SU(m)$-, $\rG_2$-, $\spin(7)$-, and $\sp(k)$-structures are
Ricci-flat, $
    \Ric^g=0$ \cite{Berger1955,Salamon1989}. These structures provide important
examples of 4-form geometries, as discussed in greater detail
in Section~\ref{sec:examples:4-form:geometry}. Moreover, if
$\theta\in\Omega^1(P,\fk)$ is an instanton connection with respect to
one of these structures, then it satisfies the \emph{Yang--Mills
equation}, $d^{\theta *}F_\theta=0$  \cite{Tian2000,Fadel2019}.

For structures with torsion, the defining 4-form is generally not
parallel with respect to the Levi--Civita connection, i.e., $\nabla^g\psi\neq 0$.
Nevertheless, in many geometrically relevant cases, one does have $\nabla^+\psi=0$ with respect to the Bismut
connection \cite{Friedrich2003b,Ivanov2004}.
Since $\Ric^g$ does not generally vanish in the presence of torsion,
the more natural curvature tensor for our purposes is $\Ric^+$, the
Ricci curvature of the Bismut connection. Similarly, instantons in
this setting do not, in general, satisfy the ordinary Yang--Mills
equation.

Let $(P,K,\theta,\pdi{\cdot,\cdot}_\fk)\to (M,G,\psi)$ be a principal bundle with connection $\theta$ over a 4-form geometry. The \emph{Yang--Mills} and
\emph{Ricci--Bismut} equations associated with these data are defined,
respectively, by
\begin{align*}
\label{eq:YM}
    d^{\theta *}F_\theta
    -F_\theta\iprod H_\psi
    +\tilde \zeta_\psi\iprod F_\theta
    &=0,
    \tag{YM}
\\
\label{eq:RB}
    \Ric^+
    +F_\theta\circ F_\theta
    +\nabla^+\tilde \zeta_\psi
    &=0,
    \tag{RB}
\end{align*}
where $\tilde \zeta_\psi$ is the scalar multiple of the Lee form $\zeta_\psi$ specified in Definition~\ref{def:generalized:pair:4:form}, and $F_\theta\circ F_\theta:=\bracks{i_{e_\mu}F_\theta,i_{e_\mu}F_\theta}_{\fk}$, where the sum is over an orthonormal frame $\{e_\mu\}$ of the tangent bundle. The Ricci--Bismut equation usually appears decomposed into its symmetric and skew-symmetric parts
\[
  \Ric^g+F_\theta\circ F_\theta -\frac14H_\psi^2+\frac12\mcal L_{\tilde\zeta_\psi}g=0 
  \qandq
d^*H_\psi-d\tilde \zeta_\psi+\tilde \zeta_\psi\iprod H_\psi=0,
\]
where $H_\psi\circ H_\psi=H^{\mu\nu}\otimes H_{\mu\nu}$.

The equations \eqref{eq:YM} and \eqref{eq:RB} generalise, in the presence of torsion, the ordinary Yang--Mills equation and the Ricci flatness condition $\Ric^g=0$, respectively. 
As shown in \S\ref{subsec:generalized:geometry}, these equations arise as components of the generalized Ricci tensor of a string algebroid and are satisfied by the structures considered above. We refer to the left-hand sides of \eqref{eq:YM} and \eqref{eq:RB} as the \emph{Yang--Mills term} and the \emph{Ricci--Bismut term}, respectively:
\begin{align*}
\label{eq:YMt}
\tag{YMt}
    d^{\theta *}F_\theta
    -F_\theta\iprod H_\psi
    +\tilde \zeta_\psi\iprod F_\theta,
    \\
    \label{eq:RBt}
    \tag{RBt}
    \Ric^+
    +F_\theta\circ F_\theta
    +\nabla^+\tilde \zeta_\psi.
\end{align*}
Together, these equations can be regarded as a generalisation of the Ricci soliton equation and are referred to in the literature as the \emph{string generalized Ricci soliton equations} \cite{kennon2026}.

In this subsection, we consider general 4-form geometries and derive expressions for their Yang--Mills and Ricci--Bismut terms, with the aim of determining when \eqref{eq:YM} and \eqref{eq:RB} hold. In the setting of string algebroids, these conditions are equivalent to generalized Ricci flatness, as discussed in \S\ref{sec:ricci:curvature:4:forms}.

\paragraph{The Yang--Mills term \eqref{eq:YMt}}
Let $(M,G,\psi)$ be a 4-form geometry and consider the decomposition of the space of 2-forms into eigenspaces 
\begin{equation*}
\Omega^2(M)=\Omega^2_{1}\oplus\cdots\oplus\Omega^2_{p}
, \qquad \vet H^2_\psi|_{\Omega^2_{i}}=\lambda_i\cdot \id_{\Omega^2_{i}}.
\end{equation*}
Fix $i_0\in\{1,\dots,p\}$. Then, for any $\beta\in\Omega^2(M)$, we have
\begin{equation}
    \label{eq:projec:aux}
    \beta\iprod\psi-\lambda_{{i_0}}\beta = 
\sum_{i\neq i_0}^p(\lambda_i-\lambda_{i_0})\proj_{\Omega^2_{i}}(\beta).
\end{equation}
For each non-zero eigenvalue $\lambda_{i_0}$, define the map $\mathfrak A_{i_0}: \Omega^2\otimes \Omega^k\to \Omega^{1}\otimes \Omega^{k-1}$ by
\begin{equation*}
\mathfrak{A}_{i_0}(\beta\otimes \xi):=\frac1{\lambda_{i_0}}
 i_{e_\mu}(\beta\iprod\psi-\lambda_{i_0}\beta)\otimes i_{e_\mu}\xi
=
\sum_{i\neq i_0}^p
    \frac{\lambda_i-\lambda_{i_0}}{\lambda_{i_0}} i_{e_\mu}\pp{\proj_{\Omega^2_{i}}(\beta)}\otimes i_{e_\mu}\xi,
\end{equation*}
where summation over $\mu$ is understood with respect to an orthonormal basis $\{e_\mu\}$ of $T_pM$.
Note that if $\beta\in \Omega^2_{i_0}$ is an eigenvector, then $\mathfrak A_{i_0}(\beta \otimes \xi)=0$.

\begin{proposition}[Yang--Mills term]
\label{prop:yang:mills}
    Let $(M,G,\psi)$ be a 4-form geometry, and let
$(P,K,\theta,\pdi{\cdot,\cdot}_{\fk})$ be a principal bundle with a connection $\theta$. 
Then, for every eigenvalue $\lambda_{i_0}\neq 0$, the following identity holds:
{
\begin{equation*}
\begin{aligned}
{d^\theta}{^*} F_\theta -F_\theta\iprod H_\psi-\frac1{\lambda_{i_0}}\zeta_\psi\iprod F_\theta
&=
-\frac{1}{\lambda_{i_0}}
H_\psi\iprod \pp{F_\theta\diamond \psi }
+\mathfrak A_{i_0}
(\nabla^{\theta,+}F_\theta)
.
\end{aligned}
\end{equation*}
}
\end{proposition}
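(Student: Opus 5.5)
The plan is to avoid differentiating $\psi$ altogether and to read the identity off from the Bianchi identity $d^\theta F_\theta=0$, rewritten with the Bismut connection. The map $\mathfrak A_{i_0}$ is applied to $\nabla^{\theta,+}F_\theta\in\Omega^1\otimes\Omega^2\otimes\adjointbundleP$. Here the $\Omega^2$-slot plays the role of $\beta$ and the $\Omega^1$-slot plays the role of $\xi$ (so $k=1$), and the output is an $\adjointbundleP$-valued 1-form. Unwinding the definition, I expect
\[
\mathfrak A_{i_0}(\nabla^{\theta,+}F_\theta)=\frac1{\lambda_{i_0}}\,i_{e_\mu}\pp{(\nabla^{\theta,+}_{e_\mu}F_\theta)\iprod\psi}-i_{e_\mu}\nabla^{\theta,+}_{e_\mu}F_\theta ,
\]
with $\psi$ treated only as a fixed algebraic coefficient inside the first term. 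Note that no $\nabla^+\psi$ ever appears. The proof then amounts to identifying each of the two terms on the right with terms on the left-hand side of the proposition.

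\emph{Step 1 (second term).} Since $\nabla^{\theta,+}=\nabla^{\theta,g}+\tfrac12 g^{-1}H_\psi$ by \eqref{eq:bismut:connection}, and $d^{\theta*}F_\theta=-i_{e_\mu}\nabla^{\theta,g}_{e_\mu}F_\theta$, I would compute the torsion correction. This gives
\[
-i_{e_\mu}\nabla^{\theta,+}_{e_\mu}F_\theta=d^{\theta*}F_\theta+c_1\,F_\theta\iprod H_\psi
\]
for a universal constant $c_1$ (it should be $\pm 1$, with the trace $H_{\mu\mu\cdot}=0$ killing one piece).

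\emph{Step 2 (first term).} Because $\psi$ is totally skew, the contraction $(\nabla_{e_\mu}F)_{ab}\psi_{\mu abc}$ only sees the full antisymmetrisation of $\nabla F$ in $(\mu,a,b)$. Hence
\[
i_{e_\mu}\pp{(\nabla^{\theta,+}_{e_\mu}F_\theta)\iprod\psi}=c_2\,\pp{e^\mu\wedge\nabla^{\theta,+}_{e_\mu}F_\theta}\iprod\psi .
\]
Next, $e^\mu\wedge\nabla^{\theta,+}_{e_\mu}F_\theta=d^\theta F_\theta+T(H_\psi,F_\theta)$, where the torsion term is an explicit $\adjointbundleP$-valued 3-form of the shape $\sum_\mu (e_\mu\iprod H_\psi)\wedge(e_\mu\iprod F_\theta)$, up to a constant. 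The Bianchi identity $d^\theta F_\theta=0$ then leaves
\[
\frac{1}{\lambda_{i_0}}\,c_2\,T(H_\psi,F_\theta)\iprod\psi ,
\]
which is a purely algebraic trilinear expression in $H_\psi$, $F_\theta$ and $\psi$.

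\emph{Step 3 (algebraic identity).} It remains to show that
\[
c_2\,T(H_\psi,F_\theta)\iprod\psi=\zeta_\psi\iprod F_\theta-H_\psi\iprod(F_\theta\diamond\psi)
\]
(with the appropriate sign on the $F_\theta\iprod H_\psi$ term absorbed from Step 1). For this I expand $F_\theta\diamond\psi=F^j\wedge\psi_j$ and contract with $H_\psi$. Contracting $H$ against the $\psi_j$ factor produces $(H_\psi\iprod\psi)\iprod F_\theta=\zeta_\psi\iprod F_\theta$, using Definition~\ref{def:lee:form}. Contracting $H$ partly against $F^j$ produces exactly the mixed triple contraction $H_{\mu a\cdot}F_{\mu b}\psi_{ab\cdot\cdot}$ appearing in $T(H_\psi,F_\theta)\iprod\psi$. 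Assembling Steps 1--3 gives the claimed formula.

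I expect the main obstacle to be Step 3, together with consistent bookkeeping of the constants $c_1,c_2$ and of the signs coming from the combinatorial factors in $\iprod^2$ and $\diamond$. A useful check is the model case $\psi=\vol_M$ in dimension four, where $H_\psi=0$ and the identity reduces to $d^{\theta*}F_\theta=\mathfrak A(\nabla^\theta F_\theta)$, i.e.\ the (anti-)self-dual splitting of $F_\theta$ together with the Bianchi identity. I would first verify the index identity in a frame by brute force, and only then organise it into the invariant form.
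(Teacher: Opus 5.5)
Your route is correct, and it reaches the paper's final algebraic step by a shorter path.

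\textbf{How the paper argues.} The paper does not antisymmetrise. It applies the characteristic codifferential $\delta^+=d^*-H_\psi\iprod^2$ both to $F_\theta$ and to $F_\theta\iprod\psi$. It uses the Leibniz rule $\nabla^{\theta,+}F_\theta\iprod\psi=\nabla^{\theta,+}(F_\theta\iprod\psi)-F_\theta\iprod\nabla^+\psi$. It then expands $d^{\theta*}(F_\theta\iprod\psi)$ via the Hodge star, so that the two $F_\theta\iprod d^*\psi$ terms cancel and the Bianchi identity removes the $d^\theta F_\theta$ term. This leaves $(F_\theta\iprod\psi)\iprod H_\psi-F_\theta\iprod(H_\psi\iprod^2\psi)$. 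A separate frame computation rewrites this as $-(F_j\iprod H_\psi)\iprod\psi_j$, with $F_j=e_j\iprod F_\theta$ and $\psi_j=e_j\iprod\psi$. Only then does the paper expand $H_\psi\iprod(F_\theta\diamond\psi)$, exactly as in your Step 3.

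\textbf{What your route buys.} Your observation is that the trace against the totally skew $\psi$ only sees $e^\mu\wedge\nabla^{\theta,+}_{e_\mu}F_\theta$. This lands directly on $(F_j\iprod H_\psi)\iprod\psi_j$. It never introduces $\nabla^+\psi$ or $d^*\psi$, and it skips the intermediate frame identity. It also makes evident that the statement is a pointwise algebraic identity modulo $d^\theta F_\theta=0$. This is the same mechanism the paper itself uses for the Ricci--Bismut term: antisymmetrise against $\psi$, then apply the Bianchi identity with skew torsion. In exchange, the paper's version keeps the computation organised around $\delta^+$, the operator that actually appears in the Yang--Mills term.

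\textbf{Fixing the constants you left open.} With $\nabla^+=\nabla^g+\tfrac12 g^{-1}H_\psi$ one finds $c_1=-1$. So Step 1 gives exactly $d^{\theta*}F_\theta-F_\theta\iprod H_\psi$, and nothing needs to be absorbed into Step 3. One also finds $c_2=1$ and $T(H_\psi,F_\theta)=-\sum_k(e_k\iprod H_\psi)\wedge(e_k\iprod F_\theta)$. Hence
\[
T\iprod\psi=(F_j\iprod H_\psi)\iprod\psi_j=H_\psi\iprod(F_\theta\diamond\psi)-\zeta_\psi\iprod F_\theta ,
\]
using $H_\psi\iprod(F_j\wedge\psi_j)=(F_j\iprod H_\psi)\iprod\psi_j-(H_\psi\iprod\psi_j)F_j$ and $(H_\psi\iprod\psi_j)F_j=-\zeta_\psi\iprod F_\theta$. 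This is the opposite sign to the identity displayed in your Step 3. With your sign, both $\tfrac1{\lambda_{i_0}}$-terms in the conclusion would come out reversed; with the corrected sign, Steps 1--3 assemble to exactly the stated formula.
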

\begin{proof}
    Applying the characteristic codifferential $\delta^+:=-i_{e_j}\nabla^+_{e_j}=d^*-H_\psi\iprod^2{}$ to the curvature 2-form, we obtain
\begin{align*}
- i_{e_j} \nabla^{\theta,+}_{e_j}F_\theta&=\delta^{\theta,+}F_\theta
=d^{\theta*}F_\theta -F_\theta\iprod H_\psi.
\end{align*}
Since $\nabla^{\theta,+}F_\theta\iprod\psi=\nabla^{\theta,+}(F_\theta\iprod\psi)-F_\theta\iprod\nabla^+\psi$, taking the characteristic codifferential gives
\begin{align*}
   -i_{e_j}\pp{\nabla^{\theta,+}_{e_j} F_\theta \iprod \psi}
   &=
   \delta^{\theta,+}(F_\theta\iprod\psi)-F_\theta\iprod \delta^+\psi
   \\&=
   d^{\theta*}(F_\theta\iprod\psi)
   -H_\psi\iprod^2(F_\theta\iprod\psi)
   -F_\theta\iprod d^*\psi 
   +F_\theta\iprod(H_\psi\iprod^2\psi)
   \\&=
   \cancel{d^{\theta}F_\theta} \iprod\psi+ \bcancel{F_\theta\iprod d^*\psi}
   -(F_\theta\iprod\psi)\iprod H_\psi
   -\bcancel{F_\theta\iprod d^*\psi }
       +F_\theta\iprod(H_\psi \iprod^2\psi),
\end{align*}
where we have used the Bianchi identity $d^\theta F_\theta=0$.
Using \eqref{eq:projec:aux}, we compute
\begin{align*}
\lambda_{i_0}\cdot\mathfrak A_{i_0}(\nabla^{\theta,+}F_\theta)&=
\sum_{i\neq i_0}(\lambda_i-\lambda_{i_0})i_{e_\mu}\pp{\proj_{\Omega^2_{k_i}}\nabla^{\theta,+}_{e_\mu }F_\theta}
=
i_{e_\mu}\pp{\nabla^{\theta,+}_{e_\mu}F_\theta\iprod\psi-\lambda_{i_0}\nabla^{\theta,+}_{e_\mu}F_\theta}\\&=
(F_\theta\iprod\psi)\iprod H_\psi-F_\theta\iprod\pp{H_\psi\iprod^2\psi}
+\lambda_{i_0}\pp{d^{\theta *}F_\theta-F_\theta\iprod H_\psi}
\\
\Rightarrow\quad
{d^\theta}{^*} F_\theta -F_\theta\iprod H_\psi
&=
\frac{1}{\lambda_{i_0}}\pp{
F_\theta\iprod\pp{H_\psi\iprod^2\psi}
-\pp{F_\theta\iprod \psi }\iprod H_\psi}
+\mathfrak A_{i_0}
(\nabla^{\theta,+}F_\theta)
.
\end{align*}
We now consider the first term on the right-hand side. A direct computation in an orthonormal frame, using \(
e^{cd}\iprod e^{ijk}
=
(\delta_c^i\delta_d^j-\delta_c^j\delta_d^i)e^k
-(\delta_c^i\delta_d^k-\delta_c^k\delta_d^i)e^j
+(\delta_c^j\delta_d^k-\delta_c^k\delta_d^j)e^i\), gives
    \begin{align*}
        F_\theta\iprod (H_\psi\iprod^2\psi)
        &=\frac{1}{2!2!1!}
        F_\theta\iprod \pp{
        H_{abi}\psi_{abjk}e^{ijk}
        }
        =\frac18 F_{cd}H_{abi}\psi_{abjk} e^{cd}\iprod e^{ijk}
        \\&=
\frac14H_{abi}\psi_{abjk}
\left(
F_{ij}e^k-F_{ik}e^j+F_{jk}e^i
\right)
=
\frac14F_{ij}
\left(
2H_{abi}\psi_{abjk}
+
H_{abk}\psi_{abij}
\right)e^k
\\&=
(F_\theta\iprod \psi)\iprod H_\psi
+\frac12F_{ij}H_{abi}\psi_{abjk}e^k
=
(F_\theta\iprod \psi)\iprod H_\psi
-(F_j\iprod H_\psi)\iprod\psi_j.
\end{align*}
Next, we have
    \begin{align*}
    H_\psi\iprod (F_\theta\diamond\psi)=H_\psi \iprod(F_j\wedge \psi_j)
        =
        (F_j\iprod H_\psi)\iprod\psi_j-(H_\psi\iprod\psi_j)F_j,
    \end{align*}
    where the last term on the right-hand side above is related to the Lee form
    \[
    (H_\psi\iprod\psi_j)F_j=
    -e_j\iprod (H_\psi\iprod\psi )F_j=-(H_\psi\iprod\psi)_jF_j=
    -\zeta_jF_j=-\zeta_\psi\iprod F_\theta .
    \]
Combining the identities above, we obtain
    \begin{align*}
F_\theta\iprod\pp{H_\psi\iprod^2\psi}
-\pp{F_\theta\iprod \psi }\iprod H_\psi
&= -(F_j\iprod H_\psi)\iprod\psi_j
= -H_\psi\iprod(F_\theta\diamond\psi)-(H_\psi\iprod\psi_j)F_j
\\&= -H_\psi\iprod(F_\theta\diamond\psi) + \zeta_\psi\iprod F_\theta.\qedhere
\end{align*}
\end{proof}

\paragraph{The Ricci--Bismut term \eqref{eq:RBt}}
We introduce the \emph{Bismut form} $\mathfrak R^+\in \Omega^2(M,\Omega^2)$, 
obtained by interchanging the two pairs of indices in the curvature tensor $R_{\nabla^+}\in \Omega^2\otimes \Omega^2$ of the Bismut connection $\nabla^+$, cf. \eqref{eq:bismut:connection}:
    \begin{equation}
    \label{eq:definition:R^+}
    \mathfrak R^+
(X, Y )(Z, W) := g(R_{\nabla^+}(Z, W)X, Y),
    \end{equation}
    which in coordinates reads
\(
\mathfrak R^+_{ijkl}=R^+_{klij}.
\)

\begin{proposition}[Ricci--Bismut term]
\label{prop:ricci:bismut}
Let $(M,G,\psi)$ be a 4-form geometry, and let
$(P,K,\theta,\pdi{\cdot,\cdot}_{\fk})$ be a principal bundle with a connection $\theta$. Suppose that the flux
$3$-form $H_\psi$ satisfies the \emph{heterotic Bianchi identity} \eqref{eq: hBi4}.
Then, for every eigenvalue $\lambda_{i_0}\neq 0$, the following identity holds:
{
\begin{equation*}
\Ric^+ + F_\theta\circ F_\theta
-\frac1{\lambda_{i_0}}\nabla^+\zeta_\psi
=
-\frac{1}{\lambda_{i_0}}
H_\psi\iprod\nabla^+\psi
+
\mathfrak A_{i_0}
\pp{
\mathfrak R^+
-
\bracks{F_\theta\otimes F_\theta}_{\fk}
}.
\end{equation*}
}
\end{proposition}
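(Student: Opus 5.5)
The plan is to mirror the proof of Proposition~\ref{prop:yang:mills}, with the curvature $F_\theta$ replaced by the Bismut form $\mathfrak R^+$ corrected by $\bracks{F_\theta\otimes F_\theta}_\fk$. The starting point is the Ricci identity for $\nabla^+$: its Ricci tensor is a contraction of the Bismut curvature,
\[
\Ric^+(X,Y)=\sum_\mu g\pp{R_{\nabla^+}(e_\mu,X)Y,e_\mu},
\]
which in terms of $\mathfrak R^+$ reads $\Ric^+=\pm\, i_{e_\mu}\mathfrak R^+\otimes i_{e_\mu}(\cdot)$, i.e.\ the same kind of partial contraction that appears in $\mathfrak A_{i_0}$. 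Likewise $F_\theta\circ F_\theta$ is the analogous contraction of $\bracks{F_\theta\otimes F_\theta}_\fk$. So the left-hand side is, up to sign, $i_{e_\mu}\mathcal S\otimes i_{e_\mu}$ with $\mathcal S:=\mathfrak R^+-\bracks{F_\theta\otimes F_\theta}_\fk$, regarded as an element of $\Omega^2\otimes\Omega^2$ whose first factor is the "curvature" slot. I will then split the first factor using \eqref{eq:projec:aux}, exactly as in the Yang--Mills computation:
\[
\lambda_{i_0}\,\mathfrak A_{i_0}(\mathcal S)=i_{e_\mu}(\mathcal S\iprod\psi)\otimes i_{e_\mu}-\lambda_{i_0}\,i_{e_\mu}\mathcal S\otimes i_{e_\mu}.
\]
The term $-\lambda_{i_0}\,i_{e_\mu}\mathcal S\otimes i_{e_\mu}$ reproduces $\lambda_{i_0}(\Ric^++F_\theta\circ F_\theta)$. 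The task therefore reduces to showing that the first term equals $H_\psi\iprod\nabla^+\psi-\nabla^+\zeta_\psi$.

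To evaluate $\mathfrak R^+\iprod\psi$, with the contraction taken on the first pair $(X,Y)$, I will use the Bianchi-type identity for connections with skew torsion. The first Bianchi identity for $\nabla^+$, combined with $dH_\psi$, gives the well-known pair-exchange formula
\[
R^+(X,Y,Z,W)-R^-(Z,W,X,Y)=\tfrac12\, dH(X,Y,Z,W)
\]
in suitable conventions, where $R^-$ is the curvature of $\nabla^-$. By \eqref{eq: hBi4}, $dH_\psi=\bracks{F_\theta\wedge F_\theta}_\fk$. Its antisymmetrisation differs from the symmetric tensor $\bracks{F_\theta\otimes F_\theta}_\fk$ by terms which cancel after the contraction $i_{e_\mu}\otimes i_{e_\mu}$ against $\psi$. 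This is the role of the correction term in $\mathcal S$. Hence $\mathcal S\iprod\psi$ is expressed through $R_{\nabla^-}$ acting on $\psi$ in its curvature slot, i.e.\ through $R_{\nabla^+}(Z,W)\diamond\psi$ after exchanging pairs once more. Then I will use the Ricci identity
\[
R_{\nabla^+}(e_\mu,X)\diamond\psi=\nabla^+_{e_\mu}\nabla^+_X\psi-\nabla^+_X\nabla^+_{e_\mu}\psi-\nabla^+_{[e_\mu,X]}\psi,
\]
contract with $e_\mu$ and $\psi$, and note that $\nabla^+$ is metric and its torsion is $H_\psi$. The contraction of the commutator yields two kinds of terms. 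The first is $\nabla^+$ of the contraction $\psi\iprod\nabla^+\psi$-type quantities, which I expect to assemble into $\nabla^+(H_\psi\iprod\psi)=\nabla^+\zeta_\psi$ via $\delta^+\psi=d^*\psi-H_\psi\iprod^2\psi$ and Definition~\ref{def:flux:3:form}. The second is a torsion term, $H_\psi\iprod\nabla^+\psi$. Dividing by $\lambda_{i_0}$ then gives the stated identity.

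The main obstacle is the middle step: keeping careful track of index conventions and signs in the exchange $\mathfrak R^+\leftrightarrow R^-$ and in verifying that exactly $\bracks{F_\theta\otimes F_\theta}_\fk$, rather than some multiple, cancels the $dH_\psi$ contribution. A related subtlety is the projection issue. We only know $\proj^\perp(d^*\psi)=\bH_\psi(H_\psi)$, so one must check that the kernel component of $\delta^+\psi$ does not survive the contraction with $\psi$. I would handle this by observing that $\ker\bH_\psi$ is annihilated by $\cdot\iprod^2\psi$, so only $\proj^\perp$ enters. I would first verify the whole computation in the model case $\nabla^+\psi=0$, where the right-hand side is just $\mathfrak A_{i_0}(\mathcal S)$, to fix all signs, and then add the $\nabla^+\psi$ terms.
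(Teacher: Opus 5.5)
Your opening reduction matches the paper's: applying \eqref{eq:projec:aux} to the endomorphism slot of $\mathfrak R^+$ and to $\bracks{F_\theta\otimes F_\theta}_\fk$ reduces the claim to one identity. You must show that the traced contraction $\tfrac12R^+_{i\mu ab}\psi_{ab\mu j}$, minus its $F_\theta$-part, equals $\tfrac16(\nabla^+_iH_\psi)_{ab\mu}\psi_{ab\mu j}$. The Leibniz rule $\nabla^+\zeta_\psi=\nabla^+H_\psi\iprod\psi+H_\psi\iprod\nabla^+\psi$ then finishes.

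The gap is the middle step: the Ricci identity for $\psi$ is the wrong tool. It turns curvature into antisymmetrised second derivatives of $\psi$. After tracing you get terms like $\delta^+(\nabla^+_X\psi)$ and $\nabla^+_X(\delta^+\psi)$, where $\delta^+\psi=d^*\psi-H_\psi\iprod^2\psi$ is the component of $d^*\psi$ in $\ker\bH_\psi$ and is generically non-zero. Your kernel argument does not dispose of these, since $\nabla^+_X$ does not preserve $\ker\bH_\psi$ unless $\nabla^+\psi=0$. The statement contains no second derivatives of $\psi$, and nothing converts such terms into $\nabla^+H_\psi\iprod\psi$.

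There is a second problem. The Ricci identity sees $R_{\nabla^+}(X,Y)$ only through the diamond action, whereas you need the double contraction $R_{\nabla^+}(X,Y)\iprod\psi$ on the endomorphism slot. For a general eigenvalue $\lambda_{i_0}$ these are not interchangeable, so identifying $\mathcal S\iprod\psi$ with $R_{\nabla^+}(Z,W)\diamond\psi$ fails. Even in your model case $\nabla^+\psi=0$, you only learn $R_{\nabla^+}(X,Y)\diamond\psi=0$. That says nothing about $\nabla^+\zeta_\psi=\nabla^+H_\psi\iprod\psi$, which is still present and generally non-zero. The pair-exchange formula produces $dH_\psi$ but no $\nabla^+H_\psi$.

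The missing ingredient is the first Bianchi identity for a metric connection with skew torsion, with one slot fixed:
\[
R^+(V,X,Y,Z)+R^+(V,Y,Z,X)+R^+(V,Z,X,Y)=\tfrac12\,dH_\psi(V,X,Y,Z)+(\nabla^+_VH_\psi)(X,Y,Z).
\]
Combine it with the total antisymmetry of $\psi$ in the three contracted indices:
\[
\tfrac12R^+_{i\mu ab}\psi_{ab\mu j}=\tfrac16\pp{R^+_{i\mu ab}+R^+_{ib\mu a}+R^+_{iab\mu}}\psi_{ab\mu j}=-\tfrac1{12}(dH_\psi)_{ab\mu i}\psi_{ab\mu j}+\tfrac16(\nabla^+_iH_\psi)_{ab\mu}\psi_{ab\mu j}.
\]
No derivative of $\psi$ appears. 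For the $dH_\psi$ term, insert \eqref{eq: hBi4}. The three pairings in $\bracks{F_\theta\wedge F_\theta}_\fk$ do not cancel against $\psi$; they contribute equally, giving $F^\alpha{}_i{}^\mu F_\alpha{}^{\nu\rho}\psi_{j\mu\nu\rho}$. Applying \eqref{eq:projec:aux} to $F_\theta\iprod\psi$ turns this into $2\lambda_{i_0}\pp{F_\theta\circ F_\theta+\fA_{i_0}(\bracks{F_\theta\otimes F_\theta}_\fk)}_{ij}$.

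The flux equation $\proj^\perp(d^*\psi)=\bH_\psi(H_\psi)$ is never used. The identity holds for any 4-form $\psi$ and any 3-form $H$ satisfying the heterotic Bianchi identity, with $\zeta=H\iprod\psi$. Your worry about $\delta^+\psi$ and the kernel projection is an artefact of the Ricci-identity route.
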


    \begin{proof}
    We adapt the argument of Ivanov et al.\ (see \cite[Theorem 4.5]{Ivanov2023b} or \cite[Theorem 4.3]{Ivanov2023}) to express the Ricci curvature of $\nabla^+$ in terms of $dH_\psi$ and $\nabla^+H_\psi$.
    Using the decomposition of $\Omega^2$ and applying \eqref{eq:projec:aux}, we obtain 
    \begin{align*}
\frac12R^+_{ijab}{\psi^{ab}}_{kl}&=
\lambda_{i_0}\ R^+_{ijkl}
+\sum_{i\neq i_0}^p
(\lambda_i-\lambda_{i_0})\pp{\proj_{\Omega^2_{i}}\pp{R^+_{ij}}}_{kl}.
\end{align*}
For the Ricci curvature of $\nabla^+$, we have
\begin{align*}
    \lambda_{i_0}\cdot \rmm{Ric}^+_{ij} &:= \lambda_{i_0}\ R^+_{i\mu \mu j} 
= 
\frac12R^+_{i\mu ab}\psi_{ab \mu j} 
-\sum_{i\neq i_0}^p
(\lambda_i-\lambda_{i_0})\pp{\proj_{\Omega^2_{i}}\pp{R^{+}_{i\mu}}}_{\mu j}
\\&=
\frac{1}{6}\pp{
R^+_{i\mu ab}+
R^+_{i\mu ab}+
R^+_{i\mu ab}
}\psi_{ab \mu j} 
+\lambda_{i_0}\cdot\mathfrak A_{i_0}(\mathfrak R^+)_{ij}
\\&=
\frac{1}{6}\pp{
R^+_{i\mu ab}+
R^+_{i b\mu a}+
R^+_{i ab\mu}
}\psi_{ ab\mu j}
+\lambda_{i_0}\cdot\mathfrak A_{i_0}(\mathfrak R^+)_{ij} .
\end{align*}
Consider the Bianchi identity for connections $\nabla$ with skew-symmetric torsion $T\in \Omega^3(M)$ and curvature $R$, cf. \cite[\S 2]{Friedrich2003b} and \cite[Prop. 2.1]{Ivanov2024}:
\[
R(V, X, Y, Z) + R(V, Y, Z, X) + R(V, Z, X, Y ) = 
\frac12
dT(V,X, Y, Z) + (\nabla_V T)(X, Y, Z).
\]
Applying it to the Bismut connection $\nabla^+$, we have
\begin{align*}
\rmm{Ric}^+_{ij} &= \frac{1}{6\lambda_{i_0}}\pp{-\frac{1}{2}(dH_\psi)_{ab\mu i} + (\nabla^+_iH_\psi)_{ab\mu}}\psi_{ab\mu j} 
+\mathfrak A_{i_0}(\mathfrak R^+)_{ij}
\\
\label{eq:two:parts:ricci:ij:geral}
&=- \frac{1}{12\lambda_{i_0}}(dH_\psi)_{ab\mu i}\psi_{ab\mu j} + \frac{1}{6\lambda_{i_0}}(\nabla^+_iH_\psi)_{ab\mu}\psi_{ab\mu j}
+\mathfrak A_{i_0}(\mathfrak R^+)_{ij}.
\end{align*}
Choosing a basis $\{r_\alpha\}$ of $\mathfrak k$ with respect to the pairing $\langle\cdot,\cdot\rangle_\fk$, we have
\[
F_\theta= \frac{1}{2!}{F^\alpha}_{ij}e^{ij} \otimes r_\alpha
\Rightarrow
\langle F_\theta\wedge F_\theta \rangle_\fk  
= \frac{1}{4}{F^\alpha}_{ab}F_{\alpha kl}e^{abkl}.
\]
    Using the heterotic Bianchi identity \eqref{eq: hBi4}, we have 
$
    (dH_\psi)_i = 
{F{^\alpha}_{i\mu}F_{\alpha \nu\rho } e^{\mu\nu\rho}}$, and contracting with $\psi_j=\tfrac{1}{3!}\psi_{j\mu\nu\rho}e^{\mu\nu\rho}$, we obtain
\begin{align*}
    (dH_\psi)_i\iprod \psi_j 
    &= 
{F{^\alpha}{_{i}{^\mu}}F_{\alpha}{^{\nu\rho} }
}\psi_{j\mu\nu\rho} 
    = 2\lambda_{i_0}F{^\alpha}{_{i}{^\mu}}F_{\alpha j\mu}
+2\sum_{i\neq i_0}(\lambda_i-\lambda_{i_0}){F^\alpha}{_i}{^\mu}\pp{\proj_{\Omega^2_{i}}(F_\alpha)}{}_{j\mu}
\\&=
2\lambda_{i_0}\pp{\bracks{e_\mu\iprod F_\theta,e_\mu\iprod F_\theta}_\fk{}_{ij}
+\mathfrak A_{i_0}\pp{\bracks{F_\theta\otimes F_\theta}_\mathfrak k}}.
    \end{align*}
On the other hand, $
(dH_\psi)_i\iprod \psi_j=
\tfrac{1}{6}(dH_\psi)_{abc i}\psi_{abc  j}$. Combining this identity with the computations above, we obtain
\begin{align*}
    \Ric^++F_\theta\circ F_\theta &=\frac{1}{\lambda_{i_0}}
    \nabla^+H_\psi\iprod\psi +\mathfrak A_{i_0}\pp{\mathfrak R^+-\bracks{F_\theta\otimes F_\theta}_\mathfrak k},
\end{align*}
and the result follows from the Leibniz rule $\nabla^+\zeta_\psi=\nabla^+(H_\psi\iprod\psi)=\nabla^+H_\psi\iprod\psi+H_\psi\iprod\nabla^+\psi$.% and the definition of the Lee form $\zeta_\psi=H_\psi\iprod\psi$.
\end{proof}

\subsection{Background on generalized geometry}
\label{subsec:generalized:geometry}
In this section, we provide the terminology and results from generalized geometry needed for the construction of string algebroids over 4-form geometries. We refer the reader to \cite{Garcia-Fernandez2020a,Garcia-Fernandez2014,daSilva2024a} for details.

\paragraph{String algebroids and generalized connections}\label{parag:string:algebroids:and:generalized:connections}

Let us briefly review the main concepts from generalized geometry that will be used later to apply the theory of 4-form geometries.

\begin{definition}[Courant algebroid]
\label{def:courant:algebroid}
A \emph{Courant algebroid} \index{Courant algebroid} $(E, \langle \cdot, \cdot \rangle, [\cdot, \cdot], \pi)$ consists of a vector bundle $E \to M$ equipped with a non-degenerate symmetric bilinear form $\langle \cdot, \cdot \rangle$ (inducing an isomorphism $E^* \cong E$), a \emph{Dorfman bracket} $[\cdot, \cdot]$ on $\Gamma(E)$, and a bundle map $\pi \colon E \to TM$, called the \emph{anchor map},
satisfying the following axioms for all $a, b, c \in \Gamma(E)$ and $f \in C^\infty(M)$:
\begin{multicols}{2}
\begin{enumerate}[label=\noit, itemsep=1pt,parsep=1pt]
    \item $\left[a, [b, c]\right] = \left[[a, b], c\right] + \left[b, [a, c]\right]$,
    \item $\pi\left[a, b\right] = [\pi(a), \pi(b)]$,
    \item $\left[a, f b\right] = f \left[a, b\right] + \pi(a)(f) b$,
    \item $\pi(a) \langle b, c \rangle = \langle [a, b], c \rangle + \langle b, [a, c] \rangle$,
    \item $\left[a, b\right] + \left[b, a\right] = \pi^* d \langle a, b \rangle$.
\end{enumerate}
\end{multicols}
\noindent A Courant algebroid is called \emph{transitive}, \emph{heterotic}, or \emph{of string type} if $\pi$ is surjective.
\end{definition}

\begin{example}[String algebroids] 
\label{example:transitive:Courant}
\label{example:string:alegbroid}
Let $P \to M$ be a principal $K$-bundle, and suppose that $\mathfrak{k}=\Lie(K)$ is equipped with a non-degenerate bi-invariant symmetric pairing $\langle \cdot, \cdot \rangle_\mathfrak{k}$. Consider the vector bundle
\begin{equation*}
E = TM \oplus \adjointbundle{P} \oplus T^*M,
\end{equation*}
where $\adjointbundle{P} = P \times_{\text{Ad}} \mathfrak{k}$ is the adjoint bundle. Equip $E$ with the symmetric pairing
\begin{equation*}
\langle X + r + \xi, Y + t + \eta \rangle = \frac{1}{2} \left( \eta(X) + \xi(Y) \right) + \langle r, t \rangle_\mathfrak{k},
\end{equation*}
and the anchor map $\pi \colon X + r + \xi \mapsto X$.

Given a 3-form $H \in \Omega^3(M)$ and a connection $\theta \in \Omega^1(P, \mathfrak{k})$ with curvature 2-form $F_\theta \in \Omega^2(P, \mathfrak{k})$, define the Dorfman bracket on $\Gamma(E)$ by
\begin{equation*}
\begin{split}
[X + r + \xi, Y + t + \eta] &=  [X, Y] + \mathcal{L}_X \eta - i_Y d\xi + i_Y i_X H - [r, t] - F_\theta(X, Y) 
\\& \quad
+ d^\theta_X t 
 - d^\theta_Y r + 2 \langle d^\theta r, t \rangle_\mathfrak{k} + 2 \langle i_X F_\theta, t \rangle_\mathfrak{k} - 2 \langle i_Y F_\theta, r \rangle_\mathfrak{k}.
\end{split}
\end{equation*}
The data $(E, \langle \cdot, \cdot \rangle, [\cdot, \cdot], \pi)$ define a Courant algebroid if and only if the \emph{heterotic Bianchi identity}, 
\begin{equation}\label{eq: hBi}
    \tag{HBI}
    dH=\bracks{F_\theta\wedge F_\theta}_\fk,
\end{equation}
 holds. This construction defines a string algebroid, since $\pi$ is surjective.

\medskip
On the other hand, for any string algebroid $E \to M$, one can construct a 3-form $H \in \Omega^3(M)$, a principal $K$-bundle $P \to M$, and a connection form $\theta \in \Omega^1(P, \mathfrak{k})$ satisfying the heterotic Bianchi identity \eqref{eq: hBi}, such that $E$ is isomorphic to the Courant algebroid constructed above. This allows us to work exclusively with the latter in the context of string algebroids. See \cite[\S 1.3]{Chen2013} or \cite[\S 2]{Garcia-Fernandez2014} for details. Note that \eqref{eq: hBi} implies that the first Pontryagin class of $P$ associated with $\bracks{\cdot,\cdot}_\fk$ via Chern--Weil theory is trivial:
$p_1(P) = 0 \in H^4_{dR}(M, \bb R)$.
\end{example}

\begin{definition}[Generalized metric]
\label{def:generalized:metric}
Let $M$ be an oriented manifold with a Courant algebroid $E$. A \emph{generalized metric} \index{generalized metric} on $E$ is an $\bracks{\cdot,\cdot}$-orthogonal decomposition 
\begin{equation*}
E = V_+ \oplus V_-,
\end{equation*}
such that $\langle \cdot, \cdot \rangle$ is positive definite on $V_+$ and negative definite on $V_-$, and the restriction $\pi|_{V_+} \colon V_+ \to TM$ is an isometry.
\end{definition}

A generalized metric on a Courant algebroid $E\to M$ naturally induces a Riemannian metric $g$ on $M$ via the isometry $\pi|_{V_+} \colon V_+ \to TM$.
Conversely, if $M$ is endowed with a Riemannian metric $g$, then, for the construction in Example~\ref{example:transitive:Courant}, where $E = TM \oplus \adjointbundle{P} \oplus T^*M$, the following splitting defines a generalized metric:
\begin{equation*}
\label{eq:generalized:metric:transitive}
\tag{GM}
\begin{split}
V_+ &= \{X + gX : X \in TM\}, \quad 
V_- = \{X + r - gX : X \in TM, r \in \adjointbundle{P}\}.
\end{split}
\end{equation*}
Conversely, any generalized metric on a string algebroid induces a Riemannian metric on $M$ such that the generalized metric takes the form \eqref{eq:generalized:metric:transitive}. See \cite{daSilva2024a} for details. Consequently, the splitting \eqref{eq:generalized:metric:transitive} gives an isometry $V_+\isomorphic TM$ and an anti-isometry $V_-\isomorphic TM\oplus \adjointbundle{P}$, identifications that we use throughout the text.

\begin{definition}[Generalized connection]
\label{def:generalized:connection}
A \emph{generalized connection} \index{generalized connection} $D$ on a Courant algebroid $E \to M$ is a first-order differential operator
\(
D \colon \Gamma(E) \to \Gamma(E^* \otimes E)
\)
satisfying a Leibniz rule and compatible with $\bracks{\cdot,\cdot}$:
\begin{align*}
D_a(f b) &= f D_a b + \pi(a)(f) b
\qandq
\pi(e) \langle a, b \rangle = \langle D_e a, b \rangle + \langle a, D_e b \rangle,
\end{align*}
for $a,b,e\in \Gamma(E)$ and $f \in C^\infty(M)$. We say that $D$ is \emph{$V_+$-compatible} if
\(
D(\Gamma(V_\pm)) \subset \Gamma(E^* \otimes V_\pm)\). The space of all $V_+$-compatible generalized connections is denoted $\mathcal{D}({V_+})$.
\end{definition}

\begin{definition}[Generalized curvature]
\label{def:generalized:curvature}
Let $E$ be a Courant algebroid with a generalized metric $E = V_+ \oplus V_-$ and $D \in \mathcal{D}({V_+})$. The \emph{generalized curvatures} $\mathrm{GR}_D^\pm \in \Gamma(V_\pm^* \otimes V_\mp^* \otimes \mathrm{End}(V_\pm))$ are defined by
\begin{equation*}
\mathrm{GR}_D^\pm(e_1^\pm, e_2^\mp) = D_{e_1^\pm} D_{e_2^\mp} - D_{e_2^\mp} D_{e_1^\pm} - D_{[e_1^\pm, e_2^\mp]}.
\end{equation*}
The corresponding \emph{generalized Ricci tensors} $\mathrm{GRic}^\pm\in \Gamma(V_\mp^*\otimes V_\pm^*)$ are defined by
\begin{equation*}
\mathrm{GRic}_D^\pm(e_2^\mp, e_3^\pm) = \mathrm{tr} \left( e_1^\pm \mapsto \mathrm{GR}_D^\pm(e_1^\pm, e_2^\mp) e_3^\pm \right).
\end{equation*}
\end{definition}

The notion of generalized curvature is defined only in the presence of a generalized metric because such quantities are not well-behaved in the general setting \cite[\S4.1]{Garcia-Fernandez2014}.

\begin{definition}[Generalized torsion]
\label{def:generalized:torsion}
The \emph{generalized torsion} $T_D \in \Gamma(\Lambda^3(E^*))$ of a generalized connection $D$ is defined by
\begin{equation*}
T_D(a, b, c) = \langle D_a b - D_b a - [a, b], c \rangle + \langle D_c a, b \rangle.
\end{equation*}
The set of all generalized connections $D$ that have vanishing torsion $T_D=0$ is denoted by $\mcal D^0$.
\end{definition}

\paragraph{Divergence operators}
In the next paragraph, we carry out an explicit computation of the generalized Ricci curvature associated with torsion-free generalized connections on string algebroids. Besides providing an explicit formula, the main point is that the resulting expression does not depend on the choice of generalized connection, but only on the generalized metric and on a particular quantity associated with the connection, called the \emph{divergence operator}, which we now introduce.

\begin{definition}[Divergence operator]
\label{def:divergence}
A \emph{divergence operator} \index{divergence operator} on a Courant algebroid $E \to M$ is a bundle map $\mathrm{div} \colon \Gamma(E) \to C^\infty(M)$ satisfying the Leibniz rule
\begin{equation*}
    \mathrm{div}(f a) = f \mathrm{div}(a) + \pi(a)(f),
    \qforq f \in C^\infty(M)
    \qandq a \in \Gamma(E).
\end{equation*}
\end{definition}

Two examples of divergence operators will be useful below.
First, if $D$ is a generalized connection on a Courant algebroid $E\to M$, then
\begin{equation*}
\mathrm{div}^D(e) = \mathrm{tr}(D e)
\end{equation*}
defines a divergence operator on $E$, called the divergence induced by $D$. In particular, the set of divergence operators on $E$ is always non-empty. In the presence of a generalized metric on $E$, we denote the set of connections satisfying $\dv^D=\dv$ by $\mcal D(\dv)$. We also consider the following spaces of generalized connections:
\begin{align*}
\mathcal{D}(V_+, \mathrm{div}) &= \{D \in \mathcal{D}({V_+}) : \mathrm{div}^D = \mathrm{div}\},
\\
\mathcal{D}^0(V_+, \mathrm{div}) &= \{D \in \mathcal{D}(V_+) : \mathrm{div}^D = \mathrm{div}, T_D = 0\}.
\end{align*}
If, moreover, the manifold is oriented by the volume form $\vol_M\in \Omega^n(M)$ of the Riemannian metric induced by $V_+$ (see Definition~\ref{def:generalized:metric} and \eqref{eq:generalized:metric:transitive}), then a second important example is the \emph{Riemannian divergence}, given by
\begin{equation}
\label{eq:riemannian:divergence}
\mathrm{div}^{V_+}(e) := \frac{1}{\mathrm{vol}_M} \mathcal{L}_{\pi(e)} \mathrm{vol}_M,
\end{equation}
where $\mcal L$ denotes the Lie derivative.

\medskip
The divergence operators on a Courant algebroid $E \to M$ constitute an affine space modelled on $\Gamma(E)$. Specifically, if $\mathrm{div}$ is a fixed divergence operator, any other $\mathrm{div}'$ is determined by a unique section $e \in \Gamma(E)$:
\begin{equation*}
    \mathrm{div}' 
    = \mathrm{div} + \langle e, \cdot \rangle.
\end{equation*}
For our purposes, we fix the Riemannian divergence; then, for each generalized connection $D$, we have
\(
\dv^D=\dv^{V_+}-\bracks{e,\cdot}\), where the sign is chosen for convenience.

\paragraph{Generalized Ricci tensor of string algebroids}
\label{parag:ricci:tensor:string:algebroids}
Our primary focus is on the generalized Ricci tensor of string algebroids \(
E = TM \oplus \adjointbundle{P} \oplus T^*M
\) over special-structure manifolds $M$, endowed with a pair \((H, \theta)\) satisfying the \emph{heterotic Bianchi identity} \eqref{eq: hBi}, cf. Example~\ref{example:transitive:Courant}.

\begin{theorem}[{\cite[Prop. 4.2]{Garcia-Fernandez2014}}]
\label{theorem:explicit:formula:generalized:ricci:curvature}
Let $(V_+, \mathrm{div})$ be a pair consisting of a generalized metric $V_+$ and a divergence operator $\mathrm{div}$ on a string algebroid
$$E = TM \oplus \adjointbundle{P} \oplus T^*M.$$
Suppose that the string algebroid structure is determined by the pair $(H,\theta)$ as in Example~\ref{example:transitive:Courant} and that the generalized metric is given by \eqref{eq:generalized:metric:transitive}. Let $e = Z + z + \zeta \in \Gamma(E)$ be such that
\begin{equation*}
\mathrm{div} = \mathrm{div}^{V_+} - \langle e, \cdot \rangle.
\end{equation*}
Define $\zeta_\pm = \frac{1}{2}(Z^\flat \pm \zeta) \in \Omega^1(M)$. Then, for any generalized connection $D \in \mathcal{D}^0(V_+, \mathrm{div})$, the \emph{generalized Ricci tensor} is given by
\begin{equation*}
\begin{split}
\mathrm{GRic}^+_D(a_-, b_+) &= i_Y i_X \pp{ \mathrm{Ric}^+ + F_\theta \circ F_\theta + \nabla^+ \zeta_+ }
%\\&\hspace{2cm}
- i_Y \left\langle d^{\theta *} F_\theta - F_\theta \iprod H + {\zeta_+} \iprod F_\theta, r \right\rangle_\mathfrak k,
\\
\GRic^-_D(b_+,a_-)&=
i_X i_Y \pp{ \mathrm{Ric}^- + F_\theta \circ F_\theta - \nabla^- \zeta_- +\bracks{F_\theta,z}_\fk}
\\&\hspace{4cm}
- i_Y \left\langle d^{\theta *} F_\theta - F_\theta \iprod H - {\zeta_-} \iprod F_\theta-d^\theta z, r \right\rangle_\mathfrak k,
\end{split}
\end{equation*}
where $a_- = X + r - X^\flat \in V_-$, $b_+ = Y + Y^\flat \in V_+$, and $\mathrm{Ric}^\pm$ is the Ricci curvature of the Bismut/Hull connections $\nabla^\pm$ defined by \eqref{eq:bismut:connection}.
\end{theorem}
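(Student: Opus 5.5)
The proof rests on a standard principle of generalized geometry: for a torsion-free, $V_+$-compatible generalized connection, $\GRic^\pm_D$ depends only on the pair $(V_+,\dv)$. I would therefore first prove this independence, then evaluate $\GRic^\pm$ on one convenient connection in $\mcal D^0(V_+,\dv)$.

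\textbf{Step 1 (independence of $D$).} Let $D,D'\in\mcal D^0(V_+,\dv)$ and set $\chi=D'-D$. Then $\chi\in\Gamma(E^*\otimes(\mathfrak o(V_+)\oplus\mathfrak o(V_-)))$, $T_\chi=0$ and $\rmm{tr}\,\chi=0$.

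The mixed components are pinned down by torsion-freeness. For $a_-\in V_-$ and $b_+\in V_+$, vanishing torsion forces
\[
D_{a_-}b_+=[a_-,b_+]_+ \qandq D_{b_+}a_-=[b_+,a_-]_-,
\]
where $(\cdot)_\pm$ denotes projection onto $V_\pm$. Hence $\chi$ vanishes on mixed arguments.

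The pure component $\chi_{V_+}\in V_+^*\otimes\mathfrak o(V_+)$ then satisfies two constraints. Its totally skew part vanishes by the torsion condition, and its trace part is fixed by the divergence. Substituting into
\[
\GRic^+(a_-,b_+)=\rmm{tr}\big(c_+\mapsto \mathrm{GR}^+(c_+,a_-)b_+\big),
\]
the only possible $\chi$-dependence is a contraction of $\chi_{V_+}$ against the fixed mixed derivatives. After tracing, this reduces to the trace of $\chi$ and to the skew part of $\chi_{V_+}$, both of which vanish. The same argument applies to $\GRic^-$, with the roles of $V_\pm$ exchanged.

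\textbf{Step 2 (a canonical connection).} Using the identifications $V_+\cong TM$ and $V_-\cong TM\oplus\adjointbundleP$ from \eqref{eq:generalized:metric:transitive}, I would define the Gualtieri--Bismut connection $D^B$ as follows:
\begin{itemize}
\item on mixed arguments, $D^B$ is given by the projected brackets of Step 1;
\item on $V_+$ along $V_+$, it acts as $\nabla^{+}$ (or $\nabla^{-}$), with the sign read off from $\pi_\pm[\cdot,\cdot]$ and the $H$-term in the Dorfman bracket;
\item on $V_-$ along $V_-$, it acts as $\nabla^{-}\oplus d^\theta$ plus a term in $F_\theta$.
\end{itemize}
I would check that $D^B$ is torsion-free and compute its divergence, which I expect to be $\dv^{V_+}$.

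Next I would correct the divergence. Add to $D^B$ the standard term $\chi_e$, built from $e=Z+z+\zeta$ split into its $V_\pm$-components, with coefficients $1/(\rmm{rk}V_\pm-1)$. This term is torsion-free by construction, and it shifts the divergence by exactly $-\bracks{e,\cdot}$, so that $D^B+\chi_e\in\mcal D^0(V_+,\dv)$.

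\textbf{Step 3 (computation).} Take $a_-=X+r-X^\flat$ and $b_+=Y+Y^\flat$. The key formula is
\[
[a_-,b_+]_+ = \nabla^+_XY-\langle i_Y F_\theta,r\rangle_\fk\text{-type term},
\]
obtained by reading off the Dorfman bracket of Example~\ref{example:transitive:Courant}; the second term comes from the components $2\bracks{i_XF_\theta,t}_\fk$ and $-F_\theta(X,Y)$. I would then compute $\mathrm{GR}^+(c_+,a_-)b_+$ and trace over $c_+$, keeping track of three contributions:
\begin{itemize}
\item the $TM$-parts assemble into $\Ric^+$;
\item the $\adjointbundleP$ components of the intermediate brackets, fed back through $-F_\theta(X,Y)$, produce $F_\theta\circ F_\theta$;
\item the $r$-dependence yields $-i_Y\bracks{d^{\theta*}F_\theta-F_\theta\iprod H,r}_\fk$, using $\delta^{\theta,+}=d^{\theta*}-H\iprod^2$ exactly as in the proof of Proposition~\ref{prop:yang:mills}.
\end{itemize}
The $\chi_e$ correction contributes $\nabla^+\zeta_+$ and $\zeta_+\iprod F_\theta$, because only the $V_+$-component $\tfrac12(Z^\flat+\zeta)$ of $e$ enters $\GRic^+$. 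The $\GRic^-$ formula follows symmetrically; there the $V_-$-component also carries $z$, which produces the extra terms $\bracks{F_\theta,z}_\fk$ and $d^\theta z$.

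\textbf{Main obstacle.} The independence argument in Step 1 is standard. The real difficulty is the sign and coefficient bookkeeping in Step 3. In particular, I would need to confirm that the curvature contributions from the $\adjointbundleP$ directions combine with the factors of $2$ in the pairing into exactly $F_\theta\circ F_\theta$, and that the divergence correction produces exactly $\nabla^\pm\zeta_\pm$ with the stated signs. I would settle these by computing in a local orthonormal frame with $\nabla^+e_i=0$ at a point.
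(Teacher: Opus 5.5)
Your overall architecture is the standard one from the cited reference, which the paper only quotes: first show independence of $D$ within $\mcal D^0(V_+,\dv)$, then compute with an explicit connection whose divergence is shifted by the $1/(\rank V_\pm-1)$ term $\chi_e$. Your Step~1 argument pinning down the mixed blocks is correct.

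\textbf{The gap is in Step~2.} A connection acting on $V_+$ along $V_+$ as $\nabla^+$ or $\nabla^-$ is not torsion-free, so the check ``$D^B$ is torsion-free'' will not go through.
\begin{itemize}
\item For $a,b,c\in V_+$ lying over $X,Y,Z$, the Dorfman bracket gives $\bracks{[a,b],c}=g(\nabla^g_XY-\nabla^g_YX,Z)+g(\nabla^g_ZX,Y)+\tfrac12H(X,Y,Z)$.
\item So if the pure block is $\nabla^g+\tfrac{s}{2}g^{-1}H$, then $T_D(a,b,c)=\tfrac{3s-1}{2}H(X,Y,Z)$. This is $H$ for $\nabla^+$ and $-2H$ for $\nabla^-$, and it vanishes only for $s=\tfrac13$.
\item Similarly, on $\adjointbundleP\subset V_-$ the term $-[r,t]$ in the bracket produces the totally skew torsion $\bracks{[r,s],t}_\fk$ unless you include $-\tfrac13[r,\cdot]$.
\end{itemize}
This is exactly why the explicit connection recorded in the paper uses $\nabla^{\pm1/3}$ and the coefficients $\tfrac13,\tfrac23$ on the $F_\theta$- and bracket-terms: the Gualtieri--Bismut connection has torsion, and one removes a third of it on the pure blocks. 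Hence $D^B+\chi_e\notin\mcal D^0(V_+,\dv)$. Since you phrased Step~1 via the vanishing of the skew part of $\chi_{V_+}$, it does not let you compute with this connection.

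\textbf{Either of two repairs closes the gap.}

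(a) Replace the pure blocks by their $\pm\tfrac13$ versions. The corrections are totally skew, hence traceless, so the divergence is still $\dv^{V_+}$ and $\chi_e$ is unchanged.

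(b) More economically, sharpen Step~1. If $D$ and $D+\chi$ share the mixed blocks, then $[c_+,a_-]_+=-D_{a_-}c_+$ gives $\mathrm{GR}^+_{D+\chi}(c_+,a_-)b_+-\mathrm{GR}^+_{D}(c_+,a_-)b_+=-(D_{a_-}\chi)_{c_+}b_+$. Since $D^+_-$ is metric on $V_+$, tracing gives $\GRic^+_{D+\chi}(a_-,b_+)-\GRic^+_D(a_-,b_+)=-(D_{a_-}\tr\chi)(b_+)$. So the skew part of the pure block never enters; only the mixed blocks and the divergence do, and symmetrically for $\GRic^-$. With this version you may legitimately compute with $\nabla^+$ on $V_+$:
\begin{itemize}
\item the $\nabla^+$-terms collapse exactly to $R^+(W,X)Y$, because $\nabla^+_XW-\nabla^-_WX=[X,W]$;
\item the divergence correction is $\bracks{D_{a_-}e_+,b_+}$ with $e_+\leftrightarrow\zeta_+^\sharp$, which is precisely $i_Yi_X\nabla^+\zeta_+-i_Y\bracks{\zeta_+\iprod F_\theta,r}_\fk$.
\end{itemize}

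\textbf{A minor slip in Step~3.} The $F_\theta$-term in $[a_-,b_+]_+$ comes from $-2\bracks{i_YF_\theta,r}_\fk$. The term $2\bracks{i_XF_\theta,t}_\fk$ vanishes because $t=0$, and $-F_\theta(X,Y)\in\adjointbundleP\subset V_-$ is killed by the projection to $V_+$.
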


The theorem above shows, in particular, that the generalized Ricci curvature associated with any connection $D \in \mathcal{D}^0(V_+, \mathrm{div})$ depends only on the pair $(V_+, \mathrm{div})$, which we call a \emph{generalized pair}, and is therefore independent of the choice of torsion-free generalized connection. Thus, the generalized Ricci curvature is an intrinsic invariant of $(V_+, \mathrm{div})$ and may be computed using the explicit connection described below. We henceforth refer to $\GRic$ as the \emph{generalized Ricci tensor of the pair} $(V_+,\dv)$ and denote it by $\GRic_{V_+,\dv}$ or $\GRic(V_+,\dv)$.

\medskip
A particular case of interest is when the section $e\in \Gamma(E)$ determining the divergence has only the 1-form component, i.e., $e \in T^*M \subset E = TM \oplus \adjointbundle{P} \oplus T^*M$. In this case, $\zeta_+ = \tfrac{1}{2} e$, and 
\begin{equation}
\label{eq:divergence:by:1:form}
\mathrm{div} = \mathrm{div}^{V_+} - 2 \langle \zeta_+, \cdot \rangle.
\end{equation}

\begin{remark}
\label{remark:Gric+=Gric-}
Let $(V_+,\dv)$ be a generalized pair, with $\dv$ as in Theorem~\ref{theorem:explicit:formula:generalized:ricci:curvature}, and write $e=Z+z+\zeta$. 
Then the two generalized Ricci tensors carry the same information; more precisely, \cite[Lemmas~2.5 and~2.7]{garciafernandez2024pluriclosedflowhullstrominger},
\[
    \GRic^+(a_-,b_+)=\GRic^-(b_+,a_-),
\]
if and only if
\begin{equation}
\label{eq:compatible:pair}
    \mcal L_Zg=0,
    \qquad
    d^\theta z=-i_ZF_\theta,
    \qquad
    d\zeta=i_ZH-2\bracks{F_\theta,z}_\fk.
\end{equation}
For a divergence of the form \eqref{eq:divergence:by:1:form}, we have $e=2\zeta_+=\zeta$, the conditions \eqref{eq:compatible:pair} reduce to $d\zeta=0$. Throughout this article, however, we do not assume that $\zeta$ is closed. Accordingly, we use the term \emph{generalized Ricci-flat} to mean that $\GRic^+=0$. When $d\zeta=0$, the equality above shows that $\GRic^+=0$ is equivalent to $\GRic^-=0$, and hence both tensors vanish.
\end{remark}

\paragraph{Explicit construction of $D\in \mcal D^0(V_+,\dv)$}
\label{parag:explicit:cnstruction:generalized:connection}

When a generalized connection $D$ is compatible with the generalized metric, it decomposes into four components:
\[
D^\pm_\mp:\Gamma(V_\pm)\to \Gamma(V_\mp^*\otimes V_\pm).
\]
Using the isometry $V_+\isomorphic TM$ and the anti-isometry $V_-\isomorphic TM\oplus \adjointbundleP$, write the divergence as
\[
\dv=\dv^{V_+}-\bracks{e,\cdot},
\qquad e=Z+z+\zeta\in\Gamma(TM\oplus\adjointbundleP\oplus T^*M),
\]
and set $\zeta_\pm=\tfrac12\pp{Z^\flat\pm\zeta}$. An explicit generalized connection $D\in \mcal D^0(V_+,\dv)$ is then given by
\[
    \pp{D^+_+}_X =\nabla^{1/3}_X+\frac1{n-1}\pp{X^\flat\wedge\zeta_+}^\sharp,
    \qquad 
    \pp{D^+_-}_{X+r}=
    \nabla^+_X-\bracks{F_\theta\diamond\cdot{},r}^\sharp,
    \]\[
    \pp{D^-_+}_X=\begin{bmatrix}
        \nabla^-_X&-\bracks{i_XF_\theta,\cdot}^\sharp
        \\
        -i_XF_\theta&\nabla^\theta_X
    \end{bmatrix},
    \]
    \begin{align*}
    (D^-_-)_{X+r}&=
    \begin{bmatrix}
        \nabla^{-1/3}_X-{\frac{1}{3}}\bracks{ F_\theta,r}^\sharp
        &
        -\frac{2}{3}\bracks{i_XF_\theta,\cdot}^\sharp
        \\
        -\frac{2}{3}i_XF_\theta
        &
        \nabla^\theta_X-\frac13[r,\cdot]
    \end{bmatrix}
    \\&\hspace{2cm}
    +\frac{1}{\dim(\fk)+n-1}\begin{bmatrix}
        -\pp{X^\flat\wedge\zeta_-}^\sharp
        &
        r^\flat\otimes \zeta_-^\sharp-z^\flat\otimes X
        \\
        \zeta_-\otimes r-X^\flat\otimes z
        &
        r^\flat\otimes z-z^\flat\otimes r
    \end{bmatrix},
\end{align*}
Here, $r^\flat$ and $z^\flat$ are defined using the pairing on $\adjointbundleP$, and $\nabla^{\pm1/3}=\nabla^g\pm\tfrac16 g^{-1}H$ denote the metric connections with skew torsion $\pm\tfrac13H\in\Omega^3(M)$; see \cite[\S 5.2]{Garcia-Fernandez2017}.

\medskip
An important property of the mixed operators $D^+_-$ and $D^-_+$ is that they do not depend on the choice of generalized connection $D\in \mcal D^0(V_+)$; see \cite[Lemma 3.2]{Garcia-Fernandez2019}. In fact, they are the natural operators given by
\[
D_{e_-}e_+=[e_-,e_+]_+, \qquad 
D_{e_+}e_-=[e_+,e_-]_-.
\]
These operators will appear later in the generalized Ricci tensor for 4-form geometries.

\subsection{Generalized Ricci flatness and coupled equations}
\label{sec:ricci:curvature:4:forms}
This section contains two of the main results of the paper, establishing complementary links between 4-form geometries and generalized Ricci flatness. In \ref{par:gravitino:ricci:flatness}, we introduce the gravitino equation and the generalized pair naturally induced by a 4-form geometry $(M,G,\psi)$. We then prove Theorem~\ref{theorem:ricci:flatness:simple:group}, which shows that, under an appropriate algebraic condition on the flux operator $\bH_\psi^2$, the gravitino equation forces the generalized Ricci tensor to vanish. In \ref{par:coupled:instantons}, we turn to the coupled instanton equation, relate it to the gravitino equation, and recover results from \cite{daSilva2024a}. Finally, Theorem~\ref{theorem:ricci:flatness:coupled:equations} expresses the generalized Ricci tensor of a solution to the coupled instanton equation in terms of the flux 3-form and the gravitino term.

\paragraph{Generalized Ricci flatness and the gravitino condition}
\label{par:gravitino:ricci:flatness}

The main result of this paper is that certain 4-form geometries $(M,G,\psi)$ determine generalized Ricci-flat metrics whenever $\theta$ is a $G$-instanton and $\Hol(\nabla^+)\subset G$. These conditions are encoded in a natural equation called the \emph{gravitino equation}.

\begin{definition}[\cite{daSilva2024a}]
\label{def:gravitino:equation}
    We say that a tensor $\xi\in \Gamma(\mcal T^{p,q}(TM))$ satisfies the \emph{gravitino equation} if $D^+_-\xi=0$ (see \ref{parag:explicit:cnstruction:generalized:connection}). We refer to $D^+_-\xi\in \Gamma(V_-^*\otimes \mcal T^{p,q})$ as the \emph{gravitino term} of $\xi$.
\end{definition}

The gravitino equation $D^+_-\xi=0$ is equivalent to the tensor being parallel with respect to the Bismut connection and being annihilated by the curvature 2-form of $\theta$ under the diamond operator; see \ref{parag:explicit:cnstruction:generalized:connection}:
    \begin{equation}
    \label{eq:gravitino:equation}
    D^+_-\xi=0
    \quad\iff\quad
    \nabla^+\xi=0 \qandq F_\theta\diamond\xi=0.
    \end{equation}

\begin{definition}
\label{def:generalized:pair:4:form}
Let $
(P,K,\theta,\pdi{\cdot,\cdot}_{\fk})
\to
(M,G,\psi)
$ (or $(M,N(G),\psi)$)
be the string algebroid $E$ induced by a 4-form geometry for which the Lie algebra $\fg$ is an eigenspace of the flux operator $\vet H_\psi^2$ with eigenvalue $b\neq 0$. Then $E$ carries an induced pair
\begin{equation}
\label{eq:generalized:induced:pair}
\pp{V_+^{G,\psi},\dv^{G,\psi}}
\end{equation}
consisting of a generalized metric and a divergence operator. The generalized metric $V_+^{G,\psi}$ is given by \eqref{eq:generalized:metric:transitive}, using the metric induced by the $G$-structure, whereas the divergence operator is defined in terms of the Lee form by
\begin{equation}
\label{eq:divergence:4:form:geometry}
\dv^{G,\psi}
=
\dv^{V_+^{G,\psi}}
-
2\bracks{-\frac{1}{b}\zeta_\psi,\cdot}.
\end{equation}
We refer to this pair as the \emph{generalized pair induced by the 4-form geometry}.
\end{definition}

\begin{theorem}[Theorem~\ref{theorem:ricci:flatness:simple:group}]
\label{theorem:ricci:flatness:simple:group:texto}
    Let $E: (P, K; \theta, \pdi{\cdot,\cdot}_\fk)\to (M, G,
    \psi )$ be the string algebroid induced by a 4-form geometry for which $\vet H_\psi^2|_\fg=b\cdot \id_\fg$ with $b\neq 0$. If $G$ is the stabiliser of $\psi$ and a multi-tensor $\xi=(\xi_1,\dots,\xi_m)$, and both $\psi$ and $\xi$ satisfy the gravitino equation, 
    \[
    G = \stab_{\SO(n)}(\psi,\xi)\qandq 
    D^+_-\psi=0=D^+_-\xi, 
    \]
    then the generalized Ricci tensor of the induced pair \eqref{eq:generalized:induced:pair} vanishes:
$$\GRic\pp{V_+^{G,\psi},\dv^{G,\psi}}=0.$$
\end{theorem}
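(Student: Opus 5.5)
We reduce the statement to the two identities of \S\ref{subsec:yang:mills+ricci:bismut}, applied with the eigenvalue $\lambda_{i_0}=b$ attached to the eigenspace $\fg\subset\Omega^2$. For the induced pair, the divergence \eqref{eq:divergence:4:form:geometry} has the form \eqref{eq:divergence:by:1:form} with $\zeta_+=-\frac1b\zeta_\psi$. By Theorem~\ref{theorem:explicit:formula:generalized:ricci:curvature}, $\GRic^+$ vanishes exactly when both the Ricci--Bismut term $\Ric^++F_\theta\circ F_\theta-\frac1b\nabla^+\zeta_\psi$ and the Yang--Mills term $d^{\theta*}F_\theta-F_\theta\iprod H_\psi-\frac1b\zeta_\psi\iprod F_\theta$ vanish. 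These are precisely the left-hand sides of Propositions~\ref{prop:ricci:bismut} and~\ref{prop:yang:mills} with $\lambda_{i_0}=b$. The heterotic Bianchi identity \eqref{eq: hBi4} holds because $E$ is a string algebroid, so Proposition~\ref{prop:ricci:bismut} applies.

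It therefore suffices to show that every term on the right-hand sides vanishes.

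\emph{Terms involving $\psi$.} By \eqref{eq:gravitino:equation}, the hypothesis $D^+_-\psi=0$ gives $\nabla^+\psi=0$ and $F_\theta\diamond\psi=0$. Hence
\[
H_\psi\iprod\nabla^+\psi=0\qandq H_\psi\iprod(F_\theta\diamond\psi)=0 .
\]

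\emph{Terms involving $F_\theta$.} The condition $G=\stab_{\SO(n)}(\psi,\xi)$, together with $F_\theta\diamond\psi=0=F_\theta\diamond\xi$ and \cite[Lemma~1.7]{Fadel2024} applied to the multi-tensor $(\psi,\xi)$ as in \eqref{eq:ker:diamond:=:lie:algebra}, gives $F_\theta\in\fg\otimes\ad P$. Since $\fg$ is the $b$-eigenspace, $\mathfrak A_b$ annihilates any tensor whose first $\Omega^2$-factor lies in $\fg$. We use that the $\mathfrak A_{i_0}$ terms in Propositions~\ref{prop:yang:mills} and~\ref{prop:ricci:bismut} only see the first 2-form slot. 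It follows that $\mathfrak A_b(\bracks{F_\theta\otimes F_\theta}_\fk)=0$.

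For the Yang--Mills term we also need $\nabla^{\theta,+}F_\theta\in\Omega^1\otimes\fg\otimes\ad P$. This holds because $\nabla^+\psi=0=\nabla^+\xi$ means $\nabla^+$ preserves the common stabiliser $G$, so $\nabla^+$ preserves the subbundle $\fg\subset\Lambda^2$. The derivative of a section of $\fg\otimes\ad P$ thus stays in it, and $\mathfrak A_b(\nabla^{\theta,+}F_\theta)=0$.

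\emph{Curvature term.} It remains to see that $\mathfrak A_b(\mathfrak R^+)=0$. Since $\nabla^+$ preserves $(\psi,\xi)$, its holonomy lies in $G$. Hence $R_{\nabla^+}(Z,W)\in\fg$ for all $Z,W$, which means the second pair of indices of $R^+$ takes values in $\fg$.

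\textbf{Main obstacle.} The delicate point is that $\mathfrak R^+$ swaps the index pairs, so one must check which slot $\mathfrak A_b$ projects. In the proof of Proposition~\ref{prop:ricci:bismut}, the projection $\proj_{\Omega^2_i}(R^+_{ij})$ is taken on the pair that is contracted with $\psi$, namely the $ab$ slot of $R^+_{i\mu ab}$. This slot is the endomorphism (holonomy) slot, so it lies in $\fg$ and the remaining components vanish. If instead the projection falls on the form slot, I would invoke the pair symmetry of $R^+$ modulo $dH_\psi$ terms to move it across. I would first verify the index convention against the display $\frac12R^+_{ijab}\psi^{ab}{}_{kl}$, which indeed contracts the endomorphism pair. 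With this, both right-hand sides vanish, hence $\GRic^+=0$.
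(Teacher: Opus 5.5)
Your proposal is correct and follows essentially the same route as the paper. You reduce via Theorem~\ref{theorem:explicit:formula:generalized:ricci:curvature} to the Yang--Mills and Ricci--Bismut identities with $\lambda_{i_0}=b$, kill the $\psi$-terms using $D^+_-\psi=0$, and kill the $\fA_b$-terms using $F_\theta\in\fg$, the $\nabla^+$-invariance of $\fg$, and $\Hol(\nabla^+)\subset G$. Your slot check is also right: since $\fR^+_{ijkl}=R^+_{klij}$, the factor that $\fA_b$ projects is the endomorphism pair of $R_{\nabla^+}$, so $\fR^+\in\fg\otimes\Omega^2$ holds directly and the fallback to pair symmetry is never needed.
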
 
{
\begin{proof}
By Theorem~\ref{theorem:explicit:formula:generalized:ricci:curvature}, it remains only to verify the Yang--Mills equation~\eqref{eq:YM} and the Ricci--Bismut equation~\eqref{eq:RB}. Since $\psi$ satisfies the gravitino equation \eqref{eq:gravitino:equation}, we have $
\nabla^+\psi=0$ and $
F_\theta\diamond\psi=0$.
Hence, applying Propositions~\ref{prop:yang:mills} and~\ref{prop:ricci:bismut} to $\fg$ as an eigenspace, the Yang--Mills and Ricci--Bismut terms are given by
\begin{align*}
d^{\theta *}F_\theta-F_\theta\iprod H_\psi
-\tfrac{1}{b}\zeta_\psi\iprod F_\theta
&=
\mathfrak A_b\bigl(\nabla^{\theta,+}F_\theta\bigr),
\\
\Ric^++F_\theta\circ F_\theta-\tfrac{1}{b}\nabla^+\zeta_\psi
&=
\fA_b\bigl(\fR^+-\bracks{F_\theta\otimes F_\theta}\bigr).
\end{align*}
Since $\nabla^+\xi=0=\nabla^+\psi$ and the $G$-structure is determined by $\pp{\xi,\psi}$, the connection $\nabla^+$ preserves the $G$-structure. Thus its curvature is $\fg$-valued:
\[
R_{\nabla^+}\in\Omega^2\otimes\fg
\quad \Longrightarrow \quad
\fR^+\in\fg
\quad\Longrightarrow\quad
\fA_b(\fR^+)=0.
\]
Moreover, the gravitino conditions, together with the fact that $G$ is determined by $(\psi,\xi)$, imply the instanton condition, $F_\theta\in\fg$, i.e., $\proj_{\Lg^\perp}(F_\theta)=0$.
Since $\nabla^+$ preserves the $G$-decomposition, it follows that
\[
\proj_{\Lg^\perp}\bigl(\nabla^{\theta,+}F_\theta\bigr)
=
\nabla^{\theta,+}\bigl(\proj_{\Lg^\perp}F_\theta\bigr)
=0
\quad\Longrightarrow\quad
\mathfrak A_b\bigl(\nabla^{\theta,+}F_\theta\bigr)=0.
\]
Likewise, the instanton condition gives $
\fA_b\bigl(\bracks{F_\theta\otimes F_\theta}\bigr)=0$. 
Hence, both the Yang--Mills and Ricci--Bismut equations are satisfied. The conclusion now follows from Theorem~\ref{theorem:explicit:formula:generalized:ricci:curvature}, yielding the generalized Ricci flatness condition.
\end{proof}
}

    Theorem~\ref{theorem:ricci:flatness:simple:group} is stated using a tensorial description of the $G$-structure, $G=\stab_{\SO(n)}(\psi,\xi)$, together with the gravitino equation for $\psi$ and $\xi$. Alternatively, the same conclusion follows directly from $\theta$ being a $G$-instanton and $\rmm{Hol}(\nabla^+)\subset G$. We adopt the spinorial description because it provides a more natural treatment of structures with structure groups $\SU(m)$ and $\sp(k)$ in \S\ref{sec:examples:4-form:geometry}.

\begin{corollary}
\label{corollary:GRIC:dimensao:quatro}
Let $M^4$ be a Calabi--Yau surface, and let $(P,K;A,\pdi{\cdot,\cdot}_\fk)$ be a principal $K$-bundle endowed with an anti-self-dual instanton $A$. If $\bracks{F_A\wedge F_A}_\fk=0$, then the string algebroid $E:(P, K; A, \pdi{\cdot,\cdot}_\fk)\to (M^4,\SU(2),\vol_M)$ induced by the 4-form geometry is generalized Ricci-flat.
\end{corollary}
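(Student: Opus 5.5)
The plan is to deduce the statement from Theorem~\ref{theorem:ricci:flatness:simple:group:texto}, applied to the 4-form geometry $(M^4,\SU(2),\psi=\vol_M)$ of Example~\ref{example:dimensao:quatro}. Four ingredients have to be checked: that the data define a string algebroid, that the eigenvalue hypothesis $\bH_\psi^2|_\fg=b\cdot\id_\fg$ with $b\neq0$ holds, that $\SU(2)$ is the stabiliser of $\psi$ together with a suitable multi-tensor $\xi$, and that both $\psi$ and $\xi$ satisfy the gravitino equation \eqref{eq:gravitino:equation}.

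\emph{The string algebroid.} Since $\vol_M$ is $\nabla^g$-parallel, $d^*\psi=0$, so Theorem~\ref{theorem:flux:3:form:expression:texto} gives $H_\psi=0$. Consequently $\nabla^+=\nabla^g$ and $\zeta_\psi=H_\psi\iprod\psi=0$, and the heterotic Bianchi identity \eqref{eq: hBi4} reduces to $\bracks{F_A\wedge F_A}_\fk=0$, which is precisely our hypothesis. Hence the construction of Example~\ref{example:string:alegbroid} yields a string algebroid.

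\emph{The eigenvalue hypothesis.} For $\beta\in\Omega^2$ we have $\bH_\psi^2(\beta)=\beta\iprod^2\vol_M=\star\beta$ up to the normalisation fixed in Definition~\ref{def:flux:operator}. Thus $\Omega^2_\pm$ are eigenspaces with eigenvalues $\pm c$ for some $c\neq0$. As recorded in Example~\ref{example:dimensao:quatro}, $\fg=\Omega^2_-$, so $b=-c\neq0$. The only genuine computation here is to confirm that the normalising constant $c$ is nonzero, and this follows from a single evaluation on $e^{12}$.

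\emph{Stabiliser and gravitino equation.} Take $\xi=(\omega_1,\omega_2,\omega_3)$, the hyperkähler triple spanning $\Omega^2_+$. Then $\stab_{\SO(4)}(\vol_M,\omega_1,\omega_2,\omega_3)=\SU(2)$, since an element of $\SO(4)$ fixing $\Lambda^2_+$ pointwise lies in $\SU(2)_-$. The Calabi--Yau (hyperkähler) condition gives $\nabla^g\omega_i=0=\nabla^g\vol_M$, and $\nabla^+=\nabla^g$. For the diamond conditions:
\begin{itemize}
\item $F_A\diamond\vol_M=0$ holds because $\diamond$ is the $\fs\fo(4)$-action and $\vol_M$ is $\SO(4)$-invariant.
\item $F_A\diamond\omega_i=0$ holds because anti-self-duality places $F_A$ in $\Omega^2_-\otimes\ad P=\fg\otimes\ad P$, and $\fg$ annihilates $\Omega^2_+$; equivalently, the commutator of $\fs\fu(2)_-$ with $\fs\fu(2)_+$ vanishes.
\end{itemize}
Hence $D^+_-\psi=0=D^+_-\xi$, and Theorem~\ref{theorem:ricci:flatness:simple:group:texto} yields $\GRic^+=0$. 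Since $\zeta_\psi=0$ is closed, Remark~\ref{remark:Gric+=Gric-} shows that $\GRic^-=0$ as well.

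The main (mild) obstacle is orientation and sign bookkeeping. One must check that the orientation induced by the $\SU(2)$-structure makes the Kähler forms self-dual, so that $\fg$ is $\Omega^2_-$ rather than $\Omega^2_+$; this is what makes the anti-self-duality of $A$ coincide with the $\SU(2)$-instanton condition of Definition~\ref{def:instanton}. One must also check the sign of $b$, although only $b\neq0$ is needed.
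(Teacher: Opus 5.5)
Your argument is correct, but it follows a different route from the paper's proof. You treat the corollary as a literal instance of Theorem~\ref{theorem:ricci:flatness:simple:group:texto}, and you verify each of its hypotheses:
\begin{itemize}
\item you exhibit the auxiliary multi-tensor $\xi=(\omega_1,\omega_2,\omega_3)$ and check $\stab_{\SO(4)}(\vol_M,\xi)=\SU(2)$;
\item you check the eigenvalue condition on $\fg=\Omega^2_-$ (with the paper's normalisation one has exactly $\bH^2_{\vol_M}=\star$, so $b=-1$);
\item you check the gravitino equations.
\end{itemize}
The general mechanism then kills the Yang--Mills and Ricci--Bismut terms.

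The paper never invokes that theorem. It works directly from Theorem~\ref{theorem:explicit:formula:generalized:ricci:curvature} with $H_{\vol_M}=0=\zeta_{\vol_M}$, so that generalized Ricci flatness amounts to $d^{A*}F_A=0$ and $\Ric^g+F_A\circ F_A=0$.
\begin{itemize}
\item The first holds because $\star F_A=-F_A$ and $d^AF_A=0$.
\item In the second, $\Ric^g=0$ by the Calabi--Yau condition.
\item $F_A\circ F_A=0$ is read off from the contraction identity in the proof of Proposition~\ref{prop:ricci:bismut}, namely $-2F^\alpha{}_{\mu i}F_{\alpha\mu j}=(dH_{\vol_M})_i\iprod\psi_j=0$.
\end{itemize}

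The direct route needs neither the hyperk\"ahler triple nor the stabiliser identification. It also shows where the hypothesis $\bracks{F_A\wedge F_A}_\fk=0$ does work beyond making the heterotic Bianchi identity hold. For anti-self-dual curvature, $F_A\circ F_A$ is pure trace, equal to $-\tfrac12\star\bracks{F_A\wedge F_A}_\fk\, g$, so it vanishes precisely because of that hypothesis and not because of anti-self-duality alone.

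In your route this vanishing appears only a posteriori: the theorem yields $\Ric^g+F_A\circ F_A=0$, and then $\Ric^g=0$ forces $F_A\circ F_A=0$. In exchange, your argument shows that the four-dimensional case is governed by exactly the same algebraic mechanism as the $\SU(m)$, $\rG_2$, $\Spin(7)$ and $\Sp(k)$ cases. This matches how the paper itself lists it among the applications in Theorem~\ref{theorem:ricci:flatness:compilation}.
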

\begin{proof}
    Since $M$ is Calabi--Yau, i.e., torsion-free as an $\SU(2)$-structure manifold, we have $H_{\vol_M}=0$ and $\Ric^g=0$. The string algebroid structure is well-defined: 
    $$
    \bracks{F_A\wedge F_A}_\fk=0=dH_{\vol_M}.
    $$
    Now, following Example~\ref{example:dimensao:quatro}, we have $\zeta_{\vol_M}=0$ and, since $A$ is an instanton, the \emph{Yang--Mills equation} is satisfied:
\[
F_A\in \fs\fu(2)\Rightarrow
*F_A=-F_A\Rightarrow
d^{A*}F_A =- \star d^A F_A =0.
\]
    Generalized Ricci flatness in this case is equivalent to 
\[
\Ric^g+F_A\circ F_A=0\qandq
d^{A*}F_A=0.
\]
    Finally, by the proof of Proposition~\ref{prop:ricci:bismut}, $-2F^\alpha{}_{\mu i}F_{\alpha \mu j}=\pp{dH_{\vol_M}}_i\iprod\psi_j=0$. Thus, $F_A\circ F_A =0$, and the left-hand side above vanishes identically.
\end{proof}

\paragraph{The coupled instanton equations}
\label{par:coupled:instantons}

In this subsection, we introduce a weaker version of the gravitino equation, proposed by de la Ossa et al.\ \cite{delaOssa2018a} in the physics literature and reinterpreted by Garcia-Fernandez and Molina \cite{Garcia-Fernandez2025} in the context of generalized geometry. The resulting equations were also investigated from a spinorial perspective in \cite{Garcia-Fernandez2017,Garcia-Fernandez2019,daSilva2024a}.

\begin{definition}[Coupled instanton]
\label{def:coupled:instanton}
    Let $E\to(M,N(G),\psi)$ be a string algebroid over a 4-form geometry, where $G\subset \SO(n)$ is a closed subgroup. We say that $E$ satisfies the \emph{coupled $G$-instanton equation} if $D^-_+$ is a $G$-instanton in the sense of remark~\ref{remark:G-instanton:N(G)-structure}.
\end{definition}

\begin{lemma}[{\cite[Lemma 4.7]{Garcia-Fernandez2025}}]
\label{lemma:eq:curvature:coupled}
    Under the identifications $V_+\isomorphic TM$ and $V_-\isomorphic TM\oplus \adjointbundle{P}$, we have
 \begin{equation*}
     F_{D^-_+}=\begin{bmatrix}
         \mathfrak R^+-\bracks{F_\theta\otimes F_\theta} & -\nabla^{\theta,+}F_\theta^\dagger 
         \\
         \nabla^{\theta,+}F_\theta & [F_\theta,{}\cdot{}]+F_\theta\diamond\bracks{\cdot,\cdot }^{-1} F_\theta 
     \end{bmatrix},
 \end{equation*}
    where $\mathfrak R^+\in  \Omega^2(M,\Omega^2)$ is given by \eqref{eq:definition:R^+}.
\end{lemma}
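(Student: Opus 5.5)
The plan is to compute the curvature of the operator $D^-_+$ directly from the explicit formula in \ref{parag:explicit:cnstruction:generalized:connection}. Transport $D^-_+$ to a connection on $TM\oplus\adjointbundleP$ via the anti-isometry $V_-\isomorphic TM\oplus\adjointbundleP$, and write it as
\[
(D^-_+)_X=\nabla^{\oplus}_X+\Phi(X),
\qquad
\nabla^\oplus=\nabla^-\oplus\nabla^\theta,
\qquad
\Phi(X)=\begin{bmatrix}0&-\bracks{i_XF_\theta,\cdot}^\sharp\\ -i_XF_\theta&0\end{bmatrix}.
\]
The curvature is then
\[
F_{D^-_+}=F_{\nabla^\oplus}+d^{\nabla}\Phi+\Phi\wedge\Phi .
\]
Here $d^{\nabla}\Phi$ is the exterior covariant derivative of the $\rmend$-valued 1-form $\Phi$, computed with a connection on $T^*M$ chosen to make the torsion terms reorganise. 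That is, $(d^\nabla\Phi)(X,Y)=\nabla_X(\Phi(Y))-\nabla_Y(\Phi(X))-\Phi([X,Y])$.

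\textbf{Diagonal blocks.} On the $TM$–$TM$ block, $\Phi\wedge\Phi$ contributes
\[
\bracks{i_XF_\theta,i_YF_\theta}-\bracks{i_YF_\theta,i_XF_\theta},
\]
which as a map $Z\mapsto\cdots$ is $-\bracks{F_\theta\otimes F_\theta}(X,Y)$ after the index reordering. The same block also contains $R_{\nabla^-}(X,Y)$. I then invoke the standard identity for connections with skew torsion $\pm H$:
\[
g(R_{\nabla^-}(X,Y)Z,W)=g(R_{\nabla^+}(Z,W)X,Y)
\]
(pair symmetry exchanging Bismut and Hull). I would verify this through the Bianchi identity quoted in the proof of Proposition~\ref{prop:ricci:bismut}. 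The ingredient there is the heterotic Bianchi identity \eqref{eq: hBi}, $dH=\bracks{F_\theta\wedge F_\theta}_\fk$. It is exactly the $dH$ correction in the pair-symmetry formula that becomes the $\bracks{F_\theta\otimes F_\theta}$ correction, so the block should combine to $\fR^+-\bracks{F_\theta\otimes F_\theta}$, consistent with \eqref{eq:definition:R^+}.

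On the $\adjointbundleP$–$\adjointbundleP$ block, $F_{\nabla^\theta}$ acts as $[F_\theta,\cdot]$. The term $\Phi\wedge\Phi$ gives
\[
-i_XF_\theta\bracks{i_YF_\theta,\cdot}+i_YF_\theta\bracks{i_XF_\theta,\cdot},
\]
which is the asserted $F_\theta\diamond\bracks{\cdot,\cdot}^{-1}F_\theta$ term.

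\textbf{Off-diagonal blocks.} The off-diagonal blocks come only from $d^\nabla\Phi$, and give terms of the form $(\nabla_XF_\theta)(Y,\cdot)-(\nabla_YF_\theta)(X,\cdot)$. Using $d^\theta F_\theta=0$ in the cyclic form $(\nabla^{\theta,g}_X F)(Y,Z)+\text{cyc}=0$, I would rewrite this antisymmetrised derivative as $(\nabla^{g}_Z F)(X,Y)$ plus $H$-terms. The mixed use of $\nabla^-$ on the $T^*M$ slot converts precisely into $\nabla^{\theta,+}_Z F_\theta(X,Y)$, which is the block $\nabla^{\theta,+}F_\theta$, read as a map $Z\mapsto\cdots$ after swapping form and endomorphism indices. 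The upper-right block follows by skew-adjointness, giving $-\nabla^{\theta,+}F_\theta^\dagger$.

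\textbf{Main obstacle.} The hardest step is the $TM$ block: correctly tracking signs and factors of $\tfrac12$ in the pair-symmetry identity relating $R^-$ and $R^+$ through $dH$, and matching this with $\bracks{F\otimes F}$ via \eqref{eq: hBi}. I would first test it in the flat case $F_\theta=0$, where it reduces to the known identity for closed torsion, and then track the $F$-terms separately.
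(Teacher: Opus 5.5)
The paper does not prove this lemma; it quotes it from \cite[Lemma 4.7]{Garcia-Fernandez2025}. A direct computation is therefore the natural route, and most of yours is sound. The splitting $(D^-_+)_X=\nabla^\oplus_X+\Phi(X)$ and the formula $F_{D^-_+}=F_{\nabla^\oplus}+d^\nabla\Phi+\Phi\wedge\Phi$ are correct, and the off-diagonal blocks do come only from $d^\nabla\Phi$. Computing with $\nabla^g$, the $\adjointbundleP$-component of $F_{D^-_+}(X,Y)Z$ is $-(\nabla^{\theta,g}_XF_\theta)(Y,Z)+(\nabla^{\theta,g}_YF_\theta)(X,Z)$, plus the $H$-terms coming from $\nabla^-_XZ$ and $\nabla^-_YZ$. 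The identity $d^\theta F_\theta=0$ turns this into $(\nabla^{\theta,+}_ZF_\theta)(X,Y)$, as you say.

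The gap is in the $TM$--$TM$ block. This is the only place where \eqref{eq: hBi} enters, so it is the heart of the lemma, and two of your claims there are false as soon as $dH\neq0$.

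First, $\Phi\wedge\Phi$ is not $-\bracks{F_\theta\otimes F_\theta}$ up to an index reordering. It is the crossed contraction
\[
g\bigl([\Phi(X),\Phi(Y)]Z,W\bigr)=\bracks{F_\theta(X,W),F_\theta(Y,Z)}_\fk-\bracks{F_\theta(X,Z),F_\theta(Y,W)}_\fk ,
\]
which differs from $-\bracks{F_\theta(X,Y),F_\theta(Z,W)}_\fk$ by $\tfrac12\bracks{F_\theta\wedge F_\theta}_\fk(X,Y,Z,W)$. No reordering of indices removes that difference.

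Second, the Bismut--Hull pair exchange is not exact; it reads
\[
g\bigl(R_{\nabla^-}(X,Y)Z,W\bigr)=g\bigl(R_{\nabla^+}(Z,W)X,Y\bigr)-\tfrac12\,dH(X,Y,Z,W).
\]

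The correct mechanism is that \eqref{eq: hBi} expands the correction into three terms:
\[
-\tfrac12\,dH(X,Y,Z,W)=-\bracks{F_\theta(X,Y),F_\theta(Z,W)}_\fk+\bracks{F_\theta(X,Z),F_\theta(Y,W)}_\fk-\bracks{F_\theta(X,W),F_\theta(Y,Z)}_\fk .
\]
The last two terms cancel the $\Phi\wedge\Phi$ contribution exactly, leaving $\fR^+(X,Y)(Z,W)-\bracks{F_\theta(X,Y),F_\theta(Z,W)}_\fk$.

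In your write-up the two misstatements are off by opposite amounts $\pm\tfrac12\bracks{F_\theta\wedge F_\theta}_\fk$, so they happen to compensate and the block you announce is right. But the mechanism you describe double-counts. You have the $dH$-correction "becoming" the $\bracks{F_\theta\otimes F_\theta}$ term on top of a $\Phi\wedge\Phi$ that already produced it. Carried out literally, that gives the wrong block.

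Separately, in the matrix of $(D^-_+)_X$ the two minus signs cancel. So on $\adjointbundleP$ one gets
\[
[\Phi(X),\Phi(Y)]t=F_\theta\bigl(X,\bracks{i_YF_\theta,t}^\sharp\bigr)-F_\theta\bigl(Y,\bracks{i_XF_\theta,t}^\sharp\bigr),
\]
which has the opposite overall sign to your expression.
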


\begin{proposition}
\label{prop:gravitino=>coupled}
    Let $E:(P, K, \theta, \pdi{\cdot,\cdot}_\fk) \to (M, H)$ be a string algebroid with generalized metric $V_+$ and $\xi\in \Gamma(\mcal T^{p,q})$ a tensor on $M$ satisfying $D^+_-\xi=0$, then $F_{D^-_+}\diamond \xi=0$.
\end{proposition}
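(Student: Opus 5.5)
The plan is to view $F_{D^-_+}\diamond\xi$ as the curvature of the connection $D^-_+$ on $V_-\isomorphic TM\oplus\adjointbundle{P}$, evaluated on pairs of elements of $V_-$ and acting on the tensor bundle $\mcal T^{p,q}(V_+)\isomorphic\mcal T^{p,q}(TM)$. Equivalently, we use the mixed curvature $\mathrm{GR}$ of $D$ built from $D^+_-$, and show it kills $\xi$ because $\xi$ is $D^+_-$-parallel.

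First, recall the symmetry underlying \cite[Lemma 4.7]{Garcia-Fernandez2025}. For the torsion-free, $V_+$-compatible $D$, the curvature $F_{D^-_+}$, viewed as a $2$-form on $V_+$ with values in $\mathfrak{so}(V_-)$, coincides under the pair-swap (the same index swap defining $\fR^+$ in \eqref{eq:definition:R^+}) with the curvature $F_{D^+_-}$ of $D^+_-$. The latter is a $2$-form on $V_-$ with values in $\mathfrak{so}(V_+)\isomorphic\Omega^2$. Concretely, Lemma~\ref{lemma:eq:curvature:coupled} writes $F_{D^-_+}$ as a block matrix. Reading it with the $V_+$-indices as the endomorphism indices, it gives the curvature of $(D^+_-)_{X+r}=\nabla^+_X-\bracks{F_\theta\diamond\cdot,r}^\sharp$:
\begin{itemize}
\item the $TM\otimes TM$ block is $\fR^+-\bracks{F_\theta\otimes F_\theta}$;
\item the mixed blocks are $\pm\nabla^{\theta,+}F_\theta$;
\item the $\adjointbundle P\otimes\adjointbundle P$ block is the bracket/quadratic term in $F_\theta$.
\end{itemize}
I would verify this identification directly by computing $[(D^+_-)_{a},(D^+_-)_{b}]-(D^+_-)_{[a,b]_-}$ for $a,b\in\Gamma(V_-)$, using $[a,b]_-$ from the Dorfman bracket in Example~\ref{example:transitive:Courant}. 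This is the main bookkeeping step and the likely obstacle, since signs and the $\adjointbundle P$-components must match the lemma exactly.

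Second, once $F_{D^-_+}$ is identified, as an $\Omega^2\cong\mathfrak{so}(TM)$-valued object, with the curvature operator $R^{D^+_-}(a,b)=[D^+_{-,a},D^+_{-,b}]-D^+_{-,[a,b]_-}$, the claim is formal. The diamond action is exactly the action of $\mathfrak{so}(TM)$ on tensors. Moreover, $D^+_-$ acts on $\mcal T^{p,q}$ by derivations extending its action on $TM$: $\nabla^+$ plus the infinitesimal action of $-\bracks{F_\theta,r}$ through $\diamond$. Therefore
\[
F_{D^-_+}\diamond\xi
=D^+_{-,a}D^+_{-,b}\xi-D^+_{-,b}D^+_{-,a}\xi-D^+_{-,[a,b]_-}\xi .
\]
Each term on the right vanishes because $D^+_-\xi=0$ identically. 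Hence $F_{D^-_+}\diamond\xi=0$.

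As a sanity check, the argument can be carried out componentwise using \eqref{eq:gravitino:equation}.
\begin{itemize}
\item $\nabla^+\xi=0$ gives $R_{\nabla^+}\diamond\xi=0$, hence $\fR^+\diamond\xi=0$ after the index swap.
\item $F_\theta\diamond\xi=0$ for every component, differentiated by $\nabla^{\theta,+}$ and using $\nabla^+\xi=0$, gives $(\nabla^{\theta,+}F_\theta)\diamond\xi=0$.
\item It also kills $\bracks{F_\theta\otimes F_\theta}\diamond\xi$ and the $\adjointbundle P$ block $F_\theta\diamond\bracks{\cdot,\cdot}^{-1}F_\theta$.
\end{itemize}
The one point needing care is that the diamond action in the statement is on the $V_+$ (i.e.\ $\Omega^2$) slot of the block entries. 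This is precisely what the pair-swap convention in Lemma~\ref{lemma:eq:curvature:coupled} ensures.
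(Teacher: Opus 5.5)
Your componentwise ``sanity check'' is exactly the paper's proof. It reads the blocks of $F_{D^-_+}$ off Lemma~\ref{lemma:eq:curvature:coupled} and kills each one: the holonomy principle handles $\fR^+$, the Leibniz rule handles $\nabla^{\theta,+}F_\theta$, and $F_\theta\diamond\xi=0$ handles $\bracks{F_\theta\otimes F_\theta}$ and the $\adjointbundleP$-block. You did not list the $[F_\theta,\cdot]$ piece of that block, but it dies the same way, since $[F_\theta\diamond\xi,\cdot]=0$. That argument is complete on its own.

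Your headline route, however, fails as literally proposed, because $[a,b]_-$ is the wrong bracket. With $[a,b]_-$ the $V_-$-projection of the Dorfman bracket, $[(D^+_-)_a,(D^+_-)_b]-(D^+_-)_{[a,b]_-}$ is not a tensor. Axioms iii) and v) of Definition~\ref{def:courant:algebroid} give $[fa,b]=f[a,b]-\pi(b)(f)\,a+\bracks{a,b}\pi^*df$, which produces an extra term $-\bracks{a,b}\,(D^+_-)_{(\pi^*df)_-}$. The expression is not even skew, since $[a,b]+[b,a]=\pi^*d\bracks{a,b}$ and $\bracks{\cdot,\cdot}$ is non-degenerate on $V_-$. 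So it cannot equal the pair-swap of the tensor $F_{D^-_+}$, and your displayed identity for $F_{D^-_+}\diamond\xi$ is false for general $\xi$. For $D^+_-$-parallel $\xi$ the discrepancy happens to vanish, but the verification step you planned would not go through.

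The repair is to use the Lie algebroid bracket on $E/T^*M\cong TM\oplus\adjointbundleP$. This is the $TM\oplus\adjointbundleP$-component of the Dorfman bracket, $[X+r,Y+t]=[X,Y]-[r,t]-F_\theta(X,Y)+d^\theta_Xt-d^\theta_Yr$, i.e.\ the Atiyah algebroid of $P$. Write $(D^+_-)_{X+r}=\nabla^+_X+A(r)$, with $A(r)=-\bracks{F_\theta,r}_\fk$ acting by $\diamond$. The curvature of $D^+_-$ for this bracket is tensorial, with blocks $R_{\nabla^+}(X,Y)-\bracks{F_\theta,F_\theta(X,Y)}_\fk$, $-\bracks{\nabla^{\theta,+}_XF_\theta,t}_\fk$ and $[A(r),A(t)]+A([r,t])$. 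Up to sign conventions, these are the pair-swapped blocks of Lemma~\ref{lemma:eq:curvature:coupled}. A blockwise match up to sign suffices, because $[(D^+_-)_a,(D^+_-)_b]\xi-(D^+_-)_{[a,b]}\xi=0$ holds with $a,b$ ranging over each summand separately.

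Compared with the paper, the repaired route presents the proposition as the integrability condition of the gravitino equation for a Lie algebroid connection, collapsing the three blockwise checks into one. It does not avoid the real input, though. The pair symmetry between $F_{D^-_+}$ and the curvature of $D^+_-$ is the content of Lemma~\ref{lemma:eq:curvature:coupled}. That symmetry rests on \eqref{eq: hBi} through $R^+(X,Y,Z,W)-R^-(Z,W,X,Y)=\tfrac12dH(X,Y,Z,W)$, and the paper simply quotes it.
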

\begin{proof}
    The gravitino equation implies $\mathfrak R^+\diamond \xi=0$ by the holonomy principle. Similarly, the condition $F_\theta\diamond\xi=0$ implies $\bracks{\pts{F_\theta\diamond \xi}\otimes F_\theta}=0$. Therefore, by the Leibniz rule, $$\nabla^{\theta,+}F_\theta\diamond\xi=0.$$
    Clearly, $[F_\theta\diamond\xi,{}\cdot{}]=0$ and $F_\theta\diamond (F_\theta\diamond\xi) =0$ also hold, so the remaining entries of $F_{D^-_+}$ vanish under the diamond action on $\xi$. The claim follows from Lemma~\ref{lemma:eq:curvature:coupled}.
\end{proof}

The result above shows that, under the hypotheses of Theorem~\ref{theorem:ricci:flatness:simple:group}, the coupled instanton equations are satisfied and generalized Ricci flatness follows. We conclude this section by addressing the following natural question: given a solution to the coupled instanton equation, which conditions guarantee generalized Ricci flatness?

\begin{theorem}[Theorem~\ref{theorem:ricci:flatness:coupled:equations}]
\label{theorem:ricci:flatness:coupled:equations:texto}
    Let $E:(P, K; \theta, \pdi{\cdot,\cdot}_\fk)\to (M, N(G),
    \psi )$ be the string algebroid induced by a 4-form geometry for which
    \[
        \vet H_\psi^2|_{\fg}=b\cdot \id_{\fg}, \qquad b\neq 0.
    \]    
    Suppose that the \emph{coupled $G$-instanton equation} holds. Then the generalized Ricci tensor of the induced pair \eqref{eq:generalized:induced:pair} coincides with a scalar multiple of the contraction of the flux 3-form with the gravitino term:
    \[ \GRic^+\pp{V_+^{G,\psi},\dv^{G,\psi}}=-\frac1bH_\psi\iprod D^+_-\psi.
    \]
\end{theorem}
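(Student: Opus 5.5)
The plan is to read off $\GRic^+$ from Theorem~\ref{theorem:explicit:formula:generalized:ricci:curvature}. The divergence \eqref{eq:divergence:4:form:geometry} corresponds to $e=-\tfrac{2}{b}\zeta_\psi\in T^*M$, so $\zeta_+=-\tfrac1b\zeta_\psi$. With this choice the two components of $\GRic^+(a_-,b_+)$ are exactly the Ricci--Bismut term \eqref{eq:RBt} and the Yang--Mills term \eqref{eq:YMt}, with $\tilde\zeta_\psi=-\tfrac1b\zeta_\psi$. I will then apply Propositions~\ref{prop:yang:mills} and~\ref{prop:ricci:bismut} with $\lambda_{i_0}=b$ and the eigenspace $\Omega^2_{i_0}=\fg$. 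These give
\[
\mathrm{(RBt)}=-\tfrac1b H_\psi\iprod\nabla^+\psi+\fA_b\bigl(\fR^+-\bracks{F_\theta\otimes F_\theta}\bigr),
\qquad
\mathrm{(YMt)}=-\tfrac1b H_\psi\iprod(F_\theta\diamond\psi)+\fA_b(\nabla^{\theta,+}F_\theta).
\]
The first of these uses \eqref{eq: hBi4}, which holds since $E$ is a string algebroid.

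Next I will kill the $\fA_b$ terms using the coupled $G$-instanton equation. By Lemma~\ref{lemma:eq:curvature:coupled}, $F_{D^-_+}\in\fg\otimes\End(V_-)$ means that the $\Omega^2$-legs of its blocks lie in $\fg$. In particular, $\fR^+-\bracks{F_\theta\otimes F_\theta}$ and $\nabla^{\theta,+}F_\theta$ take values in $\fg$ in the 2-form slot on which $\fA_b$ acts. Here I must check that the slot appearing in the lemma (the "base" 2-form slot of the curvature of $D^-_+$) matches the slot acted on by $\fA_b$ in the Propositions. This is why $\fR^+$ was defined with the pairs of indices swapped in \eqref{eq:definition:R^+}. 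Since $\fA_b$ annihilates $\fg$-valued arguments, both remainders vanish. I use only that $\fg$ is a $\bH^2_\psi$-eigenspace, which makes sense under the $N(G)$-structure by Remark~\ref{remark:instantons:normaliser}.

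It remains to identify the sum $-\tfrac1b H_\psi\iprod\nabla^+\psi$ on the $(X,Y)$ part and $-\tfrac1b H_\psi\iprod(F_\theta\diamond\psi)$ on the $(r,Y)$ part with $-\tfrac1b H_\psi\iprod D^+_-\psi$. From \ref{parag:explicit:cnstruction:generalized:connection},
\[
(D^+_-)_{X+r}\psi=\nabla^+_X\psi-\bracks{F_\theta,r}\diamond\psi .
\]
So the $X$-component gives $i_X(H_\psi\iprod\nabla^+\psi)$, evaluated on $Y$, and the $r$-component is $\bracks{H_\psi\iprod(F_\theta\diamond\psi),r}$ up to sign.

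The main obstacle is sign and index bookkeeping. I must check that the minus sign in front of the Yang--Mills pairing in Theorem~\ref{theorem:explicit:formula:generalized:ricci:curvature} matches the minus sign in $D^+_-$, so that both pieces assemble as $-\tfrac1b H_\psi\iprod D^+_-\psi$ viewed as an element of $V_-^*\otimes V_+^*$. I also need the contraction $H_\psi\iprod$ to act on the tensor slot of $\psi$ consistently in both pieces. I would verify this in an orthonormal frame, writing $D^+_-\psi$ as an element of $V_-^*\otimes\Omega^4$ and contracting its $\Omega^4$ part with $H_\psi$ to land in $\Omega^1\cong V_+^*$.
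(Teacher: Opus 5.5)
Your proposal is correct and follows the paper's proof essentially step for step. You read $\GRic^+$ off Theorem~\ref{theorem:explicit:formula:generalized:ricci:curvature} with $\zeta_+=-\tfrac1b\zeta_\psi$, apply Propositions~\ref{prop:yang:mills} and~\ref{prop:ricci:bismut} with $\lambda_{i_0}=b$, use the coupled $G$-instanton condition (via Lemma~\ref{lemma:eq:curvature:coupled}) to kill the $\fA_b$-remainders, and reassemble $-\tfrac1bH_\psi\iprod\bigl(\nabla^+\psi-\bracks{F_\theta\diamond\psi,\cdot}_\fk\bigr)$ as $-\tfrac1bH_\psi\iprod D^+_-\psi$.

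The sign you were worried about does work out: the minus in front of the Yang--Mills pairing in the explicit formula is the same minus as in $(D^+_-)_{X+r}=\nabla^+_X-\bracks{F_\theta\diamond\cdot,r}^\sharp$. Also, you only need $\fg$ to lie inside the $b$-eigenspace of $\bH^2_\psi$, not to equal it.
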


\begin{proof}
    Using the expressions for the Yang--Mills and Ricci--Bismut terms in Propositions~\ref{prop:yang:mills} and \ref{prop:ricci:bismut} applied to $\fg$ as an eigenspace, we obtain
    \begin{equation}
    \label{eq:YM+RB:coupled:temp}
    \begin{aligned}
        d^{\theta*}F_\theta-F_\theta\iprod H_\psi
        -\tfrac1b\zeta_\psi\iprod F_\theta 
        &=-\tfrac1b
        H_\psi\iprod \pp{F_\theta\diamond\psi}
        +
        \mathfrak A_b(\nabla^{\theta,+}F_\theta),
        \\
        \Ric^++F_\theta\circ F_\theta-\tfrac1b\nabla^+\zeta_\psi &=-\tfrac1bH_\psi\iprod\nabla^+\psi +\mathfrak A_b(\mathfrak R^+-\bracks{F_\theta\otimes F_\theta}).
    \end{aligned}
    \end{equation}
    Since the system satisfies the coupled $G$-instanton equation, we have
    \[
    \nabla^{\theta,+}F_\theta\in {\mathfrak g}
    \otimes \pp{T^*M\otimes \adjointbundleP}
     \qandq 
     \mathfrak R^+-\bracks{F_\theta\otimes F_\theta}\in {\mathfrak g}\otimes \Omega^2(M),
    \]
    and hence the $\mathfrak A_b$-components vanish in \eqref{eq:YM+RB:coupled:temp}.
    Finally, Theorem~\ref{theorem:explicit:formula:generalized:ricci:curvature} yields 
    \begin{align*}
\GRic^+\pp{V_+^{ G,\psi},\dv^{ G,\psi}}&=
    \pp{\Ric^++F_\theta\circ F_\theta-\tfrac1b\nabla^+\zeta_\psi}-
\bracks{
d^{\theta*}F_\theta -F_\theta\iprod H_\psi-\tfrac1b\zeta_\psi\iprod F_\theta 
,{}\cdot {}}_\fk
\\&=-\tfrac1b
H_\psi\iprod \pp{
\nabla^+\psi -
\bracks{F_\theta\diamond\psi,{}\cdot{}}_\fk
}=-\tfrac1bH_\psi\iprod D^+_-\psi
.\qedhere
    \end{align*}
\end{proof}

\smallskip

It is worth noting that, unlike Theorem~\ref{theorem:ricci:flatness:simple:group}, the conclusion of Theorem~\ref{theorem:ricci:flatness:coupled:equations} does not require $G$ to be realised as the stabiliser of a multi-tensor.

\begin{remark}
\label{eq:remark:discussion:Ric+gravitino+coupled}
    In view of Lemma~\ref{lemma:eq:curvature:coupled}, the coupled $G$-instanton condition is equivalent to
 \[
    \mathfrak R^+-\bracks{F_\theta\otimes F_\theta}\in \fg
    , 
        \qquad 
    \nabla^{\theta,+}F_\theta\in \fg
    , 
    \qquad 
[F_\theta,{}\cdot{}]+F_\theta\diamond F_\theta\in \fg
.
    \]
    The first two conditions arise, respectively, in the Yang--Mills and Ricci--Bismut equations, whereas the last condition does not seem to enter directly into the computation of the generalized Ricci tensor. Moreover, even the first two conditions do not appear in their entirety. Indeed, under the hypothesis that $\vet H_\psi^2|_{\fg}=b\ \id_{\fg}$, we have
    \begin{equation}
    \GRic^+\pp{V_+^{G,\psi},\dv^{G,\psi}}=-\frac1bH_\psi\iprod D^+_-\psi+\fA_b\pp{F^{TM}_{D^-_+}},    
    \end{equation}
    where $F^{TM}_{D^-_+}=\fR^+-\bracks{F_\theta\otimes F_\theta}+\nabla^{\theta,+}F_\theta\in \Omega^2(M,T^*M\otimes V_-)$ denotes the component of the curvature $F_{D^-_+}=F_{D^-_+}^{TM}\oplus F_{D^-_+}^{\adjointbundleP}$ taking values in $T^*M\subset V_-^*\isomorphic T^*M\oplus \adjointbundleP^*$. 
\end{remark}

\section{Examples of 4-form geometries}\label{sec:examples:4-form:geometry}

In this section, we apply the framework of 4-form geometries to the main classes of geometric structures admitting invariant 4-forms: almost Hermitian and almost special Hermitian structures, with structure groups $\U(m)$ and $\SU(m)$, respectively; \(\rG_2\)- and \(\Spin(7)\)-structures; and almost quaternionic Hermitian and almost hyper-Hermitian structures, with groups $\Sp(k)\Sp(1)$ and $\Sp(k)$, respectively. In each subsection, we begin by recalling the basics of the \(G\)-structure under consideration and showing that it admits a natural 4-form geometry. We then recall the decomposition of differential forms into irreducible \(G\)-submodules and use Theorem~\ref{theorem:flux:3:form:expression} to compute the associated flux operator, flux 3-form, and Lee form, recovering several formulas already known in the literature. Next, we explain how these structures satisfy the hypotheses of Theorem~\ref{theorem:ricci:flatness:simple:group} and thereby yield generalized Ricci-flat metrics. The main conclusion is that, for the groups
\[
    G = \SU(m),\  \rG_2,\  \Spin(7),\  \Sp(k),
\]
the associated string algebroid carries a generalized Ricci-flat metric, provided that the Bismut connection \(\nabla^+\) is compatible with the underlying \(G\)-structure and \(\theta\) is a \(G\)-instanton satisfying the heterotic Bianchi identity; see Theorem~\ref{theorem:ricci:flatness:compilation}. Finally, we present explicit examples of \(G\)-structures endowed with generalized Ricci-flat metrics. We discuss the torsion-free regime in all these cases, nearly Kähler and nearly parallel structures on the 6- and 7-spheres, lifts obtained by considering Cartesian products \(M \times \bb R\) or \(M \times \bb S^1\), and structures with closed and parallel torsion, satisfying \(dH=0\) and \(\nabla^+H=0\). In particular, in \ref{parag:lifting:G2:to:spin7} we study a lift of a $\rG_2$-structure to a $\Spin(7)$-structure using the language of 4-form geometries; this can be viewed as a generalisation of \cite[Theorem~5.1]{Ivanov2005}. In \ref{parag:su(4):structures}, we discuss \(\SU(4)\)-structures equipped with an alternative 4-form, namely the real part $\Psi_+$ of the complex volume form, instead of the usual choice \(\psi=\frac12\omega^2\). This alternative does not yield a suitable 4-form geometry for our construction of special generalized geometries and therefore illustrates the necessity of the hypotheses in our result.

\subsection{\texorpdfstring{$\U(m)$}{U(m)} and \texorpdfstring{$\SU(m)$}{SU(m)}-structures}
\label{subsec:U(m):and:SU(m)}

We focus on $m\ge 3$, since the lower-dimensional cases have atypical module decompositions. When $m=1$, there are no non-zero 4-forms. When $m=2$, the forms $d\omega$ and $d^*\psi$ do not encode the same information; moreover, this case has already been treated separately; see Example~\ref{example:dimensao:quatro} and Corollary~\ref{corollary:GRIC:dimensao:quatro}. Accordingly, let $M^{2m}$ be a smooth manifold with $m\ge 3$.

A \emph{$\U(m)$-structure} on $M$ is a pair $(g,\omega)$, where $g$ is a Riemannian metric on $M$ and $\omega\in \Omega^2(M)$ is pointwise modelled on
    \begin{equation*}
    \omega_0 = e^{12} + e^{34}+ \cdots +e^{2m-1}\wedge e^{2m}\in \Lambda^2(\bb R^{2m})^* , 
\end{equation*}
for an orthonormal frame $\{e_1, \cdots, e_{2m}\}$ of $\R^{2m}$ with associated metric $g_0=e^1\otimes e^1+\cdots+e^{2m}\otimes e^{2m}$. Alternatively, a $\U(m)$-structure is a pair $(g,J)$, where $J$ is a $g$-orthogonal almost complex structure; the two definitions are equivalent by setting 
\(
\omega({}\cdot{},{}\cdot{})=g(J{}\cdot{},{}\cdot{})
\). A $\U(m)$-structure induces an orientation \(\vol_M:=\frac{1}{m!}\omega^m\), and the group $\U(m)\subset \SO(2m)$ can be viewed as the $\SO(2m)$-stabiliser
\begin{equation*}
    \U(m ) =\Stab(\omega_0)\cap\Stab(g_0)
    = \Stab{}_{\SO(2m)}(\omega_0).
\end{equation*}
Thus, we consider the 4-form $\psi= \tfrac{1}{2} \omega \wedge \omega\in \Omega^4(M)$ (the factor $\frac{1}{2}$ will be useful later), which is pointwise modelled on
\begin{equation*}
\psi_0 
=\sum_{i<j}^me^{2i-1,2i}\wedge e^{2j-1,2j}  \in \Lambda^4(\bb R^{2m})^*.
\end{equation*}
It follows that $\U(m)\subset \Stab_{\SO(2m)}(\psi_0)$. Thus, $(M,\U(m),\psi)$ defines a 4-form geometry; see Definition~\ref{def:4:form:geometry}.

\begin{remark}
    In the case of $\U(m)$-structures with $m\ge 3$, $d^*\psi$ contains the same information as $d\omega$, since
\begin{align*}
d^*\psi &=-* d* \frac{\omega^2}{2!}=-* d \frac{\omega^{m-2}}{(m-2)!}=-\frac{1}{(m-3)!}* \pts{d\omega \wedge \omega^{m-3}}
\end{align*}
    and the assertion follows from the fact that the Lefschetz operator wedging with $\omega$ is injective on 3-forms for $m\ge 4$. The assertion for $m=3$ is immediate, since $d^*\psi=-\star d\omega $.
\end{remark}

An \emph{$\SU(m)$-structure} on $M^{2m}$ consists of a triple $(g, \omega, \Psi)$, where $(g,\omega)$ is a $\U(m)$-structure and $\Psi\in \Omega^{(m,0)}(M)$ is a complex volume form pointwise modelled on
 \begin{equation}\label{eq:model:Psi}
\Psi_0 = z^1 \wedge \cdots \wedge z^m =
\sum_{S \subset [m]} i^{\#S} e^{j_1} \wedge \cdots \wedge e^{j_m}, 
\qquad 
j_k = \begin{cases}
    2k & k \in S, \\
    2k-1 & k \notin S , 
\end{cases}
\end{equation}
with $z^k = e^{2k-1} + i e^{2k}$. We denote by $\Psi_+$ and $\Psi_-$ the real and imaginary parts of $\Psi=\Psi_+ + i\Psi_-$, respectively. We refer the reader to \cite{Chiossi2002,Salamon1989} for details.

\begin{remark}
The complex volume form $\Psi$ determines the Riemannian volume of $M$ through the normalisation condition
\begin{equation*}
    \Psi\wedge\fecho{\Psi} 
    = \dfrac{1}{m!}i^{m}(-1)^{{m(m+1)}/{2}}\ 2^m\omega^{m}
    =i^{m}(-1)^{{m(m+1)}/{2}}\ 2^m\vol_M .
\end{equation*}
One may also recover the metric from the forms $\omega$ and $\Psi$  via 
\[
g(X,Y)=\frac{(-1)^m}{2^{m-2}\cdot \vol_M} (X\iprod\omega )\wedge (Y\iprod\Psi_+)\wedge \Psi_{[m]},
\]
since $
    \SU(m) = \Stab(\omega_0,\Psi_0)$. Here $[m]\coloneqq (-1)^{m+1}\in \{+1, -1\}$.
\end{remark}
    
It follows from the previous remark that $\SU(m)\subset \Stab_{\SO(2m)}(\psi_0)$, where $\psi_0=\frac{1}{2}\omega_0^2$, so that $(M, \SU(m),\psi)$ defines a 4-form geometry; see Definition~\ref{def:4:form:geometry}. When $m=4$, the forms $\Psi_+$ and $\Psi_-$ define alternative 4-form geometries $(M^8,\SU(4),\Psi_\pm)$. We briefly explore $(M^8,\SU(4),\Psi_+)$ in \ref{parag:su(4):structures}.
More generally, a $G$-structure can admit more than one 4-form geometry, and it is natural to ask how different choices affect the resulting generalized geometry or whether the corresponding flux 3-forms are meaningfully related. We are currently investigating this question.

\paragraph{Decomposition of differential forms}
\label{subsec:Decomposition:forms:Un}
\label{parag:dec:forms:u:m}
Let $(M^{2m},g,\omega)$ be a $\U(m)$-structure with $m\ge 3$. Following \cite{Salamon1989,Cabrera2005}, $\Omega^k(M)$ decomposes into irreducible $\U(m)$-submodules. If a complex representation $V$ admits a real form, we denote that real form by $[V]$, so that $[V]\tensor_\R\C\iso V$. If $V$ is not self-conjugate, we denote by $\bracl{V}$ its underlying real representation, characterised by $\bracl{V}\tensor_\R\C\iso V\oplus\fecho{V}$. The spaces of 0-forms and 1-forms are irreducible as $\U(m)$-modules, whereas the 2-forms decompose as\footnote{Throughout the text, the subscript denotes the rank of the module, i.e. $\rank\pp{\Omega^k_l}=l$.}
        \begin{equation*}
        \begin{split}
            \Omega^2
            &= \Omega^2_{m(m-1)}\oplus \Omega^2_{m^2-1}\oplus \Omega^2_1 
            \iso \bracl{\Omega^{2,0}}\oplus [\Omega_0^{1,1}]\oplus \pdi{\omega},
        \end{split}
        \end{equation*}
        where $\mathfrak{su}(m)\isomorphic [\Omega^{1,1}_0]$. These components are characterised by
    \begin{align*}
    \Omega^2_1&
    =\{
    \beta\in \Omega^2:\beta\iprod \psi=(m-1)\beta 
    \}
    =\{f\omega:f\in C^\infty(M)\},
    \\
     \bracl{\Omega^{2,0}}&=
     \{
     \beta\in \Omega^2:\beta\iprod\psi=\beta 
     \}
     =\{
     \beta\in \Omega^2:J\beta=-\beta 
     \},
     \\
     [\Omega^{1,1}_0]&=
     \{
     \beta\in \Omega^2:\beta\iprod \psi=-\beta 
     \}=
     \{
     \beta\in \Omega^2
     :J\beta=\beta , \beta\iprod\omega=0
     \}.
    \end{align*}
    The space of 3-forms decomposes as
    \begin{align*}
        \Omega^3
        &= \Omega^3_{2m}\oplus \Omega^3_{\frac{1}{3}m(m-1)(m-2)}\oplus \Omega^3_{m(m+1)(m-2)} 
        \iso \Omega^1\oplus \bracl{\Omega^{3,0}}\oplus \bracl{\Omega^{2,1}_0} , 
    \end{align*}
    where the identification $\Omega^3_{2m}\iso \Omega^1$ is given by $\Omega^3_{2m}
        = \{X \iprod \psi : X \in \Omega^1\}$
     and the sum $\bracl{\Omega^{3,0}} \oplus \bracl{\Omega^{2,1}_0}$ consists of 3-forms $\gamma$ satisfying $\omega \iprod \gamma = 0$. Moreover, these decompositions remain valid as decompositions into irreducible $\SU(m)$-modules, with the exception of $m=3$, for which the 2-dimensional module $\bracl{\Omega^{3,0}}$ decomposes into two irreducible components $\Omega^3_2=\langle\Psi_+\rangle\oplus\langle\Psi_-\rangle$.

\paragraph{Flux operator and associated forms}
Consider the 4-form geometry $\pp{M^{2m},\ \U(m),\ \psi}$, where $\psi=\frac{1}{2}\omega^2$. According to the decomposition of $\Omega^3$ into irreducible $\U(m)$-submodules, the codifferential of $\psi$ decomposes as
\begin{equation}
\label{eq:torsion:forms:u:m}
d^*\psi  = \tau_1\iprod  \psi + 
\tau_3^{3,0}+\tau_3^{2,1},
\end{equation}
where $\tau_1\in \Omega^1(M)$, $\tau_3^{2,1}\in \bracl{\Omega^{2,1}_0}$ and $\tau_3^{3,0}\in \bracl{\Omega^{3,0}}$ are the \emph{torsion forms}. For $\U(m)$-structures, the Lee space $\Omega^1=\Omega^1_L$ is irreducible and the eigenvalue of $\mathfrak{su}(m)$ on the flux operator $\bH_\psi^2$ is $b=-1$, cf. \ref{subsec:Decomposition:forms:Un}.

\begin{proposition}
\label{prop:flux:operator:almost:hermitian}
    Let $(M^{2m},\U(m),\psi)$ be a 4-form geometry, where $\psi=\frac{1}{2}\omega^2$. Then the flux operator $\vet H_\psi:\Omega^3\to \Omega^3$ is an isomorphism of $\U(m)$-modules and acts on each component as
\begin{equation*}
    \vet H_\psi\big|_{\Omega^3_{2m}}=(m-2)\cdot \id_{\Omega^3_{2m}},  \qquad 
\vet H_\psi\big|_{\bracl{\Omega^{3,0}}}=3\cdot \id_{\bracl{\Omega^{3,0}}},  \qquad 
\vet H_\psi\big|_{\bracl{\Omega^{2,1}_0}}=- \id_{\bracl{\Omega^{2,1}_0}} .
\end{equation*}
In particular, the associated flux 3-form $H_\omega\in \Omega^3(M)$ and Lee form $\zeta_\omega\in \Omega^1(M)$ are given, respectively, by
\begin{equation*}
H_\omega=\frac{1}{m-2}\tau_1\iprod\psi +\frac{1}{3}\tau_3^{3,0}-\tau_3^{2,1} \qandq
        \zeta_\omega 
        =-\frac{m-1}{m-2}\tau_1 .
    \end{equation*}
\end{proposition}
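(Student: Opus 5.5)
The plan is to use Schur's lemma to reduce the computation of $\vet H_\psi$ to one explicit test form in each summand. By Definition~\ref{def:flux:operator}, $\vet H_\psi$ is $\U(m)$-equivariant. For $m\ge 3$ the decomposition $\Omega^3=\Omega^3_{2m}\oplus\bracl{\Omega^{3,0}}\oplus\bracl{\Omega^{2,1}_0}$ of \ref{parag:dec:forms:u:m} consists of pairwise non-isomorphic real irreducible $\U(m)$-modules, since their ranks differ for $m\ge 3$. Hence $\vet H_\psi$ preserves each summand. The operator is self-adjoint, so by Schur's lemma it acts on each summand by a real scalar, provided the commutant is only $\R$. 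This needs a word for the two modules of complex type: on $\bracl{\Omega^{3,0}}$ and $\bracl{\Omega^{2,1}_0}$ the commutant is $\C$, generated by $J$ acting as $i$ on the $(p,q)$ part. A self-adjoint element of this commutant is $a+bJ$ with $J$ skew, so self-adjointness forces $b=0$. It therefore suffices to evaluate $\gamma\iprod^2\psi$ on one non-zero element of each summand in the standard frame, with $\psi_0=\sum_{i<j}e^{2i-1,2i,2j-1,2j}$.

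For the explicit evaluations I will use the formula $\vet H_\psi(\gamma)=\tfrac12\sum_{i,j}\gamma^{ij}\wedge\psi_{ij}$. The test forms are as follows.
\begin{itemize}
\item For $\Omega^3_{2m}$, take $\gamma=e_1\iprod\psi_0=\sum_{j\ge2}e^{2,2j-1,2j}$.
\item For $\bracl{\Omega^{3,0}}$, take $\gamma=\mathrm{Re}(z^1\wedge z^2\wedge z^3)$, whose leading term is $e^{135}$.
\item For $\bracl{\Omega^{2,1}_0}$, take $\gamma=\mathrm{Re}(z^1\wedge z^2\wedge\bar z^3)$, which is primitive and of type $(2,1)$.
\end{itemize}
In each case only pairs $(i,j)$ for which $\gamma^{ij}$ and $\psi_{ij}$ are both nonzero contribute, and these are governed by the block pattern $\{2k-1,2k\}$. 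The main bookkeeping obstacle is the $\Omega^3_{2m}$ case. There the pairs lying inside a single block $\{2j-1,2j\}$ with $j\ge2$ each contribute a multiple of $e^{2,\dots}$, and summing over the $m-1$ blocks gives the factor $m-2$ after cancellation against cross terms. I will check this by computing the coefficient of $e^{2,3,4}$ directly. For the two other cases a single coefficient, for example that of $e^{135}$, suffices, and should yield $3$ and $-1$. As a consistency check, $\vet H_\psi(\gamma)$ computed via $\beta\iprod\psi$ on the factors agrees with the 2-form eigenvalues $1$ on $\bracl{\Omega^{2,0}}$ and $-1$ on $\mathfrak{su}(m)$. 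All three scalars are nonzero, so $\vet H_\psi$ is an isomorphism and $\ker\vet H_\psi=0$.

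Given these scalars, Theorem~\ref{theorem:flux:3:form:expression:texto} applied to \eqref{eq:torsion:forms:u:m} immediately yields
\[
H_\omega=\tfrac1{m-2}\tau_1\iprod\psi+\tfrac13\tau_3^{3,0}-\tau_3^{2,1}.
\]
For the Lee form, $\zeta_\omega=H_\omega\iprod\psi$. The components in $\bracl{\Omega^{3,0}}\oplus\bracl{\Omega^{2,1}_0}$ contract to zero against $\psi$. Indeed, their contraction with $\psi$ is a $\U(m)$-equivariant map into the irreducible $\Omega^1$, which is not isomorphic to these modules, so it vanishes; alternatively, this follows from $\omega\iprod\gamma=0$. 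It remains to compute $(X\iprod\psi)\iprod\psi=c\,X$, where $c$ is a scalar by Schur's lemma and Lemma~\ref{lemma:characterization:Lee:space}. Evaluating at $X=e_1$ gives $(e_1\iprod\psi_0)\iprod\psi_0=\sum_{j\ge2}e^{2,2j-1,2j}\iprod\psi_0$. The contraction of $e^{2,2j-1,2j}$ with $e^{1,2,2j-1,2j}$ gives $\pm e^1$, and the sign convention used in the proof of Lemma~\ref{lemma:characterization:Lee:space}, namely $\langle (X\iprod\psi)\iprod\psi,X\rangle=-|X\iprod\psi|^2$, forces $c=-(m-1)$. Therefore
\[
\zeta_\omega=-\tfrac{m-1}{m-2}\tau_1.
\]
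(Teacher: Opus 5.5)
Your strategy is the paper's: equivariance and Schur to reduce to one test form per summand, explicit frame computations, Theorem~\ref{theorem:flux:3:form:expression:texto} for $H_\omega$, and the double contraction for $\zeta_\omega$. Your identity $\langle (X\iprod\psi)\iprod\psi,X\rangle=-|X\iprod\psi|^2$ correctly gives $c=-(m-1)$, and your eigenvalues $m-2$, $3$, $-1$ are right. You also justify why the $\bracl{\Omega^{3,0}}\oplus\bracl{\Omega^{2,1}_0}$ part of $H_\omega$ drops out of $\zeta_\omega$, which the paper leaves implicit.

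There is, however, one false step. The summands are not distinguished by rank for all $m\ge3$: at $m=4$ one has $\rank\Omega^3_{2m}=8=\tfrac13 m(m-1)(m-2)=\rank\bracl{\Omega^{3,0}}$. This matters for your plan. Reading off an eigenvalue from a single coefficient (of $e^{234}$ or $e^{135}$) presupposes that $\vet H_\psi\gamma$ already lies in $\R\gamma$. The paper's own discussion in \ref{parag:su(4):structures} shows that an $\SU(4)$-invariant 4-form does mix exactly these two rank-8 modules. The non-isomorphism you need is still true, but for a different reason. As $\U(m)$-modules the two are distinct because the centre $\U(1)\subset\U(m)$ acts on $\Omega^3_{2m}\otimes\C\cong\Lambda^{1,0}\oplus\Lambda^{0,1}$ with weights $\pm1$, and on $\bracl{\Omega^{3,0}}\otimes\C=\Lambda^{3,0}\oplus\Lambda^{0,3}$ with weights $\pm3$. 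The same remark is needed in your Schur argument that the $\bracl{\Omega^{3,0}}$-component of $H_\omega$ contracts to zero with $\psi$ when $m=4$. Your alternative via $\omega\iprod\gamma=0$ avoids the issue there.

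A repair that needs no non-isomorphism at all, and is effectively what the paper does, is to compute the full image of each test form rather than one coefficient. Once $\vet H_\psi\gamma=c\gamma$ for some $\gamma\neq0$ in an irreducible summand $V$, the $c$-eigenspace of $\vet H_\psi$ in $V$ is a non-zero invariant subspace, hence all of $V$.

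For your $\gamma=e_1\iprod\psi_0=e^2\wedge\omega$ this is short. The pairs $(2,2k-1)$ and $(2,2k)$ contribute nothing, since for instance $e^{2k}\wedge e^{1,2k}=0$. The block pairs give $\sum_{k\ge2}e^2\wedge(\omega-e^{2k-1,2k})=(m-2)\gamma$. So the factor $m-2$ arises because each block term omits its own block, not from a cancellation against cross terms. The paper carries out this computation for a general $X\wedge\omega$, using $\omega^{\mu\nu}\omega_{\mu\nu}=2m$ and $\omega^{\mu i}\omega_{\mu a}=\delta^i{}_a$.

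Finally, $\Omega^3_{2m}\cong\Omega^1$ is also of complex type for $\U(m)$, not only the other two summands. Your self-adjointness argument covers it in the same way.
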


\begin{proof}
For the component $\Omega^3_{2m}$, whose elements have the form $X \wedge \omega$, one has
\begin{align*}
\mathbf H_\psi(X \wedge \omega)
&= (X^\flat \wedge \omega)\iprod^2 \psi
= \frac{1}{2!\,2!} \left( X_j \omega_{ka} + X_a \omega_{jk} + X_k \omega_{aj} \right) \psi_{jkbc}\, e^{abc} \\
&= \frac{1}{4} \left( X_j \omega_{ka} + X_a \omega_{jk} + X_k \omega_{aj} \right) \big(\omega_{jk} \omega_{bc} - \omega_{jb} \omega_{kc} + \omega_{kb} \omega_{jc}\big) e^{abc}.
\end{align*}
After a straightforward computation using $\omega^{\mu \nu}\omega_{\mu\nu}=2m$ and $
    \omega^{\mu i}\omega_{\mu a} = \delta^i{}_a$, we obtain
\[
\mathbf H_\psi(X \wedge \omega)=(m - 2)\, X^\flat \wedge \omega.
\]
Next, for $\leftbrack\Omega^{3,0}\rightbrack$, consider $\gamma=\Re z^{123}=e^{135} - e^{146}  - e^{236} - e^{245}$, which is an element of this space for every $m\ge 3$. Hence
$$
\gamma\iprod^2\psi =\sum_{i<j}\gamma_{ij}\wedge \psi_{ij}=
\sum_{i<j\le 6}\gamma_{ij}\wedge \psi_{ij},
$$ 
so it is enough to perform the computation for $m=3$. The non-zero $\gamma_{ij}$ (for $i<j$) are
\[
\hspace*{\fill}
\begin{array}{@{}r@{\quad}r@{\quad}r@{\quad}r@{\quad}r@{\quad}r@{}}
\gamma_{13} = e^5, & 
\gamma_{14} = -e^6, & 
\gamma_{15} = -e^3, & 
\gamma_{16} = e^4, &
\gamma_{23} = - e^6, & 
\gamma_{24} = -e^5, \\
\gamma_{25} = e^4, & 
\gamma_{26} = e^3, &
\gamma_{35} = e^1, & 
\gamma_{36} = -e^2, & 
\gamma_{45} =- e^2, & 
\gamma_{46} = -e^1.
\end{array}
\]
The corresponding components of $\psi$ are
\[
\begin{matrix}
\psi_{13} = -e^{24}, & \psi_{14} = e^{23}, & \psi_{15} = -e^{26}, & \psi_{16} = e^{25}, \\
\psi_{23} = e^{14}, & \psi_{24} = -e^{13}, & \psi_{25} = e^{16}, & \psi_{26} = -e^{15}, \\
\psi_{35} = -e^{46}, & \psi_{36} = e^{45}, & \psi_{45} = e^{36}, & \psi_{46} = -e^{35}.
\end{matrix}
\]
Taking the wedge products of the corresponding terms and summing gives 
\(
\mathbf H_\psi(\gamma)= 3\gamma.
\)
Finally, on the component $\leftbrack\Omega^{2,1}_0\rightbrack$, consider $\tilde{\gamma} = e^{135}-e^{146}+e^{236}+e^{245}$. Proceeding analogously and changing only some signs relative to $\gamma$, the same calculation gives $\mathbf H_\psi(\tilde{\gamma}) = -\tilde{\gamma}$. This proves the result.
The expression for $H_\omega$ follows directly from Theorem~\ref{theorem:flux:3:form:expression}. For the Lee form, we use the contraction identity $ {\psi^{ijk}}_l\psi_{ijk\mu}= 6(m-1)\,\delta_{l\mu}$ for $\psi=\frac{1}{2}\omega^2$, and then compute
\begin{align*}
        \zeta_\omega 
        &= H_\omega \iprod\psi =
        \frac{1}{m-2}\pts{\tau_1\iprod \psi}\iprod\psi
        =\frac{1}{(m-2)3!}\tau_1^\mu{\psi_{\mu}}^{ijk}\psi_{ijk\nu}e^\nu \\
        &= \frac{-6(m-1)}{6(m-2)}\tau_1^\mu\delta_{\mu\nu} e^\nu
        = -\frac{m-1}{m-2}\tau_1. \qedhere
    \end{align*}
\end{proof}

Note that the flux operator depends only on the $\U(m)$-structure. Therefore, its action remains unchanged for 4-form geometries endowed with an $\SU(m)$-structure. In particular, for 
\[
(M^{6},\SU(3),\psi=\tfrac12\omega^2),
\]
although the module
\(\Omega^3_2=\bracks{\Psi_+}\oplus\bracks{\Psi_-}
\)
is reducible, it still coincides with the eigenspace corresponding to the eigenvalue $3$. Still in the case $m=3$, the authors of \cite[\S 4.1]{Alexandrov2005} expressed the flux 3-form in terms of torsion forms. Proposition~\ref{prop:flux:operator:almost:hermitian} therefore provides an alternative and more general description of this quantity in arbitrary dimensions.

\medskip

In 1943, Hwa-Chung Lee \cite{Lee_Hwa-Chung_lee_form} introduced the notion now referred to as the \emph{Lee form} in almost Hermitian geometry. This notion coincides with our definition of the Lee form for $\U(m)$-structures; see Definition~\ref{def:lee:form}. The Lee form $\vtheta_\omega\in \Omega^1(M)$ is defined by
\begin{equation*}
    \vtheta_\omega 
    \defeq Jd^*\omega
    = -d^*\omega(J\cdot{}) ,
\end{equation*}
or, equivalently, it is implicitly defined by $
    d\omega^{m-1} = \vtheta_\omega\wedge \omega^{m-1}$. Using $X\iprod\omega=JX$, we can compute $\omega\iprod d^*\psi$ in two ways and conclude that $\vtheta_\omega=\zeta_\omega$, as follows:
\begin{align*}
    \omega\iprod d^*\psi &=\omega\iprod (\tau_1\iprod \psi )=\tau_1\iprod (\omega\iprod \psi )
    =(m-1)\tau_1\iprod \omega 
    =(m-1)J\tau_1.
\\
    \omega\iprod d^*\psi &=-
    \star \pp{\omega \wedge d\star \psi }=-\frac{1}{(m-2)!}\star \pp{
    \omega\wedge d\omega^{m-2}
    }
    =-(m-2)J\vtheta_\omega .
\end{align*}

\begin{remark}
The intrinsic torsion of a $\U(m)$-structure was first studied by Gray--Hervella \cite{Gray1980}, who classified almost Hermitian structures into sixteen classes according to the presence or absence of four irreducible $\U(m)$-modules:
\[
\mcal W_1,\quad 
\mcal W_2,\quad 
\mcal W_3,\quad 
\mcal W_4.
\]
Three of these components can be identified directly in terms of the torsion forms introduced above:
$ \tau_1\in \mcal W_4$, $\tau_3^{3,0} \in \mcal W_1$ and $
\tau^{2,1}_3\in \mcal W_3$. The component $\tau_3^{3,0}$ is related to the Nijenhuis tensor $N_\omega$ by
\[
\tau_3^{3,0}(X,Y,Z)=-3\  \mathrm{Skew}(N_\omega)(JX,Y,Z).
\]
Consequently, using the fact that $\cW_1\oplus\cW_2$ is determined by $N_\omega$, we recover the classical result that there exists a Hermitian connection with skew-symmetric torsion compatible with the $\U(m)$-structure if and only if $N_\omega$ is totally skew-symmetric. In this case, the connection is unique \cite[Theorem 10.1]{Friedrich2003b}, and its torsion 3-form, which coincides with our flux 3-form $H_\omega$, is normally written as $H_B=-d^c\omega + N_\omega$, where $d^c\omega\defeq Jd\omega$.
\end{remark}

\paragraph{String algebroids over $\U(m)$-structures}
\label{parag:string:algebroids:u:m}
Finally, we discuss generalized Ricci flatness for string algebroids over Hermitian structures. For this, let us recall the notion of \emph{primitive Hermitian Yang--Mills} connections $\theta$, namely those satisfying
\[
F_\theta^{(0,2)+(2,0)}=0, \qquad 
F_\theta\wedge\omega^{m-1}=0.
\]
Note that primitive Hermitian Yang--Mills connections are precisely the $\SU(m)$-instantons. Connections that satisfy only the first condition above are called \emph{Hermitian}.

\begin{proposition}[{\cite{Garcia-Fernandez2017,Garcia-Fernandez2019,garciafernandez2024pluriclosedflowhullstrominger}}]
\label{prop:ricci:flatness:U(m)}
Let $(M^{2m},\omega,\Psi)$ be an $\SU(m)$-structure, with $m\geq 3$, whose Nijenhuis tensor $N_\omega$ is totally skew-symmetric and for which $\nabla^+\Psi=0$. Consider the string algebroid
$$
(P,K,\theta,\pdi{\cdot,\cdot}_\fk)\to \pp{M,\SU(m),\psi=\tfrac{1}{2}\omega^2}.
$$
If $\theta$ is a primitive Hermitian Yang--Mills connection, then the generalized Ricci tensor of the pair \eqref{eq:generalized:induced:pair} vanishes,
    \[
    \GRic^+\pp{V_+^{\SU(m),\psi}, \dv^{\SU(m),\psi}}=0,
    \]
    where the divergence is 
    \begin{equation}
    \label{eq:divergence:u:m}
    \dv^{\SU(m),\psi}=\dv^{V_+^{\SU(m),\psi}}-2\bracks{-\frac{m-1}{m-2}\tau_1,\cdot}.
    \end{equation}
\end{proposition}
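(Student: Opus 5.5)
The plan is to reduce the statement to Theorem~\ref{theorem:ricci:flatness:simple:group:texto}, applied to the 4-form geometry $(M^{2m},\SU(m),\psi=\tfrac12\omega^2)$ with multi-tensor $\xi=(\omega,\Psi_+,\Psi_-)$. Since $\SU(m)=\Stab_{\SO(2m)}(\omega_0,\Psi_0)$ and $\psi_0=\tfrac12\omega_0^2$ is $\U(m)$-invariant, we have $\SU(m)=\Stab_{\SO(2m)}(\psi_0,\xi_0)$. The eigenvalue hypothesis is already available: by the description of $\Omega^2$ in \ref{parag:dec:forms:u:m}, $\fg=\su(m)\iso[\Omega^{1,1}_0]$ is exactly the eigenspace of $\bH^2_\psi$ with eigenvalue $b=-1\neq0$. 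With $b=-1$, the divergence \eqref{eq:divergence:4:form:geometry} becomes $\dv^{V_+}-2\bracks{\zeta_\omega,\cdot}$. Substituting $\zeta_\omega=-\frac{m-1}{m-2}\tau_1$ from Proposition~\ref{prop:flux:operator:almost:hermitian} gives \eqref{eq:divergence:u:m}.

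What remains is to verify the gravitino equation $D^+_-\psi=0=D^+_-\xi$, that is, by \eqref{eq:gravitino:equation}, the two conditions $\nabla^+(\omega,\Psi,\psi)=0$ and $F_\theta\diamond(\omega,\Psi,\psi)=0$. For the curvature part, $\theta$ is primitive Hermitian Yang--Mills, which means $F_\theta\in\su(m)\tensor\ad P$. Elements of $\fg$ annihilate the $\SU(m)$-invariant tensors under $\diamond$, so $F_\theta\diamond\omega=F_\theta\diamond\Psi=0$. Then $F_\theta\diamond\psi=F_\theta\diamond\omega\wedge\omega=0$ by the derivation property of $\diamond$.

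For the parallelism part, $\nabla^+$ is by definition the metric connection with torsion $H_\psi=H_\omega$. Because $N_\omega$ is totally skew, the remark on Gray--Hervella classes (via \cite[Theorem~10.1]{Friedrich2003b}) gives a unique Hermitian connection with skew torsion $H_B=-d^c\omega+N_\omega$. That remark identifies $H_B$ with $H_\omega$. The identification should be justified by Remark~\ref{remark:intrinsic:torsion}: if $\nabla$ has skew torsion $T$ and $\nabla\psi=0$, then $T-H_\psi\in\ker\bH_\psi$, and Proposition~\ref{prop:flux:operator:almost:hermitian} shows $\bH_\psi$ is an isomorphism, so $T=H_\psi$. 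Hence $\nabla^+\omega=0$, and therefore $\nabla^+\psi=0$. The condition $\nabla^+\Psi=0$ is assumed, so the gravitino equation holds for $\psi$ and $\xi$, and Theorem~\ref{theorem:ricci:flatness:simple:group:texto} yields $\GRic^+=0$.

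The main obstacle is the identification $\nabla^+=\nabla^B$, i.e.\ showing that the torsion of the Bismut-type connection is precisely the flux 3-form. This requires knowing that a skew-torsion connection preserving $\omega$ exists, which is where total skew-symmetry of $N_\omega$ enters, and then invoking injectivity of $\bH_\psi$ as above. I would also check that the paper's sign conventions for $d^c$ and $\diamond$ make $H_B$ agree with $\frac1{m-2}\tau_1\iprod\psi+\frac13\tau^{3,0}_3-\tau^{2,1}_3$. For this the uniqueness argument suffices and no explicit comparison is needed.
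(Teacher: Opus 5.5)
Your proposal is correct and follows essentially the same route as the paper: identify primitive Hermitian Yang--Mills connections with $\SU(m)$-instantons, get $\nabla^+\omega=0$ (hence $\nabla^+\psi=0$) from Friedrich--Ivanov's theorem for totally skew $N_\omega$, and apply Theorem~\ref{theorem:ricci:flatness:simple:group:texto} with $b=-1$. You add two small refinements: you include $\omega$ in the multi-tensor $\xi$, so the stabiliser condition is immediate; and you show that the Friedrich--Ivanov torsion equals $H_\omega$ via $d^*\psi=\bH_\psi(T)$ and injectivity of $\bH_\psi$, whereas the paper takes this from the citation and the preceding remark.
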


\begin{proof}
As mentioned above, a connection $\theta$ is primitive Hermitian Yang--Mills if and only if it is an $\SU(m)$-instanton. Indeed, $F_\theta^{(2,0)+(0,2)}=0$ is equivalent to $F_\theta\in\bracks{\omega}\oplus \mathfrak{su}(m)$, while $F_\theta\wedge\omega^{m-1}=0$ is equivalent to $F_\theta\in \bracks{\omega}^\perp$. Since $\SU(m)=\Stab_{\SO(2m)}(\psi)\cap\Stab_{\SO(2m)}(\Psi)$, it follows that
\begin{equation*}
    F_\theta\in\su(m)
    \quad\Longleftrightarrow\quad
    F_\theta\diamond\psi=0
    \qandq
    F_\theta\diamond\Psi=0 .
\end{equation*}
The condition that $N_\omega$ be totally skew-symmetric implies $\nabla^+\omega=0$ \cite[Theorem 10.1]{Friedrich2003b}; consequently, $\nabla^+\psi=0$.
Together, $\nabla^+\psi=0$ and $F_\theta\diamond\psi=0$ are equivalent to $D^+_-\psi=0$. Similarly, $\nabla^+\Psi=0$ and $F_\theta\diamond\Psi=0$ are equivalent to $D^+_-\Psi=0$. Thus, the 4-form geometry $(M,\SU(m),\psi)$, for which $b=-1$, satisfies the hypotheses of Theorem~\ref{theorem:ricci:flatness:simple:group}. 
\end{proof}

In general, string algebroids over arbitrary $\U(m)$-structures do not yield examples of generalized Ricci-flat metrics even under the hypothesis $D^+_-\omega=0$ \cite{Garcia-Fernandez2025}. The hypothesis of Theorem~\ref{theorem:ricci:flatness:simple:group} fails because $\fu(m)=\fs\fu(m)\oplus \bracks{\omega}$ is not an eigenspace of the flux operator $\vet H_\psi^2$; see \ref{parag:dec:forms:u:m}. However, under suitable assumptions, $\U(m)$-structures that satisfy the coupled
$\SU(m)$-instanton equation (which make sense since $N(\SU(m))=\U(m)$ inside $\SO(2m)$, see \ref{parag:instantons}) give rise to generalized Ricci-flat metrics.

\begin{proposition}[{\cite[Proposition~5.6]{Garcia-Fernandez2025}}]
\label{prop:coupled=>GRic=0}
    Let $(M^{2m},g,\omega)$ be a Hermitian manifold, with $m\geq 3$, and
    consider a string algebroid
    \[
        (P,K,\theta,\pdi{\cdot,\cdot}_\fk)
        \to
        \pp{M,\U(m),\psi=\tfrac{1}{2}\omega^2}
    \]
    satisfying the coupled $\SU(m)$-instanton condition. If $\theta$ is
    a Hermitian connection, then the generalized Ricci tensor vanishes, \(\GRic^+\pp{V_+^{g,\omega},\dv^{g,\omega}}=0\), where the divergence $\dv^{g,\omega}$ is given by \eqref{eq:divergence:u:m}.
\end{proposition}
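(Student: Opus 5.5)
The plan is to reduce the claim to Theorem~\ref{theorem:ricci:flatness:coupled:equations}, applied to the 4-form geometry $(M,\U(m),\psi=\tfrac12\omega^2)$ with $G=\SU(m)$, $N(G)=\U(m)$ and $b=-1$. By \ref{parag:dec:forms:u:m} we have $\bH^2_\psi|_{\su(m)}=-\id$, so the hypothesis $\bH^2_\psi|_\fg=b\cdot\id_\fg$ holds with $b=-1$. The induced divergence \eqref{eq:divergence:4:form:geometry} is then $\dv^{V_+}-2\bracks{\zeta_\omega,\cdot}$ with $\zeta_\omega=-\tfrac{m-1}{m-2}\tau_1$ by Proposition~\ref{prop:flux:operator:almost:hermitian}; this agrees with \eqref{eq:divergence:u:m}. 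Since the coupled $\SU(m)$-instanton equation holds, Theorem~\ref{theorem:ricci:flatness:coupled:equations} gives
\[
\GRic^+=H_\omega\iprod D^+_-\psi=H_\omega\iprod\nabla^+\psi-\bracks{H_\omega\iprod(F_\theta\diamond\psi),\cdot}_\fk .
\]
It therefore suffices to show that both contractions vanish.

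For the gauge term, $\theta$ Hermitian means $F_\theta\in\bracks{\omega}\oplus\su(m)=\fu(m)\tensor\ad P$. Since $\U(m)\subset\Stab(\psi)$, this gives $F_\theta\diamond\psi=0$ directly. Note that we do not need $F_\theta$ to be primitive.

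For the Riemannian term, $M$ is Hermitian, so $N_\omega=0$. By \cite[Theorem 10.1]{Friedrich2003b} the Bismut connection with torsion $H_\omega=-d^c\omega$ satisfies $\nabla^+\omega=0$. Because $\nabla^+$ is a derivation commuting with wedge products, $\nabla^+\psi=\omega\wedge\nabla^+\omega=0$. Hence $H_\omega\iprod\nabla^+\psi=0$, and $\GRic^+=0$.

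The step I expect to need the most care is identifying $H_\omega$ with the Bismut torsion. When $N_\omega$ is skew (here zero), the $\U(m)$-intrinsic torsion is totally skew. Remark~\ref{remark:intrinsic:torsion} then shows that the characteristic connection has torsion $H_\psi$; this requires the $\Stab(\psi)$-intrinsic torsion to be skew, and $\Stab_{\SO(2m)}(\psi)=\U(m)$ for $m\ge3$. Moreover $\ker\bH_\psi=0$ by Proposition~\ref{prop:flux:operator:almost:hermitian}, so the characteristic connection is unique and equals $\nabla^+$. Once this identification is in place, the rest is immediate.
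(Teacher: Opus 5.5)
Your route is the paper's. You apply Theorem~\ref{theorem:ricci:flatness:coupled:equations} with $G=\SU(m)$, $N(G)=\U(m)$, $b=-1$, and make $H_\omega\iprod D^+_-\psi$ vanish by showing $D^+_-\psi=0$ from $F_\theta\in\fu(m)\otimes\adjointbundleP$ and $\nabla^+\omega=0$. Your checks of the divergence and of the sign $-1/b=1$ are also correct.

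The one step where you go beyond the paper is the identification of $H_\omega$ with the Friedrich--Ivanov torsion $-d^c\omega$, and your justification of it is wrong. The paper simply takes this identification for granted (cf.\ the remark following Proposition~\ref{prop:flux:operator:almost:hermitian}).
\begin{itemize}
\item For a Hermitian non-K\"ahler structure, the $\U(m)$-intrinsic torsion lies in $\cW_3\oplus\cW_4$.
\item The totally skew elements of $T^*M\otimes\fu(m)^\perp$ form only $\bracl{\Lambda^{3,0}}\cong\cW_1$.
\item So the intrinsic torsion is \emph{not} totally skew, and Remark~\ref{remark:intrinsic:torsion} does not apply. A compatible connection with skew torsion does exist, but it is not the minimal connection.
\end{itemize}
The correct argument is Remark~\ref{remark:parallelism:d:psi:has:no:kernel:intersection}. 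The Friedrich--Ivanov connection has skew torsion $T=-d^c\omega$ and preserves $\omega$, hence also $\psi=\tfrac12\omega^2$. Therefore $d^*\psi=\bH_\psi(T)$. Since $\ker\bH_\psi=0$ by Proposition~\ref{prop:flux:operator:almost:hermitian}, $T=H_\omega$, so that connection is $\nabla^+$.

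A minor point: $\Stab_{\SO(2m)}(\psi)=\U(m)$ holds only for $m$ odd. For $m$ even, complex conjugation lies in $\SO(2m)$, reverses $\omega$ and fixes $\psi$. This is harmless, since only the Lie algebra $\fu(m)$ matters.
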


\begin{proof}
    By Theorem~\ref{theorem:ricci:flatness:coupled:equations}, it is
    enough to show that
    \(
        H_\omega\iprod D^+_-\psi=0.
    \)
    In fact, in the present setting, one has $D^+_-\psi=0$. Indeed,
    since $(g,\omega)$ is Hermitian, $N_\omega=0$, and the Bismut
    connection $\nabla^+$ preserves the Hermitian structure; hence, $\nabla^+\psi=0$. On the other hand, the assumption that $\theta$ is Hermitian gives
    \[
        F_\theta\in\fu(m)
        \qquad\Longrightarrow\qquad
        F_\theta\diamond\psi=0.\qedhere
    \]
\end{proof}

\paragraph{Explicit examples}

We now present some explicit examples of generalized Ricci-flat metrics on string algebroids over Hermitian manifolds.

\begin{example}[Kähler manifolds]
\label{example:kahler}
    If $(M^{2m},g,\omega)$ is a Kähler manifold, i.e., $\nabla^g\omega=0$, then, in particular, $H_\omega=0$. We can therefore consider $P=M\times \{1\}$, the trivial principal bundle with trivial structure group $K=\{1\}$. In this case, the condition that the string algebroid $TM\oplus T^*M$ (usually referred to as an \emph{exact Courant algebroid} when there is no $\adjointbundleP$ component) is generalized Ricci-flat with Riemannian divergence $\dv=\dv^{V_+}$ reduces to
    \[
    \Ric^g=0,
    \]
    so the metric is Ricci-flat Kähler and hence locally Calabi--Yau; see \cite[Proposition~6.1.1]{Joyce2000}.
    \end{example}

\begin{remark}
    Consider the string algebroid
    $E:(P,K;\theta,\bracks{\cdot,\cdot}_\fk)\to
    (M^{2m},\U(m),\psi)$ defined in
    \S\ref{subsec:U(m):and:SU(m)}. Suppose that $M$ is Kähler and that
    $\bracks{\cdot,\cdot}_\fk$ is definite. Since $H_\omega=0$, the heterotic
    Bianchi identity reduces to
    \begin{equation*}
        0=\bracks{F_\theta\wedge F_\theta}_\fk.
    \end{equation*}
    If $\theta$ is an $\SU(m)$-instanton, then
    $F_\theta\iprod\psi=-F_\theta$, or equivalently
    $F_\theta\wedge\star\psi=-\star F_\theta$. Therefore,
    \[
    0=
    \bracks{F_\theta\wedge F_\theta}_\fk\wedge\star\psi
    =
    -\bracks{F_\theta\wedge\star F_\theta}_\fk
    =-\bracks{F_\theta\iprod F_\theta}_\fk\ \vol_M.
    \]
    The definiteness of the pairing implies that $F_\theta=0$. Hence, every
    $\SU(m)$-instanton satisfying the heterotic Bianchi identity is flat in the Kähler case.
    The same argument applies to torsion-free $\rG_2$-, $\Spin(7)$-, and
    hyper-K\"ahler structures, with their corresponding instanton equations.
\end{remark}

Let $(M^6,g,\omega)$ be a strict nearly-Kähler manifold. Its $\U(3)$-structure satisfies $\nabla^g_XJ(X)=0$ for every vector field $X$. It carries an $\SU(3)$-structure \cite[Corollary 10.6]{Friedrich2003b} characterised by $d\omega=3\lambda\Psi_+$, where the non-zero constant $\lambda$ is used for normalisation. In this setting, we have \cite[Corollary 10.3]{Friedrich2003b}
\[
    d\Psi_-=-2\lambda\,\omega\wedge\omega,
    \qquad
    H_\omega=-\lambda\Psi_-=\frac14N_\omega,
    \qquad
    \nabla^+H_\omega=0.
\]
In particular, the Lee form vanishes, $\zeta_\omega=0$. Furthermore,
using
\(
    (\Psi_-)_{abi}(\Psi_-)^{ab}{}_{j}=4g_{ij}
\), we obtain
\[
    H_\omega^2
    :=
    H_{abi}H^{ab}{}_{j}\,e^i\otimes e^j
    =
    \lambda^2(\Psi_-)_{abi}(\Psi_-)^{ab}{}_{j}\,e^i\otimes e^j
    =
    4\lambda^2g.
\]
Using 
\(
    R^+=R^g+d^{\nabla^g}H_\omega+[H_\omega,H_\omega],
\)
and 
\(
    \nabla^+_XH_\omega
    =
    \nabla^g_XH_\omega+\frac12\, i_XH_\omega\diamond H_\omega,
\)
together with the fact that $\nabla^+H_\omega=0$ in the nearly-Kähler case, we obtain \cite[\S6]{Ivanov2005}
\[
R^+_{ijkl}
=
R^g_{ijkl}
+\frac12H_{ij\mu}H_{kl}{}^\mu
+\frac14H_{jk\mu}H_{il}{}^\mu
+\frac14H_{ki\mu}H_{jl}{}^\mu.
\]
Moreover, $\nabla^+$ is an $\SU(3)$-instanton, since $
    R^+\in \Sigma^2(\mathfrak{su}(3))$. It is well known that nearly-Kähler 6-manifolds are also Einstein and satisfy
\[
    \Ric^g=\frac54H_\omega^2=5\lambda^2g,
    \qquad
    \rmm{Scal}^g=30\lambda^2.
\]
There are four homogeneous strict nearly-Kähler 6-manifolds: the sphere $\bb S^6$, $\bb S^3\times \bb S^3$, $\bb CP^3$ and $F(1,2)=\SU(3)/\bb T^2$. Among these homogeneous examples, $\bb S^6$ is the only one with constant sectional curvature.

\begin{example}[Nearly-Kähler 6-sphere, { \cite[\S6.1]{Ivanov2005} }, {\cite{friedrich2005nearlykaehler_sphere}}]
\label{example:sphereS6}
   Let $(\bb S^6, g, J)$ be the usual nearly-Kähler structure on the 6-sphere, as described above, with constant sectional curvature $\rmm{sec}=1$. Then 
   \(R^g_{ijkl}={g_{jk}g_{il}-g_{ik}g_{jl}}\). Using the expression for $R^+$ on nearly-Kähler manifolds above, together with $dH_\omega=H_\omega\diamond H_\omega$, we obtain
\[
\rmmtr\pp{R^+\wedge R^+}=-\frac32dH_\omega.
\]
Let $P=\rmm{Fr}_{\SO(6)}(TM)\to M$ be the orthonormal frame bundle of $M$, viewed as a principal $\SO(6)$-bundle, and equip its Lie algebra $\fk=\fs\fo(6)$ with the pairing
\[
\bracks{A, B}_{\fk}:=-\frac23\rmmtr\pp{A\circ B}.
\]
Moreover, consider the induced Bismut connection $\nabla^+$ on $P$, which by construction satisfies $dH_\omega=\bracks{R^+\wedge R^+}_\fk$. Thus, $(H_\omega,\nabla^+)$ induces a string algebroid structure on the bundle $\transitive$. Since $\nabla^+\omega=0=\nabla^+\Psi_+$ and $\nabla^+$ is an $\SU(3)$-instanton, Proposition~\ref{prop:ricci:flatness:U(m)} yields a generalized Ricci-flat metric with divergence $\dv=\dv^{V_+}$.
\end{example}

\begin{example}[{\cite[Example~7.12]{kennon2026}}]
Let $G=\SU(2)\times\SU(2)\times\U(1)$, and let
\(\{e^1,\ldots,e^7\}\) be a left-invariant coframe satisfying
\begin{align*}
    de^1=e^{23}, \quad 
    de^2=e^{31}, \quad 
    de^3=e^{12},& \\
    de^4=e^{56}, \quad 
    de^5=e^{64}, \quad 
    de^6=e^{45} \qandq&
    de^7=0.
\end{align*}
Let \(\{e_1,\ldots,e_7\}\) be the dual frame and set $V=e_4-e_3$.
The one-parameter subgroup generated by $V$ defines a free circle action and
hence a principal $\U(1)$-bundle
\[
P=G\to M^6=\bb S^3\times\bb S^2\times\bb S^1.
\]
The 1-form \(\eta=\tfrac1{\sqrt2}V^\flat=\tfrac1{\sqrt2}(e^4-e^3)\)
defines a $\U(1)$-connection on this bundle. Consider the $\SU(3)$-structure on
$M^6$ given by
\begin{align*}
    \omega &=e^{16}+e^{25}-e^{37}+e^{15}-e^{26}-e^{47},\\
    \Psi_+&=e^{127} -e^{146}-e^{245}+e^{567}-e^{136}-e^{235}.
\end{align*}
A direct computation gives
\begin{align*}
H_\omega&=\frac12\pp{e^{123}+e^{124}+e^{356}+e^{456}}
\\
dH_\omega&=e^{1256}
=-\frac12(e^{56}-e^{12})\wedge(e^{56}-e^{12})
=-d\eta\wedge d\eta.
\end{align*}
Moreover, $\nabla^+\omega=0=\nabla^+\Psi_+$. Equip
$\fu(1)\cong\bb R$ with the pairing $\bracks{x,y}_{\fu(1)}=-xy$.
The identity above shows that $(H_\omega,\eta)$ satisfies the heterotic
Bianchi identity and therefore defines a string algebroid
\[
E=TM\oplus\underline{\bb R}\oplus T^*M.
\]
It remains to verify the instanton condition. First,
\begin{align*}
\bracks{d\eta,\omega}
&=\frac1{\sqrt2}
\bracks{e^{56}-e^{12},e^{16}+e^{25}-e^{37}+e^{15}-e^{26}-e^{47}}
=0,
\end{align*}
and hence $
d\eta\in[\Lambda^{1,1}_0]\oplus\bracl{\Lambda^{2,0}}$. Furthermore,
\begin{align*}
d\eta\iprod\Psi_+
&=\frac1{\sqrt2}(e^{56}-e^{12})\iprod
\pp{e^{127}-e^{146}-e^{245}+e^{567}-e^{136}-e^{235}} = \frac1{\sqrt2}(e^7-e^7)=0.
\end{align*}
Therefore, $d\eta\in\bracks{\Lambda^1\iprod\Psi_+}^\perp =[\Lambda^{1,1}_0]\oplus\bracks{\omega}$. By intersecting $d\eta\in[\Lambda^{1,1}_0]=\fs\fu(3)$, so $\eta$ is an $\SU(3)$-instanton. Proposition~\ref{prop:ricci:flatness:U(m)} now implies
\[
\GRic^+\pp{V_+^{\SU(3),\psi},\dv^{V_+^{\SU(3),\psi}}}=0.
\]
The divergence is the Riemannian one because $d\omega^2=2d\omega\wedge\omega=0$, and hence the Lee form
$\zeta_\omega=0$. 
\end{example}

Solutions satisfying the hypotheses of
Proposition~\ref{prop:ricci:flatness:U(m)} are called solutions of the \emph{Hull--Strominger system}. They were first studied in the physics literature in the context of supergravity, and were later introduced into the mathematical literature in the seminal paper of Li--Yau \cite{LiYau2006}. In their setting, $P$ is the bundle of split frames, so that
\[
\adjointbundle{P}
=
\rmm{End}(TM)\oplus \rmm{End}(E'),
\]
where $E'\to M$ is an auxiliary vector bundle and $M$ is typically a complex threefold. Moreover, $\theta$ is the direct sum of two Hermitian connections: a connection $\nabla$ on $TM$ and a connection $A$ on $E'$. The pairing $\bracks{\cdot,\cdot}_\fk$ is given by the difference between the traces, so that, for a constant $\alpha'$, the heterotic Bianchi identity takes the form
\[
dd^c\omega = \frac{\alpha'}{4} \pp{ \rmmtr\pp{ R_\nabla\wedge R_\nabla} - \rmmtr\pp{F_A\wedge F_A} }.
\]

Example~\ref{example:sphereS6} above is simpler than the physically motivated construction, since there is no auxiliary vector bundle. The original example of Ivanov--Ivanov considered two bundles, with $\nabla^-$ appearing in the second component, but it does not satisfy the Hull--Strominger system in the strict sense, as the almost complex structure is not integrable; see \cite[\S6]{Ivanov2005}. We therefore adapted the example to obtain a generalized Ricci-flat solution.

Examples of the Hull--Strominger system involving both bundles, and hence closer to the physical setting, can be found in Li--Yau's paper \cite[\S5]{LiYau2006}, which uses techniques from complex and algebraic geometry, as well as in the lecture notes by Garcia-Fernández \cite{Garcia-Fernandez2016}. Solutions with torus symmetry over K3 surfaces and orbifolds were constructed by Fu--Yau \cite{Fu2008} and Fino--Grantcharov--Vezzoni \cite{Fino2021}, respectively.

\paragraph{\texorpdfstring{$\SU(4)$}{SU(4)}-structures}
\label{parag:su(4):structures}
Here, we present a 4-form geometry whose flux operator has a non-trivial kernel and whose decomposition of $\Omega^3$ contains two isomorphic irreducible components, namely $(M^8,\SU(4), \Psi_+)$. Recall that an $\SU(4)$-structure in eight dimensions is a tuple $(M^8,g,\omega,\Psi)$ whose model tensors are given by
\begin{equation*}
\begin{aligned}
\omega &= e^{12}+e^{34}+e^{56}+e^{78},
\\\psi&=e^{1234}+e^{1256}+e^{1278}+e^{3456}+e^{3478}+e^{5678}
, \\
\Psi_{+} &= e^{1357} - e^{1368} - e^{1458} - e^{1467} - e^{2358} - e^{2367} - e^{2457} + e^{2468}, \\
\Psi_{-} &= 
e^{1358} + e^{1367} + e^{1457} + e^{2357}
- e^{1468} - e^{2368} -e^{2458} - e^{2467}.
\end{aligned}
\end{equation*}
Here, $\psi=\frac{1}{2}\omega^2$ and $\Psi = \Psi_++i\Psi_-$.
The space of 2- and 3-forms decomposes into irreducible $\SU(4)$-submodules as
\begin{align*}
\Omega^2&=\leftbrack\Omega^{2,0}\rightbrack\oplus \mathfrak{su}(4)\oplus \langle \omega\rangle 
=
\Omega^2_{12}\oplus \Omega^2_{15}\oplus \Omega^2_1
\\
\Omega^3&=
\Omega^1 \oplus 
\leftbrack \Omega^{3,0} \rightbrack
\oplus 
\leftbrack \Omega^{2,1}_0 \rightbrack
=
\Omega^3_8\oplus \Omega^3_{8'}\oplus \Omega^3_{40}.
\end{align*}
In particular, $\leftbrack\Omega^{3,0}\rightbrack\iso \Omega^1$, so we use a ``\emph{prime}'' to distinguish it from $\Omega^3_8$, which is also isomorphic to $\Omega^1$. These spaces are characterised by
\begin{align*}
    \Omega^3_8&=\{X\iprod \psi:X\in \Omega^1\}=\{Y\wedge\omega:Y\in \Omega^1\},
    \\
    \Omega^3_{8'}&= \{X\iprod\Psi_+:X\in \Omega^1\}=\{X\iprod\Psi_-:X\in \Omega^1\},\\
    \Omega^3_{40}&=
    \{\gamma\in \Omega^3:\omega\iprod\gamma=0, \gamma\iprod \Psi_+=0\}.
\end{align*}

Consider the 4-form geometry $(M^8,\SU(4),\Psi_+= \Re(\Psi))$. Using the decomposition of $\Omega^3$ into irreducible $\SU(4)$-submodules described in \ref{subsec:Decomposition:forms:Un}, the codifferential of $\Psi_+$ decomposes as
\begin{equation*}
d^*\Psi_+  = \sigma_1\iprod  \psi + 
\sigma_1'\iprod\Psi_+ +\sigma_3^{2,1},
\end{equation*}
where $\sigma_1,\sigma_1'\in \Omega^1(M)$ and $\sigma_3^{2,1}\in \bracl{\Omega^{2,1}_0}$. For $\SU(4)$-structures, the Lee space $\Omega^1_L=\Omega^1$ is irreducible. Moreover, a direct computation shows that the restriction of the flux operator $\bH_{\Psi_+}^2$ to $\mathfrak{su}(4)$ has eigenvalue $b=0$. Therefore, $(M^8,\SU(4),\Psi_+)$ does not satisfy the hypotheses of Theorem~\ref{theorem:ricci:flatness:simple:group}.

\begin{proposition}
\label{prop:flux:operator:SU(4)}
    Let $(M^8,\SU(4),\Psi_+=\Re(\Psi))$ be a 4-form geometry. Then the flux operator $\bH_+ = \vet H_{\Psi_+}:\Omega^3\to \Omega^3$ acts on the irreducible components by
\begin{equation*}
    \vet H_+\big|_{\Omega^3_{8}}= 3\cdot \Phi,  \qquad 
\vet H_+\big|_{\Omega^3_{8'}}=4\cdot \Phi^{-1},  \qquad 
\vet H_+\big|_{\Omega^3_{40}}= 0 \cdot\id_{\Omega^3_{40}} ,
\end{equation*}
where $\Phi : \Omega^3_8\to \Omega^3_{8'}$ is the natural isomorphism given by $\Phi(X\iprod\psi)\defeq X\iprod\Psi_+$ for $X\in\Omega^1(M)$. In particular, the associated flux form $H_+\in \Omega^3(M)$ and Lee form $\zeta_+\in \Omega^1(M)$ are given, respectively, by
\begin{equation*}
H_+= \frac{1}{3}\sigma_1'\iprod\psi + \frac{1}{4}\sigma_1\iprod\Psi_+ \qandq
        \zeta_+ 
        =-\sigma_1 .
    \end{equation*}
\end{proposition}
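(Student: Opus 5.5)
Three things need proving: the action of $\bH_+$ on the three $\SU(4)$-components of $\Omega^3$, the form of $H_+$, and the form of $\zeta_+$. The plan is to compute the action on model elements, invert on the $16$-dimensional block $V_8=\Omega^3_8\oplus\Omega^3_{8'}$ as in Remark~\ref{remark:Lee:form:2:irreps}, and then evaluate the Lee form.

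\emph{Step 1: reduction to model vectors.} $\bH_+$ is $\SU(4)$-equivariant and self-adjoint. The module $\Omega^3_{40}$ appears with multiplicity one, so Schur's lemma makes $\bH_+$ a real scalar there. The block $V_8$ is preserved, and $\bH_+|_{V_8}$ is a $2\times2$ matrix of equivariant maps. Because $\Omega^1$ is a real-type irreducible (it is the underlying real module of $\C^4$, which is not of real type), the commutant of $\SU(4)$ on $\Omega^1$ is $\C$. A priori, then, each entry of the matrix is of the form $a+bJ$, transported via $X\mapsto X\iprod\psi$ and $\Phi$. To pin these entries down I would evaluate on one model vector from each module:
\begin{itemize}
\item $X=e^1$, giving $e^1\iprod\psi=e^{2}\wedge(\omega-e^{12})$-type terms and $e^1\iprod\Psi_+=e^{357}-e^{368}-e^{458}-e^{467}$;
\item an explicit primitive $(2,1)$-form, built as in the proof of Proposition~\ref{prop:flux:operator:almost:hermitian} by flipping signs of a model element.
\end{itemize}
I then compute $\gamma\iprod^2\Psi_+=\tfrac12\gamma_{ij}\wedge(\Psi_+)_{ij}$ term by term.

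\emph{Step 2: the block entries.} The expected outcomes are as follows.
\begin{itemize}
\item $\bH_+(e^1\iprod\psi)$ is a combination of terms each containing three of the indices $\{3,\dots,8\}$ in the pattern of $\Psi_+$. It should come out as exactly $3\,e^1\iprod\Psi_+$, with no $J$-twist, because the $\Psi_-$ part cancels by the sign pattern.
\item $\bH_+(e^1\iprod\Psi_+)=4\,e^1\iprod\psi$.
\item $\bH_+$ vanishes on the $(2,1)_0$ representative. This is consistent with $\Psi_+$ having type $(4,0)+(0,4)$, which kills the type-$(2,1)$ contractions, and it also means $\ker\bH_+=\Omega^3_{40}$.
\end{itemize}
Equivariance then upgrades these to $\bH_+|_{\Omega^3_8}=3\Phi$ and $\bH_+|_{\Omega^3_{8'}}=4\Phi^{-1}$. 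The point to check in this step is that the commutant ambiguity really does not introduce a $J$ component. The real-coefficient identities on the single vector $e^1$, together with the $\C$-linearity of the commutant, suffice for this.

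\emph{Step 3: the flux form.} Since $\Omega^3_{40}$ is the kernel, $(\ker\bH_+)^\perp=V_8$ and $\tau^8=\sigma_1\iprod\psi+\sigma_1'\iprod\Psi_+$. In the notation of Remark~\ref{remark:Lee:form:2:irreps}, $\bh_8$ has $a=d=0$, $c=3$, $b=4$, so
\[
\bh_8^{-1}=\tfrac1{-12}\begin{bmatrix}0&-4\Phi^{-1}\\-3\Phi&0\end{bmatrix}.
\]
Theorem~\ref{theorem:flux:3:form:expression:texto} then gives
\[
H_+=\tfrac13\sigma_1'\iprod\psi+\tfrac14\sigma_1\iprod\Psi_+ .
\]

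\emph{Step 4: the Lee form (main obstacle).} By the remark,
\[
\zeta_+=\tfrac13(\sigma_1'\iprod\psi)\iprod\Psi_++\tfrac{\alpha}{4}\sigma_1 ,
\]
where $(X\iprod\Psi_+)\iprod\Psi_+=\alpha X$. The remaining work is to show that the mixed contraction $(X\iprod\psi)\iprod\Psi_+$ vanishes and to compute $\alpha$.
\begin{itemize}
\item \textbf{Vanishing of the mixed term.} $(X\iprod\psi)\iprod\Psi_+$ is an equivariant map $\Omega^1\to\Omega^1$, hence of the form $c_1X+c_2JX$. I would evaluate it at $X=e^1$: the $\psi$-terms are of the form $e^{2kl}$ with $\{k,l\}$ a complex pair, and such terms never match the index sets of $\Psi_+$, which contain one index from each pair. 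So the contraction is zero.
\item \textbf{The constant $\alpha$.} For $\alpha$, I compute $(\Psi_+)^{ijk}{}_{1}(\Psi_+)_{ijk\mu}=3!\cdot 4\,\delta_{1\mu}$, since $\Psi_+$ has four terms containing index $1$, each with coefficient $\pm1$. With the sign convention used for $\zeta_\omega$ in Proposition~\ref{prop:flux:operator:almost:hermitian}, this gives $\alpha=-4$.
\end{itemize}
Hence $\zeta_+=-\sigma_1$. The delicate part is the sign and normalisation bookkeeping in these contractions.
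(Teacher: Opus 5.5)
Your proposal is correct and follows the paper's proof essentially step for step. Both evaluate $\bH_+$ on $e^1\iprod\psi$, $e^1\iprod\Psi_+$ and a primitive $(2,1)$ model form, invert the off-diagonal block with $a=d=0$, $b=4$, $c=3$ via Theorem~\ref{theorem:flux:3:form:expression:texto}, and obtain $\zeta_+=-\sigma_1$ from Remark~\ref{remark:Lee:form:2:irreps} using $(X\iprod\psi)\iprod\Psi_+=0$ and $(X\iprod\Psi_+)\iprod\Psi_+=-4X$. Your extra observation that the $\SU(4)$-commutant on $\Omega^1$ is $\C$ is a point the paper leaves implicit, and you handle it correctly: one test vector $e^1$ rules out a $J$-component, since $e^1$ and $Je^1$ are independent. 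Note that $\Omega^1$ is a real irreducible module of complex type, not ``real-type'' as you first wrote.
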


\begin{proof}
Consider
\begin{align*}
    \gamma_1
    &= e^1\iprod\psi
    = e^{234}+e^{256}+e^{278}
    \in \Omega^3_8,
    \\
    \gamma_2
    &= e^1\iprod\Psi_+
    = e^{357}-e^{368}-e^{458}-e^{467}
    \in \Omega^3_{8'}.
\end{align*}
A direct computation gives
\begin{align*}
    \mathbf H_+(\gamma_1)
    &= 3\cdot(e^{357}- e^{368} - e^{458} -e^{467}) 
    = 3 \cdot \pts{e^1\iprod\Psi_+}
    = 3\cdot\Phi(\gamma_1).
\\
    \mathbf H_+({\gamma}_2)
    &= 4\cdot(e^{234} + e^{256} + e^{278}) 
    = 4 \cdot\pts{e^1\iprod\psi}
    = 4\cdot\Phi^{-1}({\gamma}_2).
\end{align*}
Next, consider $\gamma=\fR(z^1\wedge z^2\wedge \overline z^3)=e^{135}+e^{146}+e^{236}-e^{245}\in\Omega^3_{40}=\bracl{\Omega_0^{2,1}}$. A direct computation again gives $\mathbf H_+(\gamma)=0$. Following the notation of Theorem~\ref{theorem:flux:3:form:expression}, we have $V_8=\Omega^3_8\oplus\Omega^3_{8'}$, and
 \begin{equation*}
     \bh_8=\begin{bmatrix}
         0 & 4\cdot\Phi^{-1}\\
         3\cdot\Phi & 0
     \end{bmatrix}
     \quad \Longrightarrow\quad
     \bh_8^{-1}=\begin{bmatrix}
         0 & \frac{1}{3}\cdot\Phi^{-1}\\
         \frac{1}{4}\cdot\Phi & 0
     \end{bmatrix}.
 \end{equation*}
Hence, the expression for $H_+$ follows directly from Theorem~\ref{theorem:flux:3:form:expression}. To obtain the expression for $\zeta_+$, we use Remark~\ref{remark:Lee:form:2:irreps}. In this case,
\begin{equation*}
    \psi=\frac{1}{2}\omega^2,\quad
    \hat{\psi}=\Psi_+, \quad
    \tau^1=\sigma_1, \quad
    \hat{\tau}^1=\sigma_1', \quad
    a=d=0,\quad
    b=4,\quad
    c=3,
\end{equation*}
and one can check from the expressions for $\psi$ and $\Psi_+$ given at the beginning of this section that
\begin{equation*}
    (X\iprod\psi)\iprod\Psi_+ =0 \qandq (X\iprod\Psi_+)\iprod\Psi_+=-4\cdot X,\quad \forall X\in \Omega^1. \qedhere
\end{equation*}
\end{proof}

\subsection{\texorpdfstring{$\rG_2$}{G2}-structures}

    A \emph{$\rG_2$-structure} on $M^7$ corresponds to a 3-form $\vphi\in\Omega^3(M)$ pointwise modelled\footnote{Our convention follows \cite{delaOssa2018a}.} on
    \begin{equation} 
    \label{eq:model:associative:3:form}
        \vphi_0 = e^{127} + e^{347} + e^{567} + e^{135} - e^{146} - e^{236} - e^{245} , 
    \end{equation}
    where $\{e^1,\dots, e^7\}$ is an orthonormal frame of $\R^7$. A 7-manifold admits a $\rG_2$-structure if and only if it is both orientable and spinnable. Since $\rG_2\subset\SO(7)$, $\vphi$ induces a metric $g$ and an orientation $\vol_M$ through the nonlinear algebraic relation
\begin{equation*}
    g(X,Y)
    = \dfrac{1}{6 \vol_M}\pts{X\iprod\vphi}\wedge\pts{Y\iprod\vphi}\wedge{\vphi},
    \qforq X,Y\in \fX(M). 
\end{equation*}
In the frame $\{e_j\}$ such that $\varphi$ takes the form \eqref{eq:model:associative:3:form}, $g=e^1\otimes e^1+\cdots+e^7\otimes e^7$ and $\vol_M=e^{1234567}$. In particular, with the Hodge star operator $*$ induced by $\varphi$ we define the co-associative form $\psi\defeq *\vphi\in \Omega^4(M)$, which is pointwise modelled on
\begin{equation}\label{eq:model:coassoc:4:form}
    \psi_0 = e^{3456} + e^{1256} + e^{1234} - e^{2467} + e^{2357} + e^{1457} + e^{1367} .
\end{equation}
Under our conventions $|\vphi|^2 = |\psi|^2=7$, or equivalently $\vphi_0\wedge\psi_0 = 7\vol_0$.
The group $\rG_2\subset \SO(7)$ can be viewed as the $\SO(7)$-stabiliser
\begin{equation*}
    \rG_2=\Stab_{\SO(7)}(\psi_0).
\end{equation*}
So $(M^7,\rG_2,\psi=\ast\varphi)$ is a \emph{coassociative} 4-form geometry, cf. Definition~\ref{def:4:form:geometry}.

\paragraph{Decomposition of differential forms}
\label{parag:dec:forms:G2}
  Let $(M^7,\vphi)$ be a $\rG_2$-structure. Then the space of 1-forms $\Omega^1$ is irreducible. The space  of 2-forms decomposes as
        \begin{equation*}
            \Omega^2 = \Omega^2_7\oplus \Omega^2_{14} , 
        \end{equation*}
        where $\fg_2\iso \Omega^2_{14}$. These components are characterised by
        \begin{align*}
            \Omega^2_7 
            &= \{X\iprod\vphi:X\in TM \}
            = \{\beta\in \Omega^2:\pts{\beta\iprod\vphi}\iprod\vphi = 3\beta \}
            = \{\beta\in \Omega^2 :\beta\iprod\psi=2\beta \} ,   \\
            \Omega^2_{14}
            &= \{\beta\in \Omega^2 :\beta\wedge\psi=0\}
            = \{\beta\in \Omega^2:\beta\iprod\vphi=0 \}
            = \{ \beta\in \Omega^2:\beta\iprod\psi=-\beta \} . 
        \end{align*}
        The space of 3-forms decomposes as
        \begin{equation*}
            \Omega^3 = \Omega^3_1\oplus\Omega^3_7\oplus\Omega^3_{27}\iso \Omega^0\oplus\Omega^1\oplus \Sigma^2_0(\R^7) , 
        \end{equation*}
        whose components can be explicitly characterised as
        \begin{align*}
            \Omega^3_1 
            &= \{f \vphi:f\in C^{\infty}(M)\} , \\
            \Omega^3_7
            &=\{X\iprod\psi:X\in \Omega^1\} , \\
            \Omega^3_{27}
            &= \{\gamma\in\Omega^3 :\gamma\wedge\vphi=0,  \gamma\wedge\psi=0 \} 
            =\chaves{h\diamond \varphi:h\in \Sigma_0^2T^*M}.
        \end{align*}

    Due to the decompositions of $\Omega^4\iso \Omega^3$ and $\Omega^5\isomorphic \Omega^2$, there are unique \emph{torsion forms}
    \begin{equation}
    \label{eq:G2:torsion:form:modules}
        \tau_0\in \Omega^0,\quad \tau_1\in \Omega^1,\quad
        \tau_2\in \Omega^2_{14},\quad \tau_3\in \Omega^3_{27},
    \end{equation}
    satisfying
    \begin{equation}
    \label{eq:torsion:forms:G2}
    \begin{aligned}
        d\vphi
        &= \tau_0 \psi + 3\tau_1\wedge\vphi + *\tau_3 \qandq 
        d\psi
        = 4\tau_1\wedge\psi +\tau_2\wedge\vphi .
    \end{aligned}
    \end{equation}
The fact that $\tau_1$ appears in both equations is due to the existence of a unique 7-dimensional irreducible component in the intrinsic torsion, see \cite[Proposition 1]{Bryant2003} or \cite[Theorem 2.23]{Karigiannis2008a}.

\paragraph{Flux operator, flux 3-form and the Lee form}
Consider the co-associative 4-form geometry $(M^7,\rG_2,\psi)$. In view of \eqref{eq:torsion:forms:G2}, the codifferential of $\psi$ decomposes as
\begin{equation}\label{eq:codif:psi:G2}
    d^*\psi = \tau_0\vphi - 3\tau_1\iprod\psi + \tau_3.
\end{equation}
For $\rG_2$-structures, the Lee space $\Omega^1=\Omega^1_L$ is irreducible and the eigenvalue of $\fg_2$ on the flux operator on 2-forms is $b=-1$, cf. \ref{parag:dec:forms:G2}.

\begin{proposition}
\label{prop:flux:operator:G2}
    Let $(M^7,\rG_2, \psi)$ be a co-associative 4-form geometry. Then the flux operator $\bH_\psi : \Omega^3\to\Omega^3$ is an isomorphism of $\rG_2$-modules and acts on each component as
    \begin{equation*}
\vet H_\psi\big|_{\Omega^3_1}=6\cdot \id_{\Omega^3_1},  \qquad 
\vet H_\psi\big|_{\Omega^3_7}=3\cdot \id_{\Omega^3_7} ,  \qquad 
\vet H_\psi\big|_{\Omega^3_{27}}=- \id_{\Omega^3_{27}} .
\end{equation*}
In particular, the associated flux 3-form $H_\vphi\in\Omega^3(M)$ and Lee form $\zeta_\vphi\in\Omega^1(M)$ are, respectively,
\begin{equation*}
H_\vphi = \frac{1}{6}\tau_0 \varphi - \tau_1 \iprod \psi - \tau_3 
\qandq
\zeta_\vphi 
        =4\tau_1 .
\end{equation*}
\end{proposition}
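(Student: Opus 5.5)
The plan mirrors the proof of Proposition~\ref{prop:flux:operator:almost:hermitian}. Since $\Omega^3=\Omega^3_1\oplus\Omega^3_7\oplus\Omega^3_{27}$ consists of pairwise non-isomorphic irreducible $\rG_2$-modules and $\bH_\psi$ is $\rG_2$-equivariant and self-adjoint, Schur's lemma gives $\bH_\psi|_{\Omega^3_j}=a_j\,\id$ for real constants $a_j$. Each constant can therefore be found by evaluating $\bH_\psi$ on a single convenient element of each module, using the model forms \eqref{eq:model:associative:3:form} and \eqref{eq:model:coassoc:4:form}.

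\textbf{Step 1: compute $a_1$.} I will evaluate $\bH_\psi(\vphi)=\frac12\sum_{i,j}\vphi_{ij}\wedge\psi_{ij}$. In index form this is $\frac14\vphi_{ij a}\psi_{ijbc}e^{abc}$. The standard contraction identity $\vphi_{ija}\psi_{ijbc}=-4\vphi_{abc}$ then gives $6\vphi$. Because the sign conventions of \cite{delaOssa2018a} could differ, I will double-check the sign by the explicit frame sum.

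\textbf{Step 2: compute $a_7$.} I will take $\gamma=e^7\iprod\psi_0=-e^{246}+e^{235}+e^{145}+e^{136}$, or equivalently $X\iprod\psi$ for general $X$. With $\psi_{ijkl}\psi_{ijmn}=4(g_{km}g_{ln}-g_{kn}g_{lm})-2\psi_{klmn}$, the computation reduces to $3\gamma$. Alternatively, I can list the nonzero $\gamma_{ij}$ and $\psi_{ij}$ as in the $\U(m)$ proof and sum the wedge products.

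\textbf{Step 3: compute $a_{27}$.} I will take $\gamma=h\diamond\vphi$ with $h$ a simple traceless diagonal matrix, say $h=e^1\otimes e^1-e^2\otimes e^2$. This gives $\gamma=e^1\wedge(e_1\iprod\vphi)-e^2\wedge(e_2\iprod\vphi)$, an explicit 3-form to which the flux operator can be applied termwise; the expected result is $-\gamma$. A cleaner route is a consistency check through the trace: $\operatorname{tr}\bH_\psi=\frac14\psi_{ijab}\psi_{ijab}\cdot(\text{normalisation})$ is computable from $|\psi|^2=7$, and $6\cdot1+3\cdot7+a_{27}\cdot27$ must match it. I expect this step to be the main obstacle, since keeping signs and combinatorial factors consistent in the explicit computation is error-prone. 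The trace identity is my fallback to fix $a_{27}$ once $a_1$ and $a_7$ are known.

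\textbf{Step 4: flux 3-form and Lee form.} All eigenvalues are non-zero, so $\bH_\psi$ is an isomorphism and $\proj^\perp=\id$. Theorem~\ref{theorem:flux:3:form:expression} applied to \eqref{eq:codif:psi:G2} gives
\[
H_\vphi=\tfrac16\tau_0\vphi-\tfrac33\tau_1\iprod\psi-\tau_3 .
\]
For the Lee form, $\vphi\iprod\psi=0$ because $\Omega^3_1\perp\Omega^3_7$ under contraction into $\Omega^1$, and $\tau_3\iprod\psi=0$ by Schur's lemma, since $\Omega^3_{27}$ contains no copy of $\Omega^1$. Hence $\zeta_\vphi=-(\tau_1\iprod\psi)\iprod\psi$. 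The identity $\psi_{\mu ijk}\psi_{\nu ijk}=24\,\delta_{\mu\nu}$ yields $(X\iprod\psi)\iprod\psi=-4X$, with the sign arising exactly as in the $\U(m)$ case, and therefore $\zeta_\vphi=4\tau_1$.
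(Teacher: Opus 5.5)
Your strategy is the same as the paper's: Schur's lemma on the three non-isomorphic summands, evaluation on test elements, Theorem~\ref{theorem:flux:3:form:expression} for $H_\vphi$, and $\psi_{\mu ijk}\psi_{\nu ijk}=24\delta_{\mu\nu}$ for the Lee form. Step~4 is correct, and it is slightly more explicit than the paper about why $\vphi\iprod\psi$ and $\tau_3\iprod\psi$ drop out.

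However, as written, Steps~1 and~2 rest on identities with the wrong sign. Both identities you quote belong to the other common sign convention, and with them you do not get what you claim:
\begin{itemize}
\item In Step~1, $\frac14\vphi_{ija}\psi_{ijbc}e^{abc}$ with $\vphi_{ija}\psi_{ijbc}=-4\vphi_{abc}$ equals $-6\vphi$, not $6\vphi$.
\item In Step~2, your $\psi\psi$ identity gives $\frac14X^i(-2\psi_{ijab})e^{jab}=-3\,X\iprod\psi$, not $3\,X\iprod\psi$.
\end{itemize}
For the model forms \eqref{eq:model:associative:3:form}--\eqref{eq:model:coassoc:4:form}, the correct identities are $\vphi_{aij}\psi_{bcij}=4\vphi_{abc}$ and $\psi_{ijkl}\psi_{ijmn}=4(g_{km}g_{ln}-g_{kn}g_{lm})+2\psi_{klmn}$. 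You can confirm them from the model via $\vphi_{1ij}\psi_{27ij}=4$ and $\psi_{12ij}\psi_{56ij}=2$. This matters: the sign of $a_7$ fixes the sign of the $\tau_1\iprod\psi$ term in $H_\vphi$, and hence the sign of $\zeta_\vphi$.

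The trace fallback in Step~3 is also wrong as stated. $\bH_\psi$ is linear in $\psi$, so its trace cannot be a multiple of $|\psi|^2$. In fact $\operatorname{tr}\bH_\psi=0$. For $\gamma=e^{pqr}$, the diagonal entry $\langle\bH_\psi\gamma,\gamma\rangle=\frac14\gamma_{\mu\nu j}\psi_{\mu\nu ab}\gamma_{jab}$ forces $\{\mu,\nu\}=\{p,q,r\}\setminus\{j\}=\{a,b\}$. So $\psi_{\mu\nu ab}$ has a repeated index and every diagonal entry vanishes.

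With that correction, $6+3\cdot 7+27\,a_{27}=0$ gives $a_{27}=-1$ immediately, once $a_1$ and $a_7$ are right. This is quicker than the paper's symbolic computation with $S\diamond\vphi$.

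Your primary route for Step~3 also works. The choice $h=e^1\otimes e^1-e^2\otimes e^2$ gives $\gamma=e^{135}-e^{146}+e^{236}+e^{245}$, which is exactly the test form $\tilde\gamma$ from the $\U(m)$ proof. The terms of $\psi_0$ containing $e^7$ contribute $\bigl(\sum_{i<j}\gamma_{ij}\wedge\rho_{ij}\bigr)\wedge e^7=0$, where $\rho=e^{136}+e^{145}+e^{235}-e^{246}$. Hence $\bH_\psi(\gamma)=-\gamma$.
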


\begin{proof}
For the component $\Omega^3_7=\Omega^1\iprod\psi$, we consider $\gamma=X\iprod\psi$. Then, using the contraction identities for $\rG_2$-structures, which can be found in \cite[Appendix]{delaOssa2018a}, we obtain
\begin{align*}
\mathbf H_\psi(X \iprod \psi) &= \frac{1}{2!2!} X^i \psi_{i}{}^{\mu\nu}{}_{j} \psi_{\mu\nu ab}\, e^{jab} 
= \frac{1}{4} X^i \big( 4 \delta_i^a \delta_j^b - 4 \delta_i^b \delta_j^a + 2 \psi_{ijab} \big) e^{jab} 
= \frac{1}{2} X^i \psi_{ijab} e^{jab} 
\\&= 3\,X \iprod \psi.
\end{align*} 
Next, for $\Omega^3_1$, take $\gamma=\varphi$. Then 
\[
\mathbf H_\psi(\varphi)=\frac{1}{2! 1! 2!} \varphi^{\mu\nu}{}_{i} \psi_{\mu\nu ab} e^{iab}
=\frac{1}{4}\cdot 4 \varphi_{iab} e^{iab}=6 \varphi.
\]
Finally, let $\gamma = S \diamond \varphi=\tfrac{1}{2}S_{ai}\varphi_{ajk}e^{ijk}\in \Omega^3_{27}$, i.e.
$\gamma_{ijk}=S_\mu^i \varphi_{\mu jk} + S_\mu^k \varphi_{\mu ij} + S_\mu^j \varphi_{\mu ki}$, 
then
\begin{align*}
\mathbf H_\psi(S \diamond \varphi) &= \frac{1}{2\cdot 2} \big( S_\mu^i \varphi_{\mu jk} + S_\mu^k \varphi_{\mu ij} + S_\mu^j \varphi_{\mu ki} \big) \psi_{ijab}  e^{kab} \\
&= -\frac{1}{2} \left( S_j^i \varphi_{abj} + S_j^b \varphi_{iaj} + S_j^a \varphi_{ibj} \right) e^{iab} + 2 S \diamond \varphi = -3 S \diamond \varphi + 2 S \diamond \varphi = - S \diamond \varphi .
\end{align*}
The expression for $H_\psi$ follows from Theorem~\ref{theorem:flux:3:form:expression} together with \eqref{eq:codif:psi:G2}. For the Lee form, we compute
\begin{align*}
\zeta_\psi
&=H_\psi \iprod \psi 
 = -\pts{\tau_1 \iprod \psi }\iprod \psi = -\frac{1}{1!3!1!} \tau_1^\mu \psi_{\mu}{^{ijk}} \psi_{ijk\nu} e^\nu 
= \frac{1}{6} \tau_1^\mu \cdot 24 \delta_{\mu\nu} e^\nu = 4\tau_1.\qedhere
\end{align*}
\end{proof}

Following \cite[\S 2]{Cabrera1994}, Cabrera et al. define the 1-form $\vtheta_\varphi$ associated with a $\rG_2$-structure $(M^7,\varphi)$ by
\begin{equation*}
    \vtheta_\vphi
    \defeq -\dfrac{1}{3}*\pts{*d\vphi\wedge\vphi}
    = -\dfrac{1}{3}*\pts{d^*\vphi\wedge\psi}
    = -\dfrac{1}{3}\pts{d^*\vphi}\iprod\vphi .
\end{equation*}
The authors called it the \emph{Lee form}. Since $\vtheta_\vphi = \zeta_\vphi$, their notion coincides with ours in Definition~\ref{def:lee:form}.

\begin{remark}
    The classification of $\rG_2$-structures by intrinsic torsion type was initiated in \cite{Fernandez1982}. The intrinsic torsion $\Gamma$ of a $\rG_2$-structure takes values pointwise in\footnote{The notation $\mcal W$ was introduced by Gray--Fernandez \cite{Fernandez1982}. Although we do not use it elsewhere in this text, we include it here for completeness.}
\begin{align*}
    \R^7\tensor \fg_2^\perp 
    &\iso \Lambda^3\oplus\Lambda^2_{14}
    \iso \Lambda^3_1\oplus\Lambda^3_7\oplus\Lambda^3_{27}\oplus\Lambda^2_{14}
    \iso \cW_1\oplus\cW_4\oplus\cW_3\oplus\cW_2.
\end{align*}
The components of $\Gamma$ can be characterised by the torsion forms in \eqref{eq:G2:torsion:form:modules}; see \cite{Bryant2003}.
The case of \emph{integrable $\rG_2$-structures}, with $\tau_2=0$, plays a special role, since they admit a unique metric-compatible connection with totally skew-symmetric torsion $T=H_\varphi$ \cite[Theorem~4.7]{Friedrich2003b}. Proposition~\ref{prop:flux:operator:G2} provides an alternative way of recovering the expression for $H_\varphi$ given by Friedrich--Ivanov \cite[Theorem 4.8]{Friedrich2003b}.
\end{remark}

\paragraph{String algebroids over \texorpdfstring{$\rG_2$}{G2}-structures}
Finally, we discuss generalized Ricci flatness for $\rG_2$-structures. Proposition~\ref{prop:string:alg:G2} was previously proved in \cite[Theorem~4.9]{daSilva2024a}. Here, we recover it as an application of Theorem~\ref{theorem:ricci:flatness:simple:group}, which is a general result about 4-form geometries.

\begin{proposition}[{\cite[Theorems 3.8 and 4.9]{daSilva2024a}}]
\label{prop:string:alg:G2}
\label{prop:ricci:flatness:G2}
    Let $(P,K,\theta,\pdi{\cdot,\cdot}_\fk)\to (M^7,\rG_2,\psi=\star\varphi)$ be the co-associative string algebroid induced by an integrable $\rG_2$-structure ($\tau_2=0$). If $\theta$ is a $\rG_2$-instanton, then the generalized Ricci tensor of the pair \eqref{eq:generalized:induced:pair} vanishes, 
    \[
    \GRic^+\pp{V_+^{\rG_2,\psi}, \dv^{\rG_2,\psi}}=0,
    \]
    where the divergence is
    \[
        \dv^{\rG_2,\psi}=\dv^{V_+^{\rG_2,\psi}}-2\bracks{4\tau_1,\cdot}.
    \]
\end{proposition}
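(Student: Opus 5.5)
The plan is to derive Proposition~\ref{prop:ricci:flatness:G2} from Theorem~\ref{theorem:ricci:flatness:simple:group}, in the same way Proposition~\ref{prop:ricci:flatness:U(m)} was obtained. There are three hypotheses to check.

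First, the eigenvalue condition. By \ref{parag:dec:forms:G2}, $\fg_2\iso\Omega^2_{14}=\{\beta:\beta\iprod\psi=-\beta\}$. Hence $\bH^2_\psi|_{\fg_2}=-\id$, so $b=-1\neq0$.

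Second, the structure group. We have $\rG_2=\Stab_{\SO(7)}(\psi_0)$. We may therefore take the multi-tensor $\xi$ to be empty, or $\xi=\varphi$, which is harmless because $\varphi$ is determined by $\psi$ and the orientation. Then $G=\Stab_{\SO(7)}(\psi,\xi)$ holds.

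Third, the gravitino equation $D^+_-\psi=0$. By \eqref{eq:gravitino:equation} this splits into two conditions, $\nabla^+\psi=0$ and $F_\theta\diamond\psi=0$.
\begin{itemize}
\item The second condition is immediate: $\theta$ is a $\rG_2$-instanton, so $F_\theta\in\fg_2\otimes\ad P$. By \eqref{eq:ker:diamond:=:lie:algebra} this gives $F_\theta\diamond\psi=0$.
\item The first condition is the substantive step. By \cite[Theorem~4.7]{Friedrich2003b}, an integrable $\rG_2$-structure ($\tau_2=0$) admits a unique $\rG_2$-connection $\nabla^c$ with totally skew-symmetric torsion $T$. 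Since $\nabla^c\psi=0$, Remark~\ref{remark:parallelism:d:psi:has:no:kernel:intersection} gives $d^*\psi=\bH_\psi(T)$. Proposition~\ref{prop:flux:operator:G2} shows that $\bH_\psi$ is an isomorphism on $\Omega^3$, so $\ker\bH_\psi=0$ and $\proj^\perp=\id$. Therefore $T=\bH_\psi^{-1}(d^*\psi)=H_\varphi$, which means $\nabla^c=\nabla^+$ and hence $\nabla^+\psi=0$.
\end{itemize}
As a consistency check, one can compare with the Friedrich--Ivanov formula \cite[Theorem~4.8]{Friedrich2003b} and with Proposition~\ref{prop:flux:operator:G2}: $H_\varphi=\tfrac16\tau_0\varphi-\tau_1\iprod\psi-\tau_3$.

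With all hypotheses verified, Theorem~\ref{theorem:ricci:flatness:simple:group} gives $\GRic^+=0$ for the induced pair. It remains to identify the divergence. By \eqref{eq:divergence:4:form:geometry} with $b=-1$,
\[
\dv^{\rG_2,\psi}=\dv^{V_+}-2\bracks{-\tfrac1b\zeta_\varphi,\cdot}=\dv^{V_+}-2\bracks{\zeta_\varphi,\cdot},
\]
and Proposition~\ref{prop:flux:operator:G2} gives $\zeta_\varphi=4\tau_1$, which yields the stated formula.

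The main obstacle I expect is the step $\nabla^+\psi=0$. It rests on the external existence theorem for integrable structures, together with the argument that the characteristic torsion coincides with our $H_\varphi$. The latter uses that $\bH_\psi$ has trivial kernel, so that the affine space of compatible skew-torsion connections (Remark~\ref{remark:intrinsic:torsion}) reduces to the single point $H_\varphi$.
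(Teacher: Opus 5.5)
Your proposal is correct and follows the paper's proof. You verify $b=-1$ on $\fg_2$, obtain $D^+_-\psi=0$ from $\nabla^+\psi=0$ (Friedrich--Ivanov) together with $F_\theta\diamond\psi=0$ (the instanton condition), and apply Theorem~\ref{theorem:ricci:flatness:simple:group} with $\zeta_\varphi=4\tau_1$. Your extra step, identifying the Friedrich--Ivanov torsion with $H_\varphi$ via Remark~\ref{remark:parallelism:d:psi:has:no:kernel:intersection} and the invertibility of $\bH_\psi$, usefully spells out what the paper leaves to the citation.
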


\begin{proof}
    Since the $\rG_2$-structure is integrable, we have $\nabla^+\psi = 0$ \cite{Friedrich2003b}. Moreover, since $\rG_2=\Stab(\psi)$ and $\theta$ is a $\rG_2$-instanton, we have
    \begin{equation*}
        F_\theta\in \fg_2
        \quad \Longleftrightarrow\quad
        F_\theta\diamond\psi = 0 .
    \end{equation*}
    Together, $\nabla^+\psi=0$ and $F_\theta \diamond\psi=0$ are equivalent to $D^+_-\psi=0$.
    Thus, the 4-form geometry $(M^7,\rG_2,\psi)$, for which $b=-1$, satisfies the hypotheses of Theorem~\ref{theorem:ricci:flatness:simple:group}.
\end{proof}

\paragraph{Explicit examples}
The generalized Ricci-flat metrics satisfying the hypotheses of Proposition~\ref{prop:ricci:flatness:G2} are usually referred to as solutions of the \emph{heterotic $\rg_2$-system}. As in the Hull--Strominger system, the $K$-bundle $P$ under consideration usually decomposes into a part associated with $TM$ and another part associated with an auxiliary vector bundle $E'$. This system is also motivated by 10-dimensional string theories; cf. \cite{delaOssa2018,delaOssa2018a}.

There are many solutions in the literature. For example, de la Ossa--Galdeano constructed families of examples over the squashed 7-sphere and the squashed Allof-Wallach space \cite{delaOssa2021}; Clarke--Garcia-Fernández--Tipler constructed solutions over the total space of $\bb T^3$-bundles over hyper-Kähler manifolds \cite{Clarke2022}; Fino--Grantcharov--Medel constructed solutions on total spaces of fibrations over K3 surfaces \cite{fino2026newsolutionsG2hullstrominger}; Lotay--Sá Earp constructed approximate solutions over contact Calabi--Yau manifolds \cite{Lotay2023}; and Galdeano--Stecker found examples over 3-$(\alpha,\delta)$-Sasaki manifolds \cite{Galdeano2024}. Ivanov--Ivanov studied how solutions of the Hull--Strominger system can be extended to the heterotic $\rG_2$-system via Cartesian products $M^6\times \bb R$ or $M^6\times \bb S^1$. Moroianu--Raffero--Vezzoni studied the heterotic $\rG_2$-system on 2-step nilmanifolds endowed with principal torus bundles, discussing the existence of solutions for all possible isomorphism classes of 7-dimensional 2-step nilpotent Lie algebras and providing examples with $\tau_1=0$
both when $\tau_0=0$ and when $\tau_0\neq 0$. In \cite{daSilva2024a}, the first and second authors and their collaborators constructed examples of the heterotic $\rG_2$-system in order to obtain solutions to the coupled $\rG_2$-instanton equations. More recently, del Barco--Fowdar--Moreno classified nilmanifolds admitting solutions to the heterotic $\rG_2$-system \cite{delBarco2026}. We now present some simple examples.

\begin{example}[Torsion-free $\rG_2$-structures]
    If $(M^{7},\varphi)$ is a torsion-free $\rG_2$-manifold, so that $d\varphi=0$ and $d\psi=0$, then in particular $H_\psi=0$. Considering $P=M\times \{1\}$, the condition that the exact Courant algebroid $TM\oplus T^*M$ be generalized Ricci-flat with Riemannian divergence $\dv=\dv^{V_+}$ reduces to
    \(
    \Ric^g=0
    \),
    which is indeed the case \cite{bonan_holonomie_g2_spin7,alekseevskii_holonomy_groups_1968}; see also \cite[Theorem 11.8]{Salamon1989}.
\end{example}

A $\rG_2$-structure $(M^7,\varphi)$ is called \emph{nearly parallel} if there exists a constant $\lambda\in \bb R$ such that $d\varphi=\lambda\psi$. This condition means that all torsion forms except $\tau_0$ vanish.\footnote{In this case, $d\varphi=\tau_0\psi$ and $d\psi=0$. Hence $0=d^2\varphi=d\tau_0\wedge\psi$, which implies $d\tau_0\iprod\varphi=0$. Consequently, $\tau_0$ is constant, provided that $M$ is connected.} Thus,

\[
H_\vphi=\frac{1}{6}\lambda\cdot \varphi
\quad \Rightarrow\quad
dH_\vphi=\frac{1}{6}\lambda^2\cdot \psi.
\]
Moreover, $\nabla^+H_\psi=\frac16\lambda\nabla^+\varphi=0$. The Bismut connection $\nabla^+$ is an instanton since $R^+\in \Sigma^2(\fg_2)$, and nearly parallel $\rG_2$-structures are Einstein \cite{Friedrich1997}, with
\[
\Ric^g=\frac38\lambda^2g, \qquad 
\rmm{Scal}^g=\frac{21}8\lambda^2.
\]
The round 7-sphere $\bb S^7$ has a nearly-parallel $\rG_2$-structure with $\lambda=4$ and constant sectional curvature. There is also a metric squashing of the 7-sphere which is still nearly-parallel but does not have constant sectional curvature.

\begin{example}[Nearly parallel round 7-sphere {\cite[\S6]{Ivanov2005}}]
    Let $(\bb S^7, \vphi)$ be the standard $\rG_2$-structure on the 7-sphere defined by $\varphi_x=x\iprod \Omega_0$, where $\Omega_0$ is the standard $\spin(7)$-structure on $\bb R^8\supset \bb S^7$. As described above, this $\rG_2$-structure induces an Einstein metric with constant sectional curvature $\rmm{sec}=1$. Similarly to Example~\ref{example:sphereS6}, we obtain
\[
\rmmtr\pp{R^+\wedge R^+}=-\frac83dH_\vphi.
\]
    Considering again $P=\rmm{Fr}_{\SO(7)}(TM)\to M$ with Lie algebra pairing \(\bracks{A, B}:=-\frac38\rmmtr\pp{A\circ B}\), we obtain $dH_\vphi=\bracks{R^+\wedge R^+}_\fk$. Therefore, by Proposition~\ref{prop:ricci:flatness:G2}, the string algebroid $\transitive$ admits a generalized Ricci-flat metric with respect to the divergence $\dv=\dv^{V_+}$.
\end{example}

\begin{example}[Cartesian product]
\label{example:cartesian:product:SU3->G2}
    Let $(\tilde M^6,\omega,\Psi)$ be an $\SU(3)$-structure, and let $P\to \tilde M$ be a principal bundle endowed with a connection $\theta$. Assume that the hypotheses of Proposition~\ref{prop:ricci:flatness:U(m)} are satisfied, so that the corresponding generalized metric on $T\tilde M\oplus \adjointbundleP\oplus T^*\tilde M$ is Ricci-flat. Define $M^7:=\tilde M\times \bb R$ (or $M^7:=\tilde M\times \bb S^1$) and endow it with the $\rG_2$-structure
\[
\varphi:=\eta \wedge\omega+\Psi_+,
\qquad
\psi:=\starseven\varphi=\frac12\omega^2-\eta \wedge\Psi_-,
\]
where $\eta =dt$ denotes the standard one-form on $\bb R$ or $\bb S^1$. We use the product metric $g_7=g_6+\eta \otimes \eta $ and the orientation $\vol_7=\eta \wedge\vol_6$. Consider the pullback bundle $\overline P:=\proj_1^*P\to M^7$ with the pullback connection $\overline\theta:=\proj_1^*\theta$. With the above conventions, one can prove (see, e.g., \cite[\S5.1]{fino2025}) that the torsion form of the $\rG_2$-structure is the pullback of the torsion form of the original $\SU(3)$-structure, namely
\[
H_\varphi=\proj_1^*H_\omega.
\]
Hence, the heterotic Bianchi identity is preserved under pullback, since $dH_\varphi=\proj_1^*dH_\omega$ and $F_{\overline\theta}=\proj_1^*F_\theta$. Therefore, the characteristic connection of the $\rG_2$-structure is the product extension of the Bismut connection of the $\SU(3)$-structure; in particular, $\nabla^+\varphi=0$.
Moreover, since $\theta$ is an $\SU(3)$-instanton and $\mathfrak{su}(3)\subset\mathfrak g_2\subset\Lambda^2(\bb R^7)^*$, the pullback connection $\overline\theta$ is a $\rG_2$-instanton. Consequently, the pullback construction produces a generalized Ricci-flat metric on the $\rG_2$-structure $(M^7,\varphi)$.
\end{example}

\subsection{\texorpdfstring{$\Spin(7)$}{Spin(7)}-structures}

    A $\Spin(7)$-structure on $(M^8,g)$ corresponds to a 4-form $\Omega\in \Omega^4(M)$, called the \emph{Cayley form}, which is pointwise modelled on
\begin{equation}\label{eq:model:spin(7):structure}
    \begin{split}
\Omega_0=
e^0\wedge\vphi_0 +\psi_0 & =
e^{0127} + e^{0347} + e^{0567} + e^{0135} - e^{0146} - e^{0236} - e^{0245} \\&+ e^{3456} + e^{1256} + e^{1234} - e^{2467} + e^{2357} + e^{1457} + e^{1367} , 
\end{split}
\end{equation}
where $\{e_0,\dots,e_7\}$ is an orthonormal frame of $\R^8$, $\vphi_0$ is the canonical associative 3-form \eqref{eq:model:associative:3:form}, and $\psi_0$ is the canonical co-associative 4-form \eqref{eq:model:coassoc:4:form}.

Since $\Spin(7)\subset \SO(8)$, the form $\Omega$ induces a metric $g$ and an orientation $\vol_M$ in a nonlinear way \cite{Karigiannis2008, Fadel2024}. More precisely, for $v\in TM$,
\begin{equation*}
     (g(v,v))^2=-\frac{7^3}{6^{\frac{7}{3}}}\frac{(\det\ B_{ij}(v))^{\frac 13}}{A(v)^3},
     \qquad 
     \begin{matrix}
    B_{ij}(v)  =(\Omega(v,e_i)\wedge \Omega(v,e_j)\wedge \Omega(v))_{1234567},\\
    A(v)=(\Omega(v)\wedge \Omega)_{1234567}.
    \hspace{2.46cm}{}
\end{matrix}
\end{equation*}
The metric and the orientation determine a Hodge star operator $*$, under which $\Omega=\ast\Omega$ is self-dual. With our conventions, we have $|\Omega|^2=14$. In the frame $\{e_j\}$ such that $\Omega$ takes the form \eqref{eq:model:spin(7):structure}, we have $g=e^0\otimes e^0+\cdots+e^7\otimes e^7$ and $\vol_M=e^{01234567}$. 
Thus, a $\Spin(7)$-structure $(M,\Omega)$ gives rise to a \emph{Cayley} 4-form geometry $(M,\Spin(7),\Omega)$.

\begin{remark}
        An 8-manifold admits a $\Spin(7)$-structure if and only if it is orientable, spinnable, and
\begin{equation*}
    p_1^2(M) - 4p_2(M) + 8\chi(M) = 0 , 
\end{equation*}
    for an appropriate choice of orientation, where $p_i(M)$ denotes the $i$-th Pontryagin class and $\chi(M)$ denotes the Euler characteristic; see \cite[Theorem~10.7]{Lawson1989}.
\end{remark}

\paragraph{Decomposition of differential forms}
\label{sec:decomposition:forms:spin7}

Let $(M^8,\Omega)$ be a $\Spin(7)$-structure. Following \cite[\S~2]{Ivanov2004}, the spaces $\Omega^k(M)$ decompose into irreducible $\Spin(7)$-submodules. The spaces of $0$-forms and 1-forms are irreducible as $\Spin(7)$-modules. The space of 2-forms decomposes as $\Omega^2 =  \Omega^2_{7} \oplus \Omega^2_{21} $, where $\mathfrak{spin}(7)\iso\Omega^2_{21}$. Moreover, these components are characterised by 
        \begin{equation*}
        \begin{aligned}
            \Omega^2_7
            &= \{\beta\in\Omega^2:\beta\iprod\Omega = 3\beta\},
            \\
            \Omega^2_{21}
            &= \{\beta\in\Omega^2:\beta\iprod\Omega = -\beta\}=\chaves{\beta\in \Omega^2:\beta\diamond\Omega=0}.
        \end{aligned}
        \end{equation*}
        The space of 3-forms decomposes as $\Omega^3= \Omega^3_8\oplus \Omega^3_{48}$, where
        \begin{align*}
            \Omega^3_8
            &= \{X\iprod\Omega\in \Omega^3:X\in TM\} \qandq 
            \Omega^3_{48}
            = \{\gamma\in \Omega^3:\gamma\wedge\Omega=0\}.
        \end{align*}

Due to the decomposition of $\Omega^5\iso \Omega^3$, there are unique \emph{torsion forms} 
$$\tau_1\in \Omega^1
\qandq \tau_3\in \Omega^3_{48}$$
satisfying
\begin{equation}\label{eq:torsion:forms:Spin(7)}
    d\Omega = \tau_1\wedge\Omega + *\tau_3.
\end{equation}

\paragraph{Flux operator, flux 3-form and the Lee form}
Consider the Cayley 4-form geometry $(M^8,\spin(7),\Omega)$. From \eqref{eq:torsion:forms:Spin(7)}, the codifferential of $\Omega$ decomposes as
\begin{equation}
\label{eq:codif:Omega:spin7}
d^* \Omega = -\tau_1 \iprod \Omega + \tau_3.
\end{equation}
For $\spin(7)$-structures, the Lee space $\Omega^1_L=\Omega^1(M)$ is irreducible, and the eigenvalue of $\mathfrak{spin}(7)$ on the flux operator on 2-forms is $b=-1$, cf. \ref{sec:decomposition:forms:spin7}.

\begin{proposition}
\label{prop:flux:operator:spin7}
    Let $(M^8,\Spin(7), \Omega)$ be a Cayley 4-form geometry. Then the flux operator $\vet H_\Omega:\Omega^3\to \Omega^3$ is an isomorphism of $\Spin(7)$-modules and acts on each component as
\begin{equation*}
\vet H_\Omega\big|_{\Omega^3_8}=6\cdot \id_{\Omega^3_8} ,  \qquad 
\vet H_\Omega\big|_{\Omega^3_{48}}=- \id_{\Omega^3_{48}}.
\end{equation*}
In particular, the associated flux 3-form $H_\Omega\in\Omega^3(M)$ and Lee form $\zeta_\Omega\in \Omega^1(M)$ are, respectively,
\begin{equation*}
H_\Omega = -\frac{1}{6}\tau_1 \iprod \Omega - \tau_3
\qandq
\zeta_\Omega
        =\frac{7}{6}\tau_1 .
\end{equation*}
\end{proposition}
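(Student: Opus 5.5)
I will follow the same template as the proofs of Propositions~\ref{prop:flux:operator:almost:hermitian} and \ref{prop:flux:operator:G2}. The 3-forms decompose as $\Omega^3=\Omega^3_8\oplus\Omega^3_{48}$ into non-isomorphic irreducible $\Spin(7)$-modules. Since $\vet H_\Omega$ is $\Spin(7)$-equivariant and self-adjoint, Schur's lemma makes it a real scalar on each summand. It therefore suffices to evaluate $\vet H_\Omega$ on one non-zero test element of each module and read off the eigenvalues. Once both eigenvalues are non-zero, $\vet H_\Omega$ is an isomorphism, $\ker\vet H_\Omega=0$ and $\proj^\perp=\id$. Theorem~\ref{theorem:flux:3:form:expression} together with \eqref{eq:codif:Omega:spin7} then gives
\[
H_\Omega=\tfrac16(-\tau_1\iprod\Omega)+\tfrac1{-1}\tau_3=-\tfrac16\tau_1\iprod\Omega-\tau_3 .
\]

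For $\Omega^3_8$, I take $\gamma=e_0\iprod\Omega_0=\vphi_0$. I then compute $\vet H_\Omega(\gamma)=\tfrac12\sum_{i,j}\gamma_{ij}\wedge\Omega_{ij}$ directly from \eqref{eq:model:spin(7):structure}. Alternatively, I use the standard $\Spin(7)$ contraction identity
\[
\Omega_{ijkl}\Omega_{abkl}=6(\delta_{ia}\delta_{jb}-\delta_{ib}\delta_{ja})-4\Omega_{ijab},
\]
which is consistent with the eigenvalues $3$ on $\Omega^2_7$ and $-1$ on $\Omega^2_{21}$. Writing $\gamma=X\iprod\Omega$, the contraction $\tfrac14X^i\Omega_{i\mu\nu j}\Omega_{\mu\nu ab}e^{jab}$ reduces, via this identity, to a combination of $X\iprod\Omega$ terms. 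I expect the coefficient to be $6$, and I will confirm it on the explicit test form $\vphi_0$, checking for instance the coefficient of $e^{127}$.

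For $\Omega^3_{48}$, I pick an explicit element orthogonal to $\Omega^3_8$, or equivalently one with $\gamma\wedge\Omega=0$. A natural choice is the $\rG_2$-type form $\gamma=e^{127}-e^{347}$, adjusted if necessary so that its wedge with $\Omega_0$ vanishes. Then I compute $\vet H_\Omega(\gamma)$ componentwise and expect $-\gamma$.

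A cheaper consistency check comes from the trace. We have $\mathrm{tr}\,\vet H_\Omega=\tfrac1{4}\sum\Omega_{ij ij\ldots}$-type contractions, and the trace equals $8a+48b$. With $a=6$ and $b=-1$ this gives $0$. I will verify this trace identity from $|\Omega|^2=14$, since $\mathrm{tr}\,\vet H_\Omega$ is proportional to a full self-contraction of $\Omega$ with two indices traced, which vanishes by skew-symmetry. The relation $8a+48b=0$ then determines $b$ once $a$ is known, so only one explicit computation is needed.

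For the Lee form, $H_\Omega\iprod\Omega=-\tfrac16(\tau_1\iprod\Omega)\iprod\Omega-\tau_3\iprod\Omega$. The second term vanishes: by Lemma~\ref{lemma:characterization:Lee:space} and irreducibility, the map $\gamma\mapsto\gamma\iprod\Omega$ from $\Omega^3$ to $\Omega^1$ is equivariant and kills the $48$-dimensional summand by Schur's lemma. For the first term I use $\Omega_{\mu ijk}\Omega_{\nu ijk}=42\delta_{\mu\nu}$, which follows from $|\Omega|^2=14$ via $\tfrac1{24}\cdot 8\cdot 42=14$. This gives $(X\iprod\Omega)\iprod\Omega=\tfrac1{6}X^\mu\Omega_{\mu ijk}\Omega_{ijk\nu}e^\nu=-7X$, and hence $\zeta_\Omega=\tfrac76\tau_1$. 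The main obstacle is the explicit eigenvalue computation, in particular the sign conventions in the contraction identity. I would handle it by direct frame computation on $\vphi_0$.
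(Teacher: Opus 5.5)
Your plan follows the paper's proof. You apply Schur's lemma to $\Omega^3=\Omega^3_8\oplus\Omega^3_{48}$, take one test element per summand, invoke Theorem~\ref{theorem:flux:3:form:expression} with \eqref{eq:codif:Omega:spin7}, and use $\Omega_{\mu ijk}\Omega_{\nu ijk}=42\,\delta_{\mu\nu}$ for the Lee form. Two steps the paper leaves implicit are correct as you give them: deriving $42$ from $|\Omega|^2=14$, and the Schur argument that $\tau_3\iprod\Omega=0$.

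The trace shortcut is also sound and saves a computation compared with the paper, which checks $\Omega^3_{48}$ on $e^{023}-e^{057}$. Each diagonal entry $\langle\vet H_\Omega(e^{kab}),e^{kab}\rangle$ involves only components such as $\Omega_{abab}=0$. Hence $\mathrm{tr}\,\vet H_\Omega=0$, and $8a+48b=0$ forces $b=-1$ once $a=6$. Your candidate $e^{127}-e^{347}$ does satisfy $\gamma\wedge\Omega_0=0$ and has eigenvalue $-1$, so it would work as well.

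There is one real error: the contraction identity you quote has the wrong sign for this paper's conventions. With $-4\,\Omega_{ijab}$, the operator $P(\beta)=\beta\iprod\Omega$ satisfies $P^2=3-2P$, whose eigenvalues are $1$ and $-3$, not $3$ and $-1$. So your claim that it is consistent with the paper's $\Omega^2_7,\Omega^2_{21}$ is false; that sign belongs to the opposite orientation convention. If you substitute it into $\tfrac14X^i\Omega_{i\mu\nu j}\Omega_{\mu\nu ab}e^{jab}$, you get $\vet H_\Omega(X\iprod\Omega)=-6\,X\iprod\Omega$. The trace argument would then give $+1$ on $\Omega^3_{48}$, which contradicts the statement. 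In the paper's conventions the identity is
\[
\Omega_{ij\mu\nu}\Omega_{ab\mu\nu}=6(\delta_{ia}\delta_{jb}-\delta_{ib}\delta_{ja})+4\,\Omega_{ijab},
\]
which is exactly what the paper uses. You can check it from $P(e^{01})=e^{27}+e^{35}-e^{46}$ and $P(e^{27}+e^{35}-e^{46})=3e^{01}+2(e^{27}+e^{35}-e^{46})$, which give $P^2=3+2P$ on that span. The $\delta$-terms then vanish after wedging, leaving $X^i\Omega_{ijab}e^{jab}=6\,X\iprod\Omega$.

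Your fallback frame computation on $\vphi_0$ does give $\vet H_\Omega(\vphi_0)_{127}=\tfrac12(4+4+4)=6$, so the overall plan survives. You should correct the identity rather than cite it.
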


\begin{proof}
For the vector component $\Omega^3_8=\{X\iprod \Omega:\ X\in TM\}$, we use the contraction identities for $\Spin(7)$-structures from \cite[Appendix~A]{Karigiannis2008} to obtain
\begin{align*}
    \mathbf H_\Omega(X\iprod\Omega)&
     = \frac{1}{2!2!}X^i\,{\Omega_i}{}^{\mu \nu}{}_{j}\,\Omega_{\mu \nu ab}\,e^{j ab}=\frac{1}{4}X^i\big(
    6\,\delta_i{}_{a}\delta_j{}_{b}
-6\,\delta_i{}_{b}\delta_j{}_{a}
+ 4\,\Omega_{ijab}
\big)e^{jab }
=6\, X\iprod \Omega.
\end{align*}
For $\Omega^3_{48}(M)$, consider $\gamma=e^{023}-e^{057}$. Direct inspection shows that $\gamma\wedge\Omega=0$, hence $\gamma\in\Omega^3_{48}$, and one then computes $\mathbf H_\Omega(\gamma)=-\gamma$, which yields the expression for $H_\Omega$. For the Lee form, we have
\[
\zeta_\Omega=H_\Omega\iprod \Omega 
=-\frac{1}{6\cdot 3!}
        \tau_1^\mu \Omega_{\mu}{^{ijk}} \Omega_{ijk\nu} e^\nu 
= \frac{1}{36} \tau_1^\mu \cdot 42 \delta_{\mu\nu} e^\nu
= \frac{7}{6}\tau_1.\qedhere
\]
\end{proof}

\medskip

The Lee form $\vtheta_{\Omega}$ of a $\Spin(7)$-structure was defined in \cite{Ivanov2004}, following ideas from \cite{Cabrera1995}, as the 1-form
\begin{equation}\label{eq:Lee:form:spin7}
    \vtheta_\Omega
    \defeq -\dfrac{1}{7}*\pts{*d\Omega\wedge\Omega}
    = \dfrac{1}{7}*\pts{d^*\Omega\wedge\Omega}
    =\dfrac{1}{7}\pts{d^*\Omega}\iprod\Omega .
\end{equation}
One can check that $\vtheta_\Omega = \tau_1$. Thus, our notion of Lee form in Definition~\ref{def:lee:form} differs from the literature by a constant multiple, $\zeta_\Omega= \frac{7}{6}\vtheta_\Omega$.

\begin{remark}
    The classification of different types of $\Spin(7)$-structures via intrinsic torsion was first considered in \cite{Fernandez1986} and subsequently developed in \cite{Cabrera1995}. The intrinsic torsion $\Gamma$ of a $\Spin(7)$-structure takes values pointwise in\footnote{The  $\mcal W$ notation is due to Fernandez \cite{Fernandez1986} and will not be used in this text.}
\begin{equation*}
    \R^8\tensor\mathfrak{spin}(7)^\perp
    \iso
    \Lambda^3
    = \Lambda^3_8\oplus\Lambda^3_{48}
    \iso \cW_2\oplus\cW_1.
\end{equation*}
The components of $\Gamma$ are characterised by the torsion forms $\tau_1\in\Omega^1$ and $\tau_3\in \Omega^3_{48}$ in \eqref{eq:torsion:forms:Spin(7)}.
In \cite[Theorem~1.1]{Ivanov2004}, Ivanov showed that $\Spin(7)$-structures play a special role among geometries with skew-symmetric torsion, since every such structure admits a unique metric-compatible connection $\nabla^+$ with totally skew-symmetric torsion $H_\Omega$. Proposition~\ref{prop:flux:operator:spin7} therefore recovers the results of \cite{Ivanov2004}, where a direct approach based on solving a linear system of equations was used. Further proofs using spinors were given by Lucía Martin-Mérchan in \cite{Martin-Merchan2021,Martin-Merchan2020}, and we have provided an alternative proof.
\end{remark}

\paragraph{String algebroids over \texorpdfstring{$\spin(7)$}{Spin(7)}-structures}
Since the Bismut connection $\nabla^+$ is always compatible with the $\Spin(7)$-structure, obtaining a generalized Ricci-flat metric requires only a $\Spin(7)$-instanton satisfying the heterotic Bianchi identity.

\begin{proposition}\label{prop:string:alg:Spin7}
\label{prop:ricci:flatness:Spin7}
Let $(P,K,\theta,\pdi{\cdot,\cdot}_\fk)\to (M^8,\spin(7),\Omega)$ be the Cayley string algebroid induced by a $\spin(7)$-structure. If $\theta$ is a $\spin(7)$-instanton, then the generalized Ricci tensor of the pair \eqref{eq:generalized:induced:pair} vanishes, 
   \[
    \GRic^+\pp{V_+^{\spin(7),\Omega}, \dv^{\spin(7),\Omega}}=0,
    \]
    where the divergence is
    \[
        \dv^{\spin(7),\Omega}=\dv^{V_+^{\Spin(7),\Omega}}-2\bracks{\tfrac76 \tau_1, {} \cdot {} }.
    \]
\end{proposition}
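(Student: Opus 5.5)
The argument mirrors the proofs of Propositions~\ref{prop:ricci:flatness:U(m)} and~\ref{prop:ricci:flatness:G2}: we check the hypotheses of Theorem~\ref{theorem:ricci:flatness:simple:group} for the Cayley 4-form geometry $(M^8,\Spin(7),\Omega)$, taking $\xi$ empty, i.e.\ the multi-tensor consists of $\Omega$ alone.

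\textbf{Algebraic input.} By \ref{sec:decomposition:forms:spin7}, $\Spin(7)=\Stab_{\SO(8)}(\Omega_0)$. The Lie algebra $\mathfrak{spin}(7)\cong\Omega^2_{21}$ is exactly the $(-1)$-eigenspace of $\bH^2_\Omega$, so $\bH^2_\Omega|_{\fg}=b\cdot\id_\fg$ with $b=-1\neq0$. By \eqref{eq:ker:diamond:=:lie:algebra}, $\fg=\ker(\cdot\diamond\Omega)$. Hence the $\Spin(7)$-instanton condition $F_\theta\in\mathfrak{spin}(7)\otimes\ad P$ is equivalent to $F_\theta\diamond\Omega=0$, which is the second half of the gravitino equation \eqref{eq:gravitino:equation}.

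\textbf{Parallelism.} The remaining half is $\nabla^+\Omega=0$, where $\nabla^+$ has torsion $H_\Omega$. Proposition~\ref{prop:flux:operator:spin7} shows $\bH_\Omega$ is an isomorphism on $\Omega^3$, so $\ker\bH_\Omega=0$, $\proj^\perp=\id$, and $d^*\Omega=\bH_\Omega(H_\Omega)$ exactly. The intrinsic torsion of a $\Spin(7)$-structure lies in $\R^8\otimes\mathfrak{spin}(7)^\perp\cong\Lambda^3$, with total dimension $8\cdot 7=56=8+48$. By Remark~\ref{remark:intrinsic:torsion} it is therefore totally skew. The characteristic connection is then a metric connection with skew torsion $T_0=H_\Omega$ preserving $\Omega$, and so it coincides with $\nabla^+$; this is also Ivanov's theorem \cite[Theorem~1.1]{Ivanov2004}. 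Thus $\nabla^+\Omega=0$, and with the previous step $D^+_-\Omega=0$.

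\textbf{Conclusion and obstacle.} Theorem~\ref{theorem:ricci:flatness:simple:group} applies with $G=\Stab(\Omega)$, giving $\GRic^+=0$ for the induced pair. The divergence \eqref{eq:divergence:4:form:geometry} with $b=-1$ is $\dv^{V_+}-2\langle\zeta_\Omega,\cdot\rangle$, and $\zeta_\Omega=\tfrac76\tau_1$ by Proposition~\ref{prop:flux:operator:spin7}, which matches the statement.

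The main obstacle is justifying that the intrinsic torsion is totally skew, equivalently that the map $\Lambda^3\to\R^8\otimes\mathfrak{spin}(7)^\perp$ is an isomorphism. Since both sides have dimension $56$, it suffices to show injectivity. I would argue that the two components $\Lambda^3_8$ and $\Lambda^3_{48}$ are non-isomorphic irreducibles, each mapped equivariantly with nonzero image. Alternatively, the statement can simply be cited from \cite{Ivanov2004}.
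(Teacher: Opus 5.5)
Your proposal is correct and is essentially the paper's proof. The paper likewise obtains $D^+_-\Omega=0$ from $\nabla^+\Omega=0$, which it simply cites from Friedrich and Ivanov, together with $F_\theta\in\mathfrak{spin}(7)\iff F_\theta\diamond\Omega=0$, and then applies Theorem~\ref{theorem:ricci:flatness:simple:group:texto} with $b=-1$. One small caution on your optional self-contained justification of $\nabla^+\Omega=0$: the isomorphism $\Lambda^3\cong\R^8\otimes\mathfrak{spin}(7)^\perp$ gives a $\Spin(7)$-connection $\nabla^g+\frac12 g^{-1}T$ with skew torsion, namely the minimal connection $\nabla^0$ of Remark~\ref{remark:intrinsic:torsion} corrected by an $\Omega^1\otimes\mathfrak{spin}(7)$ term. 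It does not show that $\nabla^0$ itself has skew torsion, which fails in general; the identification $T=H_\Omega$ should instead come from Remark~\ref{remark:parallelism:d:psi:has:no:kernel:intersection} together with $\ker\bH_\Omega=0$.
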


\begin{proof}
    Since every $\spin(7)$-structure admits a Bismut connection $\nabla^+$, we have $\nabla^+\Omega=0$ \cite{Friedrich2002,Ivanov2004}. Moreover, since $\Spin(7) = \Stab_{\SO(8)}(\Omega_0)$ and $\theta$ is a $\spin(7)$-instanton, we have
    \begin{equation*}
        F_\theta\in\mathfrak{spin}(7)
        \quad \Longleftrightarrow\quad
        F_\theta\diamond\Omega = 0.
    \end{equation*}
    Together, $\nabla^+\Omega=0$ and $F_\theta\diamond\Omega=0$ are equivalent to $D^+_-\Omega=0$. Thus, the 4-form geometry $(M^8,\Spin(7),\Omega)$, for which $b=-1$, satisfies the hypotheses of Theorem~\ref{theorem:ricci:flatness:simple:group}.
\end{proof}

\paragraph{\texorpdfstring{$\Spin(7)$}{Spin(7)}-structures induced by \texorpdfstring{$\rG_2$}{G2}-structures via the flux operator}\label{parag:lifting:G2:to:spin7}
Now we use the 4-form formalism to describe the relationship between the flux 3-forms and Lee forms of a $\rG_2$-structure on $N^7$ and the induced $\Spin(7)$-structure on the product $M^8=\R\times N^7$. Our result can be considered a generalisation of \cite[Theorem~5.1]{Ivanov2005}, with the difference that we start with a generic $\rG_2$-structure, not necessarily integrable.

\smallskip

Let $(N^7,\vphi)$ be a manifold endowed with a $\rG_2$-structure. We can then consider a $\Spin(7)$-structure on $M^8\defeq \R\times N^7$ given by
\begin{equation}\label{eq:Spin7:induced:G2:4:form}
    \Omega\defeq \eta\wedge\vphi + \psi , 
\end{equation}
where $\eta=dt$ is the coordinate 1-form in the direction of $\bb R$, with $d\eta=0$ and $|\eta|=1$. Here, $\starseven$ and $\stareight$ denote the Hodge star operators in dimensions 7 and 8, respectively. In particular, we have
\[
g_M=g_N+\eta\otimes\eta, \qquad 
\vol_M=\eta\wedge \vol_N.
\]

\begin{lemma}\label{lemma:G2:to:Spin7:forms}
For $M^8=\bb R\times N^7$ with the $\spin(7)$-structure constructed as above, the space of $\spin(7)$-components of $\Omega^3$ decomposes with respect to the associated $\rG_2$-structure as
    \[
\Omega^3_8(M)=\Omega^3_{\overline 7^8}(M)\oplus \Omega^3_1(N), \quad 
\Omega^3_{48}(M)=\Omega^3_{27}(N)\oplus \pp{\eta\wedge \Omega^2_{14}\pp{N}}\oplus \Omega^3_{\overline 7^\perp}(M).
\]
Here $\Omega^3_{\overline{7}^8}(M)$ and $\Omega^3_{\overline{7}^\perp}(M)$ are isomorphic 7-dimensional $\rG_2$-modules identified as
\begin{align*}
    \Omega^3_{\overline 7^8}(M)&=\chaves{
    -\eta\wedge (X^7\iprod\varphi)+X^7\iprod\psi:X^7\in \Omega^1(N)
    },
    \\
    \Omega^3_{\overline 7^\perp}(M)&=\chaves{
    4\eta\wedge (X^7\iprod\varphi)+3X^7\iprod\psi:X^7\in \Omega^1(N)
    }.
\end{align*}
Moreover, for $X^7\in \Omega^1(N)\iso\Omega^3_7(N)$:
\begin{equation}
    \proj_{\Omega^3_8}\pts{X^7\iprod\psi}
    = \dfrac{4}{7}X^7\iprod\Omega
    \qandq
    \proj_{\Omega^3_{48}}\pts{X^7\iprod\psi}
    = \dfrac{4}{7}\eta\wedge\pts{X^7\iprod\vphi} + \dfrac{3}{7}X^7\iprod\psi .
\end{equation}
\end{lemma}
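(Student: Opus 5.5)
We split $\Omega^3(M)$ along the product and sort the pieces by $\rG_2$-type. Since $M=\R\times N$ with $g_M=g_N+\eta\otimes\eta$, every 3-form on $M$ decomposes uniquely as $\gamma=\gamma^3+\eta\wedge\gamma^2$, with $\gamma^3,\gamma^2$ having no $\eta$-component. Hence
\[
\Omega^3(M)=\Omega^3(N)\oplus\eta\wedge\Omega^2(N)
=\Omega^3_1(N)\oplus\Omega^3_7(N)\oplus\Omega^3_{27}(N)\oplus\eta\wedge\Omega^2_7(N)\oplus\eta\wedge\Omega^2_{14}(N),
\]
using the $\rG_2$ decompositions of \ref{parag:dec:forms:G2}. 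This is an orthogonal decomposition into $\rG_2$-modules of ranks $1+7+27+7+14=56$, with $\rG_2=\Stab(\Omega)\cap\Stab(\eta)\subset\Spin(7)$. Since $\Omega^3_8(M)\oplus\Omega^3_{48}(M)$ is orthogonal, it suffices to identify $\Omega^3_8(M)$ and then take orthogonal complements.

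\emph{Step 1: compute $\Omega^3_8(M)$.} Write $X=f\,e_0+X^7$ with $e_0$ dual to $\eta$. Then $e_0\iprod\Omega=\varphi$, and
\[
X^7\iprod\Omega=X^7\iprod(\eta\wedge\varphi)+X^7\iprod\psi=-\eta\wedge(X^7\iprod\varphi)+X^7\iprod\psi .
\]
Thus $\Omega^3_8(M)=\Omega^3_1(N)\oplus\Omega^3_{\overline7^8}(M)$, which is the first claimed identity.

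\emph{Step 2: compute $\Omega^3_{48}(M)$ as the orthogonal complement.} By Schur's lemma, $\Omega^3_{27}(N)$ and $\eta\wedge\Omega^2_{14}(N)$ are orthogonal to $\Omega^3_8(M)$, since the latter contains only components of ranks $1$ and $7$. For the same reason $\Omega^3_1(N)$ lies entirely in $\Omega^3_8$. The remaining $\rG_2$-isotypic piece is the rank-14 space
\[
\{\,a\,\eta\wedge(X^7\iprod\varphi)+b\,X^7\iprod\psi\,\}\cong\Omega^1(N)\otimes\R^2 .
\]
The $\rG_2$-equivariant maps between its two copies of $\Omega^1(N)$ are scalar, so the orthogonal complement of $\Omega^3_{\overline7^8}$ inside it is given by a single ratio $a:b$. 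The contraction identities for $\rG_2$-structures \cite[Appendix]{delaOssa2018a} give
\[
|X^7\iprod\varphi|^2=3|X^7|^2
\qandq
|X^7\iprod\psi|^2=4|X^7|^2 .
\]
Therefore the pairing of $a\,\eta\wedge(X\iprod\varphi)+b\,X\iprod\psi$ with $-\eta\wedge(X\iprod\varphi)+X\iprod\psi$ equals $(-3a+4b)|X|^2$. This vanishes for $(a,b)=(4,3)$, which identifies $\Omega^3_{\overline7^\perp}(M)$. A dimension count, $48=27+14+7$, confirms the stated decomposition of $\Omega^3_{48}(M)$.

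\emph{Step 3: the projections.} Solve
\[
X\iprod\psi=\alpha\pp{-\eta\wedge(X\iprod\varphi)+X\iprod\psi}+\beta\pp{4\eta\wedge(X\iprod\varphi)+3X\iprod\psi}.
\]
The $\eta$-component gives $-\alpha+4\beta=0$ and the other gives $\alpha+3\beta=1$, so $\alpha=4/7$ and $\beta=1/7$. This yields
\[
\proj_{\Omega^3_8}(X\iprod\psi)=\tfrac47 X\iprod\Omega
\qandq
\proj_{\Omega^3_{48}}(X\iprod\psi)=\tfrac47\eta\wedge(X\iprod\varphi)+\tfrac37X\iprod\psi .
\]

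The main obstacle is bookkeeping rather than ideas. One must get the sign in $X^7\iprod(\eta\wedge\varphi)$ right and confirm the two norm constants $3$ and $4$ in the normalisation $|\varphi|^2=7$. I would check both directly on $X=e_1$ using the model forms \eqref{eq:model:associative:3:form} and \eqref{eq:model:coassoc:4:form}, where $e_1\iprod\varphi_0$ has three terms and $e_1\iprod\psi_0$ has four.
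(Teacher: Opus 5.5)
Your proof is correct. Step 1 is the paper's computation, but Steps 2 and 3 take a different route.

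For $\Omega^3_{48}(M)$, the paper uses the characterisation $\Omega^3_{48}(M)=\{\gamma:\gamma\wedge\Omega=0\}$ and checks each proposed summand against it, then closes with a dimension count. The checks are: $\beta\wedge\psi=0$ on $\Omega^2_{14}(N)$; $\gamma\wedge\varphi=\gamma\wedge\psi=0$ on $\Omega^3_{27}(N)$; and $(X\iprod\varphi)\iprod\varphi=3X$, $(X\iprod\psi)\iprod\psi=-4X$ for the $7$-dimensional piece. For the projections, the paper contracts twice with $\Omega$, effectively using $\proj_{\Omega^3_8}(\gamma)=-\tfrac17(\gamma\iprod\Omega)\iprod\Omega$. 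This requires the $\Spin(7)$ constant $(Y\iprod\Omega)\iprod\Omega=-7Y$.

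You instead take the orthogonal complement of $\Omega^3_8(M)$ inside the $\rG_2$-isotypic splitting of $\Omega^3(M)$. The $27$- and $14$-dimensional summands then come for free, and the ratio $4:3$ is derived rather than merely verified. Your projections reduce to a $2\times2$ linear system, with no $\Spin(7)$ contraction constant.

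The two characterisations of $\Omega^3_{48}(M)$ agree. Since $\Omega$ is self-dual, $\gamma\wedge\Omega=0$ iff $\gamma\iprod\Omega=0$ iff $\gamma\perp\Omega^1\iprod\Omega$. So your norm computation and the paper's wedge computation for the $7$-piece are the same identity in different guise.

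Your route needs only the two norms $3|X|^2$, $4|X|^2$, plus Schur's lemma and the fact that the $7$-dimensional $\rG_2$-module is of real type. The paper's route is a direct check with no Schur-type argument. Its double-contraction projection also applies to any $3$-form, not only to those in the $7$-isotypic piece.
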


\begin{proof}
    For $\Omega^3_8(M)$, we have elements of the form $X^8\iprod\Omega$ for arbitrary $X^8=X^7+f\xi\in \Omega^1(M)$, with $X^7\in \Omega^1(N)$, $f\in \mcal C^\infty(M)$ and $\xi:=\eta^\flat$ unit vector field along $\bb R$. Then
    \begin{align*}
        X^8\iprod\Omega&=
        \pp{X^7+f\xi}\iprod (\eta\wedge\varphi+\psi)
        =-\eta\wedge \pp{X^7\iprod\varphi }+X^7\iprod\psi +f\varphi .
    \end{align*}
    Since $-\eta\wedge (X^7\iprod\varphi)+X^7\iprod\psi$ is an arbitrary element of $\Omega^3_{\overline 7^8}(M)$, while $f\varphi$ is an arbitrary element of $\Omega^3_1(N)$, the decomposition of $\Omega^3_8(M)$ follows. By a dimension count, it remains to check that each component in the expression for $\Omega^3_{48}(M)$ above is contained in that space. Let $\beta\in \Omega^2_{14}(N)$. Then $\beta\wedge \psi=0$, and thus
    \begin{align*}
        \pp{\eta\wedge \beta} \wedge\Omega&=
        \eta\wedge \beta\wedge (\eta\wedge \varphi+\psi)=\eta\wedge (\beta\wedge\psi)=0.
    \end{align*}
Hence, $\eta\wedge\Omega^2_{14}(N)\subset \Omega^3_{48}(M)$. Now let $\gamma\in \Omega^3_{27}(N)$. Then $\gamma\wedge\varphi=0=\gamma\wedge\psi$, and hence $\gamma\wedge\Omega=0$. Finally, using $(X^7\iprod\psi)\wedge \psi=0$, we obtain
\begin{align*}
\pp{
    4\eta\wedge (X^7\iprod\varphi)+3X^7\iprod
\psi }\wedge\Omega 
&=\eta\wedge \pp{
-3(X^7\iprod\psi)\wedge\varphi
+4(X^7\iprod\varphi)\wedge \psi
}
\\&=
\eta\wedge \pp{
3\starseven((X^7\iprod\psi)\iprod\psi )
+4\starseven((X^7\iprod\varphi)\iprod\varphi )
}
\\&=(-12+12)\eta\wedge \starseven\ X^7=0.
\end{align*}
For the projections, let $X\in\Omega^1(N)$. Then
\begin{align*}
    \pts{X\iprod\psi}\iprod\Omega
    &= \pts{X\iprod\psi}\iprod\pts{\vphi\wedge\eta + \psi} 
    = \pts{X\iprod\psi}\iprod\psi
    = -4X \\
    \Longrightarrow    \pts{\pts{X\iprod\psi}\iprod\Omega}\iprod\Omega
    &= -4X\iprod\pts{\eta\wedge\vphi + \psi}
    = 4\eta\wedge\pts{X\iprod\vphi} - 4X\iprod\psi .
\end{align*}
Therefore, 
\begin{equation*}
    \proj_{\Omega^3_8}\pts{X\iprod\psi}
    = -\dfrac{(-4)}{7}\pts{ -\eta\wedge\pts{X\iprod\vphi} + X\iprod\psi }
    = \dfrac{4}{7}X\iprod\Omega.
\end{equation*}
The result now follows from
\[
\proj_{\Omega^3_{48}}\pts{X\iprod\psi}
    = X\iprod\psi - \proj_{\Omega^3_8}\pts{X\iprod\psi}.\qedhere
\]
\end{proof}

Let us compute the torsion 3-form $H_\Omega$ of the $\Spin(7)$-structure in terms of the initial $\rG_2$-structure. The method is as follows: by the definitions of $\Omega$ and its associated flux operator, we have
\begin{equation*}
    \vet{H}_\Omega(H_\Omega)
    =d^{\stareight}\Omega
    = d^{\stareight}\pts{\eta\wedge\vphi} + d^{\stareight}\psi , 
\end{equation*}
and we know that the space of 3-forms on $M^8$ decomposes into irreducible components as $\Omega^3 = \Omega^3_8\oplus\Omega^3_{48}$, cf. \ref{sec:decomposition:forms:spin7}. Thus, it remains to identify the components of $d^{\stareight}\pts{\eta\wedge\vphi}$ and $d^{\stareight}\psi$ in each of these irreducible spaces. Once this is done, we invert the flux operator and obtain the expression for $H_\Omega$.

\begin{proposition}\label{prop:Spin7:induced:by:G2}
    Let $(M^8, \Spin(7), \Omega)$ be a 4-form geometry, induced by the 4-form geometry $(N^7,\rG_2,\psi)$ as in  \eqref{eq:Spin7:induced:G2:4:form}. Then the flux 3-form and the Lee form of $(M,\Spin(7),\Omega)$ are given by
    \begin{equation*}
        H_\Omega = H_\vphi + \eta\wedge\tau_2
        \qandq
        \zeta_\Omega = \zeta_\vphi - \frac{7}{6}\tau_0\eta,
    \end{equation*}
    where $H_\vphi$ and $\zeta_\vphi$ are the flux 3-form and the Lee form of $(N,\rG_2,\psi)$.
\end{proposition}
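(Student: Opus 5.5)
The plan is to compute $d^{*}\Omega$ on $M^8=\R\times N^7$ directly in terms of the $\rG_2$ torsion forms \eqref{eq:torsion:forms:G2}. Then I split it according to $\Omega^3=\Omega^3_8\oplus\Omega^3_{48}$ using Lemma~\ref{lemma:G2:to:Spin7:forms}, and invert $\vet H_\Omega$ with Theorem~\ref{theorem:flux:3:form:expression:texto} and Proposition~\ref{prop:flux:operator:spin7}. That is, I take
\[
H_\Omega=\tfrac16\proj_{\Omega^3_8}(d^*\Omega)-\proj_{\Omega^3_{48}}(d^*\Omega).
\]
Throughout I treat the $\rG_2$ data as pulled back from $N$, so they are $t$-independent, and I use $d\eta=0$.

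\textbf{Step 1: the codifferential.} On the product, forms pulled back from $N$ satisfy $d^{\stareight}\psi=d^{\starseven}\psi$ and $d^{\stareight}(\eta\wedge\vphi)=-\eta\wedge d^{\starseven}\vphi$, up to a sign that I will fix with the conventions $\vol_M=\eta\wedge\vol_N$. By \eqref{eq:codif:psi:G2}, $d^{\starseven}\psi=\tau_0\vphi-3\tau_1\iprod\psi+\tau_3$. For the other term, $d^{\starseven}\vphi=\pm\starseven d\psi=\pm\starseven(4\tau_1\wedge\psi+\tau_2\wedge\vphi)$. Since $\tau_2\in\Omega^2_{14}$, we have $\starseven(\tau_2\wedge\vphi)=-\tau_2$, and $\starseven(\tau_1\wedge\psi)$ is a multiple of $\tau_1\iprod\vphi$. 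Hence
\[
d^*\Omega=\tau_0\vphi-3\tau_1\iprod\psi+\tau_3+\eta\wedge\bigl(c_1\,\tau_1\iprod\vphi+c_2\,\tau_2\bigr)
\]
for explicit constants $c_1,c_2$, and I expect $c_2=-1$.

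\textbf{Step 2: projection and inversion.} By Lemma~\ref{lemma:G2:to:Spin7:forms}, $\tau_0\vphi$ lies in $\Omega^3_8$, while $\tau_3$ and $\eta\wedge\tau_2$ lie in $\Omega^3_{48}$. The two $\tau_1$-terms $-3\tau_1\iprod\psi$ and $c_1\,\eta\wedge(\tau_1\iprod\vphi)$ are split using the basis vectors of $\Omega^3_{\overline7^8}$ and $\Omega^3_{\overline7^\perp}$ together with the projection formulas of the lemma. Applying $\tfrac16$ on $\Omega^3_8$ and $-1$ on $\Omega^3_{48}$ then gives the following terms.
\begin{itemize}
\item From $\tau_0$: $\tfrac16\tau_0\vphi$.
\item From $\tau_3$: $-\tau_3$.
\item From $\tau_2$: $-c_2\,\eta\wedge\tau_2=\eta\wedge\tau_2$.
\item From $\tau_1$: a combination which must collapse to $-\tau_1\iprod\psi$ with no $\eta$-component.
\end{itemize}
Comparing with Proposition~\ref{prop:flux:operator:G2}, this yields $H_\Omega=H_\vphi+\eta\wedge\tau_2$.

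\textbf{Step 3: the Lee form.} Rather than contracting $H_\Omega$ with $\Omega$ term by term, I use Proposition~\ref{prop:flux:operator:spin7}, which gives $\zeta_\Omega=\tfrac76\tau_1^{(8)}$. Here $\tau_1^{(8)}=-\tfrac16\cdot$(the $\Omega^3_8$-coefficient) is read off from $\proj_{\Omega^3_8}(d^*\Omega)=-\tau_1^{(8)}\iprod\Omega$. The $\tau_0\vphi$ term equals $\tau_0\,\xi\iprod\Omega$ with $\xi=\eta^\sharp$, so it contributes $-\tau_0\eta$ to $\tau_1^{(8)}$ and hence $-\tfrac76\tau_0\eta$ to $\zeta_\Omega$. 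The $\tau_1$-terms must contribute $\tfrac67\cdot4\tau_1$ to $\tau_1^{(8)}$, so that $\zeta_\Omega$ receives $4\tau_1=\zeta_\vphi$. As a cross-check, I will also verify directly that $(\eta\wedge\tau_2)\iprod\Omega=0$, which holds because $\tau_2\iprod\vphi=0$ for $\tau_2\in\Omega^2_{14}$, and that $\tau_3\iprod\Omega=0$.

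\textbf{Main obstacle.} I expect the hardest part to be the sign and coefficient bookkeeping in the $\tau_1$-sector. This means computing $c_1$ exactly from $\starseven(\tau_1\wedge\psi)$ and from the sign of $d^{\stareight}(\eta\wedge\cdot)$, and then checking that the two $\tau_1$-contributions recombine into exactly $-\tau_1\iprod\psi$ with vanishing $\eta\wedge(\tau_1\iprod\vphi)$ part. I would first fix all signs on the model $\R^8$ using the explicit forms \eqref{eq:model:spin(7):structure}, testing with $\tau_1=e^1$. If the $\eta$-part does not cancel, that would indicate a sign error in $d^{\stareight}(\eta\wedge\vphi)$ rather than a flaw in the method.
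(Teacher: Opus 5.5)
Your proposal is correct, and for $H_\Omega$ it follows the paper's argument. You compute $d^*\Omega$ from \eqref{eq:torsion:forms:G2}, project using Lemma~\ref{lemma:G2:to:Spin7:forms}, and invert $\vet H_\Omega$ by $\tfrac16$ on $\Omega^3_8$ and $-1$ on $\Omega^3_{48}$.

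The constants you leave open are $c_1=4$ and $c_2=-1$, since $\starseven(\tau_1\wedge\psi)=\tau_1\iprod\vphi$. The paper avoids solving in the basis of $\Omega^3_{\overline 7^8}\oplus\Omega^3_{\overline 7^\perp}$ by rewriting $4\eta\wedge(\tau_1\iprod\vphi)-3\tau_1\iprod\psi=-4\tau_1\iprod\Omega+\tau_1\iprod\psi$. Only $\tau_1\iprod\psi$ then has to be split with the lemma's projection formulas. Either way, the $\tau_1$-part of $d^*\Omega$ is $-\tfrac{24}{7}\tau_1\iprod\Omega+\tfrac17\bigl(4\eta\wedge(\tau_1\iprod\vphi)+3\tau_1\iprod\psi\bigr)$. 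After inversion the $\eta$-parts cancel, exactly as you expect.

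You genuinely differ from the paper on the Lee form.
\begin{itemize}
\item The paper contracts $H_\Omega$ with $\Omega$ through the Hodge star. It uses $\tau_2\wedge\psi=0$, $\tau_3\wedge\psi=0$, $(\tau_1\iprod\psi)\wedge\psi=0$ and $\vphi\wedge\psi=7\vol_N$.
\item You read off the $\Spin(7)$ torsion form $\tau_1^{(8)}=\tfrac{24}{7}\tau_1-\tau_0\eta$ from $\proj_{\Omega^3_8}(d^*\Omega)$, using $\eta^\sharp\iprod\Omega=\vphi$. You then apply $\zeta_\Omega=\tfrac76\tau_1^{(8)}$ from Proposition~\ref{prop:flux:operator:spin7}, which gives $4\tau_1-\tfrac76\tau_0\eta$.
\end{itemize}
Your route reuses the projection you have already computed and needs no further $\rG_2$ identities. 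The paper's route does not depend on that projection, so it also serves as an independent check on $H_\Omega$.

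One slip in your Step~3: since $\proj_{\Omega^3_8}(d^*\Omega)=-\tau_1^{(8)}\iprod\Omega$, $\tau_1^{(8)}$ is $-1$ times the $\Omega^3_8$-coefficient of $d^*\Omega$, not $-\tfrac16$ times it. Your subsequent value $-\tau_0\eta$ already uses the correct factor.
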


\begin{proof}
Using the expressions for $d\vphi$ and $d\psi$ in \eqref{eq:torsion:forms:G2}, we obtain
\begin{align*}
    \stareight d\psi
    &= -\eta\wedge\starseven d\psi
    = -\eta\wedge\pts{4\starseven\pts{\tau_1\wedge\psi} + \starseven\pts{\tau_2\wedge\vphi}}
    = -\eta\wedge\pts{4\tau_1\iprod\vphi - \tau_2}, 
\\
    \stareight\pts{\eta\wedge d\vphi}
    &= \starseven d\vphi
    = \starseven\pts{ \tau_0\psi + 3\tau_1\wedge\vphi + \starseven\tau_3 }
    = \tau_0\vphi - 3\tau_1\iprod\psi + \tau_3 .
\end{align*}
Therefore,
\begin{align*}
    d^{\stareight}\Omega
    &= -\pts{\stareight d\stareight\pts{\eta\wedge\vphi} + \stareight d\stareight\psi}
    = -\pts{\stareight d\psi - \stareight\pts{\eta\wedge d\vphi}}\\
    &= \eta\wedge 4\pts{\tau_1\iprod\vphi} - \eta\wedge\tau_2 + \tau_0\vphi - 3\tau_1\iprod\psi + \tau_3 \\
    &= 4\pts{\eta\wedge\pts{\tau_1\iprod\vphi} - \tau_1\iprod\psi} + \tau_1\iprod\psi + \tau_0\vphi + \tau_3 - \eta\wedge\tau_2.
\end{align*}
Noting that $\tau_1\iprod\Omega
    = -\eta\wedge\pts{\tau_1\iprod\vphi} + \tau_1\iprod\psi \in \Omega^3_8 $, we conclude
\begin{equation*}
    d^{\stareight}\Omega
    = -4\tau_1\iprod\Omega + \tau_1\iprod\psi + \tau_0\vphi + \tau_3 - \eta\wedge\tau_2 .
\end{equation*}
From Lemma~\ref{lemma:G2:to:Spin7:forms}, we have $\tau_0\vphi \in \Omega^3_8$, $\tau_3\in\Omega^3_{48}$, and $\eta\wedge\tau_2\in \Omega^3_{48}$. Thus, the only term that must be projected is $\tau_1\iprod\psi$, whose components are given by Lemma~\ref{lemma:G2:to:Spin7:forms}. Therefore,
    \begin{align*}
    d^{\stareight}\Omega
    &= -4\tau_1\iprod\Omega + \tau_1\iprod\psi + \tau_0\vphi + \tau_3 - \eta\wedge\tau_2 \\
    &= -4\tau_1\iprod\Omega + \proj_8(\tau_1\iprod\psi) + \proj_{48}(\tau_1\iprod\psi) + \tau_0\vphi + \tau_3 - \eta\wedge\tau_2 \\
    &= -4\tau_1\iprod\Omega + \dfrac{4}{7}\tau_1\iprod\Omega + \dfrac{4}{7}\eta\wedge\pts{\tau_1\iprod\vphi} + \dfrac{3}{7}\tau_1\iprod\psi + \tau_0\vphi + \tau_3 - \eta\wedge\tau_2.
\end{align*}
Inverting the flux operator $\vet{H}_\Omega$, we obtain
\begin{align*}
    H_{\Omega}
    &= \vet{H}_{\Omega}^{-1}\pts{d^{\stareight}\Omega}
    = -\dfrac{4}{6}\tau_1\iprod\Omega 
    + \dfrac{4}{6\cdot 7}\tau_1\iprod\Omega 
    - \dfrac{4}{7}\pts{\tau_1\iprod\vphi}\wedge\eta
    -\dfrac{3}{7}\tau_1\iprod\psi
    +\dfrac{1}{6}\tau_0\vphi 
    -\tau_3
    +\eta\wedge \tau_2 \\
    &= \pts{\dfrac{2}{3} - \dfrac{2}{21} - \dfrac{4}{7}}\eta\wedge\pts{\tau_1\iprod\vphi}
    +\pts{-\dfrac{2}{3} + \dfrac{2}{21} - \dfrac{3}{7}}\tau_1\iprod\psi 
    +\dfrac{1}{6}\tau_0\vphi 
    -\tau_3
    +\eta\wedge\tau_2\\
    &= -\tau_1\iprod\psi 
    +\dfrac{1}{6}\tau_0\vphi 
    -\tau_3
    +\eta\wedge \tau_2 = H_\vphi +\eta\wedge\tau_2 .
\end{align*}
Now, using the characterisation of $\Omega^2_{14}(N)$ and $\Omega^3_{27}(N)$ in \ref{parag:dec:forms:G2}, we compute the associated Lee form
\begin{align*}
    \zeta_\Omega
    &= H_\Omega\iprod\Omega
    =\stareight\pts{H_\Omega\wedge\Omega}
    =\stareight\pts{\pts{H_\vphi+\tau_2\wedge\eta}\wedge\pts{\eta\wedge\vphi+\psi}} 
    = \stareight\pts{H_\vphi\wedge\eta\wedge\vphi + H_\vphi\wedge\psi}\\
    &= \stareight\pts{-\eta\wedge H_\vphi\wedge(\starseven\psi) + \pts{\frac{1}{6}\tau_0\vphi - (\tau_1\iprod\psi) - \tau_3}\wedge\psi} \\
    &= \stareight\pts{-\eta\wedge H_\vphi\wedge(\starseven\psi) +\frac{1}{6}\tau_0\vphi\wedge\psi}
    = -\pts{H_\vphi\wedge(\starseven\psi)} +\frac{7}{6}\tau_0\stareight\vol_N\\
    &= H_\vphi\iprod\psi - \frac{7}{6}\tau_0\eta= \zeta_\vphi - \frac{7}{6}\tau_0\eta.\qedhere
\end{align*}
\end{proof}

The computations in this section suggest a method for comparing the flux 3-forms associated with lifts of $G$-structures. One could also carry out the computations for $\rG_2$-structures induced by $\SU(3)$-structures and recover the results of \cite{fino2025}, or study $\Spin(7)$-structures induced by $\SU(4)$- and $\Sp(2)$-structures. This remains an open question that we are currently investigating.

\paragraph{Explicit examples}
The construction of solutions to the heterotic $\spin(7)$-system presents some difficulties, and the literature is relatively scarce. Such a solution consists of a $\Spin(7)$-structure $(M^8,\Omega)$ together with a principal $K$-bundle $P\to M$ endowed with a connection $\theta$ satisfying the hypotheses of Proposition~\ref{prop:ricci:flatness:Spin7}. Examples can be found in \cite{Ivanov2005, Fernandez2009}. For instance, the argument used for the nearly Kähler 6-sphere and the nearly parallel 7-sphere does not extend to the 8-sphere, which does not admit a $\spin(7)$-structure. Moreover, ``nearly parallel'' $\spin(7)$-structures\footnote{In the sense that $\nabla^g_X\Omega(X)=0$ for all $X$.} are actually torsion-free, $\nabla^g\Omega=0$, so there is greater rigidity in the $\spin(7)$ setting; see \cite[\S2]{gray_weak_holonomy} and \cite{alexandrov2004weakholonomy}.

\begin{example}[Torsion-free $\spin(7)$-structures]
    Let $(M^{8},\Omega)$ be a $\spin(7)$-manifold, i.e., $d\Omega=0$, so in particular $H_\Omega =0$. As before, consider $P=M\times \{1\}$. Then the condition that the exact Courant algebroid $TM\oplus T^*M$ be generalized Ricci-flat with Riemannian divergence $\dv=\dv^{V_+}$ reduces to the condition that the metric induced by $\Omega$ be Ricci-flat. This condition is always satisfied for $\Spin(7)$-manifolds \cite{bonan_holonomie_g2_spin7,alekseevskii_holonomy_groups_1968}; see also \cite[Corollary 12.6]{Salamon1989}.
\end{example}

\begin{example}[Cartesian product]
\label{example:cartesian:product:G2->Spin7}
    Let $(N^7,\vphi)$ be an integrable $\rG_2$-structure, and let $P\to N$ be a principal $K$-bundle endowed with a $\rG_2$-instanton $\theta$ satisfying the heterotic Bianchi identity \eqref{eq: hBi4}. By Proposition~\ref{prop:ricci:flatness:G2}, we then have a generalized Ricci-flat metric on $TN\oplus \adjointbundleP\oplus T^*N$ with divergence $\dv=\dv^{V_+}-2\bracks{4\tau_1,\cdot}$. Define $M^8:=N\times \bb R$ (or $M^8:=N\times \bb S^1$) and endow it with the $\spin(7)$-structure
\[
\Omega:=\eta\wedge\varphi+\psi,
\]
as in \ref{parag:lifting:G2:to:spin7}. Consider the pullback bundle $\overline P:=\proj_1^*P\to M^8$ with the pullback connection $\overline\theta:=\proj_1^*\theta$. Since $\tau_2=0$, it follows from Proposition~\ref{prop:Spin7:induced:by:G2} that
\[
H_\Omega=\proj_1^*H_\vphi.
\]
Hence, the heterotic Bianchi identity is satisfied. Moreover, since $\theta$ is an $\rg_2$-instanton and $\mathfrak g_2 \subset\mathfrak{spin}(7)$, $\overline\theta$ is a $\spin(7)$-instanton. Consequently, the pullback construction produces a generalized Ricci-flat metric on the $\spin(7)$-structure $(M^8,\Omega)$.
\end{example}

\begin{example}
    Let $(\tilde M^6,\omega,\Psi)$ be an $\SU(3)$-structure with Nijenhuis tensor $N_\omega\in \Omega^3$ and $\nabla^+\Psi=0$, and let $P\to \tilde M$ be a principal $K$-bundle endowed with an $\SU(3)$-instanton $\theta$. Assume also that $(H_\omega,\theta)$ satisfies the heterotic Bianchi identity, so that the hypotheses of Proposition~\ref{prop:ricci:flatness:U(m)} are satisfied. We can then define an integrable $\rG_2$-structure on $\tilde M\times \bb R$ or $\tilde M\times \bb S^1$ as in Example~\ref{example:cartesian:product:SU3->G2}. As in Example~\ref{example:cartesian:product:G2->Spin7}, we can define a $\spin(7)$-structure on $M:=\tilde M\times \bb R^2$, $\tilde M\times \bb T^2$, or $\tilde M\times \bb R\times \bb S^1$ via
    \[
    \Omega=dt\wedge \pp{ds\wedge \omega +\Psi_+}+ \frac12\omega\wedge\omega - ds\wedge \Psi_-,
    \]
    where $dt$ and $ds$ are the standard 1-forms in the directions of the flat factors. By applying the preceding two product constructions, the generalized Ricci-flat metric on $T\tilde M\oplus \adjointbundleP\oplus T^*\tilde M$ induces a generalized Ricci-flat metric on $TM\oplus \proj_1^*\pp{\adjointbundleP}\oplus T^*M$ over $(M^8,\Omega)$.
\end{example}

\subsection{\texorpdfstring{$\sp(k)\sp(1)$}{Sp(k)Sp(1)} and \texorpdfstring{$\Sp(k)$}{Sp(k)}-structures}

Let $n=4k$ denote the dimension of the manifold $M$, and suppose that $k\ge 2$. An \emph{almost quaternionic Hermitian} structure on a Riemannian manifold \((M^{4k},g)\) is given by a rank-three subbundle \(\mathcal Q \subset \End(TM)\) that is locally spanned by almost complex structures \(I,J,K\) satisfying the quaternionic relation $I\circ J=K$.
If \(\mathcal Q\) is globally trivial, so that one can choose \(I,J,K\) globally on \(M\), then the structure is called \emph{almost hyper-Hermitian}. The existence of such a subbundle \(\mathcal Q\) is equivalent to the existence of a distinguished global form $\kappa \in \Omega^4(M)$
called the \emph{Kraines form}, whose pointwise stabiliser is the group \(\Sp(k)\Sp(1)\). Moreover, the triviality of \(\mathcal Q\) is equivalent to the existence of three globally defined Hermitian \(2\)-forms \(\omega_I,\omega_J,\omega_K\), associated with the almost complex structures \(I,J,K\), and corresponds to the reduction of the structure group from $\sp(k)\sp(1)$ to $\sp(k)$.

\begin{definition}
    An \emph{$\sp(k)\sp(1)$-structure} on $M^{4k}$ is a pair $(g,\kappa)$, where $g$ is a Riemannian metric on $M$ and $\kappa\in \Omega^4(M)$, called the \emph{Kraines form}, which is pointwise modelled on 
    \begin{equation}
    \label{eq:canonical:kraines:4:form}
    \kappa_0=\frac13 \pp{\psi_I+\psi_J+\psi_K}
    \in \Omega^4(\bb R^{4k})^* , 
\end{equation}
where $\psi_A=\frac12 \omega_A\wedge\omega_A$ for each $A\in\{I,J,K\}=\{I_1,I_2,I_3\}$. The pointwise models are given by the expression below, with $(p,q,r)$ a cyclic permutation of $(1,2,3)$:
\[
\omega_{I_p} = \sum_{\mu = 0}^{k - 1}
e^{1+4\mu} \wedge e^{(p+1)+4\mu}
+e^{ ( q + 1 ) + 4 \mu } \wedge e^{(r + 1)+4 \mu },
\]
for an orthonormal basis $\{e_j\}$ of $\bb R^{4k}$. Note that $\omega_A$ is, pointwise, the Hermitian 2-form related to the almost complex structure $A$ and $|\kappa|^2=k(2k+1)/3$. On the other hand, an $\sp(k)$-structure is given by three globally defined 2-forms $(\omega_I,\omega_J,\omega_K)$ modelled as above.
\end{definition}

An $\Sp(k)$-structure naturally defines an $\SU(2k)$-structure by taking $(\omega, \Psi)$ to be
\[
\omega = \omega_I, \qquad 
\Psi=\frac1{k!}\pp{ \omega_J + i \omega_K }^k\in \Omega^{(2k,0)}(M).
\]
Consequently, it defines a metric $g$ and an $\Sp(k)\Sp(1)$-structure via \eqref{eq:canonical:kraines:4:form}.

The group $\Sp(k)\Sp(1)\subset \SO(4k)$ is the $\SO(4k)$-stabiliser of $\kappa_0$, while $\Sp(k)$ is the intersection of the three $\SO(4k)$-stabilisers of the forms $\omega_A$, as shown in \cite[\S 2]{Cabrera2007}. Note that characteristic 4-forms other than $\kappa$ could also be chosen.

\paragraph{Decomposition of differential forms}
\label{parag:dec:forms:sp(k)}
    Let $E\cong\bb C^{2k}$ and $H\cong\bb C^2$ denote the standard complex representations of $\Sp(k)$ and $\Sp(1)$, respectively. Representations of $\sp(k)\sp(1)$ occurring in the exterior algebra are described by tensor products constructed from $E$ and $H$. This framework is known as the \emph{$E/H$-formalism}; cf. \cite[\S 1]{salamon_quaternionic_kahler_1982}, \cite[\S 9]{Salamon1989} and \cite[\S 2.2]{cabrera2003hermitianstructuresquaternionicgeometries}. In particular, we have the irreducible representation
\[
\Lambda^1=[E\otimes_\bb CH]\eqcolon [EH],
\]
where brackets denote the underlying real representation. 
For 2-forms, we use the decomposition $$\Lambda^2(A\otimes B)\isomorphic(\Lambda^2A\otimes \Sigma^2B)\oplus(\Sigma^2A\otimes \Lambda^2B),$$
which gives
\begin{align*}
    \Lambda^2& =\Lambda^2[EH]\isomorphic
    [\Sigma^2E\otimes \Lambda^2H]\oplus [\Sigma^2H\otimes \Lambda^2E]\iso [\Sigma^2E]\oplus [\Sigma^2H \Lambda^2 E]
    \\&
    \iso [\Sigma^2E]\oplus [\Sigma^2H]\oplus[\Sigma^2H\Lambda^2_0E],
\end{align*}
where $\Lambda^rE=\Lambda^r_0E\oplus (\Lambda^{r-2}E\wedge\omega_I)$.

By the descriptions above, if $(M^{4k},g,\kappa)$ is a $\sp(k)\sp(1)$-structure, then the space $\Omega^1$ is irreducible. The space of 2-forms decomposes as
        \begin{equation*}
        \begin{split}
            \Omega^2
            &=
    \Omega^2_{k(2k+1)}\oplus 
    \Omega^2_3\oplus 
    \Omega^2_{3(2k+1)(k-1)}
    \isomorphic
    [\Sigma^2E]\oplus [\Sigma^2H]\oplus[\Sigma^2H\Lambda^2_0E]
    \\&\isomorphic 
    \mathfrak{sp}(k)\oplus \mathfrak{sp}(1)\oplus 
    [\Sigma^2H\Lambda^2_0E]
            ,
        \end{split}
        \end{equation*}
        where $\fs\fp(1)$ is locally spanned by $\{\omega_I,\omega_J,\omega_K\}$ and we have the characterisations
    \begin{align*}
    \Omega^2_3&
    =\bracks{\omega_I, \omega_J,\omega_K}=\{
    \beta\in \Omega^2:3\beta\iprod \kappa=(2k+1)\beta 
    \},
    \\
    \fs\fp(k)&=
    \chaves{\beta\in \Omega^2:\beta\diamond\kappa=0, \bracks{\beta,\omega_A}=0}
    =\chaves{\beta\in \Omega^2:\beta\iprod\kappa=-\beta}
    \\
    \Omega^2_{3(2k+1)(k-1)}&
    =
    \chaves{\beta\in \Omega^2:3\beta\iprod\kappa=\beta}.
    \end{align*}    
Finally, the space of 3-forms decomposes as
    \begin{align*}
        \Omega^3
        &= \Omega^3_{4k}\oplus {\Omega^3_{8k}\oplus \Omega^3_{\frac{16}3k(k^2-1)}\oplus\Omega^3_{\frac83k(k-2)(2k+1)}  }
        \\&\isomorphic 
        [EH]\oplus[ E \Sigma^3 H]\oplus [KH]\oplus [\Lambda^3_0 E \Sigma^3 H].
	\end{align*}
In particular, $[EH]^\perp = \chaves{\gamma \in  \Omega^3:\gamma \iprod \kappa = 0}$. The Lee space is $\Omega^1_L=\Omega^1$, and its image under contraction with $\kappa$ is $\Omega^1_L\iprod\kappa=\Omega^3_{4k}$. We also have the relation $\rmm{span}(\Omega^1\wedge \Omega^2_3)=\Omega^3_{4k}\oplus \Omega^3_{8k}$, where $K:= \pp{E\otimes \Lambda^2_0E}/E$ is an irreducible module.

\paragraph{Flux operator, flux 3-form and the Lee form}
With respect to the decomposition of $\Omega^3$ into irreducible $\sp(k)\sp(1)$-submodules given in \ref{parag:dec:forms:sp(k)}, the codifferential of the Kraines form decomposes into \emph{torsion forms}:
\begin{equation}
\label{eq:torsion:forms:sp:k:sp:1}
    d^*\kappa  = \tau_1\iprod  \kappa + 
    \tau_3^{8k}+\tau_3^{K}+\tau_3^{G},
\end{equation}
where $$\tau_1\in \Omega^1(M), \quad \tau_3^{8k}\in\Omega^3_{8k}, \quad \tau_3^K\in [KH], \quad\tau_3^{G}\in [\Lambda^3_0E\Sigma^3H].$$ 
Note that the eigenvalue of $\mathfrak{sp}(k)$ under the flux operator on 2-forms is $b=-1$, cf. \ref{parag:dec:forms:sp(k)}.

\begin{proposition}
\label{prop:flux:operator:spksp1}
    On the 4-form geometry $(M^{4k},\sp(k)\sp(1),\kappa)$, the flux operator $\vet H_\kappa:\Omega^3\to \Omega^3$ is an isomorphism of $\sp(k)\sp(1)$-modules and acts on each component as
\begin{equation*}
    \vet H_\kappa\big|_{\Omega^3_{4k}}=\frac{2k-2}3 \id,  \quad 
    \vet H_\kappa\big|_{\Omega^3_{8k}}=\frac{2k+1}{3}\id, \quad 
\vet H_\kappa\big|_{[\Lambda^3_0E\Sigma^3H]}=\id,  \quad 
\vet H_\kappa\big|_{[KH]}=- \id .
\end{equation*}
    In particular, the associated flux 3-form $H_\kappa\in \Omega^3(M)$ and Lee form $\zeta_\kappa\in \Omega^1(M)$ are given, respectively, by
\begin{equation*}
H_\kappa=\frac{3}{2k-2}\tau_1\iprod\kappa +\frac{3}{2k+1}\tau_3^{8k}
-\tau^K_3
+\tau_3^{G} \qandq
        \zeta_\kappa 
        =-\frac{2k+1}{2k-2}\tau_1 .
    \end{equation*}
\end{proposition}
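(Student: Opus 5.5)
The proof follows the pattern of Propositions~\ref{prop:flux:operator:almost:hermitian} and~\ref{prop:flux:operator:G2}. First I will check that $\vet H_\kappa$ acts by a scalar on each of the four summands, with the stated eigenvalues. The four modules $[EH]$, $[E\Sigma^3H]$, $[KH]$ and $[\Lambda^3_0E\Sigma^3H]$ are pairwise non-isomorphic irreducible $\Sp(k)\Sp(1)$-modules, and $\vet H_\kappa$ is equivariant. So by Schur's lemma, as already used in Theorem~\ref{theorem:flux:3:form:expression:texto}, it suffices to evaluate $\vet H_\kappa$ on one explicit non-zero element of each summand and read off the scalar. Linearity lets me write $\vet H_\kappa=\tfrac13(\vet H_{\psi_I}+\vet H_{\psi_J}+\vet H_{\psi_K})$. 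Each $\vet H_{\psi_A}$ is the $\U(2k)$ flux operator for $\omega_A$, whose eigenvalues $2k-2$, $3$ and $-1$ on $\Omega^3_{4k}$, $[\![\Omega^{3,0}_A]\!]$ and $[\![\Omega^{2,1}_{0,A}]\!]$ are known from Proposition~\ref{prop:flux:operator:almost:hermitian} with $m=2k$. The computation therefore reduces to knowing, for each $\Sp(k)\Sp(1)$-summand, how a test element sits relative to the three $\U(2k)$-decompositions.

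I would take the summands in the following order.
\begin{enumerate}
\item For $\Omega^3_{4k}$, take $\gamma=X\iprod\kappa=\tfrac13\sum_A X\wedge\omega_A$. The term $X\wedge\omega_A$ is an eigenvector of $\vet H_{\psi_A}$ with eigenvalue $2k-2$. For $B\neq A$, the form $X\wedge\omega_A$ is $(2,1)+(1,2)$ with respect to $B$, and its $\omega_B$-trace is proportional to $(AX)$ contracted with $\omega_B$. Its component along $\Omega^1\wedge\omega_B$ is therefore controlled by $X\iprod\omega_C$-type terms, and I would compute $\vet H_{\psi_B}(X\wedge\omega_A)$ directly in coordinates. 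The expected output is $-X\wedge\omega_A$ plus a multiple of $X\wedge\omega_B$. Summing over $A,B$ should then give $\tfrac13(2k-2+\dots)$. If the bookkeeping resists, I will simply evaluate $\vet H_\kappa(e^1\iprod\kappa)$ for the model $\kappa_0$, which only involves the first quaternionic block plus cross terms with the others.
\item For $\Omega^3_{8k}\subset\Omega^1\wedge\Omega^2_3$, use a test vector of the form $X\wedge\omega_I-(IX)\wedge\ldots$, chosen orthogonal to $\Omega^3_{4k}$. A convenient choice is $e^1\wedge\omega_I-e^1\wedge\omega_J$ projected, or more concretely $e^{1}\wedge\omega_I - e^1\wedge \omega_J$ after subtracting its $X\iprod\kappa$ part.
\item For $[\Lambda^3_0E\Sigma^3H]$ and $[KH]$, take explicit monomial combinations in two quaternionic blocks (so that $k\ge2$ is needed). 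The $G$-summand should be of type $(3,0)+(0,3)$ for all three structures simultaneously, which gives eigenvalue $\tfrac13(3+3+3)\cdot\tfrac13$ suitably normalised; I need to check the normalisation of $\kappa=\tfrac13\sum\psi_A$ here. The $K$-summand should be $(2,1)$-primitive for all three structures, giving $\tfrac13(-1-1-1)=-1$.
\end{enumerate}
Identifying which test element lies in which summand will use the characterisations $[EH]^\perp=\{\gamma\iprod\kappa=0\}$ and $\Omega^1\wedge\Omega^2_3=\Omega^3_{4k}\oplus\Omega^3_{8k}$ from~\ref{parag:dec:forms:sp(k)}.

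Once the eigenvalues are known, all of them are non-zero because $k\ge2$. Hence $\vet H_\kappa$ is an isomorphism, $\ker\vet H_\kappa=0$, and Theorem~\ref{theorem:flux:3:form:expression:texto} applied to~\eqref{eq:torsion:forms:sp:k:sp:1} gives $H_\kappa$ directly, by inverting each scalar. For the Lee form, Lemma~\ref{lemma:characterization:Lee:space} together with $[EH]^\perp=\{\gamma\iprod\kappa=0\}$ shows that only the term $\tfrac{3}{2k-2}\tau_1\iprod\kappa$ contributes. By Schur's lemma on the irreducible $\Omega^1$, the double contraction satisfies $(X\iprod\kappa)\iprod\kappa=-cX$, with $c=\tfrac16\kappa_{\mu ijk}\kappa_{\nu ijk}$. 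I will compute $c$ from the norm: $c\cdot 4k=\tfrac16\cdot 24|\kappa|^2\cdot\ldots$, that is, $\kappa_{\mu ijk}\kappa_{\mu ijk}=24|\kappa|^2=8k(2k+1)$. This gives $c=\tfrac{2k+1}{3}$, and hence $\zeta_\kappa=-\tfrac{3}{2k-2}\cdot\tfrac{2k+1}{3}\tau_1$, as claimed.

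The main obstacle will be the explicit eigenvalue computations on $\Omega^3_{8k}$ and on the two large summands. The difficulty is choosing test vectors that provably lie in a single irreducible summand, since the cross terms between $\psi_I,\psi_J,\psi_K$ do not respect the individual $\U(2k)$-types. I would handle this by working in two quaternionic blocks $\{e_1,\dots,e_8\}$ and reducing each case to a finite monomial computation.
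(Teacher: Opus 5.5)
Your skeleton is the paper's: Schur's lemma on the four pairwise non-isomorphic summands, one explicit test vector per summand, Theorem~\ref{theorem:flux:3:form:expression:texto} to invert $\vet H_\kappa$, and Schur on $\Omega^1$ for the Lee form. Your derivation of $c=\tfrac{2k+1}{3}$ from $|\kappa|^2=\tfrac{k(2k+1)}{3}$ is correct, and your $\Omega^3_{8k}$ test vector is fine.

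The genuine gap is item~3, the summand $[\Lambda^3_0E\Sigma^3H]$.

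\emph{The premise is false.} In the $E/H$-picture, $I,J,K$ act only on the $H$-factors, and $A$-type $(p,q)$ means $A$-weight $p-q$. A 3-form of type $(3,0)+(0,3)$ for all three structures would therefore need a non-zero vector of $\Sigma^3H$ of weight $\pm\tfrac32$ along all three axes of $\mathfrak{sp}(1)$. That is impossible, since the spin-$\tfrac32$ Casimir is $\tfrac{15}{4}\neq 3\cdot\tfrac94$.

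\emph{The arithmetic does not close.} Even granting the premise, $\vet H_\kappa=\tfrac13\sum_A\vet H_{\psi_A}$ would give $\tfrac13(3+3+3)=3$, not $1$. Your extra factor $\tfrac13$ is unjustified, because the normalisation of $\kappa$ is already the prefactor $\tfrac13$.

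\emph{The identification criteria are too weak.} The conditions $\gamma\iprod\kappa=0$ and orthogonality to $\Omega^1\wedge\Omega^2_3$ only cut out $[KH]\oplus[\Lambda^3_0E\Sigma^3H]$; they cannot tell those two summands apart.

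\emph{Two blocks are not enough.} No test vector for $[\Lambda^3_0E\Sigma^3H]$ can be built from $e^1,\dots,e^8$. Its rank $\tfrac83k(k-2)(2k+1)$ vanishes for $k=2$, and for $k\ge3$ it meets $\Lambda^3\langle e^1,\dots,e^8\rangle$ only in $0$.

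What is actually true is the following. For each $A$, this summand (of rank $D$) splits into two halves of equal rank. The $A$-weights $\pm3$ on $\Sigma^3H$ give a half in $[\![\Omega^{3,0}]\!]_A$. The $A$-weights $\pm1$ give a half in $[\![\Omega^{2,1}_0]\!]_A$; it is primitive because the summand is orthogonal to the $E$-isotypic part, which contains $\Omega^1\wedge\omega_A$. Hence $\operatorname{tr}\vet H_{\psi_A}=\tfrac D2(3-1)=D$ on it, so $\operatorname{tr}\vet H_\kappa=D$, and Schur gives eigenvalue $1$.

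Concretely, the paper evaluates on $\gamma_3=-e^{159}+e^{16(10)}+e^{25(10)}+e^{269}=-\mathrm{Re}(z^1\wedge z^3\wedge z^5)$, with $z^j=e^{2j-1}+ie^{2j}$. This uses a third block, and it is of $I$-type $(3,0)+(0,3)$ but not of $J$- or $K$-type; indeed $\vet H_{\psi_J}\gamma_3+\vet H_{\psi_K}\gamma_3=0$. The paper then uses the vanishing of the summand at $k=2$ to conclude that $\gamma_2=e^{167}-e^{158}$, with eigenvalue $-1$, is the $[KH]$-vector. In your type-based language, $\gamma_3\in[\Lambda^3_0E\Sigma^3H]$ can be certified directly. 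On 3-forms with one index in each of three blocks, the $H$-part lies in $[KH]$ and the $\Sigma^3H$-part in $[\Lambda^3_0E\Sigma^3H]$, and only the latter carries $I$-weight $\pm3$. Your $[KH]$ claim (primitive of type $(2,1)+(1,2)$ for all three structures) is correct.

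Item~1's heuristic is also off, because the individual $\vet H_{\psi_B}$ do not preserve $\Omega^3_{4k}$ or $\Omega^3_{8k}$. For $B\neq A$, the form $X\wedge\omega_A$ has a non-zero $B$-type $(3,0)+(0,3)$ part, and its trace part lies along $(X\iprod\omega_C)\wedge\omega_B$, not along $X\wedge\omega_B$. Your fallback of evaluating $\vet H_\kappa$ directly on $e^1\iprod\kappa_0$ is essentially what the paper does. Alternatively, use the $\mathrm{U}(2k)$ eigenvalues only through traces on $\Sp(k)$-isotypic parts. For example, $\operatorname{tr}\vet H_{\psi_A}|_{[EH]\oplus[E\Sigma^3H]}=3\cdot4k+(2k-2)\cdot4k-4k=8k^2$, which together with $\tfrac{2k-2}{3}$ on $[EH]$ forces $\tfrac{2k+1}{3}$ on $[E\Sigma^3H]$.
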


\begin{proof}
We begin with the component $\Omega^3_{4k}=\Omega^1\iprod\kappa$. An arbitrary element is of the form $\gamma=X\iprod\kappa$, and hence
\begin{align*}
    \mathbf H_\kappa(X \iprod \kappa) &= \frac{1}{2!2!} X^i \kappa_{i}{}^{\mu\nu}{}_{j} \kappa_{\mu\nu ab}\, e^{jab} 
=
\frac14 X^i
\pp{
\frac23\bigl(\delta_{ia}\delta_{jb}-\delta_{ib}\delta_{ja}\bigr)
+\frac{4k-4}9
\kappa_{ijab}
}e^{jab}
\\&=
\frac{2k-2}{3}\cdot \frac1{3!}X^i\kappa_{ijab}e^{jab}
=\frac{2k-2}{3}X\iprod\kappa.
\end{align*} 
For the component $[E\Sigma^3H]=\Omega^3_{8k}$, we first consider the 3-form $e^1\wedge \omega_I\in \Omega^3_{4k}\oplus \Omega^3_{8k}$ and subtract its projection onto $\Omega^3_{4k}$:
\[
\gamma_1:=e^1\wedge \omega_I-\proj_{4k}(e^1\wedge \omega_I)=
e^1\wedge \omega_I+\frac3{2k+1}\pp{(e^1\wedge\omega_I)\iprod\kappa}\iprod\kappa\in \Omega^3_{8k}.
\]
A direct computation then gives  $\gamma_1\iprod^2\kappa =\tfrac13(2k+1) \gamma_1$. Similarly, for the 3-forms
$$
\gamma_{2}= e^{167}-e^{158} \qandq
\gamma_3 = -e^{159}+ e^{16(10)}+e^{25(10)}+e^{269},
$$
we find $\gamma_2\iprod^2\kappa=-\gamma_2$ and $\gamma_3\iprod^2\kappa=\gamma_3$. By invariance, $\gamma_2$ and $\gamma_3$ lie in the two summands $[KH]$ and $[\Lambda^3_0E\Sigma^3H]$, in some order. To determine the order, observe that $[\Lambda^3_0E\Sigma^3H]$ vanishes when $k=2$, whereas $\gamma_3$ involves $e^{\pp{10}}$ and therefore cannot exist in eight dimensions. Hence $\gamma_3\in [\Lambda^3_0E\Sigma^3H]$. The expression for $H_\kappa$ follows from Theorem~\ref{theorem:flux:3:form:expression}, together with \eqref{eq:torsion:forms:sp:k:sp:1}. For the Lee form, we have
\begin{align*}
\zeta_\kappa
&=H_\kappa \iprod \kappa
 = \frac{3}{2k-2}\pts{\tau_1 \iprod \kappa }\iprod \kappa = \frac{1}{4k-4} \tau_1^\mu \kappa_{\mu}{^{ijk}} \kappa_{ijk\nu} e^\nu 
=- \frac{1}{4k-4} \tau_1^\mu \cdot 2(2k+1)\delta_{\mu\nu}e^\nu \\&= -\frac{2k+1}{2k-2}\tau_1.\qedhere
\end{align*}
\end{proof}

\paragraph{String algebroids over $\sp(k)\sp(1)$-structures} \label{parag:string:alg:sp(k)sp(1)}
As in the Hermitian case, generalized Ricci flatness does not hold for $\sp(k)\sp(1)$-structures. However, if we consider the reduction to $\sp(k)$, similarly to the reduction to $\SU(m)$ in the Hermitian case, generalized Ricci flatness is recovered.

\begin{proposition}
\label{prop:ricci:flatness:sp(k)}
    Let $(M^{4k},\omega_I,\omega_J,\omega_K)$ be an $\sp(k)$-structure and suppose that $\nabla^+\omega_A=0$. Let
    $(P,K,\theta,\pdi{\cdot,\cdot}_\fk)\to(M,\sp(k),\kappa)$
    be the Kraines string algebroid induced by $\kappa=\tfrac13\pp{\psi_I+\psi_J+\psi_K}$. If $\theta$ is an $\sp(k)$-instanton, then the generalized Ricci tensor of the pair \eqref{eq:generalized:induced:pair} vanishes,
    \[    
        \GRic\pp{V_+^{\sp(k), \kappa}, \dv^{\sp(k),\kappa}}=0,
    \]
    where the divergence is
    \begin{equation}
    \label{eq:divergence:spk}
        \dv^{\sp(k),\kappa}=\dv^{V_+^{\sp(k),\kappa}}-2\bracks{-\frac{2k+1}{2k-2}\tau_1,\cdot}.
    \end{equation}
\end{proposition}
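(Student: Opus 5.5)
The plan is to reduce the statement to Theorem~\ref{theorem:ricci:flatness:simple:group:texto}, exactly as in the proofs of Propositions~\ref{prop:ricci:flatness:U(m)}, \ref{prop:ricci:flatness:G2} and \ref{prop:ricci:flatness:Spin7}. Take $\psi=\kappa$ and the multi-tensor $\xi=(\omega_I,\omega_J,\omega_K)$. We need three things: the eigenvalue hypothesis for $\bH^2_\kappa$ on $\fg=\fs\fp(k)$, the identification $\Sp(k)=\Stab_{\SO(4k)}(\kappa,\omega_I,\omega_J,\omega_K)$, and the gravitino equations $D^+_-\kappa=0=D^+_-\omega_A$.

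\emph{Eigenvalue hypothesis and divergence.} The decomposition in \ref{parag:dec:forms:sp(k)} characterises $\fs\fp(k)=\{\beta:\beta\iprod\kappa=-\beta\}$. So $\bH^2_\kappa|_{\fs\fp(k)}=b\cdot\id$ with $b=-1\neq0$, and the induced pair of Definition~\ref{def:generalized:pair:4:form} is defined. By \eqref{eq:divergence:4:form:geometry}, its divergence uses the 1-form $-\tfrac1b\zeta_\kappa=\zeta_\kappa$. By Proposition~\ref{prop:flux:operator:spksp1}, $\zeta_\kappa=-\tfrac{2k+1}{2k-2}\tau_1$, which gives \eqref{eq:divergence:spk}.

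\emph{Stabiliser.} $\Sp(k)$ is the common $\SO(4k)$-stabiliser of the three forms $\omega_A$ \cite[\S2]{Cabrera2007}. Since $\kappa$ is built algebraically from the $\omega_A$, adding it changes nothing, so $\Sp(k)=\Stab_{\SO(4k)}(\kappa,\xi)$.

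\emph{Gravitino equations.} By \eqref{eq:gravitino:equation}, we need $\nabla^+\omega_A=0$ and $F_\theta\diamond\omega_A=0$, and likewise for $\kappa$. The first is the hypothesis. For $\kappa$, note that $\nabla^+$ is a derivation commuting with wedge products, so $\nabla^+\kappa=\tfrac13\sum_A\omega_A\wedge\nabla^+\omega_A=0$. For the curvature, the instanton condition gives $F_\theta\in\fs\fp(k)\otimes\ad P$. Since $\fs\fp(k)$ is the Lie algebra of the stabiliser of each $\omega_A$, we get $F_\theta\diamond\omega_A=0$ by the discussion around \eqref{eq:ker:diamond:=:lie:algebra}. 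Because $\diamond$ also acts as a derivation, $F_\theta\diamond\kappa=0$ follows. With all hypotheses verified, Theorem~\ref{theorem:ricci:flatness:simple:group:texto} yields $\GRic^+=0$.

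\emph{Main obstacle.} One point needs care. The flux 3-form $H_\kappa$ entering $\nabla^+$ is defined via $\bH_\kappa$ relative to the $\Sp(k)\Sp(1)$-structure. We must check that it is the torsion of the connection assumed to satisfy $\nabla^+\omega_A=0$, so that the hypothesis refers to the right connection. By Remark~\ref{remark:parallelism:d:psi:has:no:kernel:intersection} and Remark~\ref{remark:intrinsic:torsion}, if some skew-torsion connection preserves $\kappa$, its torsion $T$ satisfies $d^*\kappa=\bH_\kappa(T)$. Since $\bH_\kappa$ is an isomorphism on $\Omega^3$ (Proposition~\ref{prop:flux:operator:spksp1}), this forces $T=H_\kappa$, so the connection is unique and equals the Bismut connection of Definition~\ref{def:flux:3:form}. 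I would state this explicitly before invoking the theorem.
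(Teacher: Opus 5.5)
Your proposal is correct and follows the paper's proof: you reduce to Theorem~\ref{theorem:ricci:flatness:simple:group:texto} with $\xi=(\omega_I,\omega_J,\omega_K)$ and $b=-1$, obtaining $D^+_-\omega_A=0=D^+_-\kappa$ from $\nabla^+\omega_A=0$ together with $F_\theta\in\fs\fp(k)$. The ``main obstacle'' paragraph is not actually needed, since $\nabla^+$ is by definition the connection with torsion $H_\kappa$ (Definition~\ref{def:flux:3:form}), although your uniqueness argument via the invertibility of $\bH_\kappa$ is correct.
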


\begin{proof}
The conditions $\nabla^+\omega_A=0$ and the assumption that $\theta$ is an $\sp(k)$-instanton imply \(D^+_-\omega_A=0=D^+_-\kappa\). Since \(\Sp(k)=\stab(\omega_A)\), the 4-form geometry $(M^{4k},\sp(k),\kappa)$, for which $b=-1$, satisfies the hypotheses of Theorem~\ref{theorem:ricci:flatness:simple:group}.
\end{proof}

Explicit examples of solutions to the \emph{hyper-Hull--Strominger system} are still scarce in the literature, although constructions and qualitative descriptions of $\sp(k)$-instantons have recently been obtained \cite{Alonso2025,Madnick2024}. The only example considered here is provided by hyper-Kähler manifolds.

\begin{example}[Hyper-Kähler manifolds]
    Let $(M^{4k},\omega_I,\omega_J,\omega_K)$ be a hyper-Kähler manifold, i.e., $\nabla^g\omega_A=0$, so in particular $H_\kappa =0$. Considering $P=M\times \{1\}$, the condition that the exact Courant algebroid $TM\oplus T^*M$ be generalized Ricci-flat with Riemannian divergence $\dv=\dv^{V_+}$ is equivalent to $\Ric^g=0$, which is always satisfied for hyper-Kähler manifolds \cite[Corollary 9.4]{Salamon1989}.
\end{example}

\begin{example}[Coupled $\sp(k)$-instantons]
Analogously to Proposition~\ref{prop:coupled=>GRic=0}, let
$(M^{4k},g,\kappa)$, with $k\ge2$, be an $\Sp(k)\Sp(1)$-structure
satisfying $\nabla^+\kappa=0$, and consider a string algebroid
\[
(P,K,\theta,\bracks{\cdot,\cdot}_\fk)\to
(M,\Sp(k)\Sp(1),\kappa)
\]
satisfying the \emph{coupled} $\Sp(k)$-instanton condition, see \ref{parag:instantons} noting that $N(\sp(k))=\sp(k)\sp(1)$ inside $\SO(4k)$, and \ref{par:coupled:instantons}. If $\theta$ is a
quaternionic-Hermitian connection, in the sense that
\[
F_\theta\in\pp{\fs\fp(k)\oplus\fs\fp(1)}\otimes \adjointbundleP\quad \Rightarrow\quad F_\theta\diamond\kappa=0,
\]
then we have $D^+_-\kappa=0$, and since $\vet H^2_\kappa|_{\fs\fp(k)}=-\id_{\fs\fp(k)}$, by Theorem~\ref{theorem:ricci:flatness:coupled:equations}, the generalized Ricci tensor vanishes, \(\GRic^+\ppp{V_+^{\sp(k),\kappa},\dv^{\sp(k),\kappa}}=0\).
\end{example}

Recently, manifolds endowed with certain $G$-structures satisfying $dH_\psi=0$ and $\nabla^+H_\psi=0$ (in particular, $\nabla^+\zeta_\psi=0$) have been classified; see \cite{barbaro2026pluriclosedmanifoldsparallelbismut} for the Hermitian case, and \cite{papadopoulos_2026_classification_rigidityspecialexceptionalgeometries,moroianu_2026_geometriesparallelskewsymmetricclosed} for a more general treatment. For $\SU(m)$-, $\rG_2$-, $\Spin(7)$-, and $\Sp(k)$-structures, we can construct generalized Ricci-flat metrics on the exact string algebroid $TM\oplus T^*M$ with divergence $\dv=\dv^{V_+}$ as particular cases of Theorem~\ref{theorem:ricci:flatness:compilation}. In this setting, the generalized Ricci-flat condition is equivalent to $\Ric^+=0$, namely,
\[
\Ric^g-\frac14 H^2_\psi=0,
\qquad
d^*H_\psi=0.
\]
In particular, $H_\psi$ is harmonic. Moreover, since $\nabla^+H_\psi=0$ implies that $|H_\psi|^2$ is constant, taking the trace of the first equation shows that these metrics have constant scalar curvature, $d(\rmm{Scal}^g)=0$.

\medskip
For $\Sp(k)\Sp(1)$-structures satisfying $dH_\kappa=0$ and $\nabla^+H_\kappa=0$, the curvature of $\nabla^+$ satisfies the first Bianchi identity $R^+_{i[jkl]}=0$. Hence, by a direct analogue of Ishihara's argument \cite{Ishihara1974}, one obtains
\[
\Ric^+=\lambda g
\]
for some constant $\lambda\in \bb R$. Consequently, in this setting, the exact Courant algebroid $TM\oplus T^*M$ satisfies an analogue of the Einstein condition:
\[
\GRic^+\pp{V_+,\dv}=\lambda g,
\qquad\text{equivalently}\qquad
\Ric^g-\frac14 H_\kappa^2=\lambda g .
\]
Taking the trace gives
\(
\rmm{Scal}^g-\frac14\tr_g(H_\kappa^2)=n\lambda
\), and hence $\rmm{Scal}^g=n\lambda+\frac32|H_\kappa|^2$. In particular, since $\nabla^+H_\kappa=0$ implies that $|H_\kappa|^2$ is constant, the Riemannian scalar curvature is constant.

\smallskip
The investigation of the case $dH_\kappa=0$, where we have the exact Courant algebroid structure but not necessarily with parallel torsion, is currently in progress. In particular, we would like to determine whether there exists a notion of generalized Einstein metrics induced by $\sp(k)\sp(1)$-structures.

\phantomsection
\section*{Afterword}
\addcontentsline{toc}{section}{Afterword}

Our framework suggests several natural directions for further investigation. First, we have seen that a single $G$-structure may admit more than one distinguished 4-form geometry. This raises the question of how different choices of invariant 4-forms affect the associated constructions. In particular, it would be natural to understand whether the corresponding flux 3-forms are related, and to what extent the resulting string algebroid structures and generalized Ricci flatness conditions depend on this choice. This question already appears in the case of $\SU(4)$-structures discussed in \ref{parag:su(4):structures} and remains an open problem that we are currently investigating.

Another direction concerns the behaviour of the flux 3-form under geometric constructions relating different special structures. The calculations in \ref{parag:lifting:G2:to:spin7}, where a $\Spin(7)$-structure is induced from a $\rG_2$-structure, suggest a method for comparing the flux 3-form of the induced $G$-structure with that of the original $H$-structure. One could similarly carry out the corresponding computations for $\rG_2$-structures induced by $\SU(3)$-structures, recovering results such as those in \cite{fino2025}, or study $\Spin(7)$-structures induced by $\SU(4)$- and $\Sp(2)$-structures. These comparisons may clarify how the formalism of 4-form geometries behaves under natural lifts and reductions of structure groups.

The case of $\Sp(k)\Sp(1)$-structures points to a further question. As discussed in \ref{parag:string:alg:sp(k)sp(1)}, strong $\Sp(k)\Sp(1)$-structures, namely those satisfying $dH_\kappa=0$, are currently being investigated by the first author. In this setting, the exact Courant algebroid structure is available, and it is natural to ask whether there exists a suitable notion of generalized Einstein metrics induced by $\sp(k)\sp(1)$-structures.

Finally, other geometric structures also admit natural 4-form geometries. Examples include contact metric and 3-contact metric structures. Although these cases are not covered in the present work, the approach developed here may extend naturally to transverse geometries. We are currently working on 4-form geometries and generalized Ricci flatness in this broader context.

%\newpage
\appendix

\section{Conventions for contractions}
\label{app:contractions}

We follow the conventions of \cite[Appendix~A]{delaOssa2018a}.
Let $(M,g)$ be a Riemannian $n$-manifold, and let $\{e_i\}$ be a local frame with dual coframe $\{e^i\}$. We write a $k$-form $\xi\in \Omega^k(M)$ as
\begin{equation*}
    \xi 
    = \dfrac{1}{k!}\xi_{i_1\cdots i_k}e^{i_1\cdots i_k}
    = \dfrac{1}{k!}\xi_{i_1\cdots i_k}e^{i_1}\wedge\cdots\wedge e^{i_k},
\end{equation*}
where we use the Einstein summation convention and
$\xi_{i_1\cdots i_k}=\xi(e_{i_1},\ldots,e_{i_k})$.
If $M$ is oriented by the volume form $\vol_M$, the Hodge star operator is characterised by
\begin{equation*}
    * : \Omega^k(M)\to \Omega^{n-k}(M),
    \quad
    \xi\wedge*\eta = \bracks{\xi,\eta}\,\vol_M,
    \qquad \xi,\eta\in\Omega^k(M).
\end{equation*}
Using the metric, we define the contraction of a $(k+p)$-form $\xi\in \Omega^{k+p}(M)$ by a $k$-form $\phi\in \Omega^k(M)$ as the $p$-form
\begin{equation*}
    \phi\iprod\xi
    \defeq \dfrac{1}{k!\,p!}\phi^{i_1\cdots i_k}\xi_{i_1\cdots i_k\,i_{k+1}\cdots i_{k+p}}e^{i_{k+1}\cdots i_{k+p}}.
\end{equation*}

\begin{lemma}
Let $\phi\in \Omega^k(M)$ and $\xi\in \Omega^{k+p}(M)$ be forms on an oriented Riemannian $n$-manifold $M$. Then
\begin{equation*}
    \phi\iprod\xi = (-1)^{p(n-p-k)}*\pts{\phi\wedge*\xi}.
\end{equation*}
In particular,
\begin{align*}
    \phi\iprod\xi &= (-1)^{pk}*\pts{\phi\wedge*\xi}
    , \ &\textrm{if $n$ is odd},\\
    \phi\iprod\xi &= (-1)^{p(k+p)}*\pts{\phi\wedge*\xi} 
    , \ &\textrm{if $n$ is even}.
\end{align*}
\end{lemma}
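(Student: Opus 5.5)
The plan is to reduce to decomposable forms in an orthonormal coframe and then track the sign. Both sides of
\[
\phi\iprod\xi = (-1)^{p(n-p-k)}*\pts{\phi\wedge*\xi}
\]
are $C^\infty(M)$-bilinear and pointwise algebraic in $(\phi,\xi)$. It therefore suffices to verify the identity at a point, for $\phi=e^I$ and $\xi=e^J$. Here $\{e^i\}$ is an oriented orthonormal coframe with $\vol_M=e^{1\cdots n}$, and $I$, $J$ are increasing multi-indices with $|I|=k$ and $|J|=k+p$.

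\emph{Step 1 (left-hand side).} If $I\not\subset J$, then every term of the defining sum vanishes, so $e^I\iprod e^J=0$. If $I\subset J$, let $L=J\setminus I$ and define $\sigma=\pm1$ by $e^J=\sigma\, e^I\wedge e^L$. In the definition
\[
\phi\iprod\xi=\tfrac{1}{k!\,p!}\phi^{i_1\cdots i_k}\xi_{i_1\cdots i_{k+p}}e^{i_{k+1}\cdots i_{k+p}},
\]
the $k!$ orderings of $I$ and the $p!$ orderings of $L$ each contribute the same term, so the factorials cancel and $e^I\iprod e^J=\sigma\, e^L$.

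\emph{Step 2 (right-hand side).} I use the convention $\alpha\wedge*\alpha=|\alpha|^2\vol_M$ for basis monomials; that is, $*e^A=c\,e^{A^c}$ where $c=\pm1$ is fixed by $e^A\wedge c\,e^{A^c}=\vol_M$.

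Let $R$ be the complement of $J$, and define $\varepsilon=\pm1$ by $e^I\wedge e^L\wedge e^R=\varepsilon\vol_M$. Then $*e^J=\sigma\varepsilon\, e^R$. If $I\not\subset J$, then $I\cap R\neq\emptyset$ and $e^I\wedge e^R=0$, so the right-hand side also vanishes. Otherwise $e^I\wedge *e^J=\sigma\varepsilon\, e^I\wedge e^R$, and the complement of $I\cup R$ is $L$.

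Moving $e^L$ (degree $p$) past $e^R$ (degree $n-k-p$) gives
\[
e^I\wedge e^R\wedge e^L=(-1)^{p(n-k-p)}\varepsilon\vol_M ,
\]
so $*(e^I\wedge e^R)=(-1)^{p(n-k-p)}\varepsilon\, e^L$. Multiplying by $\sigma\varepsilon$ and using $\varepsilon^2=1$ gives
\[
*\pts{e^I\wedge *e^J}=(-1)^{p(n-k-p)}\sigma\, e^L ,
\]
which matches Step 1 up to exactly the claimed sign.

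\emph{Step 3 (parity cases).} Reduce the exponent mod $2$ using $p^2\equiv p$. If $n$ is odd, then $p(n-p-k)\equiv p(1+p+k)\equiv pk$. If $n$ is even, then $p(n-p-k)\equiv p(p+k)$. This gives the two stated special cases.

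The only delicate point is Step 2: bookkeeping the signs from the Hodge star convention $\xi\wedge*\eta=\bracks{\xi,\eta}\vol_M$ and from reordering $e^L$ past $e^R$. Steps 1 and 3 are routine. I would double-check Step 2 on a small case, e.g.\ $n=3$, $k=p=1$, $\phi=e^1$, $\xi=e^{12}$. There $e^1\iprod e^{12}=e^2$, $*e^{12}=e^3$, and $*(e^1\wedge e^3)=-e^2$; the predicted sign for $n$ odd is $(-1)^{pk}=-1$, so the right-hand side is $-*(e^{13})=e^2$, which agrees.
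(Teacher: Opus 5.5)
Your proof is correct. The reduction to orthonormal monomials is legitimate because both sides are tensorial, and Step~1 correctly gives $e^I\iprod e^J=\sigma e^L$. On the right-hand side, the only sign discrepancy is the factor $(-1)^{p(n-k-p)}$ that comes from commuting $e^L$ past $e^R$. Applying your rule for $*$ to $e^I\wedge e^R$, which is only $\pm$ an increasing monomial, is harmless by linearity.

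There is nothing in the paper to compare your argument against. The paper gives no proof of this lemma; it records it in the appendix as a convention taken from \cite[Appendix~A]{delaOssa2018a}. Your computation therefore supplies the standard verification that the paper leaves implicit.
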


\section{Some eigenvalues of the flux operator}

The results in this section were discovered during the research leading to this paper. Although they were ultimately unnecessary for the main arguments, we include them here for readers interested in the spectral behaviour of the flux operator.

\begin{definition}
    A 4-form geometry $(M, G,\psi)$ is said to be \emph{irreducible} if the Lee space $\Omega^1_L(M)$ is a non-trivial irreducible $G$-module and the subspace $\Omega^1_L\iprod\psi\subset \Omega^3$ is invariant under the flux operator:
    \[
        \vet H_\psi(\Omega^1_L\iprod\psi) \subset \Omega^1_L\iprod\psi.
    \]
\end{definition}
All the cases considered in this work are irreducible, namely those associated with $\U(m)$, $\SU(m)$, $\Sp(k)\sp(1)$, $\sp(k)$, $\rG_2$, and $\Spin(7)$.

\bigskip
By definition, $\pts{X\iprod\psi}\iprod\psi \in \Omega^1_L$ for every $X\in\Omega^1_L$, so the Lee space is invariant under the double-contraction map. Since this map is self-adjoint, Schur's lemma implies that, for an irreducible 4-form geometry, it is a scalar multiple of the identity:
\[
\pp{X\iprod\psi}\iprod\psi=\alpha\cdot X, \qquad \forall X\in \Omega^1_L(M),
\]
for some constant $\alpha\in \bb R$. For the same reason, there exists $a_L\in \bb R$ such that
\[
(X\iprod\psi)\iprod^2\psi =a_L\cdot X\iprod\psi, \qquad \forall X\in \Omega^1_L(M).
\]

\begin{lemma}
\label{lemma:computing:alpha:e:a_omega1}
Let $(M, G,\psi)$ be an irreducible 4-form geometry. Then the eigenvalue $\alpha\in \bb R$ of the double-contraction map on the Lee space and the eigenvalue $a_L\in \bb R$ of the flux operator $\vet H_\psi$ restricted to $\Omega^1_L\iprod\psi\subset \Omega^3$ are given by
\[
\alpha=-\frac{4}{L}|\psi|^2, 
\qquad 
a_L=\frac{1}{2}\bracks{\psi\iprod^2\psi,\psi}\cdot |\psi|^{-2},
\]
where $\bracks{\psi\iprod^2\psi,\psi}=\tfrac18\psi_{ij}{}^{ab}\psi_{ab}{}^{\mu\nu}\psi_{\mu\nu}{}^{ij}\in \bb R$ and $L:=\rank(\Omega^1_L)$ is the dimension of the Lee space. 
\end{lemma}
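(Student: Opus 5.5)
Both identities will follow from trace arguments, using Schur's lemma only to replace each operator by a scalar on an irreducible piece. The traces are then computed in a local orthonormal frame $\{e_i\}$ with the contraction conventions of Appendix~\ref{app:contractions}.

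\emph{The constant $\alpha$.} Consider the self-adjoint map $A(X)=(X\iprod\psi)\iprod\psi$ on $\Omega^1$ from the proof of Lemma~\ref{lemma:characterization:Lee:space}. By that lemma, $\Omega^1=\ker\psi\oplus\Omega^1_L$. The map $A$ vanishes on $\ker\psi$, and by irreducibility and Schur's lemma it equals $\alpha\cdot\id$ on $\Omega^1_L$. Hence $\tr A=\alpha L$.

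On the other hand, the computation in the same lemma gives $\bracks{A(X),X}=-|X\iprod\psi|^2$. Therefore
\[
\tr A=-\sum_i|e_i\iprod\psi|^2=-\frac1{3!}\psi_{iabc}\psi_{iabc}=-\frac{4!}{3!}|\psi|^2=-4|\psi|^2 .
\]
This yields $\alpha=-4|\psi|^2/L$. As a consistency check, for $\rG_2$ one has $|\psi|^2=7$ and $L=7$, which gives $\alpha=-4$, matching the computation in Proposition~\ref{prop:flux:operator:G2}.

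\emph{The constant $a_L$.} I will compute $S:=\sum_i\bracks{\vet H_\psi(e_i\iprod\psi),e_i\iprod\psi}$ in two ways.

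First, for every $i$ we have $e_i\iprod\psi=\proj_{\Omega^1_L}(e_i)\iprod\psi$, since the $\ker\psi$-component contributes nothing. So every $e_i\iprod\psi$ lies in $\Omega^1_L\iprod\psi$, where $\vet H_\psi=a_L\cdot\id$. This gives $S=a_L\sum_i|e_i\iprod\psi|^2=4a_L|\psi|^2$, using the computation above.

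Second, I expand $S$ in indices using the formula from the self-adjointness lemma, $\bracks{\vet H_\psi\gamma,\gamma}=\tfrac14\gamma_{ijc}\psi_{ijab}\gamma_{cab}$, with $\gamma_{ijc}=\psi_{kijc}$. Summing over $k$ gives
\[
S=\tfrac14\psi_{kijc}\psi_{ijab}\psi_{kcab}.
\]
Next I use the total skew-symmetry of $\psi$ to rewrite $\psi_{kijc}=\psi_{ijkc}$ and $\psi_{kcab}=\psi_{abkc}$. These are even permutations, so no signs appear. The sum then becomes $\tfrac14\,\psi_{ij}{}^{ab}\psi_{ab}{}^{\mu\nu}\psi_{\mu\nu}{}^{ij}$, which equals $2\bracks{\psi\iprod^2\psi,\psi}$ by the stated normalisation $\bracks{\psi\iprod^2\psi,\psi}=\tfrac18\psi_{ij}{}^{ab}\psi_{ab}{}^{\mu\nu}\psi_{\mu\nu}{}^{ij}$.

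Equating the two expressions for $S$ gives $a_L=\tfrac12\bracks{\psi\iprod^2\psi,\psi}\,|\psi|^{-2}$.

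The only delicate step is this index bookkeeping. It needs the correct combinatorial prefactor in $\bracks{\vet H_\psi\gamma,\gamma}$ and a careful reordering of indices so that exactly the cyclic contraction in the statement appears with the right sign. I would double-check the resulting constant against a known case, namely $\Spin(7)$, where the answer should be $a_L=6$ by Proposition~\ref{prop:flux:operator:spin7}.
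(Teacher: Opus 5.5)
Your proof is correct and follows essentially the paper's argument. For $\alpha$, the paper sums $\psi^{\mu ijk}\psi_{ijk\mu}=-4!\,|\psi|^2=3!\,\alpha L$ over an adapted Lee-space frame, which is your trace identity $\tr A=\alpha L=-4|\psi|^2$; for $a_L$, the only difference is cosmetic: the paper first uses the eigenvalue to get the pointwise identity $\psi\iprod^2\psi=\tfrac12\,e^k\wedge\vet H_\psi(e_k\iprod\psi)=2a_L\psi$ and then pairs with $\psi$, whereas you pair first by evaluating $\sum_k\bracks{\vet H_\psi(e_k\iprod\psi),e_k\iprod\psi}$ in two ways, which gives the same constant but not the extra fact that $\psi$ is an eigenvector of $\vet H^4_\psi$.
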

\begin{proof}
    To compute $\alpha$, let $\{e^\mu\}=\{e^1, \dots, e^L\}$ be a local orthonormal frame of $\Omega^1_L$ (where $0<L=\rank(\Omega^1_L)\le n$) and, if necessary, extend it to a local orthonormal frame of $\Omega^1$. Then, for fixed $\mu\le L$, we have
\[
(e^\mu\iprod\psi)\iprod\psi =\frac{1}{3!}\delta^{\mu a}\psi_{a}{^{ijk}}\psi_{ijkl}e^l=\alpha\delta^{\mu}_le^l\Longrightarrow
\sum_{i,j,k}\psi^{\mu ijk}\psi_{ijkl}=3!\cdot \alpha \delta^{\mu}_ l, \quad \forall l.
\]
Setting $l=\mu$ and summing over $\mu\le L$, the left-hand side of the last equality gives $-4!|\psi|^2$. Indeed, for $\mu>L$, we have $(e^\mu\iprod\psi)\iprod\psi=0$ and hence $\psi_{\mu ijk}\psi^{ijk\mu}=0$. The corresponding sum on the right-hand side is $3!\cdot\alpha\cdot L$. Therefore,
\[
\psi^{\mu ijk}\psi_{ijk \mu}=-4!|\psi|^2=3!\cdot \alpha L\Rightarrow
\alpha=-\frac{4}{L}|\psi|^2.
\]
To compute the eigenvalue $a_L$, defined by $(X\iprod\psi)\iprod^2\psi=a_L X\iprod\psi$, note that $\psi\iprod^2\psi\in \Omega^4(M)$ is a multiple of $\psi$:
\begin{align*}
    \psi\iprod^2\psi&= \frac12\psi_{ij}\wedge\psi_{ij}=\frac18\cdot e^k\wedge i_{e_k}(\psi_{ij}\wedge\psi_{ij})
    =\frac18\cdot 2e^k\wedge \psi_{ijk}\wedge\psi_{ij}
    =
    \frac14\cdot 2e^k\wedge (\psi_k\iprod^2\psi)
    \\&=
    \sum_{k\le L} \frac12a_L\cdot e^k\wedge \psi_k=
    \frac12a_L\cdot e^k\wedge \psi_k=
    2a_L\psi,
\end{align*}
and the result follows upon taking the inner product with $\psi$ on both sides.
\end{proof}

\begin{lemma}
    Let $(M, G, \psi)$ be an irreducible 4-form geometry. Then the Lee form is given by
    \begin{equation}
    \label{eq:lee:form:using:eigenvalues}
        \zeta_\psi=\frac{\alpha}{a_L}\tau^1,
    \end{equation}
    where $\alpha$ and $a_L$ are given by Lemma~\ref{lemma:computing:alpha:e:a_omega1}, and $\tau^1\in\Omega^1_L$ is defined by
    \(
        \proj_{\Omega^1_L\iprod\psi}(d^*\psi)=\tau^1\iprod\psi.
    \)
\end{lemma}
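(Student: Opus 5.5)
We need $\zeta_\psi = H_\psi\iprod\psi = \frac{\alpha}{a_L}\tau^1$. The plan is to split $H_\psi$ into its component in $\Omega^1_L\iprod\psi$ and its orthogonal part, and to show that only the first survives contraction with $\psi$.

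\textbf{Step 1: the component of $H_\psi$ in $\Omega^1_L\iprod\psi$.} By irreducibility, $\Omega^1_L\iprod\psi$ is $\bH_\psi$-invariant, and $\bH_\psi$ acts on it by the scalar $a_L$. Since $\bH_\psi$ is self-adjoint, the orthogonal complement $(\Omega^1_L\iprod\psi)^\perp$ is also invariant, so $\bH_\psi$ commutes with $\proj_{\Omega^1_L\iprod\psi}$. Assume $a_L\neq0$; this is implicit in the formula. Then $\Omega^1_L\iprod\psi\subset(\ker\bH_\psi)^\perp$.

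Apply $\proj_{\Omega^1_L\iprod\psi}$ to the defining identity $\proj^\perp(d^*\psi)=\bH_\psi(H_\psi)$. Since $\proj_{\Omega^1_L\iprod\psi}\circ\proj^\perp=\proj_{\Omega^1_L\iprod\psi}$, this gives
\[
\tau^1\iprod\psi = a_L\,\proj_{\Omega^1_L\iprod\psi}(H_\psi).
\]
Hence
\[
\proj_{\Omega^1_L\iprod\psi}(H_\psi)=\tfrac1{a_L}\tau^1\iprod\psi .
\]
This is the same mechanism as in the proof of Theorem~\ref{theorem:flux:3:form:expression:texto}. It does not require $\Omega^1_L\iprod\psi$ to be a whole $V_d$ block, only that it be invariant.

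\textbf{Step 2: the orthogonal part does not contribute.} For a 3-form $\gamma$ and any $X\in\Omega^1$, self-adjointness of contraction gives
\[
\langle \gamma\iprod\psi, X\rangle = \pm\langle\gamma, X\iprod\psi\rangle .
\]
The sign is the same one seen in Lemma~\ref{lemma:characterization:Lee:space}.

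By that lemma, $X\iprod\psi=0$ for $X\in\ker\psi=(\Omega^1_L)^\perp$. Hence $X\iprod\psi$ ranges exactly over $\Omega^1_L\iprod\psi$ as $X$ varies. So if $\gamma\perp\Omega^1_L\iprod\psi$, then $\gamma\iprod\psi$ is orthogonal to every $X$, and therefore $\gamma\iprod\psi=0$.

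Applying this to $\gamma=H_\psi-\proj_{\Omega^1_L\iprod\psi}(H_\psi)$ gives
\[
\zeta_\psi=H_\psi\iprod\psi=\tfrac1{a_L}(\tau^1\iprod\psi)\iprod\psi .
\]

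\textbf{Step 3: the double contraction.} The double-contraction map acts on $\Omega^1_L$ as $\alpha\cdot\id$. Since $\tau^1\in\Omega^1_L$, we get
\[
(\tau^1\iprod\psi)\iprod\psi=\alpha\tau^1,
\]
and therefore $\zeta_\psi=\frac{\alpha}{a_L}\tau^1$. The values of $\alpha$ and $a_L$ are given by Lemma~\ref{lemma:computing:alpha:e:a_omega1}.

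\textbf{Main obstacle.} The delicate point is Step 2, the vanishing of the contraction of the orthogonal part. It requires keeping track of sign conventions in the adjointness of $\cdot\iprod\psi$ between $\Omega^1$ and $\Omega^3$. It is also where one implicitly needs $\tau^1$ to be well defined in $\Omega^1_L$. That holds because $X\mapsto X\iprod\psi$ is injective on $\Omega^1_L=(\ker\psi)^\perp$. One should also note that the hypothesis $a_L\neq0$ is needed.
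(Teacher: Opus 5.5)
Your proof is correct and follows essentially the paper's argument: you split off the $\Omega^1_L\iprod\psi$-component $\frac{1}{a_L}\tau^1\iprod\psi$ of $H_\psi$, show that the orthogonal remainder contracts to zero against $\psi$, and then apply the double-contraction eigenvalue $\alpha$. The only variation is minor. You kill the remainder by pairing with all $X\in\Omega^1$, using $X\iprod\psi=0$ on $\ker\psi$, whereas the paper pairs only with $X\in\Omega^1_L$ and then invokes $\zeta_\psi\in\Omega^1_L$. You also make explicit the assumption $a_L\neq0$ and the commutation of ${\bf H}_\psi$ with the projection, both of which the paper leaves implicit.
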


\begin{proof}
    For an irreducible 4-form geometry $(M, G,\psi)$, the flux 3-form $H_\psi$ can be written as 
\begin{equation*}
    H_\psi = \frac{1}{a_L}\proj_{\Omega^1_L\iprod\psi}(d^*\psi) + H^\perp_\psi
    = \frac{1}{a_L}\tau^1\iprod\psi+H_\psi^\perp,
\end{equation*}
where $H_\psi^\perp\in\pts{\Omega^1_L\iprod\psi}^\perp$ and $\tau^1\in \Omega^1_L$. To prove \eqref{eq:lee:form:using:eigenvalues}, note first that, by definition,
\begin{equation}\label{eq:first:expression:lee:form}
        \zeta_\psi = H_\psi\iprod\psi =  \frac{1}{a_L}(\tau^1\iprod\psi)\iprod\psi +H^\perp_\psi\iprod\psi.
    \end{equation}
We claim that $H^\perp_\psi\iprod\psi=0$. Indeed, for arbitrary $X\in \Omega^1_L$, we have
\begin{equation*}
\bracks{H_\psi^\perp\iprod\psi,X}=-\bracks{H_\psi^\perp, X\iprod\psi}=0
\quad\Longrightarrow\quad
H_\psi^\perp \iprod\psi \in (\Omega^1_L)^\perp.
    \end{equation*}
    On the other hand, \eqref{eq:first:expression:lee:form}, together with $\zeta_\psi\in\Omega^1_L$ and $(\tau^1\iprod\psi)\iprod\psi\in \Omega^1_L$, implies that $H_\psi^\perp\iprod\psi\in\Omega^1_L$. Therefore, $H_\psi^\perp\iprod\psi\in \Omega^1_L\cap\pts{\Omega^1_L}^\perp=\{0\}$. Consequently,
    \(
        \zeta_\psi = \tfrac{1}{a_L}\pts{\tau^1\iprod\psi}\iprod\psi = \frac{\alpha}{a_L}\tau^1.
    \)
\end{proof}

Consider the decomposition of the Lie algebra $\fg\subset \Omega^2$ into eigenspaces of the flux operator $\vet H_\psi^2$:
\[
\mathfrak g=\mathfrak g_1\oplus\cdots\oplus\mathfrak g_l.
\]
The next result shows that there are at most \emph{three} such eigenspaces $\fg_j$.

\begin{lemma}
\label{lemma:technical}
    Let $(M, G,\psi)$ be an irreducible 4-form geometry such that $\vet H_\psi^2|_\Lg:\Lg\to \Lg$ is a homomorphism. Then every eigenvalue $b\in \bb R$ of this map is either zero or satisfies
\begin{equation*}
    b^2-a_Lb+\alpha=0. 
    \end{equation*}
    In particular, there are at most \emph{three} distinct eigenvalues for $\vet H^2_\psi|_\mathfrak g$.
\end{lemma}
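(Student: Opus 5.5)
The plan is to take an eigenvector $\beta\in\fg$ with $\beta\iprod\psi=b\beta$ and find a second relation for $b$ by contracting $\beta\diamond\psi$ with one index of $\psi$. The key fact is that $\beta\in\fg$ gives $\beta\diamond\psi=0$ (discussion below \eqref{eq:ker:diamond:=:lie:algebra}). Concretely, for $X\in\Omega^1_L$ I would compute
\[
0=\bigl(X\iprod\psi\bigr)\iprod\bigl(\beta\diamond\psi\bigr)\in\Omega^1 .
\]
I would expand this using $\beta\diamond\psi=\beta_j\wedge\psi_j$ and the same index manipulations as in the proof of Proposition~\ref{prop:yang:mills}. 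There, $F_\theta\iprod(H\iprod^2\psi)$ was rewritten via $(F\iprod\psi)\iprod H$ and $(F_j\iprod H)\iprod\psi_j$. Here the roles of the 3-form are played by $X\iprod\psi$, and the 2-form is $\beta$.

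The expansion should produce three kinds of terms. The first is $\beta$ contracted with $((X\iprod\psi)\iprod^2\psi)=a_L\,X\iprod\psi$. The second is $(\beta\iprod\psi)$ contracted with $X\iprod\psi$, which equals $b\,\beta\iprod(X\iprod\psi)$. The third is a term of the form $(\beta_j\iprod(X\iprod\psi))\iprod\psi_j$, which after one more reordering becomes $\beta\iprod\bigl((\cdot\iprod\psi)\iprod\psi\bigr)$ and hence yields $\alpha$ on the Lee space. The expected outcome is an identity of the schematic form
\[
\bigl(b^2-a_Lb+\alpha\bigr)\,\bigl(\beta\iprod X\bigr)=0\qquad\text{(up to a nonzero overall factor)},
\]
or possibly $b(b^2-a_Lb+\alpha)$. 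To get the $b^2$ term, I expect to apply the relation twice: once to $\beta$, and once to $\beta\iprod\psi=b\beta$. Alternatively, I would pair the identity with $\beta$ itself, so that $\langle\beta\iprod\psi,\beta\iprod\psi\rangle=b^2|\beta|^2$ appears.

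It then remains to handle the case where $\beta\iprod X=0$ for all $X\in\Omega^1_L$, i.e.\ $\beta$ has no legs along the Lee space. In that case $\beta\iprod\psi=0$, since $\psi$ has no legs in $\ker\psi=(\Omega^1_L)^\perp$ by Lemma~\ref{lemma:characterization:Lee:space}. Hence $b=0$, which gives the alternative in the statement. The count of at most three eigenvalues follows immediately: $0$ together with the at most two roots of the quadratic.

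The main obstacle is the index bookkeeping in the middle step. The signs and combinatorial factors ($\tfrac12$, $\tfrac1{3!}$) must come out exactly so that the coefficients are $1,-a_L,\alpha$ and not, say, $2a_L$ or $\alpha/2$. I would calibrate the constants against the $\rG_2$ case, where $\fg_2$ has $b=-1$, $a_L=3$ and $\alpha=-4$. Indeed $1+3-4=0$, which supports the form of the equation. As a second check, $\Omega^2_7$ has $b=2$, giving $4-6+... $; this would not need to vanish since $\Omega^2_7\not\subset\fg$, so it serves only as a sanity check that the identity uses $\beta\diamond\psi=0$ essentially.
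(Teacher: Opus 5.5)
Your proposal is correct, and the computation you left schematic closes exactly as you hoped. Take the two algebraic identities in the proof of Proposition~\ref{prop:yang:mills} and substitute $F_\theta\to\beta$ and $H_\psi\to X\iprod\psi$. The term $(\beta_j\iprod(X\iprod\psi))\iprod\psi_j$ appears in both identities and cancels, so it is not where $\alpha$ comes from. What remains is
\[
(X\iprod\psi)\iprod(\beta\diamond\psi)
=(\beta\iprod\psi)\iprod(X\iprod\psi)
-\beta\iprod\bigl((X\iprod\psi)\iprod^2\psi\bigr)
-\bigl((X\iprod\psi)\iprod\psi_j\bigr)\beta_j
=(b^2-a_Lb+\alpha)\,X\iprod\beta .
\]
The first two terms give $b^2$ and $a_Lb$ because $\beta\iprod(X\iprod\psi)=X\iprod(\beta\iprod\psi)=b\,X\iprod\beta$. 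The last term gives $\alpha$ because $(X\iprod\psi)\iprod\psi_j=-\bigl((X\iprod\psi)\iprod\psi\bigr)_j=-\alpha X_j$. So the factor is exactly $b^2-a_Lb+\alpha$, with no extra $b$. Your dichotomy then finishes the argument: either $X\iprod\beta\neq0$ for some $X\in\Omega^1_L$, or $\beta$ has no legs along $\Omega^1_L$, in which case $\beta\iprod\psi=0$ and so $b=0$.

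The paper uses the same ingredients but arranges them differently. It expands $\beta\iprod\psi=\frac12\beta_\mu\iprod\psi_\mu$ by substituting $\psi_\mu=a_L^{-1}\,\psi_\mu\iprod^2\psi$. This produces $\frac{b^2}{a_L}\beta$ plus a cross term, and only then does it pair $\beta\diamond\psi=0$ with $e^i\wedge\psi_j$ to turn the cross term into the $\alpha$-term. A separate argument shows that $\beta$ has no legs outside $\Omega^1_L$ when $b\neq0$, which yields the 2-form identity $a_Lb\,\beta=(b^2+\alpha)\beta$.

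You test $\beta\diamond\psi=0$ against $X\iprod\psi$ instead. You therefore only obtain a 1-form identity, but you never divide by $a_L$. That is a genuine advantage, because $a_L$ can vanish for an irreducible 4-form geometry, e.g.\ $(M^4,\SU(2),\vol_M)$, where $\bH^3_\psi\equiv0$. There the paper's first step is undefined. Your identity still gives $b^2-1=0$ (since $\alpha=-1$), consistent with $\bH^2_\psi=\star$ and $b=-1$ on $\fg=\Omega^2_-$.
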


\begin{proof}
Suppose that $b\neq 0$, and let $\beta\in \fg$ be a non-zero $b$-eigenvector, i.e., $\beta\iprod\psi=b\beta$. With respect to an adapted orthonormal frame compatible with the Lee space $\Omega^1_L$, cf. Lemma~\ref{lemma:computing:alpha:e:a_omega1}, we have $\psi_\mu\iprod\psi =\alpha e_\mu$ and $\psi_\mu\iprod^2\psi=a_L\psi_\mu$ for $\mu\le L$, whereas $\psi_\mu=0$ for $\mu>L$. Thus,
\begin{align*}
    \beta\iprod\psi &=\frac12\beta_\mu\iprod\psi_\mu
    =\frac{1}{2a_{L}}\beta_\mu \iprod\pp{\psi_\mu\iprod^2\psi}
    =\frac{1}{4a_{L}}\beta_\mu\iprod\pp{\psi_{\mu\alpha\beta}\wedge \psi_{\alpha \beta}}
    \\&=
    \frac{1}{4a_{L}} \beta_{\mu\nu}\pp{\psi_{\mu\alpha\beta\nu}\psi_{\alpha\beta}-\psi_{\mu\alpha\beta}\wedge\psi_{\alpha\beta\nu}}=
    \frac{b^2}{a_{L}}\beta
    -\frac{1}{4a_{L}}\beta_{\mu\nu}\psi_{\mu\alpha\beta}\wedge\psi_{\nu\alpha\beta}.
\end{align*}
Using $e^\nu\iprod(e^i\wedge\psi_j)=\delta^\nu_i\psi_j+e^i\wedge \psi_{\nu j}$ and the fact that $\beta\in \fg$ implies $\beta\diamond\psi=0$, we obtain
\begin{align*}
    \beta_{\mu\nu} \psi_{\mu\alpha\beta} \wedge \psi_{\nu\alpha\beta}&= 2
    \beta_{\mu\nu} \bracks{\psi_{\mu i},\psi_{\nu j}}e^{ij}
    =
2\beta_{\mu\nu}\bracks{\psi_\mu,e^\nu\iprod(e^i\wedge \psi_j)-\delta_{\nu i}\psi_j}e^{ij}
\\&=
2\beta_{\mu\nu}\bracks{e^\nu\wedge \psi_{\mu},e^i\wedge \psi_j}e^{ij}
-2\beta_{\mu\nu}\bracks{\psi_\mu,\psi_j}e^{\nu j}
\\&=
-2\bracks{{\beta\diamond \psi}  ,e^i\wedge \psi_j}e^{ij}
-\frac{1}{3}\beta_{\mu\nu}\psi_{\mu abc}\psi_{jabc}e^{\nu j}.
\end{align*}
By Lemma~\ref{lemma:computing:alpha:e:a_omega1}, $\psi_{\mu abc}\psi_{jabc}=-3!\cdot \alpha\delta_{\mu j}$ for $\mu,j\le L$. Hence,
\begin{align*}
    b\beta=\beta\iprod\psi &=
    \frac{b^2}{a_{L}}\beta +\sum_{\mu,j\le L}\frac{1}{4a_{L}}\pp{-2\beta_{\mu\nu}\alpha\delta_{\mu j}}e^{\nu j}
    =\frac{b^2}{a_{L}}\beta +\sum_{\mu,j\le L}\frac{\alpha}{2a_{L}}{\beta_{j\nu}}e^{j\nu}
    .
\end{align*}
If $\mu>L$, however, then $\psi_{\mu}=0$ and \(2b\beta_{\mu a}=\beta^{pq}\psi_{pq\mu a}=0\), so $\beta_{\mu a}=0$ because $b\neq 0$. Therefore, the sum above may be taken over all indices, and we obtain $a_Lb\beta=b^2\beta+\alpha\beta$.
\end{proof}

The two solutions of the equation in Lemma~\ref{lemma:technical} are distinct and non-zero. Indeed, its discriminant is $a_L^2-4\alpha>0$, since $\alpha<0$ by Lemma~\ref{lemma:computing:alpha:e:a_omega1}. Moreover, zero cannot be a solution, as this would imply $\alpha=0$, again contradicting Lemma~\ref{lemma:computing:alpha:e:a_omega1}. This observation does not, however, characterise the eigenvalues of $\bH_\psi^2|_{\fg}$: although the equation in Lemma~\ref{lemma:technical} has two distinct non-zero solutions, $\bH_\psi^2|_{\fg}$ may have only one of them as an eigenvalue. This occurs, for instance, for $\SU(m)$-, $\rG_2$-, and $\spin(7)$-structures determined by their defining 4-forms, as described in \S\ref{sec:examples:4-form:geometry}.

%\newpage
\phantomsection
\addcontentsline{toc}{section}{References}
\begingroup
\hbadness=3000
\bibliographystyle{0config/alpha-initials}
\bibliography{0config/bib}
\endgroup

\end{document}